\documentclass{amsart}

\newcommand{\cB}{\mathcal{B}}
\newcommand{\cD}{\mathcal{D}}
\newcommand{\cE}{\mathcal{E}}\newcommand{\cF}{\mathcal{F}}
\newcommand{\cG}{\mathcal{G}}\newcommand{\cH}{\mathcal{H}}

\newcommand{\cP}{\mathcal{P}}

\newcommand{\cS}{\mathcal{S}}

\newcommand{\bR}{\mathbb{R}}

\usepackage{lmodern}
\usepackage[dvipsnames,svgnames,x11names,hyperref]{xcolor}
\usepackage{mathtools,url,graphicx,verbatim,amssymb,enumerate,stmaryrd,nicefrac}
\usepackage[pagebackref,colorlinks,citecolor=Mahogany,linkcolor=Mahogany,urlcolor=Mahogany,filecolor=Mahogany]{hyperref}
\usepackage{microtype}
\usepackage[margin=1.5in]{geometry}
\usepackage{setspace}
\setstretch{1.1}
\usepackage{tikz, tikz-cd}
\usetikzlibrary{matrix,calc,arrows,patterns,decorations.pathreplacing}
\usepackage{caption}
\captionsetup{
	font=small}

\newtheorem{theorem}{Theorem}[section]
\newtheorem{lemma}[theorem]{Lemma}
\newtheorem{proposition}[theorem]{Proposition} 
\newtheorem{corollary}[theorem]{Corollary}

\newtheorem*{corollary*}{Corollary}

\newtheorem{atheorem}{Theorem}

\newtheorem{acorollary}[atheorem]{Corollary}

\theoremstyle{definition}
\newtheorem{definition}[theorem]{Definition}
\newtheorem{example}[theorem]{Example} 
\newtheorem{remark}[theorem]{Remark}

\newtheorem{convention}[theorem]{Convention}

\newcommand{\cat}[1]{\mathsf{#1}}
\newcommand{\mr}[1]{{\rm #1}}

\newcommand{\rel}{\mathbin{\text{rel}}}
\newcommand{\la}{\lvert}
\newcommand{\ra}{\rvert}

\newcommand{\longto}{\longrightarrow}

\setcounter{tocdepth}{1}

\date{\today}

\author{Alexander Kupers}
\title{Three applications of delooping to $h$-principles}
\address{Harvard Department of Mathematics \\
	One Oxford Street \\
	Cambridge MA, 02138, USA}
\email{kupers@math.harvard.edu}          
\thanks{AK is supported by NSF grant DMS-1803766, was partially supported by NSF grant DMS-1105058, and was supported by the Danish National Research Foundation through the Centre for Symmetry and Deformation (DNRF92).}

\begin{document}

\begin{abstract}In this paper we give three applications of a method to prove $h$-principles on closed manifolds. Under weaker conditions this method proves a homological $h$-principle, under stronger conditions it proves a homotopical one. The three applications are as follows: a homotopical version of Vassiliev's $h$-principle, the contractibility of the space of framed functions, and a version of Mather-Thurston theory.\end{abstract}


\maketitle

\section{Introduction} In this paper we give three applications to geometric topology of two influential ideas in algebraic topology. The first idea goes back to Smale's work on immersions and concerns ``$h$-principles'' reducing geometric problems to homotopy-theoretic ones \cite{smaleimm}. The other is ``delooping,'' which goes back to the recognition principle for iterated loop spaces \cite{M}. We will apply these techniques to give general conditions under which an $h$-principle holds on closed manifolds, and check these conditions are satisfied in three examples. These examples will be a homotopical version of Vassiliev's $h$-principle, the contractibility of the space of framed functions, and a version of Mather-Thurston theory.

\subsection{$H$-principles on closed manifolds} Following Gromov, in this paper we study \emph{invariant topological sheaves $\Psi$ on $n$-manifolds} \cite{gromovhp}. The prototypical examples are sheaves of smooth functions which do not have singularities of a certain type. More precisely, such sheaves assign to each open subset $U$ of a manifold the space of smooth functions on $U$ whose singularities do not belong to a set $\cD$ of singularities fixed beforehand. This describes a sheaf since one can restrict and glue such functions, and it is invariant if and only if the set $\cD$ is invariant under diffeomorphisms. 

Given an invariant topological sheaf $\Psi$, one can construct a germ map $j$ to a sheaf $\Psi^f$ which has values weakly equivalent to a space of sections of a bundle with fiber $\Psi(\bR^n)$. For the sheaf of smooth functions whose singularities do not belong to the subset $\cD$ of the space of $r$-jets of smooth functions, $\Psi^f$ is the space of sections of the subbundle of the $r$-jet bundle given by the complement of $\cD$, and $j$ sends a function to its $r$-jet. Thus the map $j$ is given by the inclusion of functions into ``formal functions.'' Alternatively, one can think it as the inclusion of ``holonomic sections'' into all sections.

To say that $\Psi$ satisfies a \emph{(parametrized) $h$-principle} is to say that the germ map $j$ is a weak equivalence \cite{emhpbook}. Spaces of sections are purely homotopy-theoretic and can be understood using the calculational tools of homotopy theory, so an $h$-principle reduces a geometric problem to a more tractable homotopy-theoretic problem. Thus $h$-principles provide a valuable connection between geometry and homotopy theory, and it is desirable to have general conditions under which $\Psi$ satisfies an $h$-principle. 

Gromov gives such conditions for sheaves on open manifolds \cite{gromovhp}. A manifold is said to be \emph{open} if none of its path components is compact. We denote the elements of $\Psi$ on $M$ that satisfy a boundary condition $b$ near $\partial M$ by $\Psi(M\rel b)$.

\begin{theorem}[Gromov] \label{thm.gromov} Let $\mr{CAT} = \mr{Diff}$, $\mr{PL}$ or $\mr{Top}$, and suppose $\Psi$ is a microflexible $\mr{CAT}$-invariant topological sheaf on $n$-dimensional manifolds. 
	
	Then there exists a flexible $\mr{CAT}$-invariant sheaf $\Psi^f$, whose values are weakly equivalent to a space of sections with fiber $\Psi(\bR^n)$, and a map of sheaves $j \colon \Psi \to \Psi^f$ such that
	\[j \colon \Psi(M \rel b) \longto \Psi^f(M \rel j(b))\]
	is a weak equivalence for all open $\mr{CAT}$-manifolds $M$ of dimension $n$ and boundary conditions $b$ near $\partial M$.
\end{theorem}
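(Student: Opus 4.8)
The plan is to realize $\Psi^f$ as a sheaf of sections of a bundle carrying a germ map $j$ from $\Psi$, and to prove that $j$ is a weak equivalence by reducing --- via a handle decomposition of $M$ in which openness forbids $n$-handles --- to a local computation on $\bR^n$. Both that computation and its globalization to a general open $M$ run on one engine: microflexibility supplies lifts for a short time, $\mr{CAT}$-invariance rescales and iterates them, and on an open manifold the iteration terminates because there is room to spare transverse to a codimension-$\geq 1$ core.

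\textbf{Construction of $\Psi^f$ and $j$.} Fix $0\in\bR^n$ and set $\Psi_0:=\Psi(\mathrm{Op}(0))$, the germs of $\Psi$ at the origin; $\mr{CAT}$-invariance makes $\Psi_0$ carry an action of the germs at $0$ of $\mr{CAT}$-embeddings $(\bR^n,0)\to(\bR^n,0)$. For an $n$-manifold $M$ I would form the associated bundle $\cE_M\to M$ with fiber $\Psi_0$ built from the germs of $\mr{CAT}$-charts of $M$, and put $\Psi^f(M\rel b):=\Gamma_b(M;\cE_M)$, the sections agreeing near $\partial M$ with the germ datum $j(b)$ of $b$. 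Restriction of sections makes $\Psi^f$ a sheaf, the associated-bundle construction makes it $\mr{CAT}$-invariant, and since for a compact pair $(K,K_0)$ the restriction $\Gamma(\mathrm{Op}(K))\to\Gamma(\mathrm{Op}(K_0))$ of sections is a Serre fibration, $\Psi^f$ is flexible; section spaces are moreover homotopy invariant, so $\Psi^f(M)\simeq\Psi^f(\mathrm{Op}(P))$ whenever $M$ deformation retracts onto a subpolyhedron $P$. The map $j\colon\Psi\to\Psi^f$ records, for a section over $U$, its germ at each point of $U$, read in a chart as an element of $\Psi_0$.

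\textbf{The local case.} On $\bR^n$ the scaling diffeomorphisms $s_\lambda\colon x\mapsto\lambda x$ ($\lambda\in(0,1]$) carry $\psi\in\Psi(\bR^n)$ along a path $\lambda\mapsto s_\lambda^{*}\psi$ whose limit as $\lambda\to 0$ is the germ of $\psi$ at $0$, defining a map $\Psi(\bR^n)\to\Psi_0$. Conversely, given a germ at $0$ --- a solution on $\mathrm{Op}(0)$ --- microflexibility extends it slightly to a larger ball, applying $s_\lambda^{-1}$ rescales that ball to unit size by $\mr{CAT}$-invariance, one extends again, and iterates; after countably many steps the domains exhaust $\bR^n$, the extensions (being compatible) glue to a solution on $\bR^n$, and checking that the two constructions are mutually inverse up to homotopy --- with parameters, and rel boundary data prescribed on a face or a sub-ball --- shows that $j\colon\Psi(\bR^n)\to\Psi^f(\bR^n)\simeq\Psi_0$ is a weak equivalence and that $\Psi_0\simeq\Psi(\bR^n)$, so $\Psi^f$ has the fiber asserted in the theorem. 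This ``extend a little, rescale, iterate'' scheme is the engine for what follows.

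\textbf{Globalization, and the main obstacle.} It suffices to treat $j\colon\Psi(M)\to\Psi^f(M)$ for $M$ open, the boundary condition $b$ riding along in a collar throughout. Since no component of $M$ is compact, $M$ admits a handle decomposition with no $n$-handles --- equivalently, $M$ deformation retracts onto a subpolyhedron $P$ with $\dim P\leq n-1$ --- and there is a proper $\mr{CAT}$-isotopy $\phi_t$ ($t\in[0,\infty)$), compactly supported on each compact set, compressing $M$ into arbitrarily thin neighborhoods of $P$. For $\Psi^f$, homotopy invariance together with descent along the cells of $P$ (each of dimension $<n$, handled fiberwise by the local case) give $\Psi^f(M)\simeq\Psi^f(\mathrm{Op}(P))$, the expected homotopy colimit of copies of $\Psi_0$. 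For $\Psi$, the engine globalizes: to extend a solution defined near $P$ over all of $M$, transport it to a standard-size neighborhood of $P$ by $\phi_t$, extend a little by microflexibility, transport forward by $\phi_t^{-1}$ --- which enlarges the domain by a definite amount in ambient coordinates --- and iterate with $t\to\infty$, so the domains exhaust $M$ and the compatible extensions glue; in parametrized and relative form this gives $\Psi(M)\simeq\Psi(\mathrm{Op}(P))$. Finally a cell-by-cell induction over $P$, using the engine in the (at least one) free transverse direction to perform the gluings that bare microflexibility would not, matches $\Psi$ with $\Psi^f$ over $\mathrm{Op}(P)$, whence $j$ is a weak equivalence on $M$. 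The main obstacle is organizing all of this coherently: the parametrized and boundary-relative forms of the rescaling scheme, uniform control over the shrinking neighborhoods and over how far each microflexible step and each choice of $t$ must reach, the compatibilities needed to glue over the countable exhaustion, and the bookkeeping of the cellular comparison with $\Psi^f$ --- each routine in isolation, but jointly the substance of the theorem. Granting these, $j\colon\Psi(M\rel b)\to\Psi^f(M\rel j(b))$ is a weak equivalence for all open $\mr{CAT}$-manifolds $M$ and boundary conditions $b$.
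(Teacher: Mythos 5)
The paper does not prove this statement: Theorem~\ref{thm.gromov} (equivalently its technical restatement, Theorem~\ref{thm.gromovprecise}) is quoted and deferred to Section~2.2.2 of \cite{gromovhp} and Appendix~V.A of \cite{kirbysiebenmann}. Your sketch follows Gromov's route, so there is no competing ``paper proof'' to compare against, and I will instead flag two places where the sketch misassigns the burden of proof.

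First, you repeatedly use microflexibility as an extension device (``microflexibility extends it slightly to a larger ball'', and in the globalization step ``extend a little by microflexibility''). That is not what microflexibility gives you: it is a short-time lifting property for \emph{homotopies} against a restriction map $\Psi(R)\to\Psi(Q)$, not a statement that such a restriction is $\pi_0$-surjective. In the local case no extension is needed at all: given a germ represented on a ball $B_r\subset U$, you push it forward along the scaling $\mr{CAT}$-isomorphism $B_r\cong\bR^n$ and use the isotopy to the inclusion to see this is a two-sided homotopy inverse to restriction, so $\Psi(\bR^n)\to\Psi_0$ is a weak equivalence by pure $\mr{CAT}$-invariance. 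The same compression isotopy $\phi_t$ onto the spine $P$ does the analogous job globally: $\phi_1^*$ carries $\Psi(\mr{Op}(P))$ into $\Psi(M)$ and is a homotopy inverse to restriction. Microflexibility's genuine role is elsewhere --- in upgrading restriction maps $\Psi(R)\to\Psi(Q)$ to Serre fibrations when the pair $(R,Q)$ has a free transverse direction (a product with an interval), which is what makes the cell-by-cell Mayer--Vietoris comparison of $\Psi(\mr{Op}(P))$ with $\Psi^f(\mr{Op}(P))$ run. Your last paragraph gestures at exactly this (``the gluings that bare microflexibility would not''), but the earlier ``extend--rescale--iterate'' passages attribute the extension to the wrong mechanism, and as written they would not type-check against Definition~\ref{def.microfib}.

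Second, your justification that $M$ deformation retracts onto a codimension-$\geq 1$ spine $P$ is ``$M$ admits a handle decomposition with no $n$-handles.'' For $\mr{CAT}=\mr{Top}$ in dimension $4$ a handle decomposition need not exist, which is precisely the reason the paper cites Siebenmann's version in \cite{kirbysiebenmann} rather than Gromov's; the paper itself flags this after Theorem~\ref{thm.technical} and uses the workaround (delete points to make the manifold smoothable) in Lemma~\ref{lem.microflexvariants}. A complete proof along your lines must either adopt Siebenmann's handle-free argument or explicitly dispose of the $\mr{Top}$, $n=4$ case by such a reduction.
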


To prove our results we use an extension of this theory to closed manifolds. This extension uses delooping techniques to show that a sufficient condition for $\Psi$ to satisfy an $h$-principle on closed manifolds is that it is \emph{group-like}. This condition can take one of the following forms, the first visibly weaker than the second (for detailed definitions see Subsections \ref{subsec.condh} and \ref{subsec.condw}):
\begin{description}
	\item[(H)] the connected components of the values of $\Psi$ on the cylinder $S^{n-1} \times [0,1]$ form a groupoid,
	\item[(W)] (H) holds and composing with an element representing an identity is a weak equivalence.
\end{description}

\begin{atheorem}\label{thm.main} Let $\mr{CAT} = \mr{Diff}$, $\mr{PL}$ or $\mr{TOP}$, and suppose $\Psi$ is a microflexible $\mr{CAT}$-invariant topological sheaf on $n$-dimensional manifolds. 
	\begin{enumerate}[(i)]
		\item If $\Psi$ is satisfies condition (H), then the map
		\[j \colon \Psi(M\rel b) \xrightarrow{\cong_{H_*}}
		\Psi^f(M\rel j(b))\] 
		is homology equivalence for all $\mr{CAT}$-manifolds $M$ of dimension $n$ and boundary conditions $b$ near $\partial M$.
		\item If $\Psi$ is satisfies condition (W), then the map
		\[j \colon \Psi(M\rel b) \overset{\simeq}{\longrightarrow}
		\Psi^f(M\rel j(b))\] 
		is weak equivalence for all $\mr{CAT}$-manifolds $M$ of dimension $n$ and  boundary conditions $b$ near $\partial M$.
	\end{enumerate}

\end{atheorem}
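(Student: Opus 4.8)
The plan is to isolate the single way in which closed manifolds differ from open ones---capping off a boundary sphere by a top-handle---and to treat it by delooping the relevant ``cylinder category.''

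\emph{Step 1: the cylinder category, and reduction to one capping.} I would first repackage the hypotheses as a topological category $\cC$: its space of objects is the space of germs $b$ of elements of $\Psi$ near a copy of $S^{n-1}$, its morphism space from $b_0$ to $b_1$ is $\Psi(S^{n-1}\times[0,1]\rel b_0,b_1)$, and composition is concatenation of cylinders. The same recipe applied to $\Psi^f$ yields a category $\cC^f$ and a continuous functor $\cC\to\cC^f$; condition \textbf{(H)} says exactly that $\pi_0\cC$ is a groupoid, and condition \textbf{(W)} that $\cC$ is moreover a homotopy groupoid (composition with identities is a weak equivalence). Now fix a handle decomposition of $M$. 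Attaching a handle of index $k<n$ glues along $S^{k-1}\times D^{n-k}$, whose interior $S^{k-1}\times\bR^{n-k}$ is an \emph{open} manifold, so every manifold and collar occurring in such a step is open and Theorem \ref{thm.gromov} applies directly; running this over the handles of index at most $n-1$ proves the statement---\emph{without} the hypotheses \textbf{(H)}/\textbf{(W)}---for every $n$-manifold admitting a handle decomposition of index at most $n-1$, in particular every open manifold and every compact manifold with non-empty boundary. An arbitrary closed $M$ is obtained from $W:=M\setminus\mathring D^n$---which is compact with non-empty boundary and hence covered by the previous case---by capping off its single boundary sphere with a disk, so it remains to treat one such capping.

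\emph{Step 2: the gluing formula.} For $W$ with a distinguished boundary sphere and $M':=W\cup_{S^{n-1}}D^n$ I would use microflexibility of $\Psi$ to produce a natural identification
\[
\Psi(M'\rel b)\;\simeq\;B\bigl(\Psi(W\rel b,-),\,\cC,\,\Psi(D^n\rel-)\bigr),
\]
a two-sided bar construction over $\cC$, the dashes ranging over germs near the capped sphere and the module structures being concatenation of collars. This is a form of descent available for microflexible sheaves; it is \emph{not} the naive homotopy pushout, which would be full descent. By contrast $\Psi^f$ is a flexible sheaf, hence does satisfy full descent, so $\Psi^f(M'\rel jb)$ is the corresponding homotopy pullback; and since $\cC^f$ is group-like that homotopy pullback coincides with the two-sided bar construction over $\cC^f$. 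The germ map on $M'$ is induced by the evident map of modules over $\cC\to\cC^f$.

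\emph{Step 3: delooping and conclusion.} The input of Theorem \ref{thm.gromov} now enters only through open manifolds. A delooping identification $B\cC\simeq\Psi(S^{n-1}\times\bR\rel\emptyset)$---and its analogue for $\cC^f$---combined with Theorem \ref{thm.gromov} on the open cylinder gives $B\cC\xrightarrow{\ \sim\ }B\cC^f$. Under \textbf{(W)}, $\cC$ is a homotopy groupoid and so is recovered from $B\cC$ (its morphism spaces are path spaces in $B\cC$), whence $\cC\xrightarrow{\ \sim\ }\cC^f$; moreover a $\cC$-module then corresponds to a fibration over $B\cC$ and is recovered as its homotopy fiber, the total space of the fibration attached to $\Psi(W\rel b,-)$ (resp.\ to $\Psi(D^n\rel-)$) being $\Psi$ of the open manifold obtained from $W$ (resp.\ $D^n$) by attaching an infinite collar, which Theorem \ref{thm.gromov} matches to its formal counterpart. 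Hence all ingredients of the two bar constructions agree and $\Psi(M'\rel b)\xrightarrow{\ \sim\ }\Psi^f(M'\rel jb)$, which is (ii). Under \textbf{(H)} alone, $\cC\to\cC^f$ need not be a weak equivalence---$\cC^f$ is only a model for the homology group completion of $\cC$---so here I would invoke the group-completion theorem of McDuff and Segal: because $\pi_0\cC$ is a groupoid the relevant maps of modules are homology equivalences (as homotopy fibers of homology fibrations), and therefore the induced map of bar constructions is a homology equivalence, which is (i).

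\emph{Main obstacle.} The crux is Step 2 together with the homological bootstrap in Step 3: proving that microflexibility by itself delivers the bar-construction descent formula---controlling the restriction maps over overlaps finely enough to rebuild $\Psi(M')$ out of $\cC$ rather than merely obtaining a comparison map---and then running the group-completion theorem in families and with arbitrary coefficients. This is precisely where the gap between \textbf{(H)} and \textbf{(W)} lives: \textbf{(W)} lets one dispense with the group completion and keep a weak equivalence, whereas \textbf{(H)} controls only $\pi_0$ and so survives only after passing to homology.
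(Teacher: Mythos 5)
Your outline follows the same high-level strategy as the paper: form the cylinder category, express $\Psi$ of a closed manifold via a bar-type resolution, deloop, and reduce to Gromov's $h$-principle on open manifolds. But several steps are genuinely gapped.

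First, Step 1's reduction via a handle decomposition of $M$ does not work uniformly: topological $4$-manifolds need not admit handle decompositions, and the theorem is stated for $\mr{CAT}=\mr{TOP}$ in all dimensions. The paper never chooses a handle decomposition; it instead resolves along a single embedding $\alpha \colon S^{n-1}\times\bR\hookrightarrow M$ coming from an embedded $\bR^n$ around a point $s$, and runs an induction over a locally finite set of such points, explicitly invoking smoothability of $M\setminus\{p\}$ to handle $\mr{TOP}$ in dimension $4$ (see the beginning of the proof of Theorem \ref{thm.technical}). Relatedly, your Step 1 treats the reduction from compact manifolds with boundary as a cascade of handle attachments; in the paper this case is not obtained by such a cascade but falls directly under the same resolution argument (Proposition \ref{prop.thmcylindricalbdy}), because the ``both boundary spheres constrained'' pieces of the resolution are themselves not open and hence are not covered by Gromov alone.

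Second, your Step 3 slides over the main input that makes the delooping usable. Condition (W) only asserts that, in each fillable component, composing with a single element $\iota$ is a weak equivalence; it does not by itself say $\cat{C}^\Psi$ is a ``homotopy groupoid.'' The fact that \emph{every} composition $g\circledcirc-$ is a weak equivalence (and, under (H), a homology equivalence via the semi-equivalence argument) is Proposition \ref{prop.compcyl}, whose proof is a careful cut-and-glue argument and is exactly what licenses applying the McDuff--Segal theorem (Theorem \ref{thm.mcduffsegal}/Lemma \ref{lem.catloop}). Without it, the identification of $\Psi(W\rel b,c)$ as a homotopy (resp. homology) fiber is unavailable. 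Your gloss on (H) (``$\cC^f$ is only a model for the homology group completion of $\cC$'') is also not quite the mechanism: the paper shows the squares are homology-cartesian, and the map of homotopy fibers is a \emph{weak} equivalence because it is a map of $\Psi$-values on open manifolds; only the comparison of $\Psi(M\rel b)$ to the fiber degrades to a homology equivalence. Finally, you correctly identify the descent formula (your Step 2) as the crux; in the paper this is Proposition \ref{prop.resolve}, proved via the deformation Lemma \ref{lem.ak} using microflexibility and a covering-dimension/chromatic argument, and it is substantial rather than formal.
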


Though variations of this result are known among experts, we give its proof. In particular, Michael Weiss, S\o ren Galatius, Nathan Perlmutter and Chris Schommer-Pries independently have obtained a version of part (ii) in unpublished work. Furthermore, Emanuele Dotto gave a way to deduce relative $h$-principles from non-relative ones using similar ideas and as an application gives an alternative proof of the Madsen-Weiss theorem \cite{dotto}. The earliest source of the ``categorical approach'' to $h$-principles known to the author is a collection of lecture notes due to Michael Weiss \cite{weissimm}.

\subsection{Applications} The goal of this paper is to give three applications of Theorem \ref{thm.main}. The first is the \emph{contractibility of the space of framed functions}, see Corollary \ref{cor.framed}. It was originally proven by Eliashberg-Mishachev \cite{emframed}, and Galatius used techniques similar to ours in an unpublished proof.

\begin{acorollary}\label{cor.framedfunctions} The space $\cG_\mr{fr}(M \rel b)$ of framed functions is contractible for all smooth manifolds $M$ and boundary conditions $b$ near $\partial M$.\end{acorollary}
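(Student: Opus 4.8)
The plan is to realise the space of framed functions as the sections of a microflexible $\mr{Diff}$-invariant topological sheaf satisfying condition (W), apply Theorem~\ref{thm.main}(ii), and then check that the resulting formal object is contractible. Let $\cG_\mr{fr}$ denote the sheaf assigning to an open subset $U$ of an $n$-manifold the space of pairs $(f,\xi)$ in which $f\colon U\to\bR$ is a smooth function all of whose singularities are of type $A_1$ or $A_2$ (Morse, respectively birth--death) and $\xi$ is a framing of the negative eigenbundle of the Hessian along the singular locus, with the usual convention at the $A_2$-singularities; restriction and gluing make this a $\mr{Diff}$-invariant topological sheaf on $n$-manifolds, and $\cG_\mr{fr}(M\rel b)$ is the space of framed functions.

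I would first check that $\cG_\mr{fr}$ is microflexible. The forbidden singularities $A_{\ge 3},D,E,\dots$ cut out a closed subset of positive codimension in the relevant jet spaces, so the underlying sheaf of generalised Morse functions has the microflexible lifting property by a standard transversality and short-time perturbation argument, and the framing $\xi$ --- a section of a frame bundle supported on the low-dimensional singular locus --- does not obstruct the lift. The main step is then condition (W): the connected components of $\cG_\mr{fr}(S^{n-1}\times[0,1])$ should form a groupoid under concatenation, and concatenation with an element representing an identity should be a weak equivalence. For the groupoid condition every framed function on the cylinder must be invertible up to concatenation; this is the framed analogue of the standard fact that in a cobordism category a morphism composed with its time-reversal deforms to a cylinder, and here the point is that the handle cancellations witnessing this deformation can be performed through paths of framed functions precisely because the framings supply the trivialisation data each cancellation requires. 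The second clause of (W) follows from the same cancellation argument carried out in smooth families.

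Granting condition (W), Theorem~\ref{thm.main}(ii) gives a weak equivalence $j\colon\cG_\mr{fr}(M\rel b)\xrightarrow{\simeq}\cG_\mr{fr}^f(M\rel j(b))$ for every smooth $M$ and boundary condition $b$, and $\cG_\mr{fr}^f(M\rel j(b))$ is weakly equivalent to a space of sections, relative to $j(b)$, of a bundle over $M$ with fibre $\cG_\mr{fr}(\bR^n)$. Since $\bR^n$ is open, Theorem~\ref{thm.gromov} identifies $\cG_\mr{fr}(\bR^n)$ with the space of formal framed function germs at a point, and this space is weakly contractible: this is the local content of the framed function theorem of Eliashberg--Mishachev \cite{emframed}, the computation being that the stratified space obtained by gluing, along their $A_2$-strata, the Stiefel manifolds of framed Morse germs of each index is contractible. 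A bundle with weakly contractible fibre has a weakly contractible space of sections relative to any boundary condition, so $\cG_\mr{fr}^f(M\rel j(b))\simeq *$, and therefore $\cG_\mr{fr}(M\rel b)\simeq *$.

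The main obstacle is the verification of condition (W): one must carry out Morse-theoretic handle cancellations not for a single function but in arbitrary smooth families over a polyhedron, keeping the framings coherent throughout, and deduce that concatenation of framed cylinders is invertible up to homotopy. This is exactly where the framing data is indispensable --- without it the formal model at a point is a non-contractible union of Grassmannians rather than a point, reflecting the Morse-theoretic obstructions present on closed manifolds --- and it is the step at which the parametrised, ``wrinkling''-style arguments of Eliashberg--Mishachev reappear, now in the service of checking a hypothesis of Theorem~\ref{thm.main} rather than of proving the $h$-principle directly.
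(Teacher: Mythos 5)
Your overall strategy is the one the paper uses: realize framed functions as a microflexible $\mr{Diff}$-invariant sheaf, verify condition (W), invoke Theorem~\ref{thm.main}(ii) to obtain $j\colon\cG_\mr{fr}(M\rel b)\xrightarrow{\simeq}\cG_\mr{fr}^f(M\rel j(b))$, and finish by showing the local model $\cG_\mr{fr}(\bR^n)$ is contractible (a join of $S^{n-1}$ with a homotopy pushout of Stiefel-type quotients $O(n)/O(i)$, which is indeed contractible; this is Lemma 6.5 in the paper). The microflexibility claim and the local contractibility claim are both essentially as in the paper.

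The gap is in your verification of conditions (H) and (W). Your argument rests on ``the standard fact that in a cobordism category a morphism composed with its time-reversal deforms to a cylinder.'' This is not a standard fact and is generally false --- cobordism categories are not groupoids, and $W\cup_M\bar W$ need not be a cylinder even for an $h$-cobordism $W$. Even confining attention to Morse functions on the \emph{fixed} manifold $S^{n-1}\times[0,1]$, the plan does not obviously close: depending on how one normalizes the time-reversal, a critical point of index $k$ in $f$ yields a critical point of index $k$ or of index $n-k$ in $\bar f$, and in neither case are the resulting critical points of $f\ast\bar f$ paired with the adjacent-index partners that Smale cancellation requires. One would have to first \emph{introduce} new critical pairs to make the cancellations go, and at that point the argument becomes substantially harder. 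Moreover, even granting invertibility, showing that $[\cat{C}^{\cG_\mr{fr}}]$ is a groupoid also requires producing \emph{identity} morphisms in the non-unital category of path components, which you do not address.

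The paper's route is both simpler and more robust. By Thom jet transversality (Lemma~\ref{lem.vascondc}), any two generalized Morse functions on a cylinder satisfying fixed boundary conditions and mapping to the contractible target $\bR$ lie in the same path component --- no handle cancellation is needed, since the generic homotopy of underlying smooth maps already avoids the forbidden strata. The remaining work (the content of Lemma 6.6, illustrated in Figure~\ref{figorientation}) is to lift such a path of generalized Morse functions to a path of \emph{framed} generalized Morse functions, extending framings through births and, at deaths, reconciling an orientation mismatch by briefly introducing an auxiliary birth--death pair. This shows $\pi_0$ is a singleton for each boundary pair, giving (H) via Example~\ref{exam.grouplikesingle}; condition (W) then follows by taking $\iota$ to be the critical-point-free restriction of the linear projection to an annulus and running the shrinking argument of Lemma~\ref{lem.secgrouplike}/\ref{lem.vascondd}.

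A smaller but worth-noting confusion: your closing paragraph presents the framings as ``indispensable'' for verifying condition (W). That is not where the framings earn their keep. The unframed sheaf $\cG(-)$ of generalized Morse functions already satisfies condition (W) (this is Lemmas~\ref{lem.vascondc} and~\ref{lem.vascondd}, leading to Corollary~\ref{cor.gmf}, a full weak-equivalence $h$-principle). What the framings buy is contractibility of the \emph{local model}: without them $\cG(\bR^n)$ is $S^{n-1}\ast P$ with $P$ a non-contractible pushout of Grassmannians, whereas $\cG_\mr{fr}(\bR^n)$ replaces the Grassmannians with Stiefel-type quotients and becomes contractible. Keeping those two roles separate --- condition (W) versus triviality of the formal fiber --- is important for understanding why framings, and not some other decoration, are the right addition.
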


Our second application is a \emph{generalization of Vassiliev's $h$-principle} \cite{vassiliev}, see Corollary \ref{cor.vassiliev2}. It will be a consequence of Thom's jet transversality theorem and implies the $h$-principle for generalized Morse functions proven in \cite{emgmf}. 

\begin{acorollary}\label{cor.vassiliev}Suppose $Z$ is a smooth manifold and $\cD$ is a closed $\mr{Diff}$-invariant stratified subset of codimension at least $n+2$ of the space of $r$-jets of smooth functions $\bR^n \to Z$. Let $\cF(-,\cD)$ denote the space of functions to $Z$ with $r$-jet avoiding $\cD$, then the map
	\[j \colon \cF(M,\cD \rel b) \longto \cF^f(M,\cD \rel j(b))\]
	is a homology equivalence for all smooth manifolds $M$ of dimension $n$ and boundary conditions $b$ near $\partial M$. If $\cD$ is additionally $\mr{Diff}(Z)$-invariant, then this map is in fact a weak equivalence. \end{acorollary}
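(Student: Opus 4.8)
The plan is to deduce Corollary~\ref{cor.vassiliev} from Theorem~\ref{thm.main} by checking that the sheaf $\Psi = \cF(-,\cD)$ of smooth functions $M \to Z$ whose $r$-jet avoids $\cD$ is (a) microflexible and $\mr{Diff}$-invariant, and (b) satisfies condition~(H) in general, and condition~(W) when $\cD$ is additionally $\mr{Diff}(Z)$-invariant. Invariance under $\mr{Diff}$ is immediate from the hypothesis that $\cD$ is a $\mr{Diff}$-invariant subset of the $r$-jet space, and that $\cF^f$ is then the section space of the complement of $\cD$ in the $r$-jet bundle is the standard description of the formal sheaf. Microflexibility is where Thom's jet transversality theorem enters: given a compact family of functions defined near a compact set, with the jet avoiding $\cD$, and a homotopy of the germs over a slightly smaller compact set, one wants to extend a small portion of that homotopy. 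Because $\cD$ is a \emph{closed} stratified subset of codimension $\ge n+2$, a generic perturbation of an $n$-parameter-plus-one family (the extra parameter being homotopy time, so total source dimension $n + (\text{number of parameters}) + 1$) misses $\cD$; the codimension bound $n+2$ is exactly what makes this transversality argument produce a perturbation that can be taken $C^0$-small and supported away from where we have already matched the homotopy, which is the content of microflexibility. I would package this as: jet transversality gives flexibility of $\cF^f$ and microflexibility of $\cF$ in one stroke, the latter by perturbing relative to the already-specified germ.

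The heart of the argument is verifying the group-like condition on the cylinder $S^{n-1}\times[0,1]$. I would first identify the relevant monoid-of-components: an element of $\Psi(S^{n-1}\times[0,1] \rel b)$ for suitable $b$ is (up to the equivalence used in Subsections~\ref{subsec.condh}--\ref{subsec.condw}) a "cobordism" of functions on $S^{n-1}$, and stacking cylinders gives the composition. To show these form a groupoid (condition (H)), I must show every such element has an inverse up to the relevant notion of concordance. The key point is that the \emph{constant} cobordism — where the function on $S^{n-1}\times[0,1]$ is pulled back from $S^{n-1}$ via the projection, or more precisely a "cylindrical" function interpolating a given boundary function with itself — is the identity, and that reversing the $[0,1]$-coordinate produces an inverse: the composite of a cylinder with its reverse is concordant to a constant cylinder because we can "fold" the $[0,1]\times[0,1]$ square, and crucially this folded family still avoids $\cD$ by the jet-transversality argument above (the extra parameter directions only help transversality, given the codimension slack). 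This uses only the $\mr{Diff}$-invariance of $\cD$ (reparametrizing the interval coordinate). For condition (W) one must additionally show that stacking with a constant cylinder is a weak equivalence onto the relevant path-component, not merely a bijection on $\pi_0$; here is where $\mr{Diff}(Z)$-invariance is needed, because the homotopy exhibiting this equivalence must slide the target values around using ambient isotopies of $Z$, and $\cF^f$-sections are only preserved under such isotopies when $\cD\subset J^r(\bR^n,Z)$ is invariant under the $\mr{Diff}(Z)$-action on the target as well as the $\mr{Diff}(n)$-action on the source.

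The main obstacle I anticipate is the microflexibility / invertibility step, specifically making the transversality argument interact correctly with the \emph{stratified} structure of $\cD$ and with the relative boundary condition simultaneously. Thom transversality for a stratified set requires perturbing to be transverse to each stratum, and one must check that transversality to all strata of a codimension-$\ge n+2$ set still forces the jet of a family of total source dimension $n+1$ (plus parameters) to miss $\cD$ entirely — this is fine dimension-count-wise, but one has to be careful that the perturbation can be taken supported in the interior, away from the locus where the homotopy and the boundary condition $b$ have already been pinned down, so that we genuinely verify microflexibility rather than mere flexibility. A secondary subtlety is bookkeeping the equivalence between $\Psi(S^{n-1}\times[0,1]\rel b)$ and the "morphism space" in the sense of Subsection~\ref{subsec.condh}, i.e.\ confirming that the folding homotopy respects whatever collar/germ conditions are built into that definition; I expect this to be routine once the correct model is fixed, with Thom transversality doing all the real work.
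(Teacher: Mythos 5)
Your overall plan — reduce to Theorem~\ref{thm.main}/\ref{thm.technical} by checking microflexibility, condition~(H), and condition~(W), using Thom transversality for the group-like conditions and $\mr{Diff}(Z)$-invariance for (W) — is the same as the paper's, and your account of why condition~(H) follows from transversality (the codimension~$\ge n+2$ forcing transverse $=$ empty over the $n$-dimensional cylinder and its $(n{+}1)$-dimensional one-parameter families) matches Lemma~\ref{lem.vascondc}. Your account of why (W) needs $\mr{Diff}(Z)$-invariance (one must move target values around by ambient isotopies) is also in the right spirit: the paper (Lemmas~\ref{lem.vaslinear} and~\ref{lem.vascondd}) implements this precisely by using radial diffeomorphisms of a chart in $Z$, together with the standard linear map $\Lambda$, to produce $\iota$ and a shrinking homotopy inverse.

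However, there is a genuine error in your argument for microflexibility. You assert that Thom transversality plus the codimension bound proves microflexibility of $\cF(-,\cD)$, and that it ``gives flexibility of $\cF^f$ and microflexibility of $\cF$ in one stroke.'' This is not correct, and it confuses microflexibility with a much stronger flexibility-type statement. Microflexibility of $\cF(-,\cD)$ uses \emph{only} that $\cD$ is closed, hence that the differential relation is \emph{open}; the codimension bound plays no role, and transversality is not the right tool. The correct argument (Lemma~\ref{lem.vasconda}) is an interpolation: given the lifting problem for $Q \subset R$, choose a bump function $\eta$ equal to $1$ near $Q$ and supported in a neighborhood where $F_s$ is defined, and set $f_s = \eta\, F_s + (1-\eta) f_0$ (or use geodesic interpolation for a general target $Z$). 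Since the complement of $\cD$ is open and $f_0$ avoids $\cD$, the interpolation still avoids $\cD$ for $s < \epsilon$. A transversality perturbation, by contrast, does not respect the prescribed boundary data ($f_0$ at $s=0$, $F_s$ near $Q$) and so does not produce a genuine lift; more fundamentally, if transversality alone gave microflexibility together with sufficient codimension, it would give flexibility and hence an $h$-principle on open manifolds without any delooping, which is false. Flexibility of $\cF^f(-,\cD)$ is also not a transversality statement: it holds because $\cF^f$ is a sheaf of sections of a bundle, and restriction of sections along a cofibration is a Serre fibration.

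A smaller gap: to apply Theorem~\ref{thm.technical} with $\Gamma = \cF^f(-,\cD)$ (as opposed to Gromov's abstract $\Psi^f$) you must also check that the germ map $\cF(\bR^n,\cD) \to \cF^f(\bR^n,\cD) \simeq J^{(r)}(\bR^n,Z)\setminus\cD$ is a weak equivalence. The paper does this in Lemma~\ref{lem.vasrn} by a Taylor-approximation argument, contracting a smooth germ onto its $r$-jet polynomial; this step should be included (it is not a consequence of transversality either).
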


Finally, we give short proof of a version of \emph{Mather-Thurston theory for foliations}. Let $\mr{Fol}_\mr{CAT}(-)$ denote the space of codimension $n$ $\mr{CAT}$-foliations as in Definition \ref{def.folspace}. There is a unique such foliation $\cF_0$ on a single manifold $M$, which we can take as a boundary condition near $\partial M$.

\begin{acorollary}\label{cor.thurston} The map $j\colon \mr{Fol}_\mr{CAT}(M \rel \cF_0) \to \mr{Fol}_{\mr{CAT}}^f(M \rel j(\cF_0))$ is a homology equivalence for all $\mr{CAT}$-manifolds $M$ of dimension $n$.
\end{acorollary}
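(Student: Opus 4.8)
The plan is to deduce Corollary~\ref{cor.thurston} from Theorem~\ref{thm.main}(i) by verifying that the sheaf $\mr{Fol}_\mr{CAT}(-)$ of codimension~$n$ $\mr{CAT}$-foliations is a microflexible $\mr{CAT}$-invariant topological sheaf on $n$-manifolds satisfying condition~(H). Invariance under $\mr{CAT}$-diffeomorphisms is immediate from the definition, and the sheaf property follows because a foliation is a local-to-global structure: foliations can be restricted to opens and glued along overlaps where they agree. Microflexibility is the Gromov-style statement that given a family of foliations over a compact parameter space together with an extension over a smaller neighborhood, one can extend it (after shrinking the germ) over the whole family for a short time; this is classical for foliations and I would cite the relevant statement in \cite{gromovhp} or \cite{emhpbook}, or reprove it from the fact that foliations are solutions to an open, $\mr{Diff}$-invariant partial differential relation that is microflexible. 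So the substance of the argument is condition~(H).

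To check~(H), I must show that the connected components of $\mr{Fol}_\mr{CAT}(S^{n-1} \times [0,1])$ — foliations on the cylinder, presumably with some boundary condition making "composition" along the $[0,1]$-direction well-defined — form a groupoid under stacking of cylinders. Composition is concatenation of foliated cylinders, with the identity represented by the product foliation $\cF_0 \times [0,1]$ (the pullback of the unique foliation $\cF_0$ on $S^{n-1}$, or rather the appropriate codimension-$n$ analogue; here one should be careful that the relevant "slab" is $S^{n-1}\times[0,1]$ viewed inside an $n$-manifold, so the foliation has codimension $n$ and the leaves are $n$-dimensional, meaning the cylinder itself carries a single leaf — in that case the monoid of components is essentially trivial and~(H) is automatic). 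The key point is invertibility: given a foliated cylinder, its \emph{reverse} (reflect the $[0,1]$-coordinate) concatenated with the original should be connected by a path of foliations to the identity. This is the standard "reverse a product up to homotopy" argument — shrink the foliated collar toward one end and expand the trivial collar — and it is exactly the kind of reversal that works for product-like structures. Once invertibility and associativity (up to the usual reparametrization homotopies) are in hand, $\pi_0 \mr{Fol}_\mr{CAT}(S^{n-1}\times[0,1])$ is a groupoid and~(H) holds.

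The main obstacle, and the step deserving the most care, is the identification of the correct boundary condition and the verification that concatenation is well-defined on connected components and admits inverses. The subtlety is that for a \emph{codimension $n$} foliation on an $n$-manifold, the leaves are top-dimensional, so the "structure" transverse to the slicing $S^{n-1}\times\{t\}$ is very rigid, and one must make sure the monoidal structure encoding "how foliations on slabs compose" is set up so that the relevant objects on $S^{n-1}\times[0,1]\rel\partial$ really do form a group-like monoid; this is precisely where Definition~\ref{def.folspace} and the choice of $\cF_0$ as boundary condition enter. Provided that setup matches the framework of Theorem~\ref{thm.main}, the reversal trick gives invertibility and the rest is formal. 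The final sentence of the corollary — that one gets a homology equivalence rather than a weak equivalence — is then simply the conclusion of part~(i), and I would remark that upgrading to~(W), hence to a weak equivalence, fails here in general because composing with an identity cylinder need not be a weak equivalence; indeed this reflects the classical fact that Mather--Thurston theory is inherently a statement about homology (the Haefliger space $B\overline{\Gamma}_n$ appears only after group completion / plus construction), which is consistent with only obtaining~(H) and not~(W).
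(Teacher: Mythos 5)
Your overall strategy matches the paper's: verify that $\mr{Fol}_\mr{CAT}(-)$ is a microflexible $\mr{CAT}$-invariant sheaf satisfying condition (H), then invoke Theorem~\ref{thm.main}(i). Your remark at the end about why (W) fails and why this is intrinsically a homological $h$-principle is also correct and essentially matches the paper's remark. However, there are two places where the execution has real problems.

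First, the verification of condition (H). You spend most of the argument on a reversal trick: concatenate a foliated cylinder with its $[0,1]$-reflection and try to shrink it to the trivial foliation. This is both unnecessary and, as stated, unjustified. The relevant morphism set of $[\cat{C}^{\mr{Fol}_\mr{CAT}}]$ is $\pi_0(\mr{Fol}_\mr{CAT}(S^{n-1}\times[1,2]\rel\cF_0))$, and you would need an honest reason why "shrink the foliated collar and expand the trivial collar" produces a \emph{path of foliations} back to $\cF_0$; this is not clear, and is in fact circular, since it presupposes the connectedness you are trying to establish. The actual reason (H) holds is much simpler and you do touch on it, but with incorrect dimension bookkeeping. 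You write that "the foliation has codimension $n$ and the leaves are $n$-dimensional, meaning the cylinder itself carries a single leaf." This is wrong: a codimension-$n$ foliation on the $n$-manifold $S^{n-1}\times[0,1]$ has $0$-dimensional leaves, i.e.\ the leaves are \emph{points}, and there is exactly one such foliation $\cF_0$ on any fixed $n$-manifold. The interesting data lives only in parametrized families (on $P\times M$ with $P$ of positive dimension). Consequently the sheaf $\mr{Fol}_\mr{CAT}(S^{n-1}\times[1,2]\rel\cF_0)$ has a unique $0$-simplex, so $\pi_0$ is a singleton, and Example~\ref{exam.grouplikesingle} applies immediately. The reversal trick should be deleted; it is a detour whose conclusion is true only because the space was already trivial on $\pi_0$.

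Second, microflexibility. You defer to "this is classical; cite \cite{gromovhp} or \cite{emhpbook}, or reprove it from the fact that foliations are solutions to an open, $\mr{Diff}$-invariant partial differential relation." The PDR route only makes sense for $\mr{CAT}=\mr{Diff}$; the corollary is stated for all three categories, and for $\mr{PL}$ and $\mr{Top}$ there is no differential relation to invoke. The paper proves microflexibility uniformly across $\mr{CAT}$ by a geometric argument (Lemma~\ref{lem.prodfol}): a transverse foliation on $W\times[0,1]\times U$ admits parallel transport along leaves, which straightens the foliation to a product over a short time interval near the compact subset $K$; one then applies isotopy extension to globalize. You should either supply this argument or cite a reference that covers $\mr{PL}$ and $\mr{Top}$ explicitly; as written, the proposal only covers the smooth case, and even there it is deferred rather than proved.
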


\subsection{Conventions} We fix some conventions.

\begin{convention}Fix a category $\mr{CAT}$ of manifolds: $\mr{CAT} = \mr{Diff}$, $\mr{PL}$, $\mr{TOP}$. All our manifolds are second countable and metrizable (which implies Hausdorff and paracompact).\end{convention}

\begin{convention}The category of spaces $\cat{S}$ will be $\cat{Sh^{CAT}}$ as in Appendix \ref{app.spaces}.\end{convention}

\subsection{Acknowledgements}Alexander Kupers is supported by NSF grant DMS-1803766. We would like to thank S\o ren Galatius for sharing his ideas on $h$-principles and much helpful advice, Oscar Randal-Williams and Johannes Ebert for pointing out a mistake in an earlier version of this paper, and Sam Nariman for comments on an early draft. We thank the referee for helpful comments and suggestions, and Chris Schommer-Pries and Michael Weiss for historical comments. Furthermore, we would like to thank Yasha Eliashberg, Daniel Alvarez-Gavela, Jeremy Miller, and Nathan Perlmutter for stimulating discussions about various $h$-principles.

\tableofcontents

\section{Invariant topological sheaves} \label{sec.setup} In this section we discuss the objects appearing in our $h$-principle.

\subsection{Sheaves} In this subsection we will define the $\mr{CAT}$-invariant topological sheaves that are the subject of this paper. To do so, we start by defining topological categories of different types of manifolds and embeddings. For us, \emph{topological category} has a set or class of objects and spaces of morphisms. In many examples one can make the objects form a set by requiring their underlying sets are subsets of some sufficiently large set, e.g.~$\bR^\infty$ in the next definition; we will ignore such issues.

\begin{definition}Let $\cat{Mfd}^\mr{CAT}_n$ be the topological category defined as follows:
	\begin{itemize}
		\item Objects are $n$-dimensional $\mr{CAT}$-manifolds without boundary.
		\item Morphism spaces are given by spaces of $\mr{CAT}$-embeddings: that is, $\mr{Hom}(M,N)$ is the object $\mr{Emb}^\mr{CAT}(M,N)$ of $\cat{Sh^{CAT}}$ which assigns to a parametrizing manifold $P$ the set of $\mr{CAT}$-maps $P \times M \to P \times N$ over $P$ that are $\mr{CAT}$-isomorphisms onto their image.
\end{itemize}\end{definition}

We can now define $\mr{CAT}$-invariant topological sheaves.

\begin{definition}\label{def.sheaf} Let $\mr{CAT}$ be $\mr{Diff}$, $\mr{PL}$ or $\mr{Top}$. \begin{itemize}
		\item A \emph{$\mr{CAT}$-invariant topological presheaf on $n$-dimensional $\mr{CAT}$-manifolds} is a continuous functor $\Psi \colon (\cat{Mfd}_n^\mr{CAT})^\mr{op} \to \cat{S}$. That is, there are continuous maps $\mr{Emb}(M,M') \times \Psi(M') \to \Psi(M)$ compatible with composition and identities. 
		\item A \emph{$\mr{CAT}$-invariant topological sheaf on $n$-dimensional $\mr{CAT}$-manifolds} is a presheaf $\Psi$ that has the property that for all open covers $\{U_i\}_{i \in I}$ of $M$ the following diagram is an equalizer in $\cat{S}$:
		\[\begin{tikzcd} \Psi(M) \rar &  \prod_{i \in I} \Psi(U_i) \arrow[shift left=.5ex]{r} \arrow[shift left=-.5ex]{r} &  \prod_{i,j \in I} \Psi(U_i \cap U_j). \end{tikzcd}\]
		\item A \emph{map of $\mr{CAT}$-invariant topological sheaves} is a continuous natural transformation.\end{itemize}\end{definition}
	
For $e \in \mr{Emb}(M,M')$, we shall often use $e^*$ for the induced \emph{pullback} map $\Psi(M') \to \Psi(M)$. The exception is when $e$ is inclusion of codimension zero submanifold $M = U$ of $M'$, in which case we use $(-)|_U \colon \Psi(M') \to \Psi(U)$. It is more intuitive for $\phi \in \mr{CAT}(M,M')$ to think of $(\phi^{-1})^*$ as a \emph{pushforward} map and thus we introduce the notation $\phi_* \coloneqq (\phi^{-1})^*$. Though we will not use this, we point out that the gluing property implies that the functoriality of $\mr{CAT}$-invariant topological sheaf automatically extends from embeddings to immersions.

\begin{example}\label{exam.sheaves} 
	We give a number of examples of $\mr{CAT}$-invariant topological sheaves:
	\begin{itemize}
		\item Fix a space $X$, and let $\Psi(M) = \mr{Map}(M,X)$ be the space of continuous maps in compact-open topology. If $\cat{S} = \cat{Sh^{CAT}}$, then the value on a parametrizing manifold $P$ is the set of continuous map $P \times M \to X$. This generalizes to spaces of sections of bundles naturally associated to $M$, like the following example.
		\item Suppose $\mr{CAT} = \mr{Diff}$, and let $\mr{Riem}(M)$ be the space of Riemannian metrics on $M$, topologized as a subspace of the sections of $T^* M \otimes T^* M$ in the weak $C^\infty$-topology. If $\cat{S} = \cat{Sh^{CAT}}$, then the value on a parametrizing manifold $P$ is a subset of the set of sections of $\pi^*(T^*M \otimes T^*M)$ over $P \times M$, where $\pi \colon P \times M \to M$ denotes the projection.
		\item Let $\mr{CAT} = \mr{PL}$, or $n \neq 4$ and $\mr{CAT} = \mr{Top}$. For a $\mr{CAT}$-manifold $M$ let $\mr{Sm}(M)$ be the object of $\cat{Sh^{CAT}}$ which assigns to parametrizing manifold $P$ the set of commutative diagrams
		\[\begin{tikzcd} P \times M \arrow{rr}{\cong}[swap]{F} \arrow{rd}[swap]{\pi_2} & & N \arrow{ld}{f} \\
		& P, & \end{tikzcd}\]
		with $\pi_2$ the projection, $N$ a smooth manifold, $f$ a smooth submersion, and $F$ a $\mr{CAT}$-isomorphism. Then the $\mr{CAT}$-invariant topological sheaf $M \mapsto \mr{Sm}(M)$ is the subject of smoothing theory \cite{burglashof,kirbysiebenmann}.
\end{itemize}\end{example}

\subsection{Boundary conditions} \label{subsec.bdy} It is often useful to make relative definitions. In our case that means defining boundary conditions near closed subsets and considering subspaces of $\Psi$ of elements satisfying such boundary conditions. This is the content of the following definitions. 

\subsubsection{Manifolds with corners} As the terminology indicates, boundary conditions will often be imposed near the boundary of a manifold. Our sheaves are not defined on manifolds with boundary, but there is a canonical way to extend $\Psi$ to manifolds with boundary (or more generally corners, when $\mr{CAT} = \mr{Diff}$), as long as these are submanifolds of an object of $\cat{Mfd}^\mr{CAT}_n$.


\begin{definition}\label{def.psicorners} If $M$ is a manifold with corners contained in $N \in \cat{Mfd}_n^\mr{CAT}$, then let $\cat{I}_M$ be the directed set of open neighborhoods $U$ of $M$ in $N$ (where there is a unique morphism $U \to U'$ if $U' \subset U$). We define $\Psi(M \subset N)$ as the colimit (taken in $\cat{S}$)
	\[\Psi(M \subset N) \coloneqq \underset{U \in \cat{I}_M}{\mr{colim}}\, \Psi(U).\]
\end{definition}

This colimit exists because our choice $\cat{Sh^{CAT}}$ for $\cat{S}$ is cocomplete, given by sheafifying the colimit of presheaves, which is just the pointwise colimit of sets. These colimits are well-behaved, in contrast with colimits in $\cat{Top}$.

\begin{convention}If $M$ is a manifold with corners contained in $N \in \cat{Mfd}_n^\mr{CAT}$ and $N$ is clear from the context (e.g.~the simplex $\Delta^n$ sitting inside the extended simplex $\Delta^n_e$) we write $\Psi(M)$ for $\Psi(M \subset N)$.\end{convention}

\subsubsection{Boundary conditions} Our next goal is to define boundary conditions and subspaces of $\Psi(M)$ of elements satisfying a boundary condition.

\begin{definition}Let $M$ be a manifold with corners and $A \subset M$ be a closed subset of $M$. Let $\cat{I}_A$ be the directed set of open neighborhoods $U \subset M$ of $A$ (where there is a unique morphism $U \to U'$ if $U' \subset U$). The \emph{set of boundary conditions of $\Psi$ near $A$} is given by the colimit (taken in $\cat{Set}$)
	\[\cB_\mr{\Psi}(A) \coloneqq \underset{U \in \cat{I}_A}{\mr{colim}}\, \Psi(U).\]\end{definition}

By definition of the colimit there is a map of sets $\beta_A \colon \Psi(U) \to \cB_\mr{\Psi}(A)$ for all neighborhoods $U \subset M$ of $A$. Thus any element $f$ of $\Psi(U)$ gives rise to a boundary condition $\beta_A(f)$ near $A$. We say that $f \in \Psi(U)$ \emph{satisfies} $b \in \cB_\mr{\Psi}(A)$ if $\beta_A(f) = b$.

\begin{definition}\label{def.withbdycondition} Let $M$ be a manifold with corners, $A \subset M$ be a closed subset of $M$ and  $b \in \cB_\mr{\Psi}(A)$ a boundary condition. Let $\cat{I}_b$ be the directed set of pairs $(U,f)$ of open neighborhoods $U$ of $A$ and elements $f$ of $\Psi(U)$ satisfying $b$ (where there is a unique morphism $(U,f) \to (U',f')$ if $U' \subset U$ and $f|_{U'} = f'$). 
	
	We let $\Psi(M \rel (U,f))$ be the subspace of $\Psi(M)$ of those $g$ such that $g|_{U \cap M} = f|_{U \cap M}$, then we define the \emph{space of elements of $\Psi$ satisfying $b$} as the colimit (taken in $\cat{S}$)
	\[\Psi(M  \rel b) \coloneqq \underset{(U,f) \in \cat{I}_b}{\mr{colim}}\, \Psi(M \rel (U,f)).\]
\end{definition}

Boundary conditions can be restricted and pulled back:
\begin{itemize}\item  If $A' \subset A$, there is a map of directed sets $\cat{I}_A \to \cat{I}_{A'}$ given by considering $(U,f)$ with $U$ a neighborhood of $A$ as a neighborhood of $A'$. This induces a \emph{restriction} map $(-)|_{A'} \colon \cB_\mr{\Psi}(A) \to \cB_\mr{\Psi}(A')$. We say that $b \in \cB_\Psi(A)$ \emph{extends} $B \in \cB_\Psi(A')$ if $b|_{A'} = B$.
	\item If $\phi \colon M \to M'$ is an embedding, there is the map of directed sets $\phi^{-1} \colon \cat{I}_{A'} \to \cat{I}_{\phi^{-1}(A')}$ given by $U' \mapsto \phi^{-1}(U')$. This induces a \emph{pullback} map $\phi^* \colon \cB_\Psi(A') \to \cB_\Psi(\phi^{-1}(A'))$. \end{itemize}

\section{Group-like invariant topological sheaves}In this section we make precise the conditions mentioned in the introduction.

\subsection{Microflexibility} \label{subsec.microflex} As in Gromov's theory \cite{gromovhp}, a major role is played by the notion of a microfibration:

\begin{definition}\label{def.microfib} A map $g \colon E \to B$ is a \emph{microfibration} if for all $i \geq 0$ and each commutative diagram
	\[\begin{tikzcd} \Delta^i \times \{0\} \rar \dar & E \dar{p} \\
	\Delta^i \times [0,1] \rar & B \end{tikzcd}\]
	there exists an $\epsilon > 0$ and a lift over $\Delta^i \times [0,\epsilon]$.\end{definition}

As expected, the condition of microflexibility requires certain maps to microflexible.

\begin{definition}\label{def.microflex} We say that a $\mr{CAT}$-invariant topological sheaf $\Psi$ is \emph{microflexible} if for all pairs of $Q \subset R$ of compact subsets in $M$, the restriction map
	\[\Psi(R \subset M) \longto \Psi(Q \subset M)\]
	is a microfibration.\end{definition}

Having a condition for all compact subsets makes microflexiblity seem hard to check. However, the following lemma say it suffices to check this condition only for particular $Q \subset R$.

\begin{lemma}\label{lem.microflexvariants}A $\mr{CAT}$-invariant topological sheaf $\Psi$ is microflexible for all pairs $Q \subset R$ of compact subsets if and only if it is microflexible for all pairs $Q \subset R$ where $Q$ is $n$-dimensional submanifold admitting a finite handle decomposition and $R$ is obtained from $Q$ by adding a single handle.
\end{lemma}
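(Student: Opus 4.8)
The forward direction is trivial: if $\Psi$ is microflexible for all pairs $Q \subset R$ of compact subsets, then in particular it is microflexible for the restricted class of pairs described. So the content is the converse, and the plan is to bootstrap from the restricted class to all compact pairs in a sequence of reductions, each time enlarging the class of pairs for which microflexibility is known.

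First I would reduce from arbitrary compact $Q \subset R$ to the case where both $Q$ and $R$ are compact codimension-zero submanifolds with (after smoothing corners, when $\mr{CAT} = \mr{Diff}$) handle decompositions. Given a lifting problem against $\Psi(R \subset M) \to \Psi(Q \subset M)$, the colimit defining $\Psi(R \subset M)$ and $\Psi(Q \subset M)$ over shrinking neighborhoods means that the given compatible data is represented on some open neighborhoods; since the parametrizing simplices $\Delta^i$ are compact and we only need a lift over $\Delta^i \times [0,\epsilon]$, one can choose compact codimension-zero submanifolds-with-handles $Q' , R'$ with $Q \subset \mathrm{int}(Q')$, $R \subset \mathrm{int}(R')$, $Q' \subset \mathrm{int}(R')$, such that solving the lifting problem for $\Psi(R' \subset M) \to \Psi(Q' \subset M)$ solves the original one after restriction. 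This is the standard ``microfibrations only see compact data'' manoeuvre, using that $\cat{Sh^{CAT}}$ has well-behaved filtered colimits (as noted after Definition~\ref{def.psicorners}). Here one must also handle the case $Q = \varnothing$ separately — but then $\Psi(Q \subset M) = *$ and microflexibility of $\Psi(R \subset M) \to *$ is just local lifting, which follows from the sheaf property and locality once we know it for handles.

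Second, having reduced to a pair $Q' \subset R'$ of compact codimension-zero submanifolds-with-handles, I would interpolate by a finite chain $Q' = R_0 \subset R_1 \subset \cdots \subset R_k = R'$ in which each $R_{j+1}$ is obtained from $R_j$ by attaching a single handle. Such a chain exists: fix a handle decomposition of $R'$, realize $Q'$ as a subcomplex (up to isotopy, using general position / handle-trading appropriate to $\mr{CAT}$), and attach the remaining handles one at a time. The restriction map $\Psi(R' \subset M) \to \Psi(Q' \subset M)$ then factors as a composite
\[
\Psi(R_k \subset M) \to \Psi(R_{k-1} \subset M) \to \cdots \to \Psi(R_0 \subset M),
\]
and each map in the composite is a microfibration by the hypothesis. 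A composite of microfibrations is a microfibration — one lifts one stage at a time, shrinking the time interval $[0,\epsilon]$ finitely many times — so the composite is a microfibration, as desired. Finally I would remark that any finite handle decomposition can be assumed by the same general-position argument, so the restriction to ``$Q$ admits a finite handle decomposition'' in the statement is no loss.

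The main obstacle is the second reduction: producing the handle-by-handle interpolation between $Q'$ and $R'$ uniformly across $\mr{CAT} = \mr{Diff}, \mr{PL}, \mr{Top}$. In the $\mr{Diff}$ and $\mr{PL}$ cases this is classical handle theory (Morse theory / collapsibility), but in $\mr{Top}$ one does not have handle decompositions for free, so the cleanest route is to phrase the whole lemma so that the class of pairs in the statement is taken as given and only ``thicken and interpolate within the CAT category'' is invoked — in practice one works with smooth or PL models and transports along a $\mr{CAT}$-isomorphism, which is legitimate since $\Psi$ is $\mr{CAT}$-invariant. The first reduction, by contrast, is soft and purely formal once one is careful about the colimits and the empty-$Q$ edge case.
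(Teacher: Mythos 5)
Your overall plan is the same as the paper's: thicken an arbitrary compact pair to a pair of compact $n$-manifolds with finite handle decompositions, then use that a composite of microfibrations is a microfibration to pass from single-handle attachments to finitely many. Two comments.

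First, a structural simplification you miss: rather than thickening $Q$ and $R$ independently and then trying to realize $Q'$ as a subcomplex of a handle decomposition of $R'$ by ``general position / handle-trading'' (which is delicate, and circular in $\mr{Top}$), the paper takes a single handle decomposition of the ambient $M$, subdivides it sufficiently, and then both $P$ and $Q$ are chosen from the subdivided decomposition. This makes the chain $Q' = R_0 \subset \cdots \subset R_k = R'$ automatic, with no isotopy or handle-trading needed, and works uniformly across $\mr{CAT}$ wherever a handle decomposition of $M$ exists.

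Second, and more seriously, your proposed fix for $\mr{CAT} = \mr{Top}$ in dimension $4$ does not work. You suggest ``working with smooth or PL models and transporting along a $\mr{CAT}$-isomorphism,'' but a compact topological $4$-manifold need not admit any smooth or PL structure at all, so there may be no smooth model to transport from. The paper's actual remedy is different: first remove a point from each compact path component of $M$, then use the theorem that every topological $4$-manifold with no compact path components \emph{is} smoothable; the resulting open manifold has a handle decomposition that one can subdivide as before, and the removed points can be chosen away from (neighborhoods of) the given compacts so nothing is lost. This is the one genuinely non-formal ingredient, and it is the one your proposal gets wrong. The remaining steps — the colimit/compactness reduction, the $Q = \varnothing$ edge case (which is in fact trivial: lift by the constant extension), and the composition argument — are fine.
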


\begin{proof}The direction $\Rightarrow$ is obvious. For $\Leftarrow$ we note that since a composition of microfibrations is a microfibration, the right hand side implies the case where $R$ is obtained from $Q$ by attaching finitely many handles. To obtain the left hand side, we note that for every pair of opens $(U,V)$ containing $(K,L)$ there exists a pair of compact $n$-dimensional manifolds with finite handle decompositions $(P,Q)$ such that $(K,L) \subset (P,Q) \subset (U,V)$. To see this, subdivide the handles in some handle decomposition of $M$ sufficiently many times. In the case of compact topological 4-manifolds, which might not admit a handle decomposition, one must first remove some points and use that every topological 4-manifold without compact path components is smoothable.
\end{proof}

\subsection{Condition (H)} \label{subsec.condh} We now start discussing senses in which a sheaf can be group-like. The statements will involve non-unital categories, i.e.\ a category without specified identity elements.

\begin{definition}\label{def.catc} We define a topological non-unital category $\cat{C}^\Psi$ as follows: 
	\begin{itemize}\item Objects are pairs $\la t,b \ra$ of a real number $t>0$ and a boundary condition $b \in \cB_\Psi(S^{n-1} \times \{t\})$.
		\item The space of morphisms from $\la t_0,b_0 \ra$ to $\la t_1,b_1 \ra$ is given by
		\[\cat{C}^\Psi\left(\la t_0,b_0 \ra,\la t_1,b_1 \ra\right) \coloneqq \begin{cases} \varnothing & \text{if $t_0 \geq t_1$,} \\ 
		\Psi(S^{n-1} \times [t_0,t_1] \rel b_0 \sqcup b_1) &  \text{if $t_0 < t_1$.}\end{cases}\]
		\item Composition of $f_0 \in  \cat{C}^\Psi(\la t_0,b_0\ra,\la t_1,b_1\ra)$ and $f_1 \in \cat{C}^\Psi(\la t_1,b_1\ra,\la t_2,b_2\ra)$ is given by concatenation
		\begin{align*}\circledcirc\colon \cat{C}^\Psi\left(\la t_0,b_0 \ra,\la t_1,b_1 \ra\right) \times\cat{C}^\Psi\left(\la t_1,b_1\ra,\la t_2,b_2\ra \right) &\longto \cat{C}^\Psi\left(\la t_0,b_0 \ra,\la t_2,b_2 \ra \right)\\
		(f_0,f_1) &\longmapsto \begin{cases} f_0 & \text{on $S^{n-1} \times [t_0,t_1]$,} \\
		f_1 & \text{on $S^{n-1} \times [t_1,t_2]$,}\end{cases}\end{align*}
		glued using the sheaf property of $\Psi$ and the fact that $f_0$ and $f_1$ both coincide with $b_1$ on an open neighborhood of $S^{n-1} \times \{t_1\}$ in $S^{n-1} \times \bR$. \end{itemize}\end{definition}
	
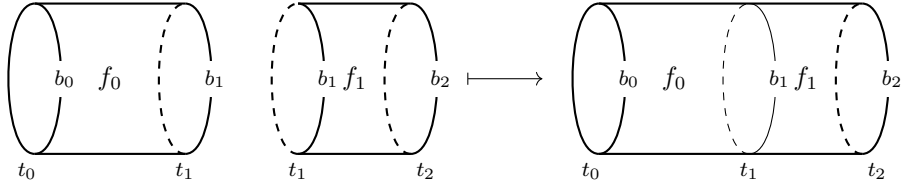
\begin{figure}
		\centering
		\begin{tikzpicture}
		\begin{scope}[xshift=-7.5cm]
		\draw [thick] (0,1) to[out=0,in=0,looseness=.6] (0,-1);
		\draw [thick]  (0,-1) to[out=180,in=180,looseness=.6] (0,1);
		\node at (-.1,-1) [below] {\footnotesize $t_0$};
		\node at (0.4,0) [fill=white] {\footnotesize $b_0$};	 	
		
		\draw [thick] (2,1) to[out=0,in=0,looseness=.6] (2,-1);
		\draw [thick,dashed] (2,-1) to[out=180,in=180,looseness=.6] (2,1);
		\node   at (2,-1) [below] {\footnotesize $t_1$};	
		\node   at (2.4,0) [fill=white] {\footnotesize $b_1$};	 
		
		\draw [thick] (0,1) -- (2,1);
		\draw [thick] (0,-1) -- (2,-1);		
		
		\node at (1,0) {$f_0$};
		\end{scope}

		\begin{scope}[xshift=-6cm]
		\draw [thick] (2,1) to[out=0,in=0,looseness=.6] (2,-1);
		\draw [thick,dashed] (2,-1) to[out=180,in=180,looseness=.6] (2,1);
		\node at (2,-1) [below] {\footnotesize $t_1$};	
		\node at (2.4,0) [fill=white] {\footnotesize $b_1$};	 
		
		\draw [thick] (3.5,1) to[out=0,in=0,looseness=.6] (3.5,-1);
		\draw [thick] [dashed] (3.5,-1) to[out=180,in=180,looseness=.6] (3.5,1);
		\node at (3.7,-1) [below] {\footnotesize $t_2$};	
		\node at (3.9,0) [fill=white] {\footnotesize $b_2$};
		
		\draw [thick] (2,1) -- (3.5,1);
		\draw [thick] (2,-1) -- (3.5,-1);		
		
		\node at (2.75,0) {$f_1$};
		\end{scope}
		
		\draw [|->] (-1.75,0) -- (-.75,0);
		
		\draw [thick] (0,1) to[out=0,in=0,looseness=.6] (0,-1);
		\draw [thick]  (0,-1) to[out=180,in=180,looseness=.6] (0,1);
		\node at (-.1,-1) [below] {\footnotesize $t_0$};
		\node at (0.4,0) [fill=white] {\footnotesize $b_0$};	 	
		
		\draw (2,1) to[out=0,in=0,looseness=.6] (2,-1);
		\draw [dashed] (2,-1) to[out=180,in=180,looseness=.6] (2,1);
		\node at (2,-1) [below] {\footnotesize $t_1$};	
		\node at (2.4,0) [fill=white] {\footnotesize $b_1$};	 
		
		\draw [thick] (3.5,1) to[out=0,in=0,looseness=.6] (3.5,-1);
		\draw [thick] [dashed] (3.5,-1) to[out=180,in=180,looseness=.6] (3.5,1);
		\node at (3.7,-1) [below] {\footnotesize $t_2$};	
		\node at (3.9,0) [fill=white] {\footnotesize $b_2$};	
		
		\draw [thick] (0,1) -- (3.5,1);
		\draw [thick] (0,-1) -- (3.5,-1);		
		
		\node at (1,0) {$f_0$};
		\node at (2.75,0) {$f_1$};	
		\end{tikzpicture}
		\caption{The composition of $f_0$ and $f_1$ in $\cat{C}^\Psi$.}
		\label{fig.composition}
	\end{figure}

For condition (H), what is relevant is not the actual category $\cat{C}^\Psi$ but its category of path components.

\begin{definition}\label{def.catpi0c} From $\cat{C}^\Psi$ we construct a non-unital category $[\cat{C}^\Psi]$ as follows:
	\begin{itemize} \item Objects are boundary conditions $b \in \cB_\Psi(S^{n-1} \times \{1\})$.
		\item Morphisms from $b_0$ to $b_1$ are given by the set $\pi_0( \Psi(S^{n-1} \times [1,2] \rel b_0 \sqcup b_1))$.
		\item Composition is given by concatenation and rescaling
		\begin{align*}\circledcirc \colon [\cat{C}^\Psi](b_0,b_1) \times [\cat{C}^\Psi](b_1,b_2) &\longto [\cat{C}^\Psi](b_0,b_2) \\
		([f_0],[f_1]) &\longmapsto [(\mr{id}_{S^{n-1}} \times \lambda)_* \left((\mr{id}_{S^{n-1}} \times \rho)_*(f_1) \circledcirc f_0\right)],\end{align*}
		where $\mr{id}_{S^{n-1}} \times \lambda \colon S^{n-1} \times [1,3] \to S^{n-1} \times [1,2]$ is obtained from a $\mr{CAT}$-isomorphism $\lambda \colon [1,3] \to [1,2]$ equal to $t \mapsto t$ near $1$ and to $t \mapsto t-1$ near $3$, and $\rho \colon [1,2] \to [2,3]$ is the $\mr{CAT}$-isomorphism given by $t \mapsto t+1$. To check composition is well-defined and associative, one uses that $\Psi$ is $\mr{CAT}$-invariant.\end{itemize}\end{definition}

Having identities is a property, not a structure. An element $f \in \cat{C}(c,c)$ is an identity element if composition on the left (resp.\ right) with it induces the identity on all morphism sets $\cat{C}(c,c')$ (resp.\ $\cat{C}(c',c)$). If a non-unital category $\cat{C}$ admits identity elements, these are unique: if $\cat{C}(c,c)$ contains identity elements $\mr{id}_c$ and $\mr{id}'_c$, then these satisfy $\mr{id}_c = \mr{id}_c \mr{id}'_c = \mr{id}'_c$. Recall that a unital category is a groupoid if each morphism has an inverse.

\begin{definition}\label{def.h} We say that $\Psi$ satisfies \emph{condition (H)} if the category $[\cat{C}^\Psi]$ is a groupoid.
\end{definition}

\begin{example}\label{exam.grouplikesingle}If $\pi_0(\Psi(S^{n-1} \times [1,2] \rel b_0 \sqcup b_1))$ consists of a single element for all boundary conditions $b_0$ and $b_1$, then $[\cat{C}^\Psi]$ is a groupoid.
\end{example}

\subsection{Condition (W)} \label{subsec.condw} The next condition is a strengthening of condition (H).

\begin{definition}\label{def.compcategory} The \emph{components} of a category $\cat{C}$ are given by the set of objects of $\cat{C}$ under the equivalence relation generated by declaring $x \sim x'$ if there is a morphism $x \to x'$.\end{definition}

The set of components of $\cat{C}$ is in natural bijection with $\pi_0 (B\cat{C})$, the path components of its classifying space.

\begin{definition}We say a boundary condition $b \in \cB_\Psi(S^{n-1})$ is \emph{fillable} if $\Psi(D^n \rel b) \neq \varnothing$.	
\end{definition}

If condition (H) is satisfied, then the full subcategory of $[\cat{C}^\Psi]$ on fillable boundary conditions is a union of path components. This definition may seem ambiguous, as $S^{n-1} \subset S^n$ bounds two different disks $D^n_+$ and $D^n_-$. However, part (i) of Theorem \ref{thm.main}, which does not depend on condition (W) or the notion of fillable boundary conditions, implies the following:

\begin{lemma}Let $b \in \cB_\Psi(S^{n-1})$, and suppose that $\Psi$ satisfies condition (H). Then $\Psi(D^n_+ \rel b) \neq \varnothing$ if and only if $\Psi(D^n_- \rel b) \neq \varnothing$.\end{lemma}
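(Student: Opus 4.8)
The plan is to reduce the claim about the two disks $D^n_\pm$ to a statement that is symmetric under reflection, using part (i) of Theorem~\ref{thm.main} together with the observation that the sheaf $\Psi$ is $\mr{CAT}$-invariant and in particular invariant under an orientation-reversing self-diffeomorphism of $S^n$. First I would set up the bookkeeping: fix an embedding $S^{n-1} \hookrightarrow S^n$ with complementary disks $D^n_+$ and $D^n_-$, and fix a $\mr{CAT}$-isomorphism $r \colon S^n \to S^n$ that restricts to the identity (or at least a $\mr{CAT}$-isomorphism fixing the boundary condition $b$) on a neighborhood of $S^{n-1}$ and swaps $D^n_+$ with $D^n_-$. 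Such an $r$ exists because a collar of $S^{n-1}$ in $S^n$ can be written as $S^{n-1} \times (-1,1)$ with $D^n_\pm$ on the two sides, and one takes the reflection $(x,s) \mapsto (x,-s)$ on the collar, extended by the obvious swap of the two disks. Pushing forward along $r_* = (r^{-1})^*$ gives a bijection $\Psi(D^n_+ \rel b') \to \Psi(D^n_- \rel r_*(b'))$; the subtlety is that the reflection moves $b$ to some boundary condition $b' = r^*(b)$ which need not equal $b$, so this alone does not finish the argument.

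To deal with the discrepancy between $b$ and $r^*(b)$, I would invoke part (i) of Theorem~\ref{thm.main} applied to a cylinder. The point is that $\Psi(D^n_+ \rel b) \neq \varnothing$ is equivalent, after gluing on a half-open collar, to the existence of an element of $\Psi$ on $D^n_+$ with \emph{any} boundary condition in the same component of $[\cat{C}^\Psi]$ as $b$ — more precisely, if $b$ and $c$ are boundary conditions on $S^{n-1}$ that are isomorphic in $[\cat{C}^\Psi]$ (equivalently, since condition (H) holds, lie in the same path component), then $\Psi(D^n_+ \rel b) \neq \varnothing \iff \Psi(D^n_+ \rel c) \neq \varnothing$: given a morphism, i.e.\ an element $g \in \Psi(S^{n-1} \times [1,2] \rel b \sqcup c)$, and a filling $f \in \Psi(D^n_+ \rel c)$, one glues $f$ and $g$ using the sheaf property to produce a filling with boundary condition $b$ (identifying $D^n_+$ with $D^n_+ \cup_{S^{n-1}} (S^{n-1}\times[1,2])$ via a $\mr{CAT}$-isomorphism that is the identity near the outer boundary). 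The same argument works for $D^n_-$. So ``fillability of $D^n_\pm$'' depends only on the path component of $b$ in $[\cat{C}^\Psi]$.

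It then remains to show that $b$ and $r^*(b)$ lie in the same path component of $[\cat{C}^\Psi]$, or rather that whatever discrepancy $r$ introduces is absorbed. Here I would use that condition (H) makes $[\cat{C}^\Psi]$ a groupoid, so in particular the endomorphism monoid of each object is a group; but more to the point, part (i) of Theorem~\ref{thm.main} identifies $\pi_0\Psi(M\rel b)$ with $\pi_0$ of a section space, which is a purely homotopy-theoretic object that is manifestly invariant under the reflection $r$ because $r$ acts on the target section space compatibly with $j$. Concretely, since $j \colon \Psi(D^n_+\rel b) \to \Psi^f(D^n_+ \rel j(b))$ is a homology equivalence, $\Psi(D^n_+\rel b)$ is nonempty iff $\Psi^f(D^n_+\rel j(b))$ is; and $\Psi^f$, being (weakly equivalent to) a space of sections with fiber $\Psi(\bR^n)$, sees only the underlying bundle data, on which the reflection $D^n_+ \leftrightarrow D^n_-$ acts isomorphically, carrying $j(b)$ to $j(r^*(b))$ and $j(b)$ for the two disks to each other. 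Combining: $\Psi(D^n_+\rel b) \neq \varnothing \iff \Psi^f(D^n_+ \rel j(b)) \neq \varnothing \iff \Psi^f(D^n_- \rel j(b)) \neq \varnothing \iff \Psi(D^n_- \rel b) \neq \varnothing$.

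The main obstacle I anticipate is the bookkeeping in the middle step — making precise the identification of a disk with a disk-plus-collar and tracking boundary conditions through the reflection and through the homology equivalence of Theorem~\ref{thm.main}(i) — rather than any genuinely hard mathematical input. One has to be careful that the section space $\Psi^f(D^n\rel j(b))$ really does only depend on the pair $(D^n, j(b))$ up to $\mr{CAT}$-isomorphism, which follows from $\mr{CAT}$-invariance of $\Psi^f$ asserted in Theorem~\ref{thm.main}, and that ``nonempty'' is detected by $H_0$, which is exactly what a homology equivalence preserves.
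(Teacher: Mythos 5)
Your overall plan is the same as the paper's: use the reflection $r$ of $S^n$ exchanging the two hemispheres to turn the question into one about two boundary conditions ($b$ and $\overline{b} := r^*b$) on the \emph{same} disk, and use Theorem~\ref{thm.main}(i) to reduce that question to the flexible sheaf $\Psi^f$. That part is fine (modulo a slip at the start: an $r$ that is the identity near $S^{n-1}$ cannot also swap $D^n_+$ and $D^n_-$; you correct this mid-paragraph with the collar reflection, which is what you actually want).

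The gap is the middle step of your final chain,
\[
\Psi^f(D^n_+ \rel j(b)) \neq \varnothing \iff \Psi^f(D^n_- \rel j(b)) \neq \varnothing,
\]
which you justify by saying $\Psi^f$ ``sees only the underlying bundle data, on which the reflection acts isomorphically.'' $\mr{CAT}$-invariance of $\Psi^f$ only gives $\Psi^f(D^n_-\rel j(b)) \cong \Psi^f(D^n_+\rel j(\overline{b}))$, i.e.\ it moves the boundary condition along with the disk; it does \emph{not} compare $j(b)$ with $j(\overline{b})$. And the blanket assertion that, for a sheaf of sections, extendability of a germ near $S^{n-1}$ over $D^n_+$ is equivalent to extendability over $D^n_-$ is simply false for general bundles over $S^n$: for the Hopf circle bundle over $S^2$, a degree-zero section over the equator extends over one hemisphere but not the other. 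So some specific property of $\Psi^f$ must enter, and your proposal never identifies it. What the paper uses is exactly that property: over the cylinder $S^{n-1}\times[0,1]$, the bundle underlying $\Psi^f$ is a product in the $[0,1]$-direction (it is induced from the frame bundle of the tangent bundle, which splits off a trivial line in the normal direction), so one can take the section to be \emph{constant} in $[0,1]$. This constant section is an explicit element of $\Psi^f(S^{n-1}\times[0,1]\rel j(b)\sqcup j(\overline{b}))$, and concatenating with it carries a filling with boundary condition $j(b)$ to one with boundary condition $j(\overline{b})$. That is the missing step you need; once it is in place your reduction goes through.
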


\begin{proof}Let $\overline{b}$ denote the boundary condition obtained from $b$ by reflection in $S^{n-1}$, then we may equivalently prove that $\Psi(D^n_+ \rel b) \neq \varnothing$ if and only if $\Psi(D^n_+ \rel \overline{b}) \neq \varnothing$. By symmetry, it suffices to prove the implication $\Rightarrow$. If condition (H) holds, then Theorem \ref{thm.main}(i) says that 
	\[H_0(\Psi(D^n_+ \rel b)) \longto H_0(\Psi^f(D^n_+ \rel j(b)))\]
is an isomorphism, and similarly for $\overline{b}$. As $j(\overline{b}) = \overline{j(b)}$, we may thus assume that $\Psi = \Psi^f$. But for such sheaves we may easily construct an element in $\Psi^f(S^{n-1} \times [0,1] \rel b \sqcup \overline{b})$ by taking the section to be independent of $[0,1]$; concatenation with this element proves $\Psi^f(D^n_+ \rel \overline{b})$ is non-empty if $\Psi^f(D^n_+ \rel b)$ is non-empty.\end{proof}

\begin{definition}\label{def.d} We say $\Psi$ satisfies \emph{condition (W)} if it satisfies condition (H) and in each fillable component of $[\cat{C}^\Psi]$, there exist boundary conditions $b_s$, $b_e$, real numbers $t_s<t_e$, and an element $\iota \in \cat{C}^\Psi(\la t_s,b_s \ra, \la t_e,b_e \ra)$, such that all maps $\iota \circledcirc -$ are weak equivalences.\end{definition}

\begin{example}Condition (W) is particularly easy to check in the situation of Example \ref{exam.grouplikesingle}; one just needs to find a single element $\iota$ which acts by a weak equivalence. Often this takes the form of a ``constant'' or ``linear'' element.\end{example}

\subsection{Flexible sheaves} \label{subsec.flexible} We will describe a closely related class of sheaves that always satisfy conditions (H) and (W). These were studied by Gromov \cite{gromovhp}.

\begin{definition}\label{def.flex}  We say that a $\mr{CAT}$-invariant topological sheaf $\Gamma$ is \emph{flexible} if for all pairs of $Q \subset R$ of compact subsets in $M$, the restriction map
	\[\Gamma(R \subset M) \longto \Gamma(Q \subset M)\]
	is a Serre fibration.\end{definition}

As in Lemma \ref{lem.microflexvariants}, it suffices to check this only for those $Q \subset R$ where $Q$ is a $n$-dimensional submanifold with a finite handle decomposition and $R$ is obtained from $Q$ by attaching a single handle.

\begin{example}If $\Gamma$ is obtained by taking sections of some natural bundle over $M$, then $\Gamma$ will be flexible, as restricting sections along a cofibration is a Serre fibration. In particular, the sheaf $\mr{Riem}$ of Riemannian metrics from Example \ref{exam.sheaves}, is flexible.\end{example}

As Serre fibrations are microfibrations, a flexible sheaf is microflexible. It also always satisfies conditions (H) and (W).

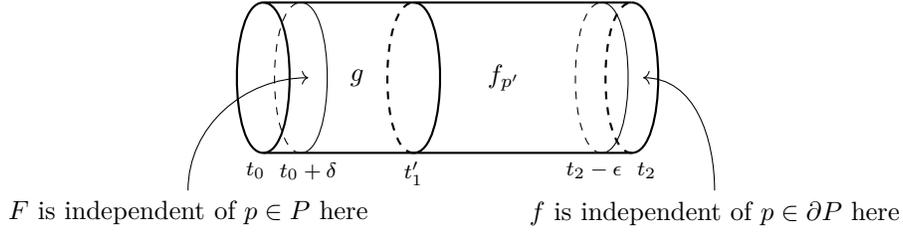
\begin{figure}
	\centering
	\begin{tikzpicture}
		\draw [thick] (0,1) to[out=0,in=0,looseness=.6] (0,-1);
		\draw [thick]  (0,-1) to[out=180,in=180,looseness=.6] (0,1);
		\node at (-.1,-1) [below] {\footnotesize $t_0$};
		
		\draw (0.5,1) to[out=0,in=0,looseness=.6] (0.5,-1);
		\draw [dashed] (0.5,-1) to[out=180,in=180,looseness=.6] (0.5,1);
		\node at (0.6,-1) [below] {\footnotesize $t_0+\delta$};	
		
		\draw [thick] (2,1) to[out=0,in=0,looseness=.6] (2,-1);
		\draw [thick] [dashed] (2,-1) to[out=180,in=180,looseness=.6] (2,1);
		\node at (2,-1) [below] {\footnotesize $t'_1$};	
			
		\draw (4.5,1) to[out=0,in=0,looseness=.6] (4.5,-1);
		\draw [dashed] (4.5,-1) to[out=180,in=180,looseness=.6] (4.5,1);
		\node at (4.4,-1) [below] {\footnotesize $t_2-\epsilon$};	
		
		\draw [thick] (4.9,1) to[out=0,in=0,looseness=.6] (4.9,-1);
		\draw [thick] [dashed] (4.9,-1) to[out=180,in=180,looseness=.6] (4.9,1);
		\node at (5.1,-1) [below] {\footnotesize $t_2$};	
		
		\draw [thick] (0,1) -- (4.9,1);
		\draw [thick] (0,-1) -- (4.9,-1);		
		
		\node at (1.25,0) {$g$};
		\node at (3.2,0) {$f_{p'}$};
		
		\node at (6,-1.8) {$f$ is independent of $p \in \partial P$ here};
		\draw [->] (6,-1.5) to[out=90,in=0] (5.05,0);
		
		\node at (-1,-1.8) {$F$ is independent of $p \in P$ here};
		\draw [->] (-1,-1.5) to[out=90,in=180] (.6,0);		
	\end{tikzpicture}
	\caption{The image of $p' \in \partial P$ under the composite $(- \circledcirc g) \circ f$. The construction in the proof of Lemma \ref{lem.secgrouplike} pushes the larger left cylinder $S^{n-1} \times [t_0,t_1]$ into the smaller left cylinder $S^{n-1} \times [t_0,t_0+\delta]$, while not modifying the smaller right cylinder $S^{n-1} \times [t_2-\epsilon,t_2]$.}
	\label{fig.secgrouplike}
\end{figure}

\begin{lemma}\label{lem.secgrouplike} A flexible sheaf $\Gamma$ satisfies conditions (H) and (W).\end{lemma}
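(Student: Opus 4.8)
The plan is to deduce both conditions directly from flexibility, i.e.\ from the fact that the restriction maps $\Gamma(R\subset M)\to\Gamma(Q\subset M)$ are Serre fibrations (as in Lemma~\ref{lem.microflexvariants}, it is enough to know this when $R$ is built from $Q$ by attaching a single handle). The two soft facts I would use repeatedly are that every cylinder $S^{n-1}\times I$ deformation retracts onto a collar of either boundary sphere, and that $\mr{CAT}$-invariance transports data freely between collars via reparametrizing diffeomorphisms; the point of flexibility is precisely that it lets one fill in, and compress, cylinders along such retractions.

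For condition (H) I would first produce identities. Given $b\in\cB_\Gamma(S^{n-1}\times\{1\})$ represented by $f\in\Gamma(S^{n-1}\times(1-\epsilon,1+\epsilon))$, glueing the germ of $f$ near $t=1$ to its translate near $t=2$ gives an element of $\Gamma$ over a collar neighbourhood of $S^{n-1}\times\{1,2\}$, and I would lift it along the Serre fibration $\Gamma(S^{n-1}\times[1,2]\subset M)\to\Gamma(\text{collar}\subset M)$ to obtain a ``linear cylinder'' $\ell_b\in\Gamma(S^{n-1}\times[1,2]\rel b\sqcup b)$; checking that this element lies in a path component hit by the fibration (so a lift exists) is done by extending $f$ one handle at a time to the right and using $\mr{CAT}$-invariance to see that each stage is non-empty. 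To see $[\ell_b]$ is a two-sided identity of $[\cat C^\Gamma]$, for a morphism $[g]\colon b\to b'$ I would join $\ell_b\circledcirc g$ to $g$ by the homotopy that reparametrizes the $\ell_b$-part into a collar $S^{n-1}\times[1,1+\theta]$ shrinking to zero width while reparametrizing $g$ onto the complementary interval, keeping the two outer boundary conditions fixed; uniqueness of identities is then automatic. For inverses, given $[g]\colon b_0\to b_1$ represented by $g$, take the reflection $\bar g$ in the interval coordinate (a representative of a morphism $b_1\to b_0$) and join $[g]\circledcirc[\bar g]$ to $[\ell_{b_0}]$ by the ``go out and come back'' homotopy $\sigma\mapsto g_\sigma\circledcirc\overline{g_\sigma}$, where $g_\sigma$ is the restriction of $g$ to $S^{n-1}\times[1,\sigma]$ reparametrized to the full cylinder (so the reversed copy glues onto it along the boundary condition $g|_{t=\sigma}$); as $\sigma\to1$ this degenerates onto the germ $b_0$, and stopping at a thin cylinder and invoking flexibility once more to connect it to $\ell_{b_0}$ finishes the path. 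Thus $[g]\circledcirc[\bar g]=\mathrm{id}_{b_0}$ and symmetrically $[\bar g]\circledcirc[g]=\mathrm{id}_{b_1}$, so $[\cat C^\Gamma]$ is a groupoid.

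For condition (W) I would, in each fillable component, pick a $b$ in it and take $b_s=b_e=b$, $t_s=1<t_e=2$ and $\iota=\ell_b$. Up to the reparametrization homeomorphism $\cat C^\Gamma(\la 2,b\ra,c)\xrightarrow{\ \cong\ }\cat C^\Gamma(\la 1,b\ra,c)$ stretching $[2,t_c]$ to $[1,t_c]$, the composition map $\iota\circledcirc-$ is homotopic to the identity by the ``push the $\ell_b$-collar to zero width'' homotopy of the previous paragraph, now performed continuously over a parameter space and relative to the locus where a lift is already prescribed — this is the construction pictured in Figure~\ref{fig.secgrouplike} — so $\iota\circledcirc-$ is a weak equivalence, which is condition (W).

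I expect the genuinely fiddly part to be making these ``compression'' and ``degeneration'' homotopies precise and, for (W), parametrized: one must carry out the reparametrizations fibrewise, invoke the Serre fibration property to extend sections over the regions being vacated (reducing the restriction maps that arise to handle attachments, as in Lemma~\ref{lem.microflexvariants}), and check continuity as the collars shrink to zero width — here it helps that the germ at the relevant sphere stays constant along such a homotopy; the non-emptiness input needed for identities is of the same flavour. A cleaner but less self-contained route would avoid this bookkeeping entirely: a flexible sheaf is microflexible, so Theorem~\ref{thm.gromov} identifies $j\colon\Gamma\to\Gamma^f$ with a weak equivalence on the open manifolds $S^{n-1}\times(a,b)$, and since $j$ commutes with the Serre fibrations $\Gamma(V)\to\Gamma(U)$ this upgrades to an equivalence of the non-unital topological categories $\cat C^\Gamma\to\cat C^{\Gamma^f}$; as $\Gamma^f$ has values weakly equivalent to section spaces, $[\cat C^{\Gamma^f}]$ is the fundamental groupoid of the section space over $S^{n-1}$, for which both (H) and the case $\iota=(\text{constant path})$ of (W) are immediate.
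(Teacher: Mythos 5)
Your strategy differs from the paper's, and as written it has a genuine gap. The paper never constructs identities or inverses in $[\cat{C}^\Gamma]$ at all: it proves the stronger, uniform statement that both $g\circledcirc-$ and $-\circledcirc g$ are \emph{weak equivalences} of hom-spaces for every morphism $g$ in $\cat{C}^\Gamma$, from which (H) and (W) follow at once (once that is known, any $\iota$ whatsoever does the job for (W), and the groupoid axioms on $\pi_0$ drop out categorically). You instead try to verify the groupoid axioms by exhibiting explicit paths to an identity $\ell_b$ and from $g\circledcirc\bar g$ to it, and for (W) to exhibit a homotopy inverse of $\iota\circledcirc-$ directly.

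The gap is exactly where you flag it as ``fiddly.'' The ``push the $\ell_b$-collar to zero width'' and ``go out and come back as $\sigma\to1$'' homotopies are not continuous at their degenerate endpoint: for $\theta>0$ you pull back along a $\mr{CAT}$-isomorphism squeezing an interval onto one of width $\theta$, but at $\theta=0$ no such isomorphism exists, and in $\cat{Sh^{CAT}}$ a one-sided family over $(0,1]$ does not automatically extend over $[0,1]$ merely because the germ at the boundary sphere is held fixed. ``Stopping at a thin cylinder and invoking flexibility once more to connect it to $\ell_{b_0}$'' is not justified either: to apply flexibility you would already need to know the thin cylinder and $\ell_{b_0}$ lie in the same path component, which is close to what you are trying to prove. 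The paper's mechanism never collapses anything to width zero. It restricts the given $P$-family to $S^{n-1}\times([t_0,t_1]\cup[t_2-\epsilon,t_2])$, compresses the $[t_0,t_1]$ part into a small \emph{positive}-width collar $[t_0,t_0+\delta]$ chosen so that the family is already independent of $p\in P$ there, observes that after this compression it agrees with the same compression applied to a reference family $f_g$ that equals $g$ on $[t_0,t_1]$, concatenates these two compression homotopies, and only then uses the Serre fibration from $\Gamma(S^{n-1}\times[t_0,t_2])$ to $\Gamma(S^{n-1}\times([t_0,t_1]\cup[t_2-\epsilon,t_2]))$ to lift the resulting homotopy. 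The compress-until-constant-in-$p$ step, not a degeneration, is the missing idea.

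Your last suggested shortcut via $j\colon\Gamma\to\Gamma^f$ is a genuinely distinct route, but the claim that Gromov's theorem ``upgrades to an equivalence of the non-unital topological categories'' is too quick: the hom-spaces $\Gamma(S^{n-1}\times[t_0,t_1]\rel b_0\sqcup b_1)$ live over a compact cylinder with boundary conditions at both ends, where the open complement has compact closure, so the relative open-manifold $h$-principle (Theorem~\ref{thm.gromovprecise}) does not apply directly. One can salvage this by comparing, for both $\Gamma$ and $\Gamma^f$, the Serre fibrations restricting the cylinder to its two boundary collars and running the long exact sequence over a fixed representative of the germ --- but this again uses flexibility of $\Gamma$ in essentially the same way as the direct argument, and one still has to verify (H) and (W) for a section sheaf, so it is ``cleaner'' mainly in the sense of outsourcing the bookkeeping.
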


\begin{proof}We will show $\Gamma$ has the property that for all $g \in \cat{C}^\Gamma(\la t,b \ra, \la t_1,b_1 \ra)$ the maps
	\begin{align*}g \circledcirc - &\colon \Gamma \left(S^{n-1} \times [t_2,t_1] \rel b_2 \sqcup b_1\right) \longto \Gamma \left(S^{n-1} \times [t_2,t_0] \rel b_2 \sqcup b_0\right), \\
	- \circledcirc g &\colon \Gamma \left(S^{n-1} \times [t_1,t_2] \rel b_1 \sqcup b_2\right) \longto \Gamma\left(S^{n-1} \times [t_0,t_2] \rel b_0 \sqcup b_2\right),\end{align*}
	are weak equivalences. We will give a proof in the second case, the other case being similar. Suppose we are given a parametrizing manifold $P$ and a commutative diagram
	\[\begin{tikzcd} \partial P \dar \rar{f} & \Gamma(S^{n-1} \times [t_1,t_2] \rel b_1 \sqcup b_2) \dar{- \circledcirc g} \\
	P \rar[swap]{F} \arrow[dotted]{ru} & \Gamma(S^{n-1} \times [t_0,t_2] \rel b_0 \sqcup b_2) \end{tikzcd}\]
	then we need to construct a dotted lift making the top triangle commute and the bottom triangle commute up to homotopy rel $\partial P$. Without loss of generality there is a neighborhood $U$ of $\partial P$ in $P$ over which a lift already exists.
	
	Consider the following construction, whose use will become clear later in this proof. Pick $\epsilon > 0$ such that $t_2 - \epsilon > t_1$ and the restriction of $f$ to $S^{n-1} \times [t_2-\epsilon,t_2]$ is independent of $p \in \partial P$. By composing the bottom map with restriction to $S^{n-1} \times ([t_0,t_1] \cup [t_2-\epsilon,t_2])$, we obtain a map 
	\[\tilde{F} \colon P \longto \Gamma\left(S^{n-1} \times ([t_0,t_1] \cup [t_2-\epsilon,t_2]) \subset S^{n-1} \times [t_0,t_2] \rel b_0 \sqcup b_2\right).\]
	There is similarly a $\delta>0$ such that the restriction to $S^{n-1} \times [t_0,t_0+\delta]$ is independent of $p \in P$. 
	
	Now pick a family $\phi_s$ of self-embeddings of $S^{n-1} \times ([t_0,t_1] \cup [t_2-\epsilon,t_2])$ with $s \in [0,\infty)$ such that 
	\begin{enumerate}[(i)]
		\item $\phi_0$ is the identity, 
		\item for each $s$, $\phi_s$ is the identity on $S^{n-1} \times [t_2-\epsilon,t_2]$ and near $S^{n-1} \times \{t_0\}$, 
		\item $\phi_s$ maps $S^{n-1} \times [t_0,t_1]$ into $S^{n-1} \times [t_0,t_0+(t_1-t_0)/(s+1)]$. 
	\end{enumerate}
	
	Then there exists a CAT function $\sigma \colon P \to [0,\infty)$ that is $0$ near $\partial P$ and so that $(t_1-t_0)/(\sigma(p)+1) < \delta$ for $p \in P \setminus U$. Consider the family 
	\begin{align*}P \times [0,1] &\longto \Gamma\left(S^{n-1} \times ([t_0,t_1] \cup [t_2-\epsilon,t_2]) \subset S^{n-1} \times [t_0,t_2] \rel b_0 \sqcup b_2\right) \\
	(p,s) &\longmapsto \phi_{s\sigma(p)}^* \tilde{F}(p).\end{align*}
	The relevant property of this family is that for $s=1$, the restriction of $\phi_{\sigma(p)}^* \tilde{F}(p)$ to $S^{n-1} \times [t_0,t_1]$ is pulled back from $S^{n-1} \times [t_0,t_0+\delta]$, where it is independent of $p \in P$.
	
	Thus, at $s=1$ this family coincides with the result at $s=1$ of the same construction applied to the map 
	\[f_g \colon P \longto \Gamma\left(S^{n-1} \times ([t_0,t_1] \cup [t_2-\epsilon,t_2]\right) \subset S^{n-1} \times [t_0,t_2] \rel b_0 \sqcup b_2)\] that on $S^{n-1} \times [t_0,t_1]$ is equal to $g$ and on $S^{n-1} \times [t_2-\epsilon,t_2]$ is equal to the restriction of $f$, which is in fact independent of $p \in \partial P$ and hence extendable to $P$. We can concatenate $(p,s) \mapsto \phi_{s\sigma(p)}^*\tilde{F}(p)$ and $(p,s) \mapsto \phi_{(1-s)\sigma(p)}^*f_g(p)$ to a family 
	\[P \times [0,2] \longto \Gamma\left(S^{n-1} \times ([t_0,t_1] \cup [t_2-\epsilon,t_2]) \subset  S^{n-1} \times [t_0,t_2] \rel b_0 \sqcup b_2 \right).\]
	Since this is constant near $\partial P$, it fits into a commutative diagram
	\[\begin{tikzcd} P \times \{0\} \cup \partial P \times [0,2] \rar \dar &[-25pt] \Gamma\left(S^{n-1} \times [t_0,t_2] \rel b_0 \sqcup b_2\right) \dar \\
	P \times [0,2] \rar & \Gamma\left(S^{n-1} \times ([t_0,t_1] \cup [t_2-\epsilon,t_2]) \subset S^{n-1} \times [t_0,t_2] \rel b_0 \sqcup b_2\right). \end{tikzcd}\]
	Since the restriction map is a Serre fibration, there is a lift. This lift induces a homotopy  rel $\partial P$  from $f \colon P \to \Gamma(S^{n-1} \times [t_0,t_2] \rel b_0 \sqcup b_2)$ to a map that is in the image of $- \circledcirc g$.
\end{proof}

Thus Theorem \ref{thm.main} says that flexible sheaves satisfy an $h$-principle, a result due to Gromov. Conversely, if a microflexible sheaf satisfies an $h$-principle, then it is flexible.

\begin{lemma}\label{lem.flexibilityopen} Let $\Psi \to \Phi$ be a weak equivalence of $\mr{CAT}$-invariant topological sheaves, and suppose that $\Psi$ is microflexible and $\Phi$ is flexible. Then $\Psi$ is flexible as well.\end{lemma}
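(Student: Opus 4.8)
The plan is to verify the fibration lifting property for $\Psi(R \subset M) \to \Psi(Q \subset M)$ directly, leveraging the weak equivalence $\Psi \to \Phi$ to transport lifts across the square, then using microflexibility of $\Psi$ to convert ``approximate'' or ``partial'' lifts into honest ones. By Lemma \ref{lem.microflexvariants} (and the analogous reduction for flexibility stated after Definition \ref{def.flex}), it suffices to treat the case where $Q$ is a compact $n$-dimensional submanifold with a finite handle decomposition and $R = Q \cup h$ is obtained by attaching one handle. First I would set up a lifting problem: a map $\Delta^i \times \{0\} \to \Psi(R \subset M)$ together with a homotopy $\Delta^i \times [0,1] \to \Psi(Q \subset M)$ extending its image, and the goal is to lift the whole homotopy. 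We want to produce a genuine (Serre) lift, not merely a germ of one.

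The key move is to run the argument in $\Phi$ first and then pull it back. Since $\Phi$ is flexible, the restriction $\Phi(R \subset M) \to \Phi(Q \subset M)$ is a Serre fibration, so the pushed-forward lifting problem in $\Phi$ has a full solution over $\Delta^i \times [0,1]$. Now I want to compare this with $\Psi$. The weak equivalence $\Psi \to \Phi$ induces, by the gluing/colimit definitions of $\Psi(-\subset M)$ on compact sets and the fact that filtered colimits and finite limits of weak equivalences in $\cat{Sh^{CAT}}$ behave well, a weak equivalence $\Psi(R \subset M) \to \Phi(R \subset M)$ and $\Psi(Q \subset M) \to \Phi(Q \subset M)$, hence a weak equivalence of maps. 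So the homotopy fibers agree. This reduces the problem to: given that $\Psi(R\subset M) \to \Psi(Q \subset M)$ is a microfibration whose comparison to a Serre fibration is a weak equivalence, conclude it is itself a Serre fibration. The standard fact here is that a microfibration which is a weak equivalence onto (equivalently: which has weakly contractible homotopy fibers matching those of) a fibration, or more simply a microfibration that admits a section-up-to-fiberwise-homotopy over the relevant cells, is a Serre fibration; concretely, a microfibration $p$ is a Serre fibration as soon as each lifting problem over $\Delta^i \times [0,1]$ that admits an ``$\epsilon$-lift'' for every $\epsilon$ can be completed, which one arranges by subdividing $[0,1]$ into finitely many pieces of length $<\epsilon$ using a partition-of-unity/compactness argument and concatenating the resulting short microfibration lifts, using the comparison with $\Phi$ to guarantee the obstruction to gluing successive short lifts vanishes.

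Concretely, the steps are: (1) reduce to one-handle attachments via Lemma \ref{lem.microflexvariants}; (2) observe $\Psi \to \Phi$ induces weak equivalences on $\Psi(R\subset M) \to \Phi(R\subset M)$ and on the base, hence on homotopy fibers; (3) given a lifting problem for $\Psi(R\subset M)\to\Psi(Q\subset M)$ over $\Delta^i\times[0,1]$, solve it in $\Phi$ by flexibility, and use step (2) to deform the $\Phi$-solution to one that lifts through $\Psi$ over $\Delta^i \times \{0,1\}$ and over $\partial\Delta^i\times[0,1]$; (4) subdivide $[0,1]$ finely and apply microflexibility of $\Psi$ finitely many times to build a lift over $\Delta^i\times[0,1]$, checking at each gluing step that the two partial lifts agree (up to the allowed homotopy) because their images in $\Phi$ agree. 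The main obstacle is step (4): making the finitely-many microflexibility lifts \emph{match up} along the subdivision hyperplanes $\Delta^i \times \{k/N\}$. Microflexibility only gives lifts over a small time-interval with no control over where they land at the far endpoint, so one must interleave the microfibration lifts of $\Psi$ with the global homotopy coming from $\Phi$'s Serre lift, using the weak equivalence $\Psi \to \Phi$ to correct the discrepancy at each stage — this is exactly the kind of ``microflexible $+$ flexible target $\Rightarrow$ flexible'' bootstrapping that appears in Gromov's theory, and getting the homotopies to be rel the appropriate faces is the delicate bookkeeping.
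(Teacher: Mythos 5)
The paper's proof is very short because it offloads the entire technical content to a theorem of Raptis: if $p \colon E \to B$ is a microfibration, $p' \colon E' \to B'$ is a Serre fibration, and the induced maps $p^{-1}(b) \to (p')^{-1}(f(b))$ on \emph{strict} fibers are weak equivalences, then $p$ is a Serre fibration. Having reduced to single handle attachments (which you also do), the paper simply verifies these hypotheses: microfibration from microflexibility, Serre fibration from flexibility of $\Phi$, and the fiber condition from the hypothesis that $\Psi \to \Phi$ is a weak equivalence, noting that the strict fiber over $a$ is $\Psi(I^k \times I^{n-k} \rel a)$.

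You instead propose to reprove this microfibration-to-fibration result from scratch by subdividing $[0,1]$ and concatenating short microfibration lifts, correcting along the way via the comparison with $\Phi$. This is where the proposal has a genuine gap. Microflexibility only provides a lift over $\Delta^i \times [0,\epsilon]$ for some uncontrolled $\epsilon$ depending on the lifting problem, so a naive iteration gives a sequence of lifts over $[0,\epsilon_1], [\epsilon_1, \epsilon_1 + \epsilon_2], \ldots$ with no guarantee that $\sum \epsilon_j = 1$; no finite subdivision of $[0,1]$ chosen in advance suffices. Your step (4) acknowledges exactly this and says the remedy is to ``interleave'' with $\Phi$'s global Serre lift and to ``correct the discrepancy at each stage,'' which you flag as ``delicate bookkeeping.'' That bookkeeping is the whole theorem, not a detail: the actual argument (Weiss's contractible-fiber case and Raptis's generalization) uses a nontrivial reparametrization and gluing scheme that you would have to spell out. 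As written, your proposal does not close the gap; the paper's proof does, by citing the result.

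A smaller point: in step (2) you argue that $\Psi(R\subset M) \to \Phi(R\subset M)$ and $\Psi(Q\subset M) \to \Phi(Q\subset M)$ are weak equivalences, hence the \emph{homotopy} fibers agree. But Raptis's theorem needs the comparison of \emph{strict} fibers, and for a map not yet known to be a fibration, strict and homotopy fibers of $p$ need not agree, so this is not quite the right thing to verify. The paper sidesteps the issue by identifying the strict fiber explicitly as $\Psi(I^k \times I^{n-k} \rel a)$ and invoking the hypothesis on $\Psi \to \Phi$ directly there. I would encourage you to look up the Raptis result and verify its hypotheses rather than attempting to reprove it inline.
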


\begin{proof}It suffices to check that for all pairs $Q \subset R \subset M$ of $n$-dimensional compact submanifolds with corners in $M$, so that $R$ is obtained from $Q$ by attaching a single handle, the restriction map 
	\[\Psi(R \subset M) \longto \Psi(Q \subset M)\] is a Serre fibration. We know it is a microfibration and there is a weak equivalence $\Psi \to \Phi$ to a flexible sheaf. Note that the fiber over $a \in \Psi(Q \subset M)$ is given by $\Psi(I^k \times I^{n-k} \rel a)$, and similarly for $\Phi$, and by assumption the induced map on fibers is a weak equivalence. Now apply the result in \cite{raptis}, which says that if we have a commutative diagram
	\[\begin{tikzcd} E \rar{g} \dar[swap]{p} & E' \dar{p'} \\
	B \rar[swap]{f} & B' \end{tikzcd}\]
	such that 
	\begin{enumerate}[(i)]
		\item $p$ is a microfibration,
		\item $p'$ is a Serre fibration, 
		\item for all $b \in B$, the induced map on fibers $g \colon p^{-1}(b) \to (p')^{-1}(f(b))$ is a weak equivalence,
	\end{enumerate}
	then $p$ is a Serre fibration.\end{proof}

\section{$H$-principles on closed manifolds}

We will now prove Theorem \ref{thm.main} in the following slightly stronger technical version. 

\begin{theorem}\label{thm.technical} Let $\mr{CAT} = \mr{Diff}$, $\mr{PL}$ or $\mr{TOP}$. Let $j \colon \Psi \to \Gamma$ be a morphism of $\mr{CAT}$-invariant topological sheaves on $n$-manifolds. Suppose that 
	\begin{enumerate}[(i)]
		\item $\Psi$ is microflexible,
		\item  $\Gamma$ is flexible, and
		\item  $\Psi(\bR^n) \to \Gamma(\bR^n)$ is a weak equivalence.
	\end{enumerate}
	Let $M$ be a $\mr{CAT}$-manifold of dimension $n$, $A \subset M$ a closed subset, and $a \in \cB_\Psi(A \subset M)$ a boundary condition.
	\begin{itemize}
		\item If $\Psi$ is satisfies condition (H), then the map
		\[j \colon \Psi(M\rel a) \xrightarrow{\cong_{H_*}}
		\Gamma(M\rel j(a))\] 
		is homology equivalence.
		\item If $\Psi$ is satisfies condition (W), then the map
		\[j \colon \Psi(M\rel a) \overset{\simeq}{\longrightarrow}
		\Gamma(M\rel j(a))\] 
		is weak equivalence.
	\end{itemize}
\end{theorem}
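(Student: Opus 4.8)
I would prove the theorem by a "decomposition along handles" argument, reducing the general closed-manifold statement to the open-manifold case (Theorem~\ref{thm.gromov}) by peeling off one handle at a time and using the group-like structure to glue results back together. The key idea is that removing an open disk from a closed manifold makes it open, so the difficulty is concentrated entirely in understanding what happens near that disk, and this is exactly what conditions (H) and (W) control.

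\textbf{Step 1: Reduce to a single top handle.} Choose a handle decomposition of $M$ relative to a collar of $A$ (using that $M$ is compact — for noncompact $M$ one exhausts by compacta and passes to a limit, since $\Psi$ and $\Gamma$ commute with the relevant filtered colimits). If $M$ has no handles of index $n$, then $M\setminus A$ deformation retracts onto a subcomplex of positive codimension and $M\rel a$ can be arranged to be "open'' in the sense needed; more precisely, one builds $M$ from $A$ by attaching handles of index $<n$, at each stage the attaching region is a codimension-zero submanifold whose complement in the new handle is noncompact, so Theorem~\ref{thm.gromov} applies to give a weak equivalence $\Psi(M\rel a)\to\Gamma(M\rel j(a))$ inductively. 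So the whole problem is the last layer: attaching $n$-handles.

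\textbf{Step 2: Isolate the $n$-handles via the category $\cat{C}^\Psi$.} Write $M = M_0 \cup (\text{$n$-handles})$ where $M_0$ is $M$ with the interiors of disjointly embedded closed $n$-disks $D_1,\dots,D_k$ removed; then $M_0$ is an open manifold with $\partial M_0 \supset \bigsqcup_i S^{n-1}_i$, and $\Psi(M_0 \rel a\sqcup b_1 \sqcup\cdots\sqcup b_k)\to\Gamma(\cdots)$ is a weak equivalence for every choice of boundary conditions $b_i$ on the spheres, by Step~1 applied to $M_0$. Now $\Psi(M\rel a)$ is built from the spaces $\Psi(M_0\rel a\sqcup\underline b)$ and $\Psi(D_i\rel b_i)$ glued over $\mathcal B_\Psi(S^{n-1}_i)$; organizing this gluing, one sees $\Psi(M\rel a)$ (and similarly $\Gamma$) is the realization of a simplicial/bar-type object built from the action of the category $\cat{C}^\Psi$ (really its $\pi_0$, $[\cat{C}^\Psi]$) on the "cylinder modules'' $\Psi(S^{n-1}\times[0,1]\rel\cdots)$ and the "disk module'' $\Psi(D^n\rel\cdots)$. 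Concretely, there is a homotopy colimit / two-sided bar construction presentation
\[\Psi(M\rel a)\;\simeq\;\underset{(U,f)}{\mr{hocolim}}\;\Psi(M_0\rel a\sqcup\underline f)\times_{\cat C^\Psi}\prod_i\Psi(D_i\rel f_i),\]
and the morphism $j$ is compatible with these presentations. This is where the delooping/recognition-principle philosophy enters: the cylinders assemble into an $E_1$-ish (indeed, after stabilizing the spheres, an $E_\infty$) structure, and conditions (H)/(W) are exactly the group-completion hypotheses needed to identify the bar construction's homotopy/homology type.

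\textbf{Step 3: Apply group completion.} Since $\Psi(M_0\rel\cdots)\to\Gamma(M_0\rel\cdots)$ is a weak equivalence (Step~1) and $\Gamma$ is flexible — hence satisfies (H) and (W) by Lemma~\ref{lem.secgrouplike}, and satisfies the closed-manifold $h$-principle by Gromov's theory applied cell-by-cell — it suffices to compare the two bar constructions. Under condition (W), the distinguished element $\iota$ acts by weak equivalences, so the bar construction over $[\cat{C}^\Psi]$ (a groupoid, by (H)) computes the strict homotopy colimit and the map $j$ is a levelwise weak equivalence, giving a weak equivalence on realizations; here I would invoke Lemma~\ref{lem.flexibilityopen} to even upgrade $\Psi$ to flexible and then cite Gromov directly. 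Under only condition (H), the element $\iota$ need not act invertibly on homotopy, but $[\cat{C}^\Psi]$ being a groupoid means that after applying $H_*$ the relevant bar construction stabilizes — this is the group-completion theorem in the form "$H_*$ of a bar construction on a homotopy-commutative monoid whose $\pi_0$ is a group is the $H_*$ of the group completion'' — and one gets a homology equivalence. The honest statement to prove is that the spectral sequence of the bar-construction filtration for $\Psi$ maps isomorphically to that for $\Gamma$: on $E_1$ this is Step~1 plus the hypothesis $\Psi(\bR^n)\to\Gamma(\bR^n)$ (which via the sheaf/microflexibility machinery propagates to $\Psi(D^n\rel b)\to\Gamma(D^n\rel j(b))$ and to all cylinder modules), and the group-like hypothesis ensures the $E_\infty$-pages assemble to the claimed (co)homology of $\Psi(M\rel a)$ without correction terms.

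\textbf{Main obstacle.} The hard part will be Step~2: setting up a clean, functorial bar-construction model for $\Psi(M\rel a)$ in which the category $\cat{C}^\Psi$ visibly acts, and checking that the realization of this model recovers $\Psi(M\rel a)$ — i.e.\ that cutting $M$ along the spheres $S^{n-1}_i$ and reassembling via the sheaf (gluing) property of $\Psi$ is correctly captured by a homotopy colimit, not just a point-set colimit. This requires a careful "resolution of $M$ by cylinders'' and controlling the homotopy type of the various pieces (in particular that $\Psi(S^{n-1}\times[0,1]\rel b\sqcup b')$ has the homotopy type one expects, using microflexibility to replace strict restriction maps by fibrations). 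Once this model is in place, Step~3 is an invocation of the group-completion theorem and Gromov's open-manifold $h$-principle, both of which are available to us; the subtlety is entirely in the bookkeeping of the decomposition and in verifying that the hypothesis on $\Psi(\bR^n)$ suffices to control all the building blocks via the sheaf axioms and microflexibility.
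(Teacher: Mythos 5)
Your architecture (Gromov's open-manifold theorem, a semisimplicial/bar-construction decomposition of $\Psi(M \rel a)$, and a delooping argument) matches the paper's, and you have correctly identified the resolution step as the technical crux. However, there are genuine gaps beyond "careful bookkeeping."

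\textbf{The group-completion invocation hides the real work under (H).} The statement you appeal to --- ``$H_*$ of a bar construction on a homotopy-commutative monoid whose $\pi_0$ is a group is $H_*$ of the group completion'' --- is not what is needed and does not apply directly: $\cat{C}^\Psi$ is a non-unital topological category, not a homotopy-commutative monoid, and there is no $E_\infty$ or even $E_1$-monoid whose bar construction you are computing (the $E_\infty$ remark is a red herring). What the McDuff--Segal delooping result (Theorem~\ref{thm.mcduffsegal}, Lemma~\ref{lem.catloop}) actually requires is that every morphism $g$ of $\cat{C}^\Psi$ acts by a \emph{homology equivalence} on the left- and right-modules $\cF^\Psi(M)$ and the morphism spaces. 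Under condition~(H) this is Proposition~\ref{prop.compcyl}(i), proved via the intermediate notion of \emph{semi-equivalence} (Definition~\ref{def.semiequivalence}) and an explicit geometric construction with isotopies of the interval; it does not follow from ``$\pi_0$ is a group.'' This is a nontrivial step that your sketch elides entirely. Similarly under (W), knowing that the single element $\iota$ acts by weak equivalences is not enough: you need that \emph{all} morphisms do, and this requires the 2-out-of-3 argument in Proposition~\ref{prop.compcyl}(ii) combining (H), (W), and the already-established injectivity on homotopy groups.

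\textbf{The appeal to Lemma~\ref{lem.flexibilityopen} is circular or at best redundant.} To apply that lemma you must already have a levelwise weak equivalence $\Psi(M) \to \Gamma(M)$ for all the relevant manifolds, which is precisely the conclusion you are trying to reach; and if you had already shown the levelwise weak equivalence of the bar constructions you are done, and neither the lemma nor a further appeal to Gromov adds anything.

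\textbf{Minor points.} The handle decomposition in Step~1 may not exist for $\mr{CAT} = \mr{Top}$ in dimension $4$; the paper resolves by cutting along an embedded cylinder $S^{n-1}\times\bR$ and punctures points rather than literally removing $n$-handles, which sidesteps this. Also, in the resolution step --- your ``main obstacle'' --- you flag the difficulty correctly but do not address it; the paper's Proposition~\ref{prop.resolve} requires a careful microfibration argument (Lemma~\ref{lem.ak}) to push families to be locally constant in the parameter, and this is where microflexibility is actually used.
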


To deduce Theorem \ref{thm.main}, one uses that Gromov constructed for each microflexible $\mr{CAT}$-invariant sheaf $\Psi$ a natural $\mr{CAT}$-invariant flexible sheaf $\Psi^f$ with a map $j \colon \Psi \to \Psi^f$ of sheaves. He further proved that $\Psi^f(M)$ is weakly equivalent to the space of sections of the bundle $\mr{Fr}^\mr{CAT}(TM) \times_{\mr{CAT}(n)} \Psi(\bR^n)$ over $M$, which has fiber $\Psi(\bR^n)$. This is described for $\mr{CAT} = \mr{Diff}$ in Section 2.2.2 of \cite{gromovhp} and in general in Appendix V.A of \cite{kirbysiebenmann}, the latter of which explicitly avoids assuming the existence of a handle decomposition when $\mr{CAT} = \mr{Top}$, which may not exist in dimension 4. We will need the precise statement of Gromov's $h$-principle for open manifolds, previously Theorem \ref{thm.gromov}:

\begin{theorem}[Gromov-Siebenmann] \label{thm.gromovprecise} Let $j \colon \Psi \to \Gamma$ be a map of $\mr{CAT}$-invariant sheaves. Suppose that
	\begin{enumerate}[(i)]
		\item $\Psi$ is microflexible,
		\item $\Gamma$ is flexible, and
		\item $j \colon \Psi(\bR^n) \to \Gamma(\bR^n)$ is a weak equivalence. 
	\end{enumerate}  
	Let $M$ be a manifold, $A \subset M$ be a closed subset so that $M \setminus A$ has no path components with compact closure in $M$, and $a \in \cB_\Psi(A \subset M)$ be a boundary condition. Then
	\[j \colon \Psi(M \rel a) \longto \Gamma(M \rel j(a))\]
	is a weak equivalence.
\end{theorem}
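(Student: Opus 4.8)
The plan is a handle induction in which the inductive step combines a compression argument (in the spirit of Lemma~\ref{lem.secgrouplike}) with the microfibration-to-fibration comparison of \cite{raptis}; morally it upgrades ``microflexibility'' to ``flexibility'' on open manifolds, as in Lemma~\ref{lem.flexibilityopen}. The geometric input is that, because $M\setminus A$ has no path component with compact closure in $M$, the manifold $M$ can be built from the empty manifold by attaching a locally finite family of handles all of index $\le n-1$ (the handles near $A$ absorbing charts covering $A$); equivalently, every handle $D^k\times D^{n-k}$ has core $D^k\times\{0\}$ of codimension $n-k\ge1$. (For $A=\varnothing$ this is the classical fact that an open manifold has a spine of codimension $\ge1$; in general one exhausts $M$ by compact codimension-$0$ submanifolds whose complementary cobordisms each reach the outer boundary, and trades away top-index handles there. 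For $\mr{CAT}=\mr{Top}$ and $n=4$ one first deletes a discrete set of points, harmless here, and smooths, as in the proof of Lemma~\ref{lem.microflexvariants}.) The exclusion of index-$n$ handles is essential: filling in such a handle is precisely the move under which the conclusion can fail.

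One proves by induction on the handles that $j\colon\Psi(X\rel c)\to\Gamma(X\rel j(c))$ is a weak equivalence for every stage $X$, every closed $C\subseteq X$, and every $c\in\cB_\Psi(C)$, switching on the boundary condition $a$ once a neighborhood of $A$ has been assembled. The base cases are $X=\varnothing$, where both sides are a point, and $X$ a chart $\cong\bR^n$, where the absolute case is hypothesis (iii) and the case of a germ near a closed subset follows by a colimit argument: small neighborhoods of the subset isotope into one another, so microflexibility makes the restriction maps between their $\Psi$-values weak equivalences, which identifies the germ space with $\Psi$ of a cofinal chart and presents the relative spaces as (weakly) contractible or as the absolute ones, and likewise for $\Gamma$. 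For the inductive step, write a stage as $X\cup_\phi H$ with $H=D^k\times D^{n-k}$ and $k\le n-1$. The restriction $\Psi(X\cup_\phi H\rel c)\to\Psi(X\rel c)$ is a microfibration since $\Psi$ is microflexible, $\Gamma(X\cup_\phi H\rel j(c))\to\Gamma(X\rel j(c))$ is a Serre fibration since $\Gamma$ is flexible, and the fiber of the former over a point is $\Psi(D^k\times D^{n-k}\rel b_H)$ for a boundary condition $b_H$ on $S^{k-1}\times D^{n-k}$, with the evident description of the $\Gamma$-fiber. \textbf{The crux is to show the induced map of fibers is a weak equivalence.} Here one uses a compression argument: one shrinks the $D^k$-factor into an arbitrarily small neighborhood of the core $\{0\}\times D^{n-k}$ --- whose codimension $n-k\ge1$ provides the room --- while leaving $b_H$ fixed, and lifts along the squeeze using microflexibility of $\Psi$ and flexibility of $\Gamma$, exactly the mechanism of the proof of Lemma~\ref{lem.secgrouplike} and Figure~\ref{fig.secgrouplike}; this may be organized as a sub-induction on $k$ whose base case $k=0$ returns hypothesis (iii). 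Granting this, \cite{raptis} promotes the $\Psi$-restriction to a Serre fibration, and comparing the two fibration sequences --- equivalence on bases by the inductive hypothesis for $X$, equivalence on fibers just shown --- gives the equivalence on total spaces. Finally, writing $M$ as an increasing union of finite stages $X_0\subseteq X_1\subseteq\cdots$, the sheaf property gives $\Psi(M\rel a)=\varprojlim_\ell\Psi(X_\ell\rel a)$ and likewise for $\Gamma$; the tower maps are Serre fibrations by the step just proven, so these inverse limits are homotopy limits and a levelwise equivalence concludes.

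The main obstacle is the fiberwise statement --- the compression argument over a single handle. Everything else is formal once one knows the $\Psi$-fiber is weakly equivalent to the $\Gamma$-fiber, and it is precisely there, in needing the $D^{n-k}$-factor to provide room to push the constraint off its core, that the codimension $n-k\ge1$ --- hence the hypothesis that $M\setminus A$ has no compact-closure component --- is consumed. The remaining care goes into the bookkeeping of the (possibly double) induction, the colimit manipulations underlying the relative local statements, and, for $\mr{CAT}=\mr{PL},\mr{Top}$, the smoothing-theoretic input and its handle-free low-dimensional form already cited in the text.
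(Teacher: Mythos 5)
The paper does not prove Theorem~\ref{thm.gromovprecise}: it states the result as Gromov--Siebenmann, citing Section~2.2.2 of \cite{gromovhp} and Appendix~V.A of \cite{kirbysiebenmann}, so there is no internal proof to compare against. Your skeleton --- a handle decomposition of $M$ relative to a neighborhood of $A$ by handles of index $\le n-1$, comparison of the microfibration restriction for $\Psi$ against the Serre fibration restriction for $\Gamma$, a fiberwise equivalence over each handle promoted by \cite{raptis}, and a limit over a compact exhaustion --- is indeed the standard strategy, and your observation that the index bound $k\le n-1$ is exactly what gets consumed in the fiberwise step is the right one.

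The crux step is, however, garbled in a way that would break the argument. You propose to ``shrink the $D^k$-factor into an arbitrarily small neighborhood of the core $\{0\}\times D^{n-k}$, whose codimension $n-k\ge1$ provides the room, while leaving $b_H$ fixed.'' With $H=D^k\times D^{n-k}$ attached along $S^{k-1}\times D^{n-k}$, the subset $\{0\}\times D^{n-k}$ is the \emph{cocore}, it has codimension $k$ rather than $n-k$, and one cannot shrink $D^k$ towards its center while keeping $b_H$ fixed, since $b_H$ is a germ along $\partial D^k\times D^{n-k}$ and shrinking $D^k$ moves that sphere. The compression that works squeezes the $D^{n-k}$-factor towards $\{0\}$ away from the attaching region, pushing $H$ into any neighborhood of the spine $(D^k\times\{0\})\cup(S^{k-1}\times D^{n-k})$; here $D^k\times\{0\}$ is the genuine core, of codimension $n-k$, and $k\le n-1$ is precisely what makes that codimension positive so the spine is of lower dimension than $H$. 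Even after this is corrected, the appeal to ``exactly the mechanism of the proof of Lemma~\ref{lem.secgrouplike}'' hides the real difficulty: that proof produces a full lift using the Serre-fibration property of a \emph{flexible} sheaf, whereas $\Psi$ is only microflexible, giving lifts over arbitrarily short parameter intervals. Chaining these local lifts along the squeeze, and controlling what happens as the squeeze degenerates onto the spine, is the substantive technical content of the Gromov and Kirby--Siebenmann arguments, and your sketch does not supply it.
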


\subsection{Composition of cylinders} We start by proving that under conditions (H) or (W) composition with a morphism in $\cat{C}^\Psi$ is a homology equivalence or a weak equivalence. It will be useful to consider the following intermediate notion.

\begin{definition}\label{def.semiequivalence} A map $f\colon X \to Y$ is a \emph{semi-equivalence} if for all parametrizing manifolds $P$ with boundary $\partial P$ and commutative diagrams
	\[\begin{tikzcd} \partial P \rar \dar & X \dar{f} \\
	P \arrow[dotted]{ru}[description]{L} \rar &  Y, \end{tikzcd}\]
	there is a map $L$ such that the top and bottom triangle commute up to independent homotopy (thus we do \emph{not} require the homotopy for the bottom triangle to be rel $\partial P$).\end{definition}

\begin{lemma}\label{lem.nearequivalence} A semi-equivalence $f\colon X \to Y$ is a homology equivalence and is injective on homotopy groups. \end{lemma}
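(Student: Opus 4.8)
The plan is to establish the two conclusions --- that a semi-equivalence $f \colon X \to Y$ is a homology equivalence and is injective on homotopy groups --- separately, though both will follow from the same basic mechanism: the defining lifting property lets us solve lifting problems in $Y$ after pushing into $X$, at the cost of changing the solution by a homotopy that need not be rel $\partial P$.

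\textbf{Injectivity on homotopy groups.} First I would fix a basepoint $x_0 \in X$ with image $y_0 = f(x_0)$, and suppose $\alpha \colon S^k \to X$ is such that $f \circ \alpha$ is nullhomotopic in $Y$. Viewing $S^k = \partial D^{k+1}$, the nullhomotopy is precisely a map $D^{k+1} \to Y$ filling $f \circ \alpha$, so we get a commutative square with $\partial P = S^k$, $P = D^{k+1}$, top map $\alpha$ and bottom map the nullhomotopy. Applying the semi-equivalence property yields a lift $L \colon D^{k+1} \to X$ whose restriction to $S^k$ is homotopic to $\alpha$ (via the homotopy witnessing commutativity of the top triangle). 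Since a map $S^k \to X$ that extends over $D^{k+1}$ is nullhomotopic, and $\alpha$ is homotopic to such a map, $\alpha$ itself is nullhomotopic. This shows $f_* \colon \pi_k(X) \to \pi_k(Y)$ has trivial kernel, hence is injective. One mild subtlety is basepoint bookkeeping --- the homotopy on the top triangle need not fix the basepoint --- but since we only need the free homotopy class, this is harmless; alternatively one restricts attention to path components and works with the action of $\pi_1$.

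\textbf{Homology equivalence.} The cleanest route is to show the homotopy cofiber (or the pair $(M_f, X)$ for the mapping cylinder $M_f$) has trivial reduced homology, equivalently that $f_* \colon H_*(X) \to H_*(Y)$ is an isomorphism. I would argue this by showing $f$ induces a surjection on $H_*$ with a ``section up to the right homology'' coming from the lifting property, and separately injectivity. Concretely: to see surjectivity, represent a class in $H_k(Y)$ by a map from a finite simplicial complex (or a singular cycle), realize that complex as built from disks, and use the lifting property cell-by-cell to push the cycle into $X$ up to homology --- each lifting problem $\partial D^j \to X$, $D^j \to Y$ is solvable after a homotopy, and the homotopies can be assembled (this is where one works, not rel boundary, so some care is needed to keep track of the discrepancies, which themselves bound and hence vanish in homology). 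For injectivity on $H_*$: a $k$-cycle $z$ in $X$ whose image bounds in $Y$ gives a filling $D^{k+1}$-type lifting problem as above, and the resulting lift in $X$ exhibits $z$ as homologous to a cycle bounding in $X$.

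\textbf{Expected main obstacle.} The hard part is the homology surjectivity/injectivity argument: because the semi-equivalence only provides homotopies that are \emph{not} rel $\partial P$, one cannot naively glue lifts over a cell structure, and one must argue at the level of chains/cycles that the ``error terms'' from the non-rel-boundary homotopies are themselves boundaries. A slick way to finesse this is to invoke the standard fact (e.g.\ a consequence of obstruction theory or of the relative Hurewicz/Whitehead machinery applied to the mapping cylinder) that a map which is ``$\infty$-connected in the weak sense provided by semi-equivalences'' --- more precisely, whose mapping cylinder pair $(M_f, X)$ has the property that every map $(D^{k+1}, S^k) \to (M_f, X)$ is homotopic rel nothing to one landing in $X$ --- induces a homology iso; this reduces everything to the single clean statement that such lifting problems are solvable, which is exactly Definition~\ref{def.semiequivalence}. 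I would therefore present the proof as: (1) the basepoint-free lifting property of Definition~\ref{def.semiequivalence} implies every lifting problem $(S^k,D^{k+1}) \to (Y, \text{via } f)$ admits a solution in $X$ up to free homotopy; (2) deduce injectivity on $\pi_*$ directly as above; (3) deduce the homology statement by applying the lifting property inductively over a CW model of a relative cycle, noting that the non-rel-boundary homotopies contribute only terms that die in homology.
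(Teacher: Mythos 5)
Your argument for injectivity on homotopy groups is correct and is essentially the paper's argument: set up the disk-filling problem, apply the semi-equivalence to get $L\colon D^{i+1}\to X$ with $L|_{\partial D^{i+1}}$ freely homotopic to the original class, and glue the homotopy onto $L$ to conclude the class was already trivial.

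The homology half, however, has a genuine gap. You correctly identify the obstruction --- the homotopies are not rel $\partial P$, so a cell-by-cell lift over a CW model produces mismatches on the interfaces --- but you do not resolve it. Your claim that ``the non-rel-boundary homotopies contribute only terms that die in homology'' is precisely the thing that needs proof, and the ``standard fact'' you invoke (that a map whose mapping cylinder pair admits free compression of every $(D^{k+1},S^k)$ induces a homology isomorphism) is not standard in that form and I do not believe it holds as stated: free compression of disk pairs is strictly weaker than vanishing of relative homotopy groups (which requires compression rel $S^k$), and the relative Hurewicz theorem requires that vanishing together with connectivity hypotheses you do not have. The paper sidesteps all of this with one observation you are missing: by Lemma~\ref{lem.homologequivalencedef}, a homology equivalence is the same thing as an isomorphism on oriented bordism $\Omega^{\mathrm{SO}}_*$. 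Since a bordism class (or a nullbordism of one) is represented by a single compact oriented manifold $P$ with boundary $\partial P$ mapping to the target, you apply the semi-equivalence property exactly once, to $(P,\partial P)$ as a whole, with no cell structure and hence no gluing problem. Injectivity on $\Omega^{\mathrm{SO}}_*$ is then the same argument as for $\pi_*$ with $D^{i+1}$ replaced by an arbitrary compact oriented bordism $P$; surjectivity is even easier, taking $\partial P=\varnothing$ in the lifting diagram. Note also that the definition of semi-equivalence is phrased for arbitrary parametrizing manifolds $P$, not just disks --- that generality is there precisely so this bordism argument works, and it is the generality your disk-only ``standard fact'' fails to exploit.
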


\begin{proof}We start with the second claim. Suppose that $x \in \pi_i(X)$ becomes null-homotopic after composition with $f$, i.e.~there is a commutative diagram
	\[\begin{tikzcd} \partial D^{i+1} \rar{x} \dar & X \dar{f} \\
D^{i+1} \rar &  Y .\end{tikzcd}\]
Since $f$ is a semi-equivalence, we can find a map $L \colon D^{i+1} \to X$ such that $L|_{\partial P} \sim x$. By gluing this homotopy to $L$ we get a map $D^{i+1} \cong (D^{i+1} \times \{1\}) \cup (\partial D^{i+1} \times [0,1]) \to X$ whose restriction to $\partial P \times \{0\}$ is $x$, exhibiting $x$ as already being zero in $\pi_i(X)$. (The homotopy for the bottom triangle plays no role in this argument.) 

For the first claim we use Lemma \ref{lem.homologequivalencedef} which says that a map is a homology equivalence if and only if it is an oriented bordism equivalence. For injectivity, suppose that $x \in \Omega^\mr{SO}_i(X)$ becomes null-cobordant after composition with $f$, i.e.~there is an $(i+1)$-dimensional compact oriented manifold $P$ and a commutative diagram
\[\begin{tikzcd} \partial P  \rar{x} \dar & X \dar{f} \\
P \rar &  Y .\end{tikzcd}\]
Then the same argument as above with $P$ replacing $D^{i+1}$ proves that $x$ was already zero in $\Omega^\mr{SO}_i(X)$.

For surjectivity, suppose we have an $y \in \Omega^\mr{SO}_i(Y)$, represented by an $i$-dimensional closed oriented manifold $P$ together with a map $P \to Y$. Then we can think of this as a commutative diagram
\[\begin{tikzcd} \partial P = \varnothing  \rar \dar & X \dar{f} \\
P \rar{y} &  Y .\end{tikzcd}\]
Since $f$ is semi-equivalence, there is a map $L \colon P \to Y$ such that $f \circ L \sim y$. Thus $L \colon P \to X$ represents an element in $\Omega^\mr{SO}_i(X)$ which is mapped to $y$ by $f$.
\end{proof}

Let us now return to the task at hand, proving that condition (H) or (W) imply composition in $\cat{C}^\Psi$ is a homology equivalence or a weak equivalence.

\begin{proposition}\label{prop.compcyl} Let $\Psi$ be a $\mr{CAT}$-invariant topological sheaf on $n$-manifolds. For $t_0 < t_1 < t_2$ and an element
	\[g \in \cat{C}^\Psi(\la t_1,b_1 \ra, \la t_2,b_2 \ra ) = \Psi(S^{n-1} \times [t_1,t_2] \rel b_1 \sqcup b_2)\]
	consider the map
	\[\begin{tikzcd} \cat{C}^\Psi(\la t_0,b_0 \ra, \la t_1,b_1 \ra ) = \Psi(S^{n-1} \times [t_0,t_1] \rel b_0 \sqcup b_1) \dar{g \circledcirc -} \\
	\cat{C}^\Psi(\la t_0,b_0 \ra, \la t_2,b_2 \ra ) = \Psi(S^{n-1} \times [t_0,t_2] \rel b_0 \sqcup b_2).\end{tikzcd}\]
	\begin{enumerate}[(i)]
		\item If $\Psi$ satisfies condition (H), the map $g \circledcirc -$ is a semi-equivalence as in Definition \ref{def.semiequivalence}.
		\item If $\Psi$ satisfies condition (W) and $b_0$ (or equivalently $b_1$ or $b_2)$ is fillable, the map  $g \circledcirc -$ is a weak equivalence.
	\end{enumerate}
\end{proposition}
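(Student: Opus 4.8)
\emph{Setup.} The plan is to reduce everything to the groupoid structure of $[\cat{C}^\Psi]$ using two elementary observations. Since concatenation $\circledcirc$ is continuous, any path from $\alpha$ to $\alpha'$ in a morphism space $\cat{C}^\Psi(c,c')$ induces a homotopy $\alpha\circledcirc- \simeq \alpha'\circledcirc-$; hence the homotopy class of the map $\alpha\circledcirc-$ depends only on $[\alpha]\in[\cat{C}^\Psi]$. And $\mr{CAT}$-invariance lets us pass between cylinders of different heights via rescaling homeomorphisms, which are homotopy equivalences, so I will tacitly identify $\cat{C}^\Psi(\langle t_0,b\rangle,\langle t_1,c\rangle)$ with $\cat{C}^\Psi(\langle t_0',b\rangle,\langle t_1',c\rangle)$. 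Finally, since $[\cat{C}^\Psi]$ is a groupoid, $[g]$ has an inverse; write $\bar g$ for a chosen representative.

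\emph{Part (ii).} As $b_0$ is fillable, $b_0,b_1,b_2$ all lie in the fillable component of $[\cat{C}^\Psi]$ containing the objects $b_s,b_e$ and the element $\iota$ of condition (W), and $\iota\circledcirc-$ is a weak equivalence for every choice of source object. First I bootstrap this: $\iota\circledcirc\bar\iota\circledcirc\iota$ represents $[\iota]$, so $(\iota\circledcirc-)\circ(\bar\iota\circledcirc-)\circ(\iota\circledcirc-)\simeq \iota\circledcirc-$ is a weak equivalence; since $\iota\circledcirc-$ is a weak equivalence, conjugating in the homotopy category shows $\bar\iota\circledcirc-$ is one as well. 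Because $\iota\circledcirc\bar\iota$ and $\bar\iota\circledcirc\iota$ represent the identities of $b_e$ and $b_s$, composition with any identity-representative at $b_s$ or $b_e$ is then a weak equivalence; transporting around the (connected) component, using that all its objects are isomorphic in the groupoid, propagates this to identity-representatives at every object. Finally, for any $\alpha\colon c\to d$ in the component, $(\bar\alpha\circledcirc-)\circ(\alpha\circledcirc-)\simeq \eta_c\circledcirc-$ and $(\alpha\circledcirc-)\circ(\bar\alpha\circledcirc-)\simeq \eta_d\circledcirc-$ are weak equivalences, so $\alpha\circledcirc-$ has a homotopy two-sided inverse; taking $\alpha=g$ finishes (ii).

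\emph{Part (i).} Under (H) alone, composition with any morphism is only known to be a bijection on the $\pi_0$ of the cylinder spaces, which falls short of a weak equivalence — which is why the target is the weaker notion of semi-equivalence. Here is the shape of the argument. Given a lifting problem $\partial P\xrightarrow{a}X$, $P\xrightarrow{b}Y$ for $g\circledcirc-$, one may assume $P$ connected; then, because $\cB_\Psi$ is a discrete set, the boundary condition that $b$ induces on the slice $S^{n-1}\times\{t_1\}$ is constant, equal to $b_1$, so $b$ restricts to a family $b'\colon P\to X$ with $b'|_{\partial P}=a$ and $b=b''\circledcirc b'$ with $b''|_{\partial P}$ constant at $g$ and $[b''(p)]=[g]$ for all $p$. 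The candidate lift $\bar g\circledcirc b$, rescaled into $X$, solves the problem up to a residual obstruction that is controlled only at the level of $\pi_0$, and which ``is'' the statement that $g\circledcirc-$ with source the intermediate boundary condition is again a semi-equivalence — so the assertion is self-referential. I would resolve this recursion by a group-completion argument: replace $Y$ by the mapping telescope of the composites $(g\circledcirc-)\circ(\bar g\circledcirc-)$ and $(\bar g\circledcirc-)\circ(g\circledcirc-)$, whose successive stages are all homotopic (each being composition with an identity-representative), and run a McDuff--Segal-type argument for the topological category $\cat{C}^\Psi$ — whose category of components is a groupoid by (H) — to conclude that $g\circledcirc-$ is a homology equivalence and is injective on homotopy groups in the precise sense packaged into Definition \ref{def.semiequivalence}.

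\emph{Main obstacle.} Part (ii) is routine once the conjugation trick of the first observation is in hand. The crux is Part (i): bridging the gap between the purely $\pi_0$-level information that condition (H) supplies and the family-level lifting property demanded by ``semi-equivalence.'' This is a genuine group-completion phenomenon, and the real work is to give a clean, self-contained version of the group-completion theorem for the non-unital topological category $\cat{C}^\Psi$ with its cylinder morphism spaces, rather than merely quoting the monoid version of McDuff--Segal.
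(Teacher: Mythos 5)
Your part (ii) is broadly in the same spirit as the paper's, but your part (i) contains a false claim and does not actually give a proof. First, the decomposition $b = b'' \circledcirc b'$ you propose does not exist: you assert that ``because $\cB_\Psi$ is a discrete set, the boundary condition that $b$ induces on the slice $S^{n-1}\times\{t_1\}$ is constant,'' but $\cB_\Psi(S^{n-1}\times\{t_1\})$ is merely a \emph{set}, not a discrete space in a way that constrains the family, and for a family $F\colon P \to \Psi(S^{n-1}\times[t_0,t_2] \rel b_0 \sqcup b_2)$ the induced boundary condition at the \emph{intermediate} slice $\{t_1\}$ genuinely varies with $p$ (take $\Psi$ to be smooth functions to see this). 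The property that \emph{is} available is quite different and is what the paper uses: since $\cat{S}=\cat{Sh^{CAT}}$ and the parametrizing manifold is compact, the family $f$ is independent of $p\in\partial P$ on a neighborhood of the \emph{endpoint} slice $S^{n-1}\times\{t_1\}$ (where the boundary condition $b_1$ is prescribed), and $F$ is independent of $p\in P$ near $S^{n-1}\times\{t_2\}$. The paper then picks interior parameters $\tau_1\in(t_0,t_1)$, $\tau_2\in(t_1,t_2)$, splits $f=f^\tau\circledcirc f'$, $g=g^\tau\circledcirc g'$, chooses a representative $(g')^{-1}$ of $[g']^{-1}$ (this is exactly where condition~(H) enters), and writes down the explicit lift $L=\lambda_*[((g')^{-1}\circledcirc -)\circ F']$. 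The verifications $L|_{\partial P}\sim f$ and $(g\circledcirc -)\circ L \sim F$ then reduce to homotopies between \emph{single elements} of $\Psi(S^{n-1}\times[\tau_1,t_1]\rel\cdots)$ and $\Psi(S^{n-1}\times[\tau_2,t_2]\rel\cdots)$ representing the same morphism of $[\cat{C}^\Psi]$ --- precisely the $\pi_0$-level information (H) controls. This is the mechanism that resolves what you call the ``self-referential'' obstruction; no group completion is needed or wanted here. Indeed, your proposal to ``run a McDuff--Segal-type argument'' for $\cat{C}^\Psi$ is circular in the context of the paper: Proposition~\ref{prop.compcyl} is an \emph{input} to the delooping step (via Proposition~\ref{prop.acteq} and Theorem~\ref{thm.mcduffsegal}), not a consequence of it, and in any case semi-equivalence is a lifting property on the nose, which a mapping telescope for a modified target would not deliver.

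In part (ii) your conjugation trick for $\bar\iota\circledcirc -$ is fine, but the ``propagating around the component'' step is unsupported: to show composition with an identity-representative $\eta_c$ at an arbitrary object $c$ in the component is a weak equivalence, the natural move $\eta_c\circledcirc-\simeq(\phi\circledcirc-)(\eta_{b_s}\circledcirc-)(\bar\phi\circledcirc-)$ presupposes that $\phi\circledcirc-$ and $\bar\phi\circledcirc-$ are weak equivalences, which is what you are trying to prove. The paper avoids this by feeding in the injectivity on homotopy groups established in part (i): writing $(\iota_P\circledcirc-)\sim(h\circledcirc-)\circ(g\circledcirc-)$ for $g$ starting at $b_s$, surjectivity of $(\iota_P\circledcirc-)$ plus injectivity of $h\circledcirc-$ forces $h\circledcirc-$ to be a weak equivalence, and 2-out-of-3 does the rest; the general $g$ is then sandwiched as $(\iota_P\circledcirc-)\sim(h\circledcirc-)(g\circledcirc-)(k\circledcirc-)$ with $k$ starting at $b_s$ and $h$ ending at $b_e$. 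You should adopt this structure rather than attempting a free-standing two-sided-inverse argument.
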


\begin{proof}We start with part (i). We have to prove that for each compact parametrizing manifold $P$ with boundary $\partial P$ and each commutative diagram
	\begin{equation}\label{eqn.diagcomposableproof1}\begin{tikzcd} \partial P\rar{f} \ar[d] & \Psi\left(S^{n-1} \times [t_0,t_1] \rel b_0 \sqcup b_1\right) \dar{g \circledcirc -} \\
	P \rar[swap]{F} \arrow[dotted]{ru}[description]{L} & \Psi\left(S^{n-1} \times [t_0,t_2] \rel b_0 \sqcup b_2\right) \end{tikzcd}\end{equation}
	there is a dotted lift $L \colon P \to \Psi\left(S^{n-1} \times [t_0,t_1] \rel b_0 \sqcup b_1\right)$ such that the top and bottom triangle independently commute up to homotopy. 
	
	Since our category of spaces $\cat{S}$ is $\cat{Sh^{CAT}}$, there exist an open neighborhood $U_1$ of $\partial S^{n-1} \times \{t_1\}$ such that $f|_{U_1}$ is independent of $p \in \partial P$ and an open neighborhood $U_2$ of $\partial S^{n-1} \times \{t_2\}$ such that $F|_{U_2}$ is independent of $p \in P$. Let $\tau_1 \in (t_0,t_1)$ be such that $S^{n-1} \times [\tau_1,t_1] \subset U_1$ and let $f^\tau$ denote the restriction of $f(p)$ to $S^{n-1} \times [\tau_1,t_1] \subset U_1$ for any $p \in \partial P$ (it is by construction independent of $p$). Similarly, let $\tau_2 \in (t_1,t_2)$ be such that $S^{n-1} \times [\tau_2,t_2] \subset U_2$ and let $g'$ and $g^\tau$ denote the restrictions of $g$ to $S^{n-1} \times [t_1,\tau_2]$ and $S^{n-1} \times [\tau_2,t_2]$ respectively, so that $g^\tau \circledcirc g' = g$. Then we can factor \eqref{eqn.diagcomposableproof1} as
	\begin{equation}\label{eqn.diagcomposableproof2} \begin{tikzcd} \partial P \rar{f'} \dar &[-5pt] \Psi\left(S^{n-1} \times [t_0,\tau_1] \rel b_0 \sqcup \beta_1\right) \rar{f^\tau \circledcirc - } \dar[swap]{g' \circledcirc f^\tau \circledcirc -} &   \Psi\left(S^{n-1} \times [t_0,t_1] \rel b_0 \sqcup b_1\right)\arrow{dl}[description]{g' \circledcirc -}  \dar{g^\tau \circledcirc g' \circledcirc - } \\[10pt]
	P \rar[swap]{F'} &  \Psi\left(S^{n-1} \times [t_0,\tau_2] \rel b_0 \sqcup \beta_2\right)\rar[swap]{g^\tau \circledcirc - }  & \Psi\left(S^{n-1} \times [t_0,t_2] \rel b_0 \sqcup b_2\right) \end{tikzcd}\end{equation}
	where $f'$, $F'$, $\beta_1$ and $\beta_2$ are uniquely determined by demanding  this diagram is a factorization of (\ref{eqn.diagcomposableproof1}).
	
	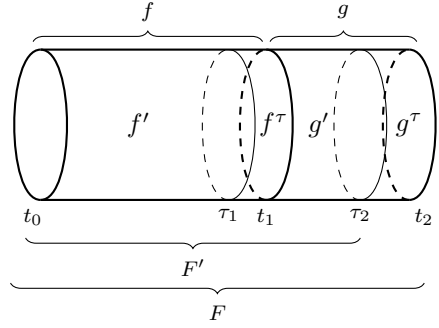
\begin{figure}
		\centering
		\begin{tikzpicture}
		\draw [thick] (0,1) to[out=0,in=0,looseness=.6] (0,-1);
		\draw [thick]  (0,-1) to[out=180,in=180,looseness=.6] (0,1);
		\node at (-.1,-1) [below] {\footnotesize $t_0$};
		
		\draw (2.5,1) to[out=0,in=0,looseness=.6] (2.5,-1);
		\draw [dashed] (2.5,-1) to[out=180,in=180,looseness=.6] (2.5,1);
		\node at (2.5,-1) [below] {\footnotesize $\tau_1$};	
		
		\draw [thick] (3,1) to[out=0,in=0,looseness=.6] (3,-1);
		\draw [thick] [dashed] (3,-1) to[out=180,in=180,looseness=.6] (3,1);
		\node at (3,-1) [below] {\footnotesize $t_1$};	
		
		\draw (4.25,1) to[out=0,in=0,looseness=.6] (4.25,-1);
		\draw [dashed] (4.25,-1) to[out=180,in=180,looseness=.6] (4.25,1);
		\node at (4.25,-1) [below] {\footnotesize $\tau_2$};	
		
		\draw [thick] (4.9,1) to[out=0,in=0,looseness=.6] (4.9,-1);
		\draw [thick] [dashed] (4.9,-1) to[out=180,in=180,looseness=.6] (4.9,1);
		\node at (5.1,-1) [below] {\footnotesize $t_2$};	
		
		\draw [thick] (0,1) -- (4.9,1);
		\draw [thick] (0,-1) -- (4.9,-1);	
		
		\draw [decorate,decoration={brace,amplitude=4pt},xshift=0pt,yshift=3pt]
		(-0.1,1) -- (2.95,1) node [black,midway,xshift=0cm,yshift=.4cm] {\footnotesize $f$};	
		\draw [decorate,decoration={brace,amplitude=4pt},xshift=0pt,yshift=3pt]
		(3.05,1) -- (5,1) node [black,midway,xshift=0cm,yshift=.4cm] {\footnotesize $g$};
		\draw [decorate,decoration={brace,amplitude=4pt},xshift=0pt,yshift=-3pt]
		(4.25,-1.4) -- (-.2,-1.4) node [black,midway,xshift=0cm,yshift=-.4cm] {\footnotesize $F'$};
		\draw [decorate,decoration={brace,amplitude=4pt},xshift=0pt,yshift=-3pt]
		(5.1,-2) -- (-.4,-2) node [black,midway,xshift=0cm,yshift=-.4cm] {\footnotesize $F$};
		
		\node at (1.3,0) {$f'$};
		\node at (3.1,0) {$f^\tau$};
		\node at (3.7,0) {$g'$};
		\node at (4.9,0) {$g^\tau$};
	
		\end{tikzpicture}
		\caption{An illustration of \eqref{eqn.diagcomposableproof2}.}
		\label{fig.compcyl}
	\end{figure}
	
	By condition (H), the morphism $[g']$ of $[\cat{C}^\Psi](b_1,\beta_2)$ has an inverse, which we can represent by an element $(g')^{-1} \in \Psi(S^{n-1} \times [\tau_2,t_2] \rel \beta_2 \sqcup b_1)$. We claim the lift $L\colon P \to \Psi(S^{n-1} \times [t_0,t_1] \rel b_0 \sqcup b_1)$ is given by
	\[L \coloneqq (\mr{id}_{S^{n-1}} \times \lambda)_* [((g')^{-1} \circledcirc -) \circ F'],\]
	where $\lambda \colon [t_0,t_2] \to [t_0,t_1]$ is a $\mr{CAT}$-isomorphism satisfying
	\[\lambda(t) = \begin{cases} t+(t_1-t_2) & \text{near $t_2$,} \\
	t & \text{if $t \in [t_0,\tau_1]$.}\end{cases}\]
	To the sake of readibility, we shall abbreviate $(\mr{id}_{S^{n-1}} \times \lambda)_*$ by $\lambda_*$, and do the same for similar maps.
	
	To prove this is indeed the desired map, we have to check that (a) $L|_{\partial P}$ is homotopic to $(f^\tau \circledcirc -) \circ f' = f$, and (b) $(g^\tau \circledcirc g' \circledcirc -) \circ L$ is homotopic to $(g^\tau \circledcirc -) \circ F' = F$. For part (a) we refer to Figure \ref{fig.compcylhomotopy1}, and write 
	\begin{align*}L|_{\partial P} &= \lambda_* [((g')^{-1} \circledcirc -) \circ F'|_{\partial P}] \\
	&= \lambda_*[(g')^{-1} \circledcirc -) \circ (g'  \circledcirc -) \circ (f^\tau \circledcirc -) \circ f'] \\
	&= \lambda_*[(((g')^{-1} \circledcirc g' \circledcirc f^\tau) \circledcirc -) \circ f'] \\
	&= [\lambda_*((g')^{-1} \circledcirc g' \circledcirc f^\tau) \circledcirc -] \circ \lambda_*(f') \\
	&= [\lambda_*((g')^{-1} \circledcirc g' \circledcirc f^\tau) \circledcirc -] \circ f' \\
	& \sim [f^\tau \circledcirc - ] \circ f' = f.\end{align*}
	The first and second step are definitions, the third is associativity of composition. In the fourth step we use compatibility between composition and $\mr{CAT}$-invariance. In the second-to-last step we use that $\lambda$ fixes $[t_0,\tau_1]$ pointwise to see that $\lambda_*(f') = f'$. In the last step we use that $\lambda_* ((g')^{-1} \circledcirc g' \circledcirc f^\tau)$ is homotopic to $f^\tau$, as both are representatives in $\Psi(S^{n-1} \times [\tau_1,t_1]  \rel \beta_1 \sqcup b_1)$ of the morphism $[f^\tau]$ of $[\cat{C}^\Psi]$.
	
	\begin{figure}
		\centering
		\begin{tikzpicture}
		\draw [thick] (0,1) to[out=0,in=0,looseness=.6] (0,-1);
		\draw [thick]  (0,-1) to[out=180,in=180,looseness=.6] (0,1);
		\node at (-.1,-1) [below] {\footnotesize $t_0$};
		
		\draw (2.5,1) to[out=0,in=0,looseness=.6] (2.5,-1);
		\draw [dashed] (2.5,-1) to[out=180,in=180,looseness=.6] (2.5,1);
		\node at (2.5,-1) [below] {\footnotesize $\tau_1$};	
		
		\draw [thick] (3,1) to[out=0,in=0,looseness=.6] (3,-1);
		\draw [thick] [dashed] (3,-1) to[out=180,in=180,looseness=.6] (3,1);
		\node at (3,-1) [below] {\footnotesize $t_1$};	
				
		\draw [thick] (0,1) -- (3,1);
		\draw [thick] (0,-1) -- (3,-1);	
		
		\draw [decorate,decoration={brace,amplitude=4pt},xshift=0pt,yshift=3pt]
		(2.5,1) -- (3.1,1);
		\draw [->] (2.8,1.25) to[out=90,in=135] ([shift=(135:3.3cm)]7.5,0);
		\draw [dotted] (7.5,0) circle (3.3 cm);
		
		\node at (1.6,0) {$f'$};
		\node at (1.5,-1.5) {$L|_{\partial P}$};
		
		\begin{scope}[xshift=5cm]
		\draw (0,1) to[out=0,in=0,looseness=.6] (0,-1);
		\draw (0,-1) to[out=180,in=180,looseness=.6] (0,1);
		\node at (-.1,-1) [below] {\footnotesize $\tau_1$};
		
		\draw (1.5,1) to[out=0,in=0,looseness=.6] (1.5,-1);
		\draw [dashed] (1.5,-1) to[out=180,in=180,looseness=.6] (1.5,1);
		\node at (1.5,-1) [below] {\footnotesize $\lambda(t_1)$};	
		
		\draw (2.8,1) to[out=0,in=0,looseness=.6] (2.8,-1);
		\draw [dashed] (2.8,-1) to[out=180,in=180,looseness=.6] (2.8,1);
		\node at (2.8,-1) [below] {\footnotesize $\lambda(\tau_2)$};	
		
		\draw [thick] (4.5,1) to[out=0,in=0,looseness=.6] (4.5,-1);
		\draw [thick] [dashed] (4.5,-1) to[out=180,in=180,looseness=.6] (4.5,1);
		\node at (4.5,-1) [below] {\footnotesize $t_1 = \lambda(t_2)$};	
		
		\draw [thick] (0,1) -- (4.5,1);
		\draw [thick] (0,-1) -- (4.5,-1);	
		\node at (1,0) [fill=white] {$\lambda_*(f^\tau)$};
		\node at (2.5,0) [fill=white] {$\lambda_*(g')$};
		\node at (4,0) [fill=white] {$\lambda_*((g')^{-1})$};
		\draw [decorate,decoration={brace,amplitude=4pt},xshift=0pt,yshift=3pt]
		(0,1) -- (4.5,1) node [black,midway,xshift=0cm,yshift=.4cm] {homotopic to $f^\tau$};
		\end{scope}

		\end{tikzpicture}
		\caption{An illustration of $L|_{\partial P} \sim f$, using $F'|_{\partial P} = g' \circledcirc f = g' \circledcirc f^\tau \circledcirc f$.}
		\label{fig.compcylhomotopy1}
	\end{figure}
	
	For (b), $(g^\tau \circledcirc g' \circledcirc -) \circ L \sim (g' \circledcirc -) \circ F' = F$, we introduce the notation $\lambda^1 \colon [t_1,t_2] \to [t_2,2t_2-t_1]$ and $\lambda^2 \colon [t_0,2t_2-t_1] \to [t_0,t_2]$ for the $\mr{CAT}$-isomorphisms determined by
	\[\lambda^1(t) =  t+(t_2-t_1), \qquad \text{and} \qquad \lambda^2(t) = \begin{cases} t+(t_1-t_2) & \text{if $t \in [t_2,2t_2-t_1]$,} \\
	\lambda(t) & \text{if $t \in [t_0,t_2]$.} \end{cases}\]
	In other words, $\lambda^1$ is just a translation and $\lambda^2$ an extension of $\lambda$ by a translation. We also pick a $\mr{CAT}$-isomorphism $\lambda^3 \colon [\tau_2,t_2] \to [\lambda(\tau_2),t_2]$ satisfying
	\[\lambda^3(t) = \begin{cases} t+(\lambda(\tau_2)-\tau_2) & \text{near $\tau_2$,} \\
	t & \text{near $t_2$,}\end{cases}\]
	and define a $\mr{CAT}$-isomorphism $\lambda^4 \colon [t_0,t_2] \to [t_0,t_2]$ by
	\[\lambda^4(t) = \begin{cases} \lambda^2(t) = \lambda(t) & \text{if $t \in [t_0,\tau_2]$,} \\
	\lambda^3(t) & \text{if $t \in [\tau_2,t_2]$.}
	\end{cases}\]
	
	Referring to Figure \ref{fig.compcylhomotopy2}, we write
	\begin{align*}(g^\tau \circledcirc g' \circledcirc -) \circ L &= (g^\tau \circledcirc g' \circledcirc -) \circ \lambda_*[((g')^{-1} \circledcirc -) \circ F'] \\
	&= \lambda^2_*[(\lambda^1_*(g^\tau \circledcirc g') \circledcirc -) \circ ((g')^{-1} \circledcirc -) \circ F'] \\
	&= (\lambda^2_*[\lambda^1_*(g^\tau \circledcirc g') \circledcirc (g')^{-1}] \circledcirc -) \circ \lambda^2_*(F') \\
	& \sim [\lambda^3_*(g^\tau) \circledcirc -] \circ \lambda^2_*(F') \\
	& = \lambda^4_*[(g^\tau \circledcirc -) \circ F'] \\
	& \sim (g^\tau \circledcirc -) \circ F' = F.\end{align*}
	The first three steps follow from the definitions, the fourth uses $\lambda^2_* [\lambda^1_*(g^\tau \circ g') \circ (g')^{-1}]$ and $\lambda^3_*(g^\tau)$ are both representatives in $\Psi(S^{n-1} \times [\tau_2,t_2] \rel \beta_2 \sqcup b_2)$ of the morphism $[g^\tau]$ in $[\cat{C}^\Psi]$. The second-to-last is a definition, and the last step uses that $\mr{CAT}$-invariance and the fact that $\lambda^4$ is isotopic to the identity. \\

	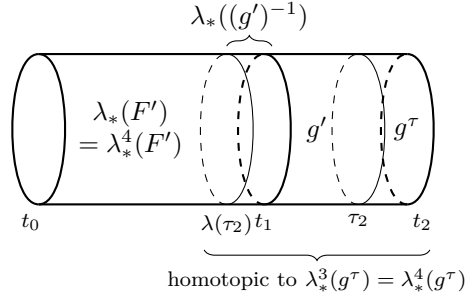
\begin{figure}
		\centering
		\begin{tikzpicture}
		\draw [thick] (0,1) to[out=0,in=0,looseness=.6] (0,-1);
		\draw [thick]  (0,-1) to[out=180,in=180,looseness=.6] (0,1);
		\node at (-.1,-1) [below] {\footnotesize $t_0$};
		
		\draw (2.5,1) to[out=0,in=0,looseness=.6] (2.5,-1);
		\draw [dashed] (2.5,-1) to[out=180,in=180,looseness=.6] (2.5,1);
		\node at (2.5,-1) [below] {\footnotesize $\lambda(\tau_2)$};	
		
		\draw [thick] (3,1) to[out=0,in=0,looseness=.6] (3,-1);
		\draw [thick] [dashed] (3,-1) to[out=180,in=180,looseness=.6] (3,1);
		\node at (3,-1) [below] {\footnotesize $t_1$};	
		
		\draw (4.25,1) to[out=0,in=0,looseness=.6] (4.25,-1);
		\draw [dashed] (4.25,-1) to[out=180,in=180,looseness=.6] (4.25,1);
		\node at (4.25,-1) [below] {\footnotesize $\tau_2$};	
		
		\draw [thick] (4.9,1) to[out=0,in=0,looseness=.6] (4.9,-1);
		\draw [thick] [dashed] (4.9,-1) to[out=180,in=180,looseness=.6] (4.9,1);
		\node at (5.1,-1) [below] {\footnotesize $t_2$};	
		
		\draw [thick] (0,1) -- (4.9,1);
		\draw [thick] (0,-1) -- (4.9,-1);
		
		\node at (1.2,0) {\parbox{1.5cm}{\centering $\lambda_*(F')$ \\ $=\lambda^4_*(F')$}};
		\node at (3.7,0) {$g'$};
		\node at (4.9,0) {$g^\tau$};
		
		\draw [decorate,decoration={brace,amplitude=4pt},xshift=0pt,yshift=3pt]
		(2.5,1) -- (3.1,1) node [black,midway,xshift=0cm,yshift=.4cm] {$\lambda_*((g')^{-1})$};
		
		\draw [decorate,decoration={brace,amplitude=4pt},xshift=0pt,yshift=-3pt]
		(5.2,-1.5) -- (2.2,-1.5) node [black,midway,xshift=0cm,yshift=-.4cm] {\footnotesize homotopic to $\lambda^3_*(g^\tau) = \lambda^4_*(g^\tau)$};
		
		\end{tikzpicture}
		\caption{An illustration of $(g^\tau \circledcirc g' \circledcirc -) \circ L \simeq F$. It explains why $(g^\tau \circledcirc g' \circledcirc -) \circ L$ is homotopic to $\lambda^4_*(g^\tau \circledcirc F)$, and since $\lambda^4 \colon [t_0,t_2] \to [t_0,t_2]$ is isotopic to $\mr{id}_{[t_0,t_2]}$ $\mr{CAT}$-invariance finishes the argument.}
		\label{fig.compcylhomotopy2}
	\end{figure}
	
	We continue with part (ii). The argument for part (i) does not give a weak equivalence because $(g \circledcirc - ) \circ L$ is not homotopic to $F$ rel $\partial P$. It does show that $g \circledcirc -$ is injective on homotopy groups, by Lemma \ref{lem.nearequivalence}.
	
	For surjectivity on homotopy groups under condition (W), fix a fillable path component $P$ of $\cat{C}^\Psi$ and identity element $\iota_P \in \cat{C}^\Psi(\la t_s,b_s \ra,\la t_e,b_e \la)$ in $P$. Let us first consider a morphism $g \in \cat{C}^\Psi(\la t_s,b_s \ra, \la t,b \ra )$ in $P$ with $t_s<t<t_e$. Note that
	\[(\iota_P \circledcirc -) \sim (h \circledcirc - ) \circ (g \circledcirc -)\]
	where $h$ is a representative in $\Psi(S^{n-1} \times [t,t_e] \rel b \sqcup b_e)$ of the morphism $[\iota_P \circ g^{-1}]$ in $[\cat{C}^\Psi]$. As $\iota_P \circledcirc -$ was assumed to be a weak equivalence, $h \circledcirc -$ is surjective on homotopy groups. But it was already known to be injective on homotopy groups, and hence is a weak equivalence. By the 2-out-of-3 property for weak equivalences, so is $g \circledcirc -$. Similarly, $g \circledcirc -$ is a weak equivalence for any $g \in \cat{C}^\Psi(\la t,b \ra, \la t_e,b_e \ra)$ in $P$ with $t_s<t<t_e$.
	
	Now suppose we are given an arbitrary morphism $g \in \cat{C}^\Psi( \la t_0,b_0 \ra, \la t_1,b_1 \ra)$ in $P$. Pick arbitrary $t_s<\tau_0<\tau_1<t_e$, then there is a $\mr{CAT}$-automorphism $\lambda \colon [t_0,t_1] \to [\tau_0,\tau_1]$ that satisfies
	\[\lambda(t) = \begin{cases} t+(\tau_0-t_0) & \text{near $t_0$,} \\
	t+(\tau_1-t_1) & \text{near $t_1$.}\end{cases}\]
	By $\mr{CAT}$-invariance, $g \circledcirc -$ is a weak equivalence if and only if $\lambda_* g \circledcirc -$ is. Thus we may assume $t_s<t_0<t_1<t_e$. The element $\lambda_* g$ lies in $P$ as well, and thus there is a zigzag of morphisms in $[\cat{C}^\Psi]$ between $b_s$ and $b_0$. Since $[\cat{C}]^\Psi$ is a groupoid, this implies there is a morphism from $b_s$ to $b_0$. Take a representative $k$ of this morphism in $\Psi(S^{n-1} \times [t_s,t_0] \rel b_s \sqcup b_0)$. Taking $h$ a representative of $[\iota_P \circ (g \circ k)^{-1}]$ in $\Psi(S^{n-1} \times [t_1,t_e] \rel b_1 \sqcup b_e)$, we get that
	\[(\iota_P \circledcirc -) \sim (h \circledcirc - ) \circ (g \circledcirc - ) \circ (k \circledcirc -).\]
	Since all of the maps except $g \circledcirc -$ are weak equivalences, so is $g \circledcirc -$.
\end{proof}

\subsection{Deformation lemma's}

The following lemma's will be useful in our proofs. They will be used to deform families to families that are locally constant in the parameter space.

\begin{lemma}\label{lem.ak} Suppose we are given a microflexible $\mr{CAT}$-invariant topological sheaf $\Psi$, an open neighborhood $U$ of $S^{n-1} \times \{0\}$ whose closure $\bar{U}$ is contained in $S^{n-1} \times (-1/2,1/2)$, and a family $f \colon \Delta^i \to \Psi(S^{n-1} \times \bR)$.
	
	Then there exists an open neighborhood $V$ of the barycenter $\beta$ contained in the interior of $\Delta^i$, such that for each $\mr{CAT}$-function $\eta \colon \Delta^i \to [0,1]$ with $\mr{supp}(\eta) \subset V$ and $\eta(\beta) = 1$, there is a homotopy $f_s \colon \Delta^i \times [0,1] \to \Psi(S^{n-1} \times \bR)$ satisfying
	\begin{enumerate}[(i)]
		\item \emph{the homotopy starts at $f$}: $f_0 = f$,
		\item \emph{the homotopy is rel $\partial \Delta^i$}: for all $d \in \Delta^i \setminus V$ and $s \in [0,1]$ we have $f_s(d) = f_0(d)$,
		\item \emph{the endpoint of the homotopy is constant near the barycenter}: for all $d \in \eta^{-1}(1)$, we have $f_1(d)|_U = f_1(\beta)|_U$,
		\item \emph{the homotopy is compactly supported}: for all $d \in \Delta^i$ and $s \in [0,1]$, with $W \coloneqq S^{n-1} \times ((-\infty,-1) \cup (1,\infty))$ we have $f_s(d)|_W = f_0(d)|_W$.\end{enumerate}
\end{lemma}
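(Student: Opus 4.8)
The plan is to perform a contraction of $\Delta^i$ onto its barycenter, localized by the cutoff $\eta$, apply it to $f$ on the compact core $\bar U$, and then promote this to an honest homotopy of $f$ on all of $M$ that is stationary on $W$, using microflexibility together with Gromov's $h$-principle on open manifolds. Concretely, I would first fix $c\in(0,1/2)$ with $\bar U\subset S^{n-1}\times(-c,c)$, put $L:=S^{n-1}\times[-1,1]$ and $Q:=\bar U\cup S^{n-1}\times([-1,-7/8]\cup[7/8,1])$, so that $\bar U\subset Q\subset L$ are compact in $M=S^{n-1}\times\bR$. I would take $V$ to be any open convex neighbourhood of $\beta$ whose closure lies in the interior of $\Delta^i$ (this $V$ does not depend on $\eta$). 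Given $\eta$, I would form the CAT map $H\colon\Delta^i\times[0,1]\to\Delta^i$, $H(d,s)=(1-s\,\eta(d))\,d+s\,\eta(d)\,\beta$ (affine combination in $\Delta^i$; it lands in $\Delta^i$ because $\bar V$ is convex, lies in the interior, and $\mr{supp}(\eta)\subset V$), which satisfies $H(\cdot,0)=\mr{id}$, $H(d,s)=d$ for all $s$ whenever $\eta(d)=0$ (so in particular for $d\notin V$), and $H(d,1)=\beta$ whenever $\eta(d)=1$.

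The crucial step is to observe that the restriction $\Psi(L\subset M)\to\Psi(Q\subset M)$ is a \emph{Serre} fibration, not merely a microfibration. The neighbourhoods of $L$, of $Q$, and of the strata computing the fibres of this map are all products of $S^{n-1}$ with open intervals, hence open manifolds; so Gromov's $h$-principle on open manifolds (Theorem \ref{thm.gromovprecise}) shows that the comparison map from $\Psi$ to Gromov's flexible sheaf $\Psi^f$ is a weak equivalence on each of them, and therefore on the relevant colimits and on fibres. Since $\Psi$ is microflexible this restriction map is a microfibration, since $\Psi^f$ is flexible the $\Psi^f$-analogue is a Serre fibration, and the two agree up to weak equivalence on fibres; hence the criterion of \cite{raptis} used in the proof of Lemma \ref{lem.flexibilityopen} applies and gives that $\Psi(L\subset M)\to\Psi(Q\subset M)$ is a Serre fibration. (The passage from $\bar U$ to $Q$, and the later gluings, are handled by the sheaf property, treating the two outer pieces of $Q$ separately.)

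With full homotopy lifting available, I would take the homotopy $h\colon\Delta^i\times[0,1]\to\Psi(Q\subset M)$ equal to $f(H(d,s))|_{\bar U}$ near $\bar U$ and equal to the $s$-independent $f(d)|$ near $S^{n-1}\times([-1,-7/8]\cup[7/8,1])$, glued by the sheaf property; then $h(\cdot,0)=f|_Q$, the homotopy $h$ is $s$-independent over a closed neighbourhood $A$ of $\Delta^i\setminus V$ disjoint from $\mr{supp}(\eta)$, and $h(d,1)|_{\bar U}=f(\beta)|_{\bar U}$ for $d\in\eta^{-1}(1)$. Lifting $h$ relative to $\Delta^i\times\{0\}$ (where the lift is $f|_L$) and relative to $A\times[0,1]$ (where I take the constant lift $f|_L$) produces $\tilde h\colon\Delta^i\times[0,1]\to\Psi(L\subset M)$ lifting $h$, starting at $f|_L$, and $s$-independent over $A$; moreover, since $S^{n-1}\times\{\pm1\}\subset Q$, the germ of $\tilde h$ near $S^{n-1}\times\{\pm1\}$ is forced to equal the $s$-independent $f(d)|$, so $\tilde h$ glues with the constant homotopy $(d,s)\mapsto f(d)$ on $M\setminus \mathring L$ to a homotopy $f_s\colon\Delta^i\to\Psi(M)$. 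Then (i)--(iv) follow by inspection: $f_0=f$; $f_s(d)=f(d)$ for $d\notin V$; $f_s|_W=f_0|_W$ because $W\subset M\setminus \mathring L$ up to the region on which $\tilde h$ already equals $f$; and for $d\in\eta^{-1}(1)$ one has $f_1(d)|_U=\tilde h(d,1)|_U=h(d,1)|_U=f(\beta)|_U=f_1(\beta)|_U$, using $U\subset\bar U$ and $\beta\in\eta^{-1}(1)$.

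The delicate point — the main obstacle — is the promotion of the restriction map to a genuine Serre fibration: with bare microflexibility one only obtains a lift of $h$ over $\Delta^i\times[0,\epsilon]$, and there is no reason for $\epsilon$ to be large enough for the contraction $H$ to reach its endpoint, so condition (iii) would fail. Once one has a Serre fibration, the relative lifting with the two stationarity constraints, and the choice of neighbourhoods of $\bar U$, of $S^{n-1}\times([-1,-7/8]\cup[7/8,1])$, and of $\partial L$ that make all the gluings via the sheaf property legitimate, are routine bookkeeping.
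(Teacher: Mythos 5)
There is a genuine gap: the central claim that the restriction map $\Psi(L\subset M)\to\Psi(Q\subset M)$ is a Serre fibration does not follow from what you say, and the proposed justification is circular with the main theorem of the paper. To apply the Raptis criterion as in Lemma \ref{lem.flexibilityopen} you need its hypothesis (iii), namely that the induced map on \emph{fibers} is a weak equivalence. But the fiber of $\Psi(L\subset M)\to\Psi(Q\subset M)$ over $a$ is, via the sheaf property, a space of germs on compact cylinders $S^{n-1}\times[\tau,\tau']$ subject to boundary conditions at \emph{both} ends. Theorem \ref{thm.gromovprecise} does not apply to these: there the set $M\setminus A$ has path components with compact closure, which is exactly the excluded case. (These cylinder spaces with two-sided boundary conditions are the morphism spaces of $\cat{C}^\Psi$, and proving that $j$ is an equivalence on them is precisely what conditions (H)/(W) and the delooping machinery of Theorem \ref{thm.main} are designed to do; the present lemma assumes only microflexibility, so one cannot appeal to that here.) Your remark that the comparison is ``a weak equivalence on each of them, and therefore on the relevant colimits and on fibres'' slips from weak equivalences on totals and bases to a weak equivalence on fibers, which requires the very fibration property you are trying to establish. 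As a sanity check: if such restriction maps were Serre fibrations for every microflexible $\Psi$, then by Definition \ref{def.flex} every microflexible sheaf would be flexible, and the rest of the paper would be unnecessary. A secondary symptom of the same problem is your assertion that \emph{any} convex neighbourhood $V$ of $\beta$ works independently of $f$; the correct $V$ has to depend on $f$.

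For comparison, the paper's proof avoids needing any fibration upgrade. Choose a family $\lambda\colon[0,1]\to\mr{CAT}(\Delta^i,\Delta^i)$ fixing $\partial\Delta^i$ pointwise and collapsing a growing ball about $\beta$, and define $F\colon\Delta^i\times[0,1]\to\Psi(\bar W\sqcup\bar U\subset S^{n-1}\times\bR)$ by $F(d,s)=f(d)$ on $\bar W$ and $F(d,s)=f(\lambda(s)(d))$ on $\bar U$. Because $\lambda(s)$ fixes $\partial\Delta^i$ and $\beta$, the map $F$ already admits a lift along $\Psi(S^{n-1}\times\bR)\to\Psi(\bar W\sqcup\bar U)$ over the closed set $(\Delta^i\times\{0\})\cup(\partial\Delta^i\times[0,1])\cup(\{\beta\}\times[0,1])$, given simply by $f\circ\pi_1$. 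Microflexibility extends this partial lift to a map $L$ on an open neighbourhood $Y$ of that closed set. Since $\{\beta\}\times[0,1]\subset Y$ and $[0,1]$ is compact, there is an open $V\ni\beta$ with $V\times[0,1]\subset Y$; this is where the neighbourhood $V$ of the lemma comes from. The homotopy is then $(d,t)\mapsto L(d,t\eta(d))$, which stays in $Y$ since $\mr{supp}(\eta)\subset V$. The cutoff $\eta$ is precisely what absorbs the ``smallness'' of the microfibration lift into the choice of $V$, so no Serre fibration is needed.
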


\begin{proof}Pick a family of $\mr{CAT}$-maps $\lambda \colon [0,1] \to \mr{CAT}(\Delta^i,\Delta^i)$  satisfying:
	\begin{enumerate}[(i)]
		\item $\lambda(0) = \mr{id}$,
		\item $\lambda(s)|_{\partial \Delta^i} = \mr{id}$,
		\item for $s \geq 1/4$, $\lambda(s)$ collapses $\Delta^i_{s-1/4}(\beta)$ to the barycenter.
	\end{enumerate}
	Then we can define a map
	\begin{align*} F \colon \Delta^i \times [0,1] &\longto \Psi\left(\bar{W} \sqcup \bar{U} \subset S^{n-1} \times \bR\right) \\
	(d,s) &\longmapsto \begin{cases} f(d) & \text{on $\bar{W}$,} \\
	f(\kappa_s(d)) & \text{on $\bar{U}$.}\end{cases}\end{align*}
	By construction this fits into a commutative diagram
	\[\begin{tikzcd} (\Delta^i \times \{0\}) \cup (\partial \Delta^i \times [0,1]) \cup (\{\beta\} \times [0,1]) \rar{f \circ \pi_1} \dar] &  \Psi(S^{n-1} \times \bR) \dar \\
	\Delta^i \times [0,1] \rar[swap]{F} & \Psi(\bar{W} \sqcup \bar{U} \subset S^{n-1} \times \bR).\end{tikzcd}\] 
	 Since the right map is a microfibration, there exists an open neighborhood $Y$ of $(\Delta^i \times \{0\}) \cup (\partial \Delta^i \times [0,1]) \cup (\{\beta\} \times [0,1])$ in $\Delta^i \times [0,1]$ and a partial lift $L \colon Y \to \Psi(S^{n-1} \times \bR)$. Since $Y$ contains $\{\beta\} \times [0,1]$, there is an open neighborhood $V \subset \mr{int}(\Delta^i)$ of $\beta$ such that $V \times [0,1] \subset Y$. Then the map $\Delta^i \times [0,1] \ni (d,t) \mapsto L(d,t\eta(d))$ is the desired homotopy. \end{proof}


\begin{lemma}\label{lem.akflex} Suppose we are given a map $j \colon \Psi \to \Gamma$ of $\mr{CAT}$-invariant topological sheaves with $\Psi$ microflexible and $\Gamma$ flexible. Suppose we are further given an open neighborhood $U$ of $S^{n-1} \times \{0\}$ whose closure $\bar{U}$  is contained in $S^{n-1} \times (-1/2,1/2)$. If $j \colon \Psi(S^{n-1} \times \bR) \to \Gamma(S^{n-1} \times \bR)$ is a $\pi_0$-surjection, then for any family $f \colon \Delta^i \to \Gamma(S^{n-1} \times \bR)$, there exist
	\begin{itemize}
		\item an open neighborhood $V$ of the barycenter $\beta$ contained in the interior of $\Delta^i$,
		\item an open neighborhood $V'$ of the barycenter $\beta$ contained in $V$, 
		\item an element $g \in \Psi(S^{n-1} \times \bR)$,
	\end{itemize} 
	such that for each $\mr{CAT}$-function $\lambda \colon \Delta^i \to [0,1]$ with $\mr{supp}(\lambda) \subset V'$ and $\lambda(\beta) = 1$, there is a homotopy $f_s \colon \Delta^i \times [0,1] \to \Psi(S^{n-1} \times \bR)$ satisfying
	\begin{enumerate}[(i)]
		\item \emph{the homotopy starts at $f$}: $f_0 = f$,
		\item \emph{the homotopy is rel $\partial \Delta^i$}: for all $d \in \Delta^i \setminus V$ and $s \in [0,1]$ we have $f_s(d) = f_0(d)$,
		\item \emph{the endpoint of the homotopy is constant near the barycenter}: for all $d \in \lambda^{-1}(1)$, we have $f_1(d)|_{U} = f_1(\beta)|_{U}$,
		\item \emph{the endpoint of the homotopy equal $g$ near the barycenter}: $f_1(\beta)|_{U} = j(g)|_{U}$,
		\item \emph{the homotopy is compactly supported}: for all $d \in \Delta^i$ and $s \in [0,1]$, with $W \coloneqq S^{n-1} \times ((-\infty,-1) \cup (1,\infty))$ we have $f_s(d)|_W = f_0(d)|_W$.\end{enumerate}
\end{lemma}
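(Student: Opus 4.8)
The plan is to obtain constancy near the barycenter by a direct application of Lemma~\ref{lem.ak}, and then to use the flexibility of $\Gamma$ to deform the resulting constant value into the image of $j$ in a compactly supported way. Since $g$ must be produced before $\lambda$, the auxiliary data has to be set up uniformly in $\lambda$.

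First I would fix an open neighbourhood $U_1$ of $S^{n-1}\times\{0\}$ with $\bar U\subset U_1$ and $\bar U_1\subset S^{n-1}\times(-\tfrac12,\tfrac12)$. As $\Gamma$ is flexible, hence microflexible, Lemma~\ref{lem.ak} applies to $\Gamma$ with $U_1$ in place of $U$; it produces a neighbourhood $V_1\ni\beta$, and I set $V\coloneqq V_1$ and pick an open $V'\ni\beta$ with $\bar{V'}\subset V_1$. I then fix once and for all a $\mr{CAT}$-function $\eta_0\colon\Delta^i\to[0,1]$ that is identically $1$ on a neighbourhood of $\bar{V'}$ and has $\mr{supp}(\eta_0)\subset V_1$, and let $h^0_s$ be the corresponding homotopy from $f$ to a family $f'\coloneqq h^0_1$ provided by Lemma~\ref{lem.ak}: thus $h^0$ is rel $\Delta^i\setminus V_1$, is constant on $W\coloneqq S^{n-1}\times((-\infty,-1)\cup(1,\infty))$, and satisfies $f'(d)|_{U_1}=f'(\beta)|_{U_1}$ for all $d\in\eta_0^{-1}(1)$. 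The point of using a \emph{fixed} $\eta_0$ is that then $\gamma\coloneqq f'(\beta)\in\Gamma(S^{n-1}\times\bR)$ depends only on the choices made so far and not on the eventual $\lambda$. Using that $j\colon\Psi(S^{n-1}\times\bR)\to\Gamma(S^{n-1}\times\bR)$ is a $\pi_0$-surjection, I choose $g\in\Psi(S^{n-1}\times\bR)$ with $j(g)$ in the path component of $\gamma$ together with a path $\alpha\colon[0,1]\to\Gamma(S^{n-1}\times\bR)$ from $\gamma$ to $j(g)$. This fixes the required data $V,V',g$.

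Now suppose a $\mr{CAT}$-function $\lambda$ with $\mr{supp}(\lambda)\subset V'$ and $\lambda(\beta)=1$ is given. Since $\mr{supp}(\lambda)\subset V'\subset\eta_0^{-1}(1)$, the family $f'$ already equals the constant $\gamma$ on $U_1$ over all of $\mr{supp}(\lambda)$. I put $\bar K\coloneqq \bar U_1\sqcup\big(S^{n-1}\times([-1,-\tfrac12]\cup[\tfrac12,1])\big)$, a compact proper subset of $S^{n-1}\times[-1,1]$; by flexibility of $\Gamma$ the restriction $R\colon\Gamma(S^{n-1}\times[-1,1]\subset S^{n-1}\times\bR)\to\Gamma(\bar K\subset S^{n-1}\times\bR)$ is a Serre fibration. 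I then define a homotopy $b\colon\Delta^i\times[0,1]\to\Gamma(\bar K\subset S^{n-1}\times\bR)$ equal to $f'(d)$ (independently of $s$) on the outer collar $S^{n-1}\times([-1,-\tfrac12]\cup[\tfrac12,1])$, and on $\bar U_1$ equal to $\alpha(\lambda(d)s)|_{\bar U_1}$ on a neighbourhood of $\mr{supp}(\lambda)$ and to $f'(d)|_{\bar U_1}$ elsewhere; the two prescriptions on $\bar U_1$ agree where both apply because $\lambda$ vanishes there and $f'|_{U_1}$ equals $\gamma=\alpha(0)$ over $\eta_0^{-1}(1)$. Then $b_0=R\circ f'$ and $b$ is stationary outside $\mr{supp}(\lambda)$, so the homotopy lifting property of $R$ (applied relative to the region where $b$ is stationary, after the usual shrinking of neighbourhoods) yields a lift $\tilde b\colon\Delta^i\times[0,1]\to\Gamma(S^{n-1}\times[-1,1]\subset S^{n-1}\times\bR)$ with $\tilde b_0=f'(-)|_{S^{n-1}\times[-1,1]}$. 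Because $\tilde b_s(d)$ agrees with $f'(d)$ near $S^{n-1}\times\{\pm1\}$, gluing it with $f'(d)|_W$ via the sheaf property of $\Gamma$ gives a homotopy $h^1_s$ starting at $f'$, stationary on $W$, rel $\Delta^i\setminus\mr{supp}(\lambda)$, with $h^1_1(d)|_U=\alpha(1)|_U=j(g)|_U$ for all $d\in\lambda^{-1}(1)$. Finally I take $f_s$ to be the concatenation of $h^0$ and $h^1$, reparametrised to lie over $[0,1]$: properties (i), (ii) and (v) hold for each half and hence for $f_s$ (using $\mr{supp}(\lambda)\subset V$), while (iii) and (iv) hold because $f_1=h^1_1$ equals the constant $j(g)|_U$ on $U$ over the neighbourhood $\lambda^{-1}(1)$ of $\beta$.

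I expect the crux to be the grafting in the last paragraph: moving the constant value from $\gamma$ to $j(g)$ \emph{only near the barycenter} while \emph{not disturbing the germ at infinity}, at the same time. Microflexibility of $\Psi$ — which already powers Lemma~\ref{lem.ak} — does not suffice here, since it only yields a lift over $\Delta^i\times[0,\varepsilon]$; one genuinely needs the full homotopy lifting property coming from flexibility of $\Gamma$ in order to interpolate, over the gap $\big(S^{n-1}\times[-1,1]\big)\setminus\bar K$, between the prescribed motion along $\alpha$ on $\bar U_1$ and the stationary prescription on the outer collar. The rest — arranging that $U_1,V,V',\eta_0$ (hence $\gamma,g,\alpha$) are chosen before $\lambda$ is revealed, and the routine neighbourhood manipulations underlying the relative lift and the sheaf gluing — is bookkeeping.
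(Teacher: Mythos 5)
Your strategy coincides with the paper's: first apply Lemma~\ref{lem.ak} (with a fixed $\eta_0$, so that $V,V',g$ can be chosen before $\lambda$ is revealed) to make the family constant near the barycenter on a neighborhood of $\bar U$, then use $\pi_0$-surjectivity of $j$ to produce $g$ and a path $\alpha$ in $\Gamma(S^{n-1}\times\bR)$ from the constant value to $j(g)$, and finally use flexibility of $\Gamma$ to realize the motion along $\alpha$ near the barycenter while freezing everything else. Where you differ is in how the last step is executed. The paper lifts a single path $[0,1]\to\Gamma(\bar W\sqcup\bar U\subset S^{n-1}\times\bR)$ to $\ell\colon[0,1]\to\Gamma(S^{n-1}\times\bR)$ and then writes the homotopy as $\ell(\lambda(d)\cdot s)$ where $\lambda>0$, whereas you lift the whole $\Delta^i$-parameter family against the restriction $\Gamma(S^{n-1}\times[-1,1]\subset\cdot)\to\Gamma(\bar K\subset\cdot)$ with the ``frozen'' outer collar built into the base of the lifting square. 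Your variant makes conditions (ii) and (v) manifest directly from the lift, at the cost of a slightly larger fibration; the paper's path lift is terser but, read literally, forgets the $d$-dependence away from $\bar U$ and so has to be spliced onto $f_0(d)$ on $W$ by an additional gluing that is left implicit.

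One small slip: you put $\bar U_1$ into $\bar K$ and argue the two prescriptions on $\bar U_1$ agree because $f'(d)$ and $\gamma$ agree on $U_1$ over $\eta_0^{-1}(1)$. That gives equality in $\Gamma(U_1)$, not in $\Gamma(\bar U_1\subset\cdot)$, which by Definition~\ref{def.psicorners} is a colimit over \emph{open neighborhoods of} $\bar U_1$; the open set $U_1$ is not a neighborhood of its own closure. The fix is immediate: use $\bar U$ (the closure of the given $U$) in place of $\bar U_1$ in the definition of $\bar K$. Since $U_1$ is an open neighborhood of $\bar U$, constancy on $U_1$ does give equality in $\Gamma(\bar U\subset\cdot)$, and you still get (iii) and (iv) because $U\subset\bar U$. (Alternatively, apply Lemma~\ref{lem.ak} with a yet larger $U_2$ satisfying $\bar U_1\subset U_2$ and $\bar U_2\subset S^{n-1}\times(-1/2,1/2)$.) With that adjustment the argument is sound.
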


\begin{proof}We apply Lemma \ref{lem.ak} to $f$ to obtain $V$; then after picking an $\eta$ that is $1$ near $\beta$ and supported in $V$, we may assume without loss of generality that $f_0(d)$ is independent of $d$ on $U$ in a neighborhood $V'$ of the barycenter. 
	
	The hypothesis that $j$ is a $\pi_0$-surjection implies that that for any $s_0 \in \Gamma(S^{n-1} \times \bR)$ we can find a path $s_t \colon [0,1] \longto \Gamma(S^{n-1} \times \bR)$ such that $s_1 = j(g)$ for some $g \in \Psi(S^{n-1} \times \bR)$. We may then define a map 
	\begin{align*}S \colon [0,1] &\longto \Gamma(\bar{W} \sqcup \bar{U} \subset S^{n-1} \times \bR) \\
	t &\longmapsto \begin{cases} s_0 & \text{on $\bar{W}$,} \\
	s_t & \text{on $\bar{U}$.}\end{cases}\end{align*}
	This fits into a commutative diagram
	\[\begin{tikzcd} \{0\} \rar{s_0}  \dar & \Gamma(S^{n-1} \times \bR) \dar \\ {[0,1]} \arrow[dotted]{ur} \rar[swap]{S} & \Gamma(\bar{W} \sqcup \bar{U} \subset S^{n-1} \times \bR) \end{tikzcd}\]
	and flexibility provides a dotted lift $\ell$. Our homotopy is then given by 
	\[(d,s) \longmapsto \begin{cases} f_0(d) & \text{if $\lambda(d) = 0$,} \\
	\ell(\lambda(d)\cdot s) & \text{if $\lambda(d)\neq 0$.}\end{cases}\]
\end{proof}

\subsection{Resolving}\label{sec:resolving} Let $M$ be manifold with embedding $\alpha \colon S^{n-1} \times \bR \hookrightarrow M$. Further suppose we have a closed subset $A \subset M$ disjoint from the image of $\alpha$, and a boundary condition $a \in \cB_\Psi(A \subset M)$. We will ``resolve'' the space $\Psi(M \rel a)$ by cutting along lines $S^{n-1} \times \{t\}$. The result will be a weakly-equivalent semisimplicial space, and for background on semisimplicial spaces we recommend \cite{rwebertsemi}.

\begin{definition}The augmented semisimplicial space $\Psi(M \rel a,\alpha)_\bullet$ is defined as follows.
	\begin{itemize}
		\item The space of $p$-simplices is given by a disjoint union over the indexing set of $(p+1)$-tuples $(\la t_0,b_0 \ra,\ldots,\la t_p,b_p\ra)$ of real numbers $t_i$ and boundary conditions $b_i \in \cB_\Psi(S^{n-1} \times \{t_i\})$ so that $t_0<\ldots<t_p$, of terms given by
		\[\Psi\left(M \setminus S^{n-1} \times [t_0,t_p] \rel b_0 \sqcup b_p \sqcup a\right) \times \prod_{i=0}^{p-1} \Psi\left(S^{n-1} \times [t_i,t_{i+1}] \rel b_i \sqcup b_{i+1}\right).\]
		\item The $(-1)$-simplices are given by $\Psi(M \rel a)$.
		\item The $i$th face map forgets $t_i$ and $b_i$, and uses the sheaf property to glue the elements of $\Psi$.
		\item The augmentation map $\epsilon$ forgets the real numbers and boundary conditions,  and uses the sheaf property to glue the elements of $\Psi$.
	\end{itemize}
\end{definition}

\begin{proposition}\label{prop.resolve}Suppose that $\Psi$ is a $\mr{CAT}$-invariant microflexible sheaf. Then the augmentation $\epsilon$ induces a weak equivalence $\epsilon \colon ||\Psi(M \rel a,\alpha)_\bullet|| \to \Psi(M \rel a)$.\end{proposition}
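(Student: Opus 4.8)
The plan is to show that $\epsilon$ is a microfibration all of whose fibers are weakly contractible, and then to conclude as follows: comparing $\epsilon$ with $\mr{id}_{\Psi(M\rel a)}$ via the result of \cite{raptis} quoted in the proof of Lemma \ref{lem.flexibilityopen} (take $p=\epsilon$, $p'=\mr{id}$, and note the induced map on fibers over $\varphi$ is the map from a weakly contractible space to a point) shows that such a map is a Serre fibration, hence — having weakly contractible fibers — a weak equivalence.

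First I would compute the fibers. Fix $\varphi\in\Psi(M\rel a)$. Because $\Psi$ is a sheaf, an element of $\Psi(M\rel a)$ is determined by, and is reconstructed from, its restrictions to $M\setminus\alpha(S^{n-1}\times[t_0,t_p])$ and to the cylinders $\alpha(S^{n-1}\times[t_i,t_{i+1}])$; hence a $p$-simplex of $\Psi(M\rel a,\alpha)_\bullet$ lying over $\varphi$ carries no information beyond the tuple $t_0<\dots<t_p$ of cut levels, the boundary conditions $b_i$ and the pieces of $\Psi$ being forced to be the germs and restrictions of $\varphi$. Thus $\epsilon^{-1}(\varphi)$ is the geometric realization of the semisimplicial space $[p]\mapsto\{(t_0,\dots,t_p)\in\bR^{p+1}:t_0<\dots<t_p\}$ with face maps deleting coordinates. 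This semisimplicial space admits the extra degeneracy $(t_0,\dots,t_p)\mapsto(t_0-1,t_0,\dots,t_p)$, so its realization is contractible by the standard argument (see \cite{rwebertsemi}); concretely, every compact subset lies in the order complex of some interval $[a,b]\subset\bR$, which is a cone on its top element and hence contractible, so the realization is weakly contractible.

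The substantial step is that $\epsilon$ is a microfibration, and this is where microflexibility together with the deformation Lemma \ref{lem.ak} enter. The obstruction to a naive lift of a family $G\colon\Delta^i\times[0,1]\to\Psi(M\rel a)$ is that the germ of $G(d,s)$ along a fixed level $S^{n-1}\times\{t\}$ varies with the parameters, so one cannot merely cut $G$ along fixed levels. The remedy is to use Lemma \ref{lem.ak}, applied near the chosen cut levels, simplexwise over a sufficiently fine subdivision of $\Delta^i$, and compactly supported away from the part of the lift already built, to deform $G$ so that it is locally constant near those levels; along such levels the operation of cutting is continuous, and microflexibility of $\Psi$ lets one extend the resulting pieces — which live on compact cylinders and on $M$ minus the cut region — over a short time interval $[0,\delta]$, with $\delta$ below the finite infimum of the times for which all the (finitely many) extensions exist. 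Where the subdivision forces incompatible local germs, one passes to higher simplices of the resolution, organizing the choices using the contractibility of the ordered-cut complex established in the previous paragraph.

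The main obstacle is exactly this microfibration argument — the bookkeeping of cut levels and boundary conditions across the subdivision of $\Delta^i$: choosing enough cut levels, applying Lemma \ref{lem.ak} in an order compatible with the simplicial structure, and checking that the extensions furnished by microflexibility agree on the overlaps of simplices. An alternative that packages this bookkeeping into a black box would be to verify the hypotheses (surjectivity of the augmentation together with contractibility of the spaces of cuts) of a general resolution theorem for augmented semisimplicial spaces from \cite{rwebertsemi} and quote it directly.
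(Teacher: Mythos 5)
Your fiber computation is correct, and the Raptis reduction would indeed finish the proof \emph{if} $\epsilon$ were a microfibration --- but it is not, and this is where the argument breaks down.

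To see why, take $\Psi = \mr{Map}(-,\bR)$ (which is flexible, hence microflexible, so Proposition \ref{prop.resolve} applies), fix $\varphi_0 \in \Psi(M)$, and consider the path $s \mapsto \varphi_s := \varphi_0 + s$. Take the lift at time $0$ to be a $0$-simplex with cut level $t_1$ and boundary condition $b_1 = $ germ of $\varphi_0$ at $S^{n-1}\times\{t_1\}$. A continuous $[0,\delta]$-family of points in $||\Psi(M,\alpha)_\bullet||$ must, on each stratum, stay in one piece of the disjoint union indexed by $(\la t_0,b_0\ra,\ldots,\la t_p,b_p\ra)$; since the sets $\cB_\Psi(S^{n-1}\times\{t_i\})$ are \emph{discrete} indexing sets, the germ of $\varphi_s$ at any cut level of positive weight must be locally constant in $s$. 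But the germ of $\varphi_s$ changes at every level for every $s$, so no lift exists over any $[0,\delta]$. Thus $\epsilon$ is not a microfibration (nor a Serre fibration), only a weak equivalence --- a genuinely weaker property.

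The argument you give for microfibrancy conflates two lifting problems. You propose to ``use Lemma \ref{lem.ak} ... to deform $G$ so that it is locally constant near those levels,'' then cut and extend. But the microfibration property requires a lift of the bottom map $G$ \emph{as given}; Lemma \ref{lem.ak} produces a compactly supported \emph{deformation} of $G$, which changes the bottom map. Deforming the base is exactly what the microfibration lifting problem forbids. The paper's proof sidesteps this by proving the weak-equivalence lifting criterion directly: given $f\colon \partial P \to ||\Psi(M,\alpha)_\bullet||$ and $F\colon P \to \Psi(M)$, one finds a lift making the top triangle commute and the bottom commute only \emph{up to homotopy rel $\partial P$}. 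The deformation supplied by Lemma \ref{lem.ak} is precisely that permitted homotopy of $F$, and then the lift is produced combinatorially (the Lebesgue covering dimension and chromatic-number argument in the paper's Claim) by choosing cut levels where the deformed family is locally constant. Your sketch also waves at this combinatorics, but the fatal gap is upstream: the route through microfibrancy requires a strictly stronger statement that fails.
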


\begin{proof}Since all our manipulations will be compactly-supported in the image of $\alpha$, we may assume $A = \varnothing$ and consequently simplify the notation. We will prove that $\epsilon$ is a weak equivalence using Lemma \ref{lem.ak}. We do this by showing for all compact parametrizing manifolds $P$ with boundary $\partial P$, there exists a dotted map in each commutative diagram
	\[\begin{tikzcd} \partial P \rar{f} \dar & {||\Psi(M,\alpha)_\bullet||} \dar{\epsilon} \\
	P \rar[swap]{F} \arrow[dotted]{ru} & \Psi(M) \end{tikzcd}\]
	making the top triangle commute and the bottom triangle commute up to homotopy rel $\partial P$. Without loss of generality we may assume that a dotted map extending $f$ exists over a collar $U$ of $\partial P$ in $P$. We shall write $i$ for $\dim(P)$.
	
	\vspace{.5ex}
	
	\noindent \textbf{Claim:} We may assume that for each $p \in P$ there is at least one $t \in \bR$ such that the restriction of $F_{p'}$ near $S^{n-1} \times \{t\}$ is independent of $p'$ near $p$.
	
	\vspace{-.5ex}
	
	\begin{proof}Near $x \in U \setminus \partial P$ the map $f$ may be heuristically described as an element $f(x) \in ||\Psi(M,\alpha)_\bullet||$ given by a map $P \to \Psi(M)$ together with a finite collection of slices $\{t_k\}$ with weights $\{s_k\}$ in $[0,1]$ summing to $1$. By compactness of $\partial P$, we may assume that over $U$ only a finite collection $\{t_\ell\}_{\ell \in L}$ of values $t_\ell \in \bR$ is used for the slices. 
		
		Pick $(i+2)$ pairwise disjoint intervals $\{[a_j-\eta_j,a_j+\eta_j]\}_{0 \leq j \leq i+1}$ in $\bR$ which are disjoint from the finite set $\{t_\ell\}_{\ell \in L}$. We also pick for each $p \in \mr{int}(P)$ an embedded simplex $\Delta^{i}(p) \subset \mr{int}(P)$ with barycenter mapping to $p$.
		
		For each $p \in \mr{int}(P)$ and $0 \leq j \leq i+1$, apply Lemma \ref{lem.ak} to the family obtained from $F$ by identifying $(\Delta^i(p),\partial \Delta^i(p))$ with $(\Delta^{i},\partial \Delta^i)$ and identifying $S^{n-1} \times (a_j-\eta_j,a_j+\eta_j)$ with $S^{n-1} \times \bR$. Its conclusion is that there exists open neighborhoods $U_j(p)$ of $S^{n-1} \times \{a_j\} \subset S^{n-1} \times \bR$ and $V_j(p)$ of $p \in P$, such that for each $\mr{CAT}$-function $\eta \colon P \to [0,1]$ supported in $V_j(p)$ there is a deformation $F_s$ supported in $V_j(p)$ in the domain and in $S^{n-1} \times (a_j-\eta_j,a_j+\eta_j)$ in the codomain, which starts at $F_0 = F$ and ends at an $F_1$ which satisfies $F_1(p')|_{U_j(p)} = F_1(p)|_{U_j(p)}$ for $p' \in \eta^{-1}(1)$. By replacing $U_j(p)$ with $U(p) \coloneqq \smash{\cap_{j=0}^{i+1}} U_j(p)$ and $V_j(p)$ by $V(p) \coloneqq \smash{\cap_{j=0}^{i+1}} V_j(p)$, we may without loss of generality assume that $U_j(p) = U_{j'}(p)$ and $V_j(p) = V_{j'}(p)$ for all $0 \leq j,j' \leq i+1$, and thus we henceforth drop the subscripts.
		
		Since $P$ is an $i$-dimensional $\mr{CAT}$-manifold, it has Lebesgue covering dimension $i$. This means that there are open subsets $\tilde{U}(p) \subset U(p)$ and $\tilde{U} \subset U$ which (a) cover $P$, and (b) have the property that each $q \in P$ is contained in at most $\dim(P)+1$ of these subsets. Since $P$ is compact, we may assume that only finitely many of these are non-empty: $\tilde{U}$ and  $\tilde{U}(p_k)$ for $1 \leq k \leq K$. Since $P$ is paracompact we may pick a subordinate partition of unity given by $\sigma_{\tilde{U}}$ and $\{\sigma_{\tilde{U}(p_k)}\}_{1 \leq k \leq K}$ consisting of $\mr{CAT}$-functions with values in $[0,1]$, and we let $\mu > 0$ denote the minimum over all $p \in P$ of $\max\{\sigma_{\tilde{U}}(p),\max \{\sigma_{\tilde{U}(p_k)}(p) \mid 1 \leq k \leq K\}\}$.
		
		The finite graph with a vertex for each element of this cover and edge whenever two elements have non-empty intersection, has vertices of valence $\leq i+1$. Hence its chromatic number is $\leq i+2$, and we can assign to each $1 \leq k \leq K$ a number $0 \leq c(k) \leq i+1$ such that for $c(k) \neq c(k')$ if $k \neq k'$ and $\tilde{U}(p_k) \cap \tilde{U}(p_{k'}) \neq \varnothing$.
		
		Fix a $\mr{CAT}$-function $\rho \colon [0,\infty) \to [0,1]$ that is the identity on $[0,1/2]$ and $1$ on $[1,\infty)$. Now apply for each $1 \leq k \leq K$ the deformation of $F$ obtained from the function $\rho(\frac{2}{\mu} \sigma_{\tilde{U}(p_k)}) \colon P \to [0,1]$ in the codomain interval $S^{n-1} \times (a_{c(k)}-\eta_{c(k)},a_{c(k)}+\eta_{c(k)})$. It is possible to do all of these simultaneously, because whenever $\tilde{U}(p_k) \cap \tilde{U}(p_{k'}) \neq \varnothing$ these deformations have disjoint support in the codomain. The result is a new family $\tilde{F}$ such that for each $p \in P$ there is at least one $t \in \bR$ such that the restriction of $\tilde{F}_{p'}$ to $S^{n-1} \times \{t\}$ is independent of $p'$ near $p$.\end{proof}
	
	\vspace{.5ex}
	
	The now-proven claim tells us there is an open cover $\{U_j\}_{j \in J}$ of $\mr{int}(P)$ with the property that for each $U_j$ there exists a $t_j$ such that for $p \in U_j$ the restriction of $F_{p'}$ to $S^{n-1} \times \{t\}$ is independent of $p'$ near $p$. We then construct the desired lift of $F$ to $||\Psi(M,\alpha)_\bullet||$ as follows. Pick a partition of unity subordinate to the aforementioned open cover, denoting the function supported in $U$ by $\sigma_U$ and the function with support in $U_j$ by $\sigma_j$. Then the slices and simplicial coordinates over $p \in P$ are given by $t_j$ with weight $\sigma_j(p)$, and if $p \in U$ we add to these the slices $\{t_k\}$ with weights $\{\sigma_U(p)s_k(p)\}$.\end{proof}

We need a naturality statement for the map of the previous proposition. Before stating it, we make the following definition.

\begin{definition}\label{def.collarresolvej} Let $j \colon \Psi \to \Gamma$ be a morphism of $\mr{CAT}$-invariant topological sheaves. The augmented semisimplicial space $\Gamma(M \rel j(a),\alpha,j)_\bullet$ is defined as follows.
	\begin{itemize}
		\item The space of $p$-simplices is given by a disjoint union over the indexing set of $(p+1)$-tuples $(\la t_0,b_0 \ra,\ldots,\la t_p,b_p \ra)$ of real numbers $t_i$ and boundary conditions $b_i \in \cB_\Psi(S^{n-1} \times \{t_i\})$ so that $t_0<\ldots<t_p$, of terms given by
		\begin{align*}&\Gamma(M \setminus S^{n-1} \times [t_0,t_p] \rel j(b_0) \sqcup j(b_p) \sqcup j(a)) \\
		&\qquad \times \prod_{i=0}^{p-1} \Gamma(S^{n-1} \times [t_i,t_{i+1}] \rel j(b_i) \sqcup j(b_{i+1})).\end{align*}
		\item The $(-1)$-simplices are given by $\Gamma(M \rel j(a))$.
		\item The $i$th face map forgets $t_i$ and $j(b_i)$, and uses the sheaf property to glue the elements of $\Gamma$.
		\item The augmentation map $\epsilon$ forgets the real numbers and boundary conditions, and uses the sheaf property to glue the elements of $\Gamma$.
	\end{itemize} 
\end{definition}

The condition that $j \colon \Psi(S^{n-1} \times \bR) \to \Gamma(S^{n-1} \times \bR)$ is a $\pi_0$-surjection in the next proposition, is implied by $\Psi(\bR^n) \to \Phi(\bR^n)$ being a weak equivalence by Theorem \ref{thm.gromovprecise}. Using the same proof as for Proposition \ref{prop.resolve}, but replacing Lemma \ref{lem.ak} by Lemma \ref{lem.akflex}, we obtain:

\begin{proposition}\label{prop.resolvefolex}Let $j \colon \Psi \to \Gamma$ be a map of $\mr{CAT}$-invariant topological sheaves. Suppose that $\Psi$ is microflexible, $\Gamma$ is flexible and $j \colon \Psi(S^{n-1} \times \bR) \to \Gamma(S^{n-1} \times \bR)$ be a $\pi_0$-surjection. Then the augmentation $\epsilon$ induces a weak equivalence $\epsilon \colon ||\Gamma(M \rel j(a),\alpha,j)_\bullet|| \to \Gamma(M \rel j(a))$.\end{proposition}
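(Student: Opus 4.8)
The plan is to rerun the proof of Proposition~\ref{prop.resolve} almost verbatim, substituting Lemma~\ref{lem.akflex} for Lemma~\ref{lem.ak} throughout. As there, since every deformation we make is compactly supported in the image of $\alpha$, we may assume $A=\varnothing$. To prove $\epsilon$ is a weak equivalence we would solve, for each compact parametrizing manifold $P$ with boundary $\partial P$, the lifting problem
\[\begin{tikzcd} \partial P \rar{f} \dar & ||\Gamma(M \rel j(a),\alpha,j)_\bullet|| \dar{\epsilon} \\ P \rar[swap]{F} \arrow[dotted]{ru} & \Gamma(M \rel j(a)) \end{tikzcd}\]
making the top triangle commute and the bottom one commute up to homotopy rel $\partial P$, having first arranged (as in Proposition~\ref{prop.resolve}) that $f$ extends to a lift over a collar $U$ of $\partial P$ using only finitely many slices $\{t_\ell\}$; write $i=\dim P$.

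The heart of the matter is again the Claim that $F$ may be deformed, rel $\partial P$ and compactly supported in the image of $\alpha$, so that for each $p\in P$ there is a $t\in\bR$ for which $F_{p'}$ is independent of $p'$ near $p$ on a neighbourhood of $S^{n-1}\times\{t\}$. The one new requirement over Proposition~\ref{prop.resolve} is that such a slice must carry a boundary condition lying in the image of $j\colon\cB_\Psi(S^{n-1}\times\{t\})\to\cB_\Gamma(S^{n-1}\times\{t\})$, so that it contributes a simplex of the semisimplicial space of Definition~\ref{def.collarresolvej}; equivalently, near the slice the family should be $j$ of something in $\Psi$. This is exactly conclusion (iv) of Lemma~\ref{lem.akflex}: near the relevant slice the deformed family equals $j(g)$ for some $g\in\Psi(S^{n-1}\times\bR)$, so the complement piece and the adjacent cylinder piece of the resolution both acquire boundary conditions of the form $j$ of a restriction of $g$, automatically consistently. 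The hypothesis of Lemma~\ref{lem.akflex}, that $j\colon\Psi(S^{n-1}\times\bR)\to\Gamma(S^{n-1}\times\bR)$ is a $\pi_0$-surjection, is precisely our hypothesis (and, as the text notes, follows from $\Psi(\bR^n)\to\Gamma(\bR^n)$ being a weak equivalence by Theorem~\ref{thm.gromovprecise}).

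Granting this, I would prove the Claim exactly as in Proposition~\ref{prop.resolve}: choose $(i+2)$ pairwise disjoint intervals in $\bR$ avoiding the finitely many slices used over $U$, apply Lemma~\ref{lem.akflex} to $F$ on an embedded simplex around each $p\in\mr{int}(P)$ and in each of these intervals, intersect the resulting neighbourhoods so that they may be taken independent of the interval, use that $P$ has Lebesgue covering dimension $i$ to pass to a cover in which each point meets at most $i+1$ members, $(i+2)$-colour the nerve of that cover, and run the finitely many resulting deformations simultaneously --- those with the same colour have disjoint support in $P$, those of different colours act in disjoint cylinder intervals. A partition of unity subordinate to the open cover $\{U_j\}_{j\in J}$ of $\mr{int}(P)$ so obtained (together with $U$) then yields the lift, with slice $t_j$ over $p$ carrying weight $\sigma_j(p)$ and the collar slices $\{t_k\}$ weights $\{\sigma_U(p)s_k(p)\}$, just as before. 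I do not anticipate a genuine obstacle: the only new ingredient --- that every slice boundary condition appearing is in the image of $j$ --- is supplied automatically by Lemma~\ref{lem.akflex}(iv), and the simultaneous-deformation step remains compatible because Lemma~\ref{lem.akflex} carries the same compact-support conclusion (v) as Lemma~\ref{lem.ak}(iv). The one thing genuinely worth checking is that this extra bookkeeping threads through the colouring argument without change, which it does.
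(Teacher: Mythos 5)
Your proposal is correct and follows exactly the paper's approach: the paper proves Proposition~\ref{prop.resolvefolex} by stating that it is obtained by running the proof of Proposition~\ref{prop.resolve} with Lemma~\ref{lem.akflex} substituted for Lemma~\ref{lem.ak}. You also correctly identify the one genuinely new point --- that the slice boundary conditions must lie in the image of $j$ so that they index simplices as in Definition~\ref{def.collarresolvej}, and that this is exactly what conclusion (iv) of Lemma~\ref{lem.akflex} supplies.
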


The map $j$ induces a semisimplicial map $j_\bullet$ between augmented semisimplicial objects $\Psi(M \rel a,\alpha)_\bullet$ and $\Gamma(M \rel a,\alpha,j)_\bullet$. The reason we used a different indexing set in Definition \ref{def.collarresolvej}, is to make the second part of the following proposition hold.

\begin{proposition}\label{prop.rescommute} The following diagram commutes			
	\[\begin{tikzcd} {||\Psi(M \rel a,\alpha)_\bullet||} \rar{||j_\bullet||} \dar & {||\Gamma(M \rel j(a),\alpha,j)_\bullet||} \dar \\
	\Psi(M \rel a) \rar[swap]{j} & \Gamma(M \rel j(a)) \end{tikzcd}\]
	and for each $k \geq 0$, the map $j_k$ induces a bijection on the indexing sets of the disjoint union appearing in the definitions of the spaces of $k$-simplices.
\end{proposition}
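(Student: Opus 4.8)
The plan is to verify the two assertions separately, and both should be essentially formal once one unwinds the definitions. For the commutativity of the square, I would argue simplicial-degree by simplicial-degree: the map $j_\bullet$ is defined in each degree $p$ by applying the sheaf morphism $j$ to each of the factors
\[\Psi\bigl(M \setminus S^{n-1}\times[t_0,t_p] \rel b_0\sqcup b_p\sqcup a\bigr) \times \prod_{i=0}^{p-1}\Psi\bigl(S^{n-1}\times[t_i,t_{i+1}]\rel b_i\sqcup b_{i+1}\bigr),\]
keeping the indexing tuple $(\la t_0,b_0\ra,\dots,\la t_p,b_p\ra)$ unchanged. Both composites in the square send such a $p$-simplex first to its underlying element of $\Psi(M\rel a)$ (resp.\ $\Gamma(M\rel j(a))$) by gluing along the sheaf property, and then apply $j$ (resp.\ the identity). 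Since $j$ is a map of sheaves, it is compatible with the equalizer diagrams defining the sheaf property, hence it commutes with the gluing maps used to define the augmentations $\epsilon$. Thus ``glue, then apply $j$'' equals ``apply $j$ factorwise, then glue,'' which is exactly the commutativity of the square. One then passes to thick geometric realizations, noting that $||{-}||$ is a functor so the degreewise-compatible maps assemble to the stated diagram; the augmentations on the two sides are themselves induced by the $\epsilon$ in each simplicial degree, so no new compatibility is needed.

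For the second assertion, the key observation is that we have deliberately used \emph{the same} indexing set in Definition \ref{def.collarresolvej} as in the definition of $\Psi(M\rel a,\alpha)_\bullet$: in both cases a $p$-simplex lives over a tuple $(\la t_0,b_0\ra,\dots,\la t_p,b_p\ra)$ with $t_0<\dots<t_p$ and $b_i\in\cB_\Psi(S^{n-1}\times\{t_i\})$, and in Definition \ref{def.collarresolvej} the sheaf $\Gamma$ is only ever evaluated relative to boundary conditions of the form $j(b_i)$, $j(a)$, i.e.\ the boundary conditions are still bookkept by elements of $\cB_\Psi$, not $\cB_\Gamma$. The semisimplicial map $j_k$ does not touch the tuple; it only changes the point of the fibre sitting over that tuple. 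Hence on the level of the disjoint-union indexing sets $j_k$ is literally the identity, in particular a bijection. (This is precisely the reason the indexing was set up this way, as the paragraph before the proposition flags.)

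I do not expect a serious obstacle here; both parts are unwinding of definitions. The only point that needs a little care is making sure the face maps and augmentation are genuinely compatible with $j$, which reduces to the single fact that a morphism of sheaves respects the equalizer/gluing structure — this is immediate from the definition of a map of $\mr{CAT}$-invariant topological sheaves as a natural transformation, since restriction to the pieces $M\setminus S^{n-1}\times[t_0,t_p]$, $S^{n-1}\times[t_i,t_{i+1}]$ and their overlaps are all pullbacks along embeddings. One should also remark that the well-definedness of the face maps (that gluing $f_i$ and $f_{i+1}$, which agree with $b_{i+1}$ near $S^{n-1}\times\{t_{i+1}\}$, produces an element satisfying the amalgamated condition) uses the sheaf property exactly as in Definition \ref{def.catc}, and $j$ preserves all of this. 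So the proof is a short verification rather than a computation.
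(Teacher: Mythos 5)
Your proof is correct and matches what the paper leaves implicit: the paper gives no proof of this proposition, treating both assertions as immediate from the definitions exactly as you argue (naturality of $j$ with the gluing/equalizer structure for the square, functoriality of $\|{-}\|$ to pass to realizations). Your reading of the indexing-set remark is also right — Definition \ref{def.collarresolvej} deliberately bookkeeps boundary conditions in $\cB_\Psi$ rather than $\cB_\Gamma$ precisely so that $j_k$ is literally the identity on indexing tuples.
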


\subsection{The proof} In this subsection we prove Theorem \ref{thm.technical}. The proof is as follows:
\begin{enumerate}[(i)]
	\item We define a functor $\cat{F}^\Psi(M) \colon \cat{C}^\Psi \to \cat{S}$.
	\item We show that each morphism in $\cat{C}^\Psi$ induces on $\cat{F}^\Psi(M)$ a homology equivalence if condition (H) is satisfied, or weak equivalence if condition (W) is satisfied.
	\item Using a delooping result of McDuff-Segal, we prove that $j \colon \Psi(M \rel a) \to \Gamma(M \rel j(a))$ is a homology equivalence or weak equivalence when $M$ is path-connected, $S^{n-1} \subset \partial M \cap A$ and $a|_{S^{n-1}}$ is fillable. More precisely, we use Section \ref{sec:resolving} and (ii) to deduce this from the $h$-principle on open manifolds, here Theorem \ref{thm.gromovprecise}.
	\item This is then used to prove that $j \colon \Psi(M \rel b) \to \Gamma(M \rel j(b))$ is a homology equivalence or weak equivalence without conditions.
\end{enumerate}

\subsubsection{A functor}
Let us fix a manifold $M$ with $\partial M = S^{n-1}$. In this section we prove steps (i) and (ii).

\begin{definition}We let $\cat{F}^\Psi(M)$ be the functor $\cat{C}^\Psi \to \cat{S}$ sending an object $\la t,b\ra$ to the space $\Psi(M \cup_{S^{n-1}} (S^{n-1} \times [0,t]) \rel b)$ and a morphism $g \in \Psi(S^{n-1} \times [t_0,t_1] \rel b_0 \sqcup b_1)$ to the map
	\[g \odot - \colon \Psi\left(M \cup_{S^{n-1}} (S^{n-1} \times [0,t_0]) \rel b_0\right) \longto \Psi\left(M \cup_{S^{n-1}} (S^{n-1} \times [0,t_2]) \rel b_2\right)\]
	obtained by gluing on $g$ using the sheaf property.
\end{definition}

\begin{proposition}\label{prop.acteq} Let $\Psi$ be a microflexible $\mr{CAT}$-invariant topological sheaf on $n$-manifolds. For $0<t_1<t_2$ and an element
	\[g \in \cat{C}^\Psi(\la t_1,b_1 \ra, \la t_2,b_2 \ra ) = \Psi\left(S^{n-1} \times [t_1,t_2] \rel b_1 \sqcup b_2\right)\]
	consider the map
	\[\begin{tikzcd} \cat{F}^\Psi(M)(\la t_1,b_1 \ra ) = \Psi\left(M \cup_{S^{n-1}} (S^{n-1} \times [0,t_1]) \rel b_1\right) \dar{g \odot -} \\
	\cat{F}^\Psi(M)(\la t_2,b_2 \ra ) = \Psi\left(M \cup_{S^{n-1}} (S^{n-1} \times [0,t_2]) \rel b_2\right).\end{tikzcd}\]
	\begin{enumerate}[(i)]
		\item If $\Psi$ satisfies condition (H), the map $g \odot -$ is a semi-equivalence as in Definition \ref{def.semiequivalence}.
		\item If $\Psi$ satisfies condition (W) and $b_1$ (or equivalently $b_2$) is fillable, the map  $g \odot -$ is a weak equivalence.
	\end{enumerate}
\end{proposition}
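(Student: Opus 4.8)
The plan is to mirror the two arguments inside the proof of Proposition \ref{prop.compcyl}, with $\cat{F}^\Psi(M)$ replacing $\cat{C}^\Psi$ and with $M$ an inert spectator: every construction below is supported in a cylindrical collar of the free boundary $S^{n-1}\times\{t_i\}$ and leaves $M$ alone, so the formal properties of $\circledcirc$ used there (associativity of gluing, compatibility of gluing with $\mr{CAT}$-invariance, continuity of gluing in both variables, and the fact that homotopic elements of $\cat{C}^\Psi$ glue to homotopic maps) transfer verbatim to $\odot$.

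For part (i) I would copy the proof of Proposition \ref{prop.compcyl}(i). Given a lifting problem for $g\odot-$ over a compact parametrizing manifold $P$, first pass to a collar of $S^{n-1}\times\{t_1\}$ on which $f$ is independent of $p\in\partial P$ and a collar of $S^{n-1}\times\{t_2\}$ on which $F$ is independent of $p\in P$ (possible since $\Psi(-\rel b)$ is a colimit over boundary neighborhoods and $P$ is compact); choose $\tau_1\in(0,t_1)$ and $\tau_2\in(t_1,t_2)$ inside them; factor $g=g^\tau\circledcirc g'$ with $g'$ over $[t_1,\tau_2]$ and $g^\tau$ over $[\tau_2,t_2]$; use condition (H) to represent $[g']^{-1}$ by an element $(g')^{-1}$ of $\cat{C}^\Psi$; and set $L\coloneqq\lambda_*\bigl(((g')^{-1}\odot-)\circ F'\bigr)$, where $F'$ is the restriction of $F$ to $M\cup_{S^{n-1}}(S^{n-1}\times[0,\tau_2])$ and $\lambda\colon[0,t_2]\to[0,t_1]$ is a $\mr{CAT}$-isomorphism fixing $[0,\tau_1]$, translating near $t_2$, and extended by $\mr{id}_M$. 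The two required homotopies ($L|_{\partial P}\sim f$ and $(g\odot-)\circ L\sim F$, independent, not rel $\partial P$) are then produced by the identical sequence of steps as in Proposition \ref{prop.compcyl}(i).

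For part (ii), part (i) together with Lemma \ref{lem.nearequivalence} already gives the homology statement and injectivity on homotopy groups, so only surjectivity on homotopy groups remains. The verbatim analogue of Proposition \ref{prop.compcyl}(ii) is unavailable, because condition (W) supplies a weak equivalence $\iota_P\circledcirc-$ of cylinder spaces only, not of the spaces $\cat{F}^\Psi(M)(-)$; so instead I would route the argument through the resolution of Subsection \ref{sec:resolving}. Fix an embedding $\alpha\colon S^{n-1}\times\bR\hookrightarrow M\cup_{S^{n-1}}(S^{n-1}\times[0,t_1])$ with image a collar just inside the free boundary, viewed also inside $M\cup_{S^{n-1}}(S^{n-1}\times[0,t_2])$; by Proposition \ref{prop.resolve} both the source and target of $g\odot-$ are thick geometric realizations of the associated augmented semisimplicial spaces, with identical indexing sets at each level. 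Since $g$ is glued on outside the image of $\alpha$, the map $g\odot-$ is semisimplicial and compatible with the augmentations, and on $p$-simplices, cutting along the interior slices splits the manifold into a short collar of $M$ and a thin outer cylinder carrying $b_i$; on this decomposition $g\odot-$ is the identity on every factor except the outer cylinder, where it is a cylinder composition $g\circledcirc-$. By Proposition \ref{prop.compcyl}(ii) — applicable since $b_1$ is fillable — each such map is a weak equivalence, so $g\odot-$ is a levelwise weak equivalence of augmented semisimplicial spaces, hence a weak equivalence on thick realizations, hence a weak equivalence.

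I expect the main obstacle to be one of emphasis rather than difficulty: the homotopy-chasing of part (i) is routine once one checks that attaching $M$ is invisible to every operation, whereas the one genuinely new point is that for $\odot$ one must not invoke condition (W) directly but instead pass through the semisimplicial resolution, which reduces $g\odot-$ to a levelwise copy of $g\circledcirc-$ on a cylinder where Proposition \ref{prop.compcyl}(ii) does apply — the fillability of $b_1$ being exactly the hypothesis that licenses that reduction. The only delicate bookkeeping is to arrange $\alpha$ so that $g\odot-$ is simplicial and augmentation-compatible on both sides simultaneously, so that the levelwise comparison descends to realizations.
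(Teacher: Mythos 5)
Your part (ii) agrees with the paper; your part (i) takes a genuinely different route. The paper treats both cases uniformly: it resolves $\cat{F}^\Psi(M)(\la t_i,b_i\ra)$ along a collar of the free boundary via Proposition \ref{prop.resolve}, notes that $g\odot-$ descends to a semisimplicial map which, after the sheaf-property decomposition of the complement of the sliced region into an inner piece (containing $M$) and an outer cylinder, is the identity on the inner factor and $g\circledcirc-$ on the outer cylinder, invokes Proposition \ref{prop.compcyl} levelwise, and concludes by realizing a levelwise homology (resp.\ weak) equivalence. You instead carry out the homotopy chase of Proposition \ref{prop.compcyl}(i) directly with $\odot$ in place of $\circledcirc$, which is valid --- the lift $L=\lambda_*\bigl(((g')^{-1}\odot-)\circ F'\bigr)$ makes sense after extending $\lambda$ by $\mr{id}_M$, and all the gluings and $\mr{CAT}$-invariance arguments are supported in the outer collar, never touching $M$ --- and it yields the semi-equivalence claimed in the statement, whereas the paper's realization argument literally produces only a homology equivalence in case (i) (which suffices, since the downstream Lemma \ref{lem.catloop}(a) asks only for homology equivalences). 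Your diagnosis of why the verbatim analogue of Proposition \ref{prop.compcyl}(ii) fails --- condition (W) supplies a weak equivalence $\iota\circledcirc-$ of cylinder spaces, not of the values of $\cat{F}^\Psi(M)$ --- is exactly right, and the resolution route you take for part (ii) is precisely the paper's. One point you gesture at but should make explicit for the levelwise comparison: a $p$-simplex term must be nonempty in the source iff it is in the target; this holds because under condition (H) the category $[\cat{C}^\Psi]$ is a groupoid, so the outer-cylinder factor is nonempty precisely when $b_p$ lies in the same component as $b_1$ (equivalently $b_2$), and fillability of $b_1$ therefore propagates to every slice condition that actually occurs.
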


\begin{proof}Any choice of embedding $\bR \hookrightarrow [0,t_0]$ induces an embedding $\alpha \colon S^{n-1} \times \bR \hookrightarrow M \cup_{S^{n-1}} (S^{n-1} \times [0,t_0])$. By Proposition \ref{prop.resolve} the map $\epsilon \colon ||\Psi(M \cup_{S^{n-1}} (S^{n-1} \times [0,t_0]) \rel b, \alpha)|| \to \Psi(M \cup_{S^{n-1}} (S^{n-1} \times [0,t_0]) \rel b)$ is a weak equivalence. The map $g \odot -$ induces a semisimplicial map
	\[\begin{tikzcd}\Psi\left(M \cup_{S^{n-1}} (S^{n-1} \times [0,t_1]) \rel b_1,\alpha\right)_\bullet \dar{(g \odot -)_\bullet} \\ \Psi\left(M \cup_{S^{n-1}} (S^{n-1} \times [0,t_2]) \rel b_2,\alpha\right)_\bullet \end{tikzcd}\]
	which is a levelwise homology equivalence (resp.\ weak equivalence) under condition (H) (resp.\ (W)) by Proposition \ref{prop.compcyl}. A geometric realization of a levelwise homology equivalence (resp.\ levelwise weak equivalence) of semisimplicial spaces is a homology equivalence (resp.\ weak equivalence), e.g.~by the geometric realization spectral sequence in Section 1.4 of \cite{rwebertsemi} (resp.\ Theorem 2.2 of \cite{rwebertsemi}).
\end{proof}

\subsubsection{Intermezzo on delooping} We now recall two well-known results by Segal and McDuff-Segal, for which we recommend \cite{rwebertsemi} as a reference. Recall a commutative diagram in $\cat{S}$
\[\begin{tikzcd} E \rar \dar{p} & E' \dar{p'} \\
B \rar{f} & B'.\end{tikzcd}\]
is \emph{homotopy cartesian} (resp.\ \emph{homology cartesian}) if $f \colon \pi_0(B) \to \pi_0(B')$ is surjective, and for all $b \in B$ the induced map $\mr{hofib}_b(p) \to \mr{hofib}_{f(b)}(p')$ is a weak equivalence (resp.\ homology equivalence).

\begin{theorem}[McDuff-Segal] \label{thm.mcduffsegal} Let $f_\bullet \colon E_\bullet \to B_\bullet$ be a map of semisimplicial objects in $\cat{S}$ such that for all $0 \leq i \leq p$ the diagram
	\[\begin{tikzcd} E_p \rar{d_i} \dar[swap]{f_p} & E_{p-1} \dar{f_{p-1}} \\
	B_p \rar[swap]{d_i} & B_{p-1}.\end{tikzcd}\]
	is a homotopy cartesian (resp.\ homology cartesian), then 
	\[\begin{tikzcd} E_0 \rar \dar[swap]{f_0} & {||E_\bullet||} \dar{||f_\bullet||} \\
	B_0 \rar & {||B_\bullet||} \end{tikzcd}\]
	is also homotopy cartesian (resp.\ homology cartesian).\end{theorem}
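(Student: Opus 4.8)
The plan is to prove the stronger statement that $||f_\bullet|| \colon ||E_\bullet|| \to ||B_\bullet||$ is itself a quasifibration (resp.\ a homology fibration in the sense of McDuff--Segal) whose fiber over any point lying in the path component of the image of some $b \in B_0$ is weakly (resp.\ homology) equivalent to $\mr{fib}_b(f_0)$; since $\pi_0(B_0) \to \pi_0(||B_\bullet||)$ is automatically surjective, this is exactly the assertion that the square in the conclusion is homotopy (resp.\ homology) cartesian. Before starting I would make two reductions. First, replacing each $E_p$ by the mapping path space of $f_p$ --- a construction compatible with the face maps that alters neither the realizations up to weak equivalence nor the relevant homotopy fibers --- we may assume $f_p \colon E_p \to B_p$ is a Serre fibration for every $p$; the hypothesis then says that the comparison map $E_p \to B_p \times_{B_{p-1}} E_{p-1}$ is a weak equivalence (resp.\ homology equivalence) for each face map $d_i$. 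Second, since every injective simplicial operator is a composite of cofaces and the conditions ``$\pi_0$-surjective'' and ``equivalence on homotopy fibers'' are both closed under composition, the hypothesis upgrades to: for every injective $\theta \colon [q] \hookrightarrow [p]$ the square with horizontal maps $\theta^*$ and vertical maps $f_p, f_q$ is homotopy (resp.\ homology) cartesian.

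Next I would run the usual skeletal induction. Write $||E_\bullet||_{\le k}$ and $||B_\bullet||_{\le k}$ for the sub-objects assembled from simplices of dimension $\le k$, so that $||B_\bullet||_{\le 0} = B_0$, the realization is the colimit of the $||B_\bullet||_{\le k}$ along cofibrations, and $||B_\bullet||_{\le k+1}$ is obtained from $||B_\bullet||_{\le k}$ by attaching $B_{k+1} \times \Delta^{k+1}$ along $B_{k+1} \times \partial\Delta^{k+1}$ (compatibly for $E$ and for $||f_\bullet||$). I would prove by induction on $k$ that $||f_\bullet||_{\le k}$ is a quasifibration (resp.\ homology fibration) with fibers as described; the case $k=0$ is the fibration $f_0$. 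For the inductive step one invokes the Dold--Thom gluing criterion for quasifibrations (resp.\ its homology analogue, as recorded in \cite{rwebertsemi}): over the open cells $B_{k+1} \times \mr{int}(\Delta^{k+1})$ the map is $f_{k+1} \times \mr{id}$, hence a fibration; over a neighborhood of $||B_\bullet||_{\le k}$ one uses the radial deformation retracting each punctured cell onto its boundary sphere, covered on total spaces by the evident deformation, and the requirement that this be a weak (resp.\ homology) equivalence on fibers reduces to the assertion that $\mr{fib}_b(f_{k+1}) \to \mr{fib}_{d_i b}(f_k)$ --- together with its iterates along the lower-dimensional faces of $\partial\Delta^{k+1}$ --- is such an equivalence, which is precisely the upgraded hypothesis from the second reduction. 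Passing to the colimit over $k$ via the colimit version of the same criterion gives the claim for $||f_\bullet||$.

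The decisive point, where the only real work is hidden, is this inductive step: quasifibrations (and homology fibrations) are genuinely non-local and are not preserved by arbitrary homotopy pushouts, so one must appeal to the Dold--Thom--McDuff--Segal gluing and colimit lemmas, and one must verify that the identifications of the fiber of $||f_\bullet||_{\le k}$ produced along the various iterated faces of $\partial\Delta^{k+1}$ in the attaching map are mutually compatible --- this coherence is exactly why the hypothesis on the $d_i$ must first be promoted to one on all injective operators. In the homology-cartesian variant one additionally carries the $\pi_0$-surjectivity clauses and the homology-fibration versions of the gluing lemmas through every stage. A slicker-sounding but equivalent alternative is to observe that $||E_\bullet||$ and $||B_\bullet||$ are the homotopy colimits of $E_\bullet$ and $B_\bullet$ over $\Delta_{\mr{inj}}^{\mr{op}}$ and to quote Puppe's theorem (and its homology counterpart) on homotopy colimits of natural transformations all of whose naturality squares are homotopy cartesian; this merely repackages the same content.
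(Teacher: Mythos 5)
Your argument is correct, and it is essentially the proof given in the modern reference the paper cites (Theorems 2.12 and 6.5 of Ebert--Randal-Williams, ``Semi-simplicial spaces''); the paper itself does not reprove the result but defers to Segal, McDuff--Segal, and that reference. The three moves you make --- fibrant replacement of $f_\bullet$ by mapping path spaces so that the hypothesis becomes a statement about strict pullbacks along faces, the upgrade from face maps to all injective operators by horizontal pasting of homotopy (resp.\ homology) cartesian squares, and the skeletal induction using the Dold--Thom quasifibration gluing criterion (resp.\ its homology-fibration analogue) together with its colimit form --- are exactly the ingredients in that proof. One small point worth keeping visible is the one you flag yourself: the deformation retraction of a punctured cell $B_{k+1} \times (\Delta^{k+1}\setminus\{\text{barycenter}\})$ onto $B_{k+1}\times\partial\Delta^{k+1}$ may land in the interior of a face of any codimension, so the fiber comparison one must verify is $\mr{fib}_b(f_{k+1}) \to \mr{fib}_{\theta^*b}(f_q)$ for an arbitrary injective $\theta\colon[q]\hookrightarrow[k+1]$, not just for a single coface; this is precisely why the hypothesis must first be promoted, and for thick realization of semisimplicial spaces (no degeneracies) this is unproblematic. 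Your closing remark that the same content can be packaged as Puppe's theorem on homotopy colimits along natural transformations with homotopy cartesian naturality squares is also a standard equivalent formulation.
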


\begin{proof}The homotopy cartesian part is a consequence of the proof of Proposition 1.6 of \cite{segalcategories}. The statement of that proposition only differs from ours in its use of simplicial spaces and the thin geometric realization, which are replaced by semisimplicial spaces and the thick geometric realization in the first line of the proof. A further inspection of its proof shows that Proposition 3 of \cite{MSe} implies the homology cartesian part. For modern proofs, see Theorems 2.12 and 6.5 of \cite{rwebertsemi}.\end{proof}

Our application concerns a \emph{double-sided bar construction} for a (possibly non-unital) topological category $\cat{C}$ and two functors $F\colon \cat{C} \to \cat{S}$, $G \colon \cat{C}^\mr{op} \to \cat{S}$. We will also assume our category $\cat{C}$ has a functor $t \colon \cat{C} \to (\bR,<)$, where $(\bR,<)$ is the non-unital poset category.

\begin{definition}We let $B_\bullet(F,\cat{C},G)$ denote the semisimplicial space with space of $k$-simplices given by the disjoint union over $(k+1)$-tuples $(c_0,\ldots,c_k)$ of objects of $\cat{C}$ satisfying $t(c_0)<\ldots<t(c_k)$ of terms given by
	\[F(c_0) \times \left(\prod_{i=0}^{k-1} \cat{C}(c_i,c_{i+1})\right) \times G(c_k).\]
	The face maps $d_i$ are obtained by applying $F$ for $i=0$, composition in $\cat{C}$ for $0 < i < k$, and by applying $G$ for $i=k$.
	
	We let $B(F,\cat{C},G)$ denote the geometric realization $||B_\bullet(F,\cat{C},G)||$.\end{definition}

In this setting, the conditions in Theorem \ref{thm.mcduffsegal} amount to the following Lemma. To simplify notation, we note there are terminal functors $\ast \colon \cat{C} \to \cat{S}$ and $\ast \colon \cat{C}^\mr{op} \to \cat{S}$ sending every object to the point. We denote $B(\ast,\cat{C},\ast)$ by $B\cat{C}$, as $B_\bullet(\ast,\cat{C},\ast)$ coincides with the nerve of $\cat{C}$.

\begin{lemma}\label{lem.catloop} Let $\cat{C}$, $F$ and $G$ be above. Then the following hold:
	\begin{enumerate}[(a)]
		\item If all morphisms $f \colon c \to c'$ induce homology equivalences $f \circ - \colon \cat{C}(c'',c) \to \cat{C}(c'',c')$ if $t(c'')<t(c)$, $- \circ f \colon \cat{C}(c',c'') \to \cat{C}(c,c'')$ if $t(c')<t(c'')$, $F(f) \colon F(c) \to F(c')$ and $G(f) \colon G(c') \to G(c)$, then 
		\[F(c) \times G(c) \longto \mr{hofib}_c\left(B(F,\cat{C},G) \longto B\cat{C}\right)\]
		is a homology equivalence.
		\item If all morphisms $f \colon c \to c'$ induce weak equivalences $f \circ - \colon \cat{C}(c'',c) \to \cat{C}(c'',c')$ if $t(c'')<t(c)$, $- \circ f \colon \cat{C}(c',c'') \to \cat{C}(c,c'')$ if $t(c')<t(c'')$, $F(f) \colon F(c) \to F(c')$ and $G(f) \colon G(c') \to G(c)$, then 
		\[F(c) \times G(c) \longto \mr{hofib}_c\left(B(F,\cat{C},G) \longto B\cat{C}\right)\]
		is a weak equivalence.
\end{enumerate}\end{lemma}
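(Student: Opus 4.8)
The plan is to deduce Lemma \ref{lem.catloop} directly from the McDuff--Segal theorem (Theorem \ref{thm.mcduffsegal}) applied to the semisimplicial map $B_\bullet(F,\cat{C},G) \to B_\bullet(\ast,\cat{C},\ast) = B_\bullet\cat{C}$. The point is to identify the homotopy (resp.\ homology) fiber of this realized map by verifying that the hypotheses of Theorem \ref{thm.mcduffsegal} hold levelwise, and then computing the fiber over the $0$-simplex $c$, which will turn out to be $F(c) \times G(c)$.

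\textbf{Step 1: Reduce to a levelwise statement.} By Theorem \ref{thm.mcduffsegal}, it suffices to show that for each $0 \leq i \leq k$ the square
\[\begin{tikzcd} B_k(F,\cat{C},G) \rar{d_i} \dar & B_{k-1}(F,\cat{C},G) \dar \\ B_k\cat{C} \rar{d_i} & B_{k-1}\cat{C} \end{tikzcd}\]
is homotopy cartesian (resp.\ homology cartesian), for then the square relating $B_0(F,\cat{C},G) \to B(F,\cat{C},G)$ to $B_0\cat{C} \to B\cat{C}$ is also homotopy (resp.\ homology) cartesian. Since $B_0(F,\cat{C},G) = \coprod_{c} F(c) \times G(c)$ fibers over $B_0\cat{C} = \mr{ob}(\cat{C})$ with fiber $F(c) \times G(c)$ over $c$, and since $t \colon \cat{C} \to (\bR,<)$ guarantees no nonidentity endomorphisms so that the map $B_0\cat{C} \to B\cat{C}$ sees $c$ as a genuine $0$-cell, this will give the desired identification of $\mr{hofib}_c(B(F,\cat{C},G) \to B\cat{C})$ once we know $\pi_0$-surjectivity; the latter is automatic since every $0$-simplex of $B_\bullet\cat{C}$ is hit.

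\textbf{Step 2: Analyze the levelwise squares.} Fix $0 \leq i \leq k$. Both horizontal maps are disjoint unions over tuples of objects; the face map on the base side, $d_i \colon B_k\cat{C} \to B_{k-1}\cat{C}$, restricted to the component indexed by $(c_0,\dots,c_k)$, lands in the component indexed by $(c_0,\dots,\widehat{c_i},\dots,c_k)$. Over a point $\xi$ of $B_{k-1}\cat{C}$ given by a tuple of morphisms, the homotopy fiber of the base map $d_i$ is, up to weak equivalence, a space of ``ways to insert an object $c_i$ with morphisms'': for $0 < i < k$ this is the space $\coprod_{c_i} \cat{C}(c_{i-1},c_i) \times \cat{C}(c_i,c_{i+1})$ (with the composite constrained to equal the given morphism $c_{i-1} \to c_{i+1}$, so really a homotopy fiber of a composition map); for $i = 0$ it is $\coprod_{c_0} \cat{C}(c_0,c_1)$ and for $i=k$ it is $\coprod_{c_k}\cat{C}(c_{k-1},c_k)$. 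On the total-space side $B_k(F,\cat{C},G)$, the homotopy fiber of $d_i$ over a matching point is the analogous space but with an extra $F(c_0)$-factor replaced appropriately when $i=0$ (it becomes a homotopy fiber of $F(c_0) \times \cat{C}(c_0,c_1) \to F(c_1)$) and a $G$-factor when $i = k$, and for $0<i<k$ it is literally the same space as on the base side. The induced map on homotopy fibers is then built from the maps $f \circ -$, $- \circ f$, $F(f)$, and $G(f)$ — exactly the maps hypothesized to be homology (resp.\ weak) equivalences. Standard properties of homotopy fibers (a map of fibrations-up-to-homotopy inducing equivalences on base-fibers and total-fibers) then give that each square is homology (resp.\ homotopy) cartesian.

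\textbf{Main obstacle.} The genuinely delicate point is \emph{Step 2}: one must carefully identify the homotopy fibers of the face maps $d_i$ in both semisimplicial spaces and check the induced map between them is the expected assembly of the four given maps. The cases $i=0$, $i=k$, and $0 < i < k$ behave differently (the first two involve $F$ or $G$; the middle ones only involve composition in $\cat{C}$), and one has to be careful that the ``composition constraint'' for $0 < i < k$ is handled correctly — the fiber is not $\cat{C}(c_{i-1},c_i)\times\cat{C}(c_i,c_{i+1})$ but a homotopy fiber of the composition map to $\cat{C}(c_{i-1},c_{i+1})$, and one invokes that $- \circ f$ and $f \circ -$ are equivalences to compare these across the square. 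Once the fibers are correctly identified, the verification that the induced maps are equivalences is formal, and the conclusion is immediate from Theorem \ref{thm.mcduffsegal} together with the identification $B_0(F,\cat{C},G)|_c = F(c)\times G(c)$.
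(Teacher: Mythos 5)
Your overall plan — apply Theorem~\ref{thm.mcduffsegal} levelwise to the semisimplicial map $B_\bullet(F,\cat{C},G)\to B_\bullet\cat{C}$ and read off the fiber over a $0$-simplex $c$ — is exactly the intended route. But your Step~2 compares the wrong fibers, and this is a real gap for the homology case. The paper's definition of a homotopy (resp.\ homology) cartesian square, stated just before Theorem~\ref{thm.mcduffsegal}, asks you to compare homotopy fibers of the \emph{vertical} maps $f_p\colon B_p(F,\cat{C},G)\to B_p\cat{C}$ and $f_{p-1}$ over a point $b\in B_p\cat{C}$ and its image $d_i(b)\in B_{p-1}\cat{C}$. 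You instead compute the horizontal homotopy fibers of the face maps $d_i$ on the top and the bottom rows and compare those. For the homotopy version this can be repaired (homotopy pullback squares are symmetric in the two directions, modulo some $\pi_0$ care), but for the homology version it cannot: ``homology cartesian'' is not a symmetric condition on a square. For instance, if $h\colon A\to B$ is any homology equivalence that is not a weak equivalence, the square with $h$ on top, $\ast\to\ast$ on the bottom, and the unique maps on the sides is homology cartesian in the paper's (vertical) sense — the vertical fibers are $A$ and $B$ — but $\mathrm{hofib}(h)$ need not have trivial reduced homology, so the horizontal-fiber comparison fails. So the inference ``horizontal fibers agree up to homology equivalence, hence the square is homology cartesian'' is not valid.

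Once you compare the correct (vertical) fibers, the argument becomes much simpler and the ``main obstacle'' you flag dissolves. On the component of $B_k(F,\cat{C},G)$ indexed by $(c_0,\ldots,c_k)$, the map $f_k$ to $B_k\cat{C}$ is literally the product projection forgetting the factors $F(c_0)$ and $G(c_k)$, so its (strict = homotopy) fiber is $F(c_0)\times G(c_k)$ at every point. The induced map on these fibers along $d_i$ is $F(\phi_1)\times\mathrm{id}_{G(c_k)}$ for $i=0$, $\mathrm{id}_{F(c_0)}\times G(\phi_k)$ for $i=k$, and the identity for $0<i<k$; these are weak (resp.\ homology) equivalences directly from the hypotheses on $F(f)$ and $G(f)$, with no appeal to the composition maps $f\circ-$ or $-\circ f$ and no need to identify homotopy fibers of $\cat{C}(c_{i-1},c_i)\times\cat{C}(c_i,c_{i+1})\to\cat{C}(c_{i-1},c_{i+1})$. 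You should also note that the paper's definition of homotopy/homology cartesian includes $\pi_0$-surjectivity of the bottom horizontal map $d_i\colon B_k\cat{C}\to B_{k-1}\cat{C}$, which your proof (and, to be fair, the paper's terse treatment) leaves implicit; it should at least be flagged as part of the standing hypotheses on $\cat{C}$.
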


\begin{remark}Theorem \ref{thm.mcduffsegal} has improvements involving homology with coefficients in various types of local systems. These could be used to give variations of Theorem \ref{thm.main} for homology equivalences with such coefficients. We have not found a use for this.\end{remark}

\subsubsection{The proof for spherical boundary} Fix a manifold $M$ with closed subset $A \subset M$ and a choice of embedded $S^{n-1} \subset \partial M \setminus A$. We will discuss when $\Psi(M \rel a \sqcup b) \to \Gamma(M \rel j(a \sqcup b))$ is a homology or weak equivalence for a boundary condition $a \in \cB_\Psi(A)$ and a (fillable) boundary condition $b \in \cB_\Psi(S^{n-1})$, that is, step (iii). This is the only place where we use delooping, as described in the previous section. We will also need the following easy lemma.

\begin{lemma}\label{lem.halfcontr} Let $\Psi$ be a $\mr{CAT}$-invariant topological sheaf. Then $\Psi(S^{n-1} \times [0,\infty) \rel b)$ is weakly contractible for all boundary conditions $b \in \cB_\Psi(S^{n-1} \times \{0\})$.\end{lemma}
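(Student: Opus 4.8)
The statement asserts that $\Psi(S^{n-1} \times [0,\infty) \rel b)$ is weakly contractible. The plan is to exhibit an explicit contraction using the ``shift to infinity'' trick, which is the standard way to contract sections (or sheaf values) on a half-open cylinder relative to a boundary condition at the closed end. First I would fix a representative of $b$: by definition of $\cB_\Psi(S^{n-1} \times \{0\})$, the boundary condition is given by an element $f \in \Psi(U)$ for some open neighborhood $U$ of $S^{n-1} \times \{0\}$ in $S^{n-1} \times \bR$, and $\Psi(S^{n-1}\times[0,\infty)\rel b)$ is the colimit over such pairs $(U,f)$ of the subspaces where elements restrict to $f|_{U\cap [0,\infty)}$; since a filtered colimit of weakly contractible spaces along maps that are all weak equivalences is weakly contractible, it suffices to treat one such $\Psi(S^{n-1}\times[0,\infty) \rel (U,f))$.

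The key step is to use $\mr{CAT}$-invariance to ``push everything off to infinity.'' Choose a family of $\mr{CAT}$-embeddings $\phi_s \colon S^{n-1}\times[0,\infty) \hookrightarrow S^{n-1}\times[0,\infty)$, for $s\in[0,1]$, such that $\phi_0 = \mr{id}$, each $\phi_s$ is the identity on a fixed small collar $S^{n-1}\times[0,\varepsilon)$ on which the boundary condition lives (so that $(\phi_s)_*$ preserves the constraint imposed by $(U,f)$), and $\phi_1$ has image contained in $S^{n-1}\times[0,\varepsilon)$, i.e. it compresses the whole half-cylinder into the collar. Concretely one can take $\phi_s = \mr{id}_{S^{n-1}} \times \psi_s$ for a suitable isotopy $\psi_s$ of $[0,\infty)$ fixing a neighborhood of $0$ and with $\psi_1([0,\infty)) \subset [0,\varepsilon)$. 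The map $g \mapsto (\phi_s)_* g = (\phi_s^{-1})^* g$ then gives, by continuity of the sheaf's functoriality, a homotopy from the identity of $\Psi(S^{n-1}\times[0,\infty)\rel (U,f))$ (at $s=0$) to the map that factors through $\Psi(S^{n-1}\times[0,\varepsilon) \rel (U,f))$ (at $s=1$), \emph{except} that $(\phi_s)_*g$ is only defined on the image of $\phi_s$; one must extend it back to all of $S^{n-1}\times[0,\infty)$. The honest way to handle this is to note that since $\phi_s$ is a $\mr{CAT}$-isomorphism onto its image, $(\phi_s)_*g$ is defined on an open set shrinking towards the collar, but one actually wants the reverse direction: use embeddings $\phi_s$ that are \emph{onto}, i.e. the flow of a vector field pointing towards $0$ and vanishing near $0$, so that $(\phi_s)_*$ is genuinely an endomorphism of $\Psi(S^{n-1}\times[0,\infty)\rel(U,f))$. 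At $s=1$ the image of $(\phi_1)_*$ lands in the subspace of elements that are determined (via the sheaf/gluing property) by their restriction to an arbitrarily small collar, hence by $f$ alone — giving a single point. Thus $\phi_s{}_*$ is a homotopy from $\mr{id}$ to a constant map, proving weak contractibility.

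The main obstacle is getting the endpoint of the homotopy to be genuinely constant: compressing into the collar shows every element becomes ``pulled back from a tiny collar'', and one must argue that on such a tiny collar the only element satisfying $(U,f)$ is $f$ itself — this uses that $\Psi$ is a sheaf and that as $\varepsilon\to 0$ the neighborhood shrinks inside $U$, so the constraint $g|_{U\cap[0,\varepsilon)} = f|_{U\cap[0,\varepsilon)}$ pins down $g$ on all of $[0,\varepsilon)$; combined with $\mr{CAT}$-invariance and the identification of $\Psi(S^{n-1}\times[0,\varepsilon))$ with $\Psi(S^{n-1}\times[0,\infty))$ via a $\mr{CAT}$-isomorphism fixing the collar, one closes the loop. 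A clean alternative, which I would actually write, is: fix $f$, and use the isotopy to build a deformation retraction of $\Psi(S^{n-1}\times[0,\infty)\rel(U,f))$ onto its subspace of elements agreeing with a fixed ``extend $f$ by translation-invariance'' element $\bar f$ — one checks $\bar f \in \Psi(S^{n-1}\times[0,\infty)\rel(U,f))$ exists after possibly shrinking $U$ (extend $f|_{S^{n-1}\times[0,\varepsilon/2)}$ by pulling back along a retraction $[0,\infty)\to[0,\varepsilon/2)$ fixing $[0,\varepsilon/4)$) — and the isotopy $\phi_s$ contracts onto $\bar f$. This avoids any delicate sheaf argument and is purely formal given $\mr{CAT}$-invariance and continuity of functoriality.
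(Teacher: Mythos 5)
Your overall plan (compress the half-cylinder into a small collar and use $\mr{CAT}$-invariance) is the same as the paper's, but the central step is ill-posed, and the patch you propose does not repair it. You set up the homotopy as a pushforward $(\phi_s)_* = (\phi_s^{-1})^*$ along a compression. Pushforward is only defined when $\phi_s$ is a $\mr{CAT}$-isomorphism, whereas the compression $\phi_1$ with image inside the collar is a proper (non-surjective) self-embedding with no global inverse; you notice this ("$(\phi_s)_*g$ is only defined on the image of $\phi_s$") and then propose choosing $\phi_s$ to be onto, e.g.\ the flow of an inward-pointing vector field vanishing near the boundary. This cannot work: a self-embedding of $S^{n-1}\times[0,\infty)$ that is onto is a $\mr{CAT}$-isomorphism, and no $\mr{CAT}$-isomorphism has image inside $S^{n-1}\times[0,\varepsilon)$. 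Moreover, for such a $\phi_1$, $(\phi_1)_*$ is a bijection on $\Psi(S^{n-1}\times[0,\infty))$, so its image is certainly not "elements determined by their restriction to a small collar." The "onto" requirement and the "compresses into the collar" requirement are mutually exclusive, and this is precisely why the pushforward framing cannot be saved.

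The correct move, which the paper uses, is \emph{pullback} $\psi_s^*$ along an isotopy of (non-surjective) embeddings $\psi_s\colon S^{n-1}\times[0,\infty)\to S^{n-1}\times[0,\infty)$ with $\psi_0=\mr{id}$, each $\psi_s$ the identity on a fixed neighborhood $V$ of $S^{n-1}\times\{0\}$, and $\psi_1$ having image in a chosen neighborhood $U$. Pullback is always defined regardless of surjectivity; it preserves the boundary condition because $\psi_s|_V=\mr{id}$; and since $\psi_1$ has image in $U$, $\psi_1^*(h)$ depends only on $h|_U$. The last ingredient, which replaces your filtered-colimit reduction, is that for a compact parametrizing manifold $\partial P$ the family $f\colon\partial P\to\Psi(S^{n-1}\times[0,\infty)\rel b)$ factors through a single colimit stage (a property of $\cat{Sh^{CAT}}$), so there is a single $U$ and $g\in\Psi(U)$ with $f_t|_U = g$ for all $t$; then $\psi_1^*(f_t)=\psi_1^*(g)$ is independent of $t$ and the extension over $P$ follows by coning. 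Your "clean alternative" gestures at exactly this pullback-along-a-retraction construction for building $\bar f$, but it still invokes the ill-defined $\phi_s$ pushforward to produce the actual contraction; replacing $(\phi_s)_*$ by $\psi_s^*$ throughout is the repair you need.
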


\begin{proof}It suffices to prove that for all  compact parametrizing manifolds $P$ with boundary $\partial P$, any map $f \colon \partial P \to \Psi(S^{n-1} \times [0,\infty) \rel b)$ can be extended to a map $P \to \Psi(S^{n-1} \times [0,\infty) \rel b)$. Since we working in $\cat{S} = \cat{Sh^{CAT}}$, there exists an open neighborhood $U$ of $S^{n-1} \times \{0\}$ in $S^{n-1} \times [0,\infty)$ and $g \in \Psi(U)$ such that for all $t \in \partial P$ we have that $(f_t)|_U = g$. 
	
	Now pick a $\mr{CAT}$-isotopy $\psi_s$ of embeddings $S^{n-1} \times [0,\infty) \to S^{n-1} \times [0,\infty)$ such that \begin{enumerate}[(i)]
		\item $\psi_0 = \mr{id}$, 
		\item $\psi_1$ has image in $U$, 
		\item there is a neighborhood $V$ of $S^{n-1} \times \{0\}$ so that $\psi_s|_V = \mr{id}$ for all $s \in [0,1]$.
	\end{enumerate}
	Extend $g$ to a collar $\partial P \times [0,1]$ by $(t,s) \mapsto (f_t) \circ \psi_s$. On $\partial P \times \{1\}$, this is independent of $p \in \partial P$ and hence can be extended to $P$.\end{proof}

\begin{proposition}\label{prop.thmcylindricalbdy} Let $j \colon \Psi \to \Gamma$ be a map of $\mr{CAT}$-invariant sheaves. Suppose that $\Psi$ is microflexible, $\Gamma$ is flexible and $j \colon \Psi(\bR^n) \to \Gamma(\bR^n)$ is a weak equivalence.
	
	Let $M$ be a manifold, and $A \subset M$ a closed subset. Suppose that $S^{n-1} \subset \partial M \setminus A$, and $M \setminus S^{n-1}$ has no path components with compact closure. Fix boundary conditions $a \in \cB_\Psi(A)$ and $b \in \cB_\Psi(S^{n-1})$, and consider the map 
	\[j \colon \Psi(M \rel a \sqcup b) \longto \Gamma(M \rel j(a \sqcup b)).\]
	\begin{enumerate}[(i)]
		\item If $\Psi$ satisfies condition (H), this map is a homology equivalence.
		\item If $\Psi$ satisfies condition (W) and $b$ is fillable, this map is a weak equivalence.
	\end{enumerate}
\end{proposition}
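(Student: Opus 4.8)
The plan is to resolve $\Psi(M\rel a\sqcup b)$ as a two-sided bar construction, apply the McDuff--Segal delooping to compare $\Psi$ with $\Gamma$ fiberwise, and then reduce the two outer terms of the resulting fiber sequences to the open $h$-principle, Theorem~\ref{thm.gromovprecise}. Concretely, put $M_\infty \coloneqq M \cup_{S^{n-1}} (S^{n-1}\times[0,\infty))$, choose an embedding $\alpha\colon S^{n-1}\times\bR \hookrightarrow S^{n-1}\times(0,\infty)\subset M_\infty$ disjoint from $A$, apply Proposition~\ref{prop.resolve} to $(M_\infty,A,a)$, and discard the contractible ``far'' factors $\Psi(S^{n-1}\times[t_p,\infty)\rel b_p)$ using Lemma~\ref{lem.halfcontr}. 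This identifies $\Psi(M_\infty\rel a)$ with $B(\cat{F},\cat{C}^\Psi,\ast)$, where $\cat{F}\colon\cat{C}^\Psi\to\cat{S}$ sends $\la t,b'\ra$ to $\Psi(M\cup_{S^{n-1}}(S^{n-1}\times[0,t])\rel a\sqcup b')$ --- the functor of the previous subsubsection, with the condition $a$ carried along exactly as in the proof of Proposition~\ref{prop.acteq}. By $\mr{CAT}$-invariance $\cat{F}(\la t,b'\ra)\cong\Psi(M\rel a\sqcup b')$, so for a suitable vertex $c_b$ of $\cat{C}^\Psi$ one has $\cat{F}(c_b)\cong\Psi(M\rel a\sqcup b)$. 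The same construction applies to $\Gamma$, which is microflexible since it is flexible.

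By Propositions~\ref{prop.compcyl} and~\ref{prop.acteq}, every morphism of $\cat{C}^\Psi$ acts by a homology equivalence on $\cat{C}^\Psi(c'',-)$, on $\cat{C}^\Psi(-,c'')$ and on $\cat{F}$ when $\Psi$ satisfies (H), and by a weak equivalence when $\Psi$ satisfies (W) once we restrict to the fillable path component $\mathcal{P}$ of $c_b$ (this is legitimate since $\mathcal{P}$ is a union of path components, hence compatible with the resolution). Lemma~\ref{lem.catloop} together with Theorem~\ref{thm.mcduffsegal} then shows that $\cat{F}(c)\to\mr{hofib}_c\bigl(B(\cat{F},\cat{C}^\Psi,\ast)\to B\cat{C}^\Psi\bigr)$ is a homology equivalence under (H), and a weak equivalence over $\mathcal{P}$ under (W); the $\pi_0$-surjectivity hypothesis of Theorem~\ref{thm.mcduffsegal} holds because $\Psi(S^{n-1}\times\bR)\to\Gamma(S^{n-1}\times\bR)$ is a weak equivalence by Theorem~\ref{thm.gromovprecise}. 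Since $\Gamma$ is flexible, it satisfies (H) and (W) by Lemma~\ref{lem.secgrouplike}, so the analogous statement holds for $\cat{F}^\Gamma$ and $\cat{C}^\Gamma$.

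Now $j$ induces a compatible pair of these homotopy fiber sequences, and the induced map of fibers over $c_b\mapsto j(c_b)$ is, under the identifications above, the map $j\colon\Psi(M\rel a\sqcup b)\to\Gamma(M\rel j(a\sqcup b))$ we want. So it suffices to show the maps on base and total spaces are homology (resp.\ weak) equivalences. For the base, resolving $\Psi(S^{n-1}\times\bR)$ and $\Gamma(S^{n-1}\times\bR)$ --- both ends contractible by Lemma~\ref{lem.halfcontr} --- gives $B\cat{C}^\Psi\simeq\Psi(S^{n-1}\times\bR)$ and $B\cat{C}^\Gamma\simeq\Gamma(S^{n-1}\times\bR)$ compatibly with $j$; as $S^{n-1}\times\bR$ is open, Theorem~\ref{thm.gromovprecise} makes this map a weak equivalence. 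For the total space, we need $j\colon\Psi(M_\infty\rel a)\to\Gamma(M_\infty\rel j(a))$ to be a homology (resp.\ weak) equivalence: when $M_\infty\setminus A$ has no path component of compact closure this is again Theorem~\ref{thm.gromovprecise}, and otherwise one trims the open ends of $M_\infty$ with Lemma~\ref{lem.halfcontr} and cuts along separating spheres, reducing to compact cylinder spaces $\Psi(S^{n-1}\times[t_0,t_1]\rel c_0\sqcup c_1)$, on which $j$ is a homology (resp.\ weak, if $c_0$ is fillable) equivalence by running the present argument on the \emph{contractible} space $\Psi(S^{n-1}\times[t_0,\infty)\rel c_0)$: this exhibits $\cat{C}^\Psi(\la t_0,c_0\ra,-)$ as a homology loop space of $B\cat{C}^\Psi\simeq\Psi(S^{n-1}\times\bR)$, which maps by a weak equivalence to $B\cat{C}^\Gamma$. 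Two-out-of-three for the homotopy fiber sequences then concludes, after the routine check that the relevant path components match on $\pi_0$.

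I expect the main obstacle to be precisely this last reduction for the total space: the hypothesis only controls $M\setminus S^{n-1}$, while $\Psi(M_\infty\rel a)$ sees all of $M_\infty\setminus A$, which may acquire compact chunks that Theorem~\ref{thm.gromovprecise} does not handle directly, so one must feed those chunks back through the delooping machinery. Running parallel to this throughout the condition-(W) argument is the bookkeeping of staying inside the fillable path component determined by $b$ --- using that $j$ sends fillable objects to fillable objects and that fillable components are unions of path components --- which is what keeps Lemma~\ref{lem.catloop}(b) and its $\Gamma$-analogue applicable.
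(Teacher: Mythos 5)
Your proof follows the paper's own argument: form $M_\infty = M\cup_{S^{n-1}}(S^{n-1}\times[0,\infty))$, resolve $\Psi(M_\infty\rel a)$ along $\alpha$ via Proposition~\ref{prop.resolve}, discard the contractible tail factor using Lemma~\ref{lem.halfcontr} to identify the resolution with $B(\cat{F}^\Psi(M),\cat{C}^\Psi,\ast)$, apply McDuff--Segal delooping (Theorem~\ref{thm.mcduffsegal} via Lemma~\ref{lem.catloop}, fed by Propositions~\ref{prop.compcyl} and~\ref{prop.acteq}) to realize $\Psi(M\rel a\sqcup b)$ as the homotopy fiber over $c_b$, and compare to $\Gamma$ by applying Gromov's open $h$-principle (Theorem~\ref{thm.gromovprecise}) to the total space $\Psi(M_\infty\rel a)$ and to $B\cat{C}^\Psi\simeq\Psi(S^{n-1}\times\bR)$. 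This is exactly the paper's route.

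The one substantive deviation is your ``otherwise'' branch for the total-space map. You are right to notice that a direct appeal to Theorem~\ref{thm.gromovprecise} needs $M_\infty\setminus A$ to have no path component with compact closure in $M_\infty$, and that this is not a formal consequence of the hypothesis as literally stated, since removing $A$ can split $M\setminus S^{n-1}$ into smaller pieces. But the workaround you sketch does not close this gap: the obstructing compact chunks of $M_\infty\setminus A$ sit inside $M$ with frontier in $A$ and have no reason to be annuli, so ``cutting along separating spheres, reducing to compact cylinder spaces $\Psi(S^{n-1}\times[t_0,t_1]\rel c_0\sqcup c_1)$'' does not cover the general case --- and on a cylinder you would anyway be invoking the proposition to prove itself. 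The paper's proof has no such branch; it applies Gromov's theorem directly, and the hypothesis is to be read as ensuring that $M_\infty\setminus A$ is open in the relevant sense, which is what one checks in each of the places Proposition~\ref{prop.thmcylindricalbdy} gets invoked in the proof of Theorem~\ref{thm.technical}. One further small mix-up: the $\pi_0$-surjectivity entering the homotopy/homology-cartesian hypotheses of Theorem~\ref{thm.mcduffsegal} is internal to the semisimplicial object for $\Psi$ alone and is supplied by Propositions~\ref{prop.compcyl} and~\ref{prop.acteq}; the weak equivalence $\Psi(S^{n-1}\times\bR)\to\Gamma(S^{n-1}\times\bR)$ is what lets you compare the two fiber sequences over matching basepoints, not what verifies that hypothesis. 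Your fillability bookkeeping for condition (W) is correct and makes explicit something the paper leaves implicit.
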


\begin{proof}We claim that there are weak equivalences 
	\[B(\cat{F}^\Psi(M),\cat{C}^\Psi,\ast) \simeq \Psi(M \cup_{S^{n-1}} (S^{n-1} \times [0,\infty))),\]
	\[B(\cat{C}^\Psi) \simeq \Psi(S^{n-1} \times \bR).\]
	We give the proof in the first case, the second being similar. An embedding $\bR \hookrightarrow [0,\infty)$ gives an embedding $\alpha \colon S^{n-1} \times \bR \to M \cup_{S^{n-1}} (S^{n-1} \times [0,\infty))$. By Proposition \ref{prop.resolve} the map $\epsilon \colon || \Psi(M \cup_{S^{n-1}} (S^{n-1} \times [0,\infty)),\alpha)_\bullet|| \to  \Psi(M \cup_{S^{n-1}} (S^{n-1} \times [0,\infty)))$ is a weak equivalence. Now note that there is a semisimplicial map $\Psi(M \cup_{S^{n-1}} (S^{n-1} \times [0,\infty)),\alpha)_\bullet \to B_\bullet(\cat{F}^\Psi(M),\cat{C}^\Psi,\ast)$ obtained by levelwise projecting away terms of the form $\Psi(S^{n-1} \times [t_p,\infty) \rel b_p)$. These are contractible by Lemma \ref{lem.halfcontr} and thus the realization of this semisimplicial map is a weak equivalence, e.g.~by Theorem 2.2 of \cite{rwebertsemi}.
	
	It is easy to see from the proof that this weak equivalence in natural in $\Psi$ and hence it follows from Gromov's $h$-principle --- here Theorem \ref{thm.gromovprecise}, which uses the assumption that $\Psi(\bR^n) \to \Gamma(\bR^n)$ is a weak equivalence --- that in the commutative diagram
	\[\begin{tikzcd} B(\cat{F}^\Psi(M),\cat{C}^\Psi,\ast) \rar \dar & \dar  B(\cat{F}^{\Gamma}(M),\cat{C}^{\Gamma},\ast) \\
	B(\cat{C}^\Psi) \rar & B(\cat{C}^{\Gamma}) \end{tikzcd}\]
	the horizontal maps are weak equivalences. Hence their homotopy fibers are weak equivalent as well. By Lemma \ref{lem.catloop} and Propositions \ref{prop.compcyl} and \ref{prop.acteq}, the induced map on homotopy fibers over the object $(b,1)$ is homology equivalent (resp.\ weakly equivalent) to $j \colon \Psi(M \rel a \sqcup b) \to \Gamma(M \rel j(a \sqcup b))$. Thus this map is a homology equivalence (resp.\ weak equivalence).
\end{proof}

\subsubsection{General manifolds} We now finish the proof of Theorem \ref{thm.technical} by completing step (iv).

\begin{proof}[Proof of Theorem \ref{thm.technical}] Without loss of generality $M$ path-connected. Any closed subset $A$ is an intersection of locally finite simplicial complexes. This may be seen using a handle decomposition of $M$ (if $M$ is a 4-dimensional topological manifold, this may not exist and one needs to use that for any point $p \in M \setminus A$ the manifold $M \setminus \{p\}$ is smoothable). Thus we can reduce to the case $A$ is a locally finite simplicial complex and hence assume that there is a locally finite set $S$ of points in $M$ such that $M \setminus (S \cup A)$ has no compact components.
	
	For $S$ finite the proof is by induction over $|S|$. In the initial case $|S|=1$, we write $S = \{s\}$ and our goal is to prove that the map $j \colon \Psi(M \rel a) \to \Gamma(M \rel j(a))$ is a homology equivalence (resp.\ weak equivalence). There is an embedding $\bR^n \hookrightarrow M \setminus A$ sending the origin to $s$. From this we obtain an embedding $\alpha \colon S^{n-1} \times \bR \hookrightarrow M$. By Proposition \ref{prop.rescommute} we have a commutative diagram
	\[\begin{tikzcd} {||\Psi(M \rel a,\alpha)_\bullet||} \rar{||j_\bullet||} \dar[swap]{\simeq} & {||\Gamma(M \rel j(a),\alpha,j)_\bullet||} \dar{\simeq} \\
	\Psi(M \rel a) \rar[swap]{j} & \Gamma(M \rel j(a)) \end{tikzcd}\]
	and the vertical maps are weak equivalences by Propositions \ref{prop.resolve} and \ref{prop.resolvefolex}, where in the latter case one uses the comments preceding its statement.
	
	The map $j_k$ induces a bijection on the indexing sets of the spaces of $k$-simplices. We then use Proposition \ref{prop.thmcylindricalbdy} to see that $j_k$ is a homology (resp.\ weak equivalence) on each term. For weak equivalence we additionally use that by construction all non-empty terms have fillable boundary conditions. Hence $j_\bullet$ is a levelwise homology equivalence  (resp.\ weak equivalence) between semisimplicial spaces and thus so is its realization $j$, e.g.~by the geometric realization spectral sequence in Section 1.4 of \cite{rwebertsemi} (resp.\ Theorem 2.2 of \cite{rwebertsemi}).
	
	For the induction step, note that to deduce the case $|S \cup \{s\}| = |S|+1$ from the case $|S|$, one can use the same argument as above after replacing Proposition \ref{prop.thmcylindricalbdy} with the inductive hypothesis. Finally we need deduce the statement for locally finite $S$ from that for finite $S$: exhaust $M$ by compact submanifolds $M_i$, so that $|S \cap M_i|$ is finite, and apply the sheaf property.
\end{proof}

\section{Application I: Vassiliev's $h$-principle} \label{sec.vassiliev} In this section we discuss our first application, functions with moderate singularities.

\begin{convention}
	In this section, $\mr{CAT} = \mr{Diff}$ and thus all manifolds are smooth.
\end{convention}

\subsection{Sheaves of functions with restricted jets} \label{subsec.sheavesjets} Vassiliev's $h$-principle concerns functions from a smooth manifold to $\bR^n$ that do not have certain singularities, in the sense that their jets avoid certain subspaces of the jet space. We give precise definitions following Chapter 1 of \cite{emhpbook}, or Section II.2 of \cite{ggstable}. Let $Z$ be a smooth manifold and $C^\infty(\bR^n,Z)$ have the weak $C^\infty$-topology, i.e.\ a sequence converges if all derivatives converge on all compacts. 

Given a smooth map $f \colon \bR^n \to Z$, let $D^I_0(f)$ denote the mixed derivatives at $0 \in \bR^n$ with respect to $I = (i_1,\ldots,i_{|I|})$ with $1 \leq i_k \leq n$:
	\[D^I_0(f) \coloneqq \frac{\partial}{\partial x^{i_1}} \cdots \frac{\partial}{\partial x^{i_{|I|}}} f(0).\]
Note $D^\varnothing_0 f$ is the value of the function at the origin.

\begin{definition}\label{def.jets} Let $Z$ be a smooth manifold of dimension $z$. The \emph{$r$th jet space} $J^{(r)}(\bR^n,Z)$ of smooth maps from $\bR^n \to Z$ is given by the quotient space 
	\[J^{(r)}(\bR^n,Z) \coloneqq C^\infty(\bR^n,Z)/{\sim_r}\]
	along the equivalence relation $\sim_r$ on where $f \sim_r g$ if $D^I_0(f) = D^I_0(g)$ for all $I$ satisfying $0 \leq |I|\leq r$.
	
	We denote the quotient map $ C^\infty(\bR^n,Z) \to J^{(r)}(\bR^n,Z)$ by $j^r$ and call it the \emph{$r$-jet map}.
\end{definition}

\begin{example}\label{exam.jetpoly} Fixing a basis in $\bR^n$, $J^{(r)}(\bR^n,\bR^z)$ can be identified with all $z$-tuples of polynomials in $n$ variables of degree $\leq r$. Under this identification, the $r$-jet map $j^r \colon C^\infty(\bR^n,\bR^z) \to J^{(r)}(\bR^n,\bR^z)$	sends $f$ to the following $z$-tuple of polynomials: the coefficient of $x^I$ in the $j$th polynomial (where $I = (i_1,\ldots,i_{|I|})$ with $1 \leq i_k \leq n$, $0 \leq |I|\leq r$, and $1 \leq j \leq z$) is given by $D^I_0(f_j)$.
\end{example}

The topological group $\mr{Diff}_0(\bR^n)$ of diffeomorphisms of $\bR^n$ fixing the origin acts on the right on $J^{(r)}(\bR^n,Z)$ by composition. If $\bar{f} \in J^{(r)}(\bR^n,Z)$ is represented by $f$, we have
\begin{align*}J^{(r)}(\bR^n,Z) \times \mr{Diff}_0(\bR^n) &\longto J^{(r)}(\bR^n,Z) \\
(\bar{f},\psi) &\longmapsto \overline{f \circ \psi}.\end{align*}
This action factors over the quotient group 
\[G^{(r)} \coloneqq \mr{Diff}_0(\bR^n)/{\sim_r}\]
along the equivalence relation $\sim_r$ of Definition \ref{def.jets} restricted to diffeomorphisms.

We can replace $\bR^n$ by $M$. To do this, note that Definition \ref{def.jets} can be generalized to smooth maps $M \to Z$ by replacing $(\bR^n,0)$ with $(M,m)$. Varying $m \in M$, we get a space $J^{(r)}(M,Z)$ with map to $M$. This is a fiber bundle with fiber over $m \in M$ given by $J^{(r)}(T_m M,Z)$, which is called the \emph{$r$th jet bundle of maps from $M$ to $Z$}. 

A more explicit description of this bundle is as follows. There is a principal $G^{(r)}$-bundle $G^{(r)}(\bR^n,TM)$ with total space given by pairs $(m,\bar{g})$ of a point in $M$ and an $r$-jet of a diffeomorphism $g \colon \bR^n \to TM$ which preserves the origin. Then $G^{(r)}$ acts on the right by composition and we have
\[J^{(r)}(M,Z) \cong G^{(r)}(\bR^n,TM) \times_{G^{(r)}} J^{(r)}(\bR^n,Z),\]
where to make $G^{(r)}$ act on the left on $J^{(r)}(\bR^n,Z)$ we act by the inverse. A subset $\cD$ of $J^{(r)}(\bR^n,Z)$ is \emph{$\mr{Diff}$-invariant} if it is preserved by $G^{(r)}$. If so, then its complement $J^{(r)}(\bR^n,Z) \setminus \cD$ is also preserved by $G^{(r)}$ and we can define $\cF^f(M,\cD)$ and $\cF(M,\cD)$ as follows:

\begin{definition}\label{def.vascfdformal} Suppose that $\cD \subset J^{(r)}(\bR^n,Z)$ is $\mr{Diff}$-invariant.
	\begin{itemize}
		\item We define $\cF^f(M,\cD)$ to be the space of sections
		\[\cF^f(M,\cD) \coloneqq \Gamma(M,G^{(r)}_0(\bR^n,TM) \times_{G^{(r)}} (J^{(r)}(\bR^n,Z) \setminus \cD)).\]
		\item We define $\cF(M,\cD)$ to be the space of smooth functions $M \to Z$ whose $r$-jets do \emph{not} lie in $\cD$:
		\[\cF(M,\cD) \coloneqq (j^r)^{-1}(\cF^f(M,\cD)) \subset C^\infty(M,Z).\]
	\end{itemize} 
\end{definition}

The space $\cF(M,\cD)$ can be identified with the subspace of $\cF^f(M,\cD)$ consisting of those sections that are \emph{holonomic}, i.e.\ their $0$th jet determines the higher jets by taking derivatives. The map $j\colon \cF(M,\cD) \to \cF^f(M,\cD)$ is then identified with the inclusion of this subspace. 

We leave it to the reader to see that for any $\mr{Diff}$-invariant $\cD$ the assignments $M \mapsto \cF(M,\cD)$ and $M \mapsto \cF^f(M,\cD)$ are $\mr{Diff}$-invariant topological sheaves.

\subsection{Vassiliev's $h$-principle} In Theorem 9 of \cite{vassiliev}, Vassiliev proved a homological $h$-principle for sheaves of the form $\cF(-,\cD)$ under certain conditions on $\cD$. His proof uses Alexander duality and interpolation theory for analytic functions to reduce the result to a finite-dimensional statement. The statement of the conditions on $\cD$ uses the notion of a real semi-algebraic subset of a Euclidean space; this is by definition a finite union of subsets cut out by finitely many polynomial equalities and inequalities.

\begin{theorem}[Vassiliev] \label{thm.vassiliev} If $Z = \bR^z$ and $\cD$ is a closed real semi-algebraic set of codimension at least $n+2$, then $\cF(-,\cD)$ satisfies a homological $h$-principle on closed manifolds. That is, the map
	\[j\colon \cF(M,\cD \rel b) \longto \cF^f(M,\cD \rel j(b))\]
	is a homology equivalence for all compact smooth manifolds $M$ and boundary conditions $b \in \cB_\mr{\cF}(\partial M)$.\end{theorem}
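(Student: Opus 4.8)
The plan is to obtain this as a special case of the same argument that proves Corollary~\ref{cor.vassiliev}: namely, to deduce the theorem from Theorem~\ref{thm.technical}(i) applied to the map $j^r\colon \cF(-,\cD)\to\cF^f(-,\cD)$ of $\mr{Diff}$-invariant topological sheaves (a closed semi-algebraic $\mr{Diff}$-invariant $\cD$ of course comes equipped with a $\mr{Diff}$-invariant Whitney stratification), with $A=\partial M$ and $a=b$. Four hypotheses must be checked: that $\cF^f(-,\cD)$ is flexible, that $\cF(-,\cD)$ is microflexible, that $j^r\colon\cF(\bR^n,\cD)\to\cF^f(\bR^n,\cD)$ is a weak equivalence, and that $\cF(-,\cD)$ satisfies condition~(H). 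The first is immediate: by Definition~\ref{def.vascfdformal}, $\cF^f(-,\cD)$ is a sheaf of sections of a fiber bundle, and such sheaves are flexible (Subsection~\ref{subsec.flexible}). For microflexibility I would invoke Lemma~\ref{lem.microflexvariants} to reduce to the microfibration property of Definition~\ref{def.microfib} for a pair $Q\subset R$ differing by a single handle, and extend a short-time deformation from a neighborhood of $Q$ to $R$ by interpolating against the original element with a cut-off function, using that having $r$-jet in the open set $J^{(r)}(\bR^n,\bR^z)\setminus\cD$ is a $C^r$-stable condition; equivalently, this is the standard fact that sheaves of solutions of open differential relations are microflexible, and it also follows from parametrized Thom jet transversality.

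Next I would verify condition~(H), where the codimension hypothesis first enters. By Example~\ref{exam.grouplikesingle} it suffices that $\pi_0\big(\cF(S^{n-1}\times[1,2]\rel b_0\sqcup b_1)\big)$ be a single point for all $b_0,b_1$. For non-emptiness, glue representatives of $b_0$ and $b_1$ to a smooth function on $S^{n-1}\times[1,2]$ and perturb it in the interior, rel a neighborhood of the boundary (where its $r$-jet already avoids $\cD$), to be transverse to the stratification of $\cD$; since $\dim(S^{n-1}\times[1,2])=n<n+2\le\operatorname{codim}\cD$, transversality forces disjointness from $\cD$. For $\pi_0\le 1$, given $f_0,f_1$ in this space, the straight-line path $t\mapsto(1-t)f_0+tf_1$ (available because $Z=\bR^z$) is a path in $C^\infty(S^{n-1}\times[1,2],\bR^z)$ that equals $b_0$ and $b_1$ near the respective boundary components for every $t$ (being a convex combination of functions agreeing there); perturbing this $[0,1]$-family rel $t\in\{0,1\}$ and rel the boundary neighborhood to be transverse to $\cD$, and using $\dim([0,1]\times S^{n-1}\times[1,2])=n+1<n+2\le\operatorname{codim}\cD$, produces a path from $f_0$ to $f_1$ inside $\cF(S^{n-1}\times[1,2]\rel b_0\sqcup b_1)$. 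Hence $[\cat{C}^{\cF}]$ is a groupoid.

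The remaining hypothesis — that $j^r\colon\cF(\bR^n,\cD)\to\cF^f(\bR^n,\cD)$ is a weak equivalence — is the step I expect to be the real obstacle. Since $\bR^n$ is contractible, the bundle defining $\cF^f(\bR^n,\cD)$ is trivial and $\cF^f(\bR^n,\cD)$ deformation retracts onto its fiber $J^{(r)}(\bR^n,\bR^z)\setminus\cD$, under which $j^r$ becomes the ``$r$-jet at the origin'' map $\cF(\bR^n,\cD)\to J^{(r)}(\bR^n,\bR^z)\setminus\cD$. That this is a weak equivalence when $\cD$ is closed semi-algebraic of codimension $\ge n+2$ is precisely the finite-dimensional heart of Vassiliev's argument: one realizes and connects families of finite jets by honest functions avoiding $\cD$ — polynomials, or real-analytic functions obtained by interpolating through prescribed finite jets at finitely many points — and identifies the homotopy type of the complement $\cF(\bR^n,\cD)$ via Alexander duality. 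Parametrized jet transversality alone only yields this in low degrees (with $\operatorname{codim}\cD\ge n+2$ it gives a bijection on $\pi_0$ and a surjection on $\pi_1$), so Vassiliev's interpolation is an essential extra input, which I would import from \cite{vassiliev} rather than reprove. Granting it, Theorem~\ref{thm.technical}(i) — whose proof feeds this weak equivalence into Gromov--Siebenmann, Theorem~\ref{thm.gromovprecise} — gives that $j^r\colon\cF(M,\cD\rel b)\to\cF^f(M,\cD\rel j(b))$ is a homology equivalence for every compact smooth $M$ and boundary condition $b\in\cB_{\cF}(\partial M)$, as claimed.
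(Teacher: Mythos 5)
Your overall structure matches the paper's: Theorem~\ref{thm.vassiliev} is obtained from Theorem~\ref{thm.technical}(i) via Corollary~\ref{cor.vassiliev2}, with $\cF^f(-,\cD)$ flexible as a section sheaf, microflexibility checked as in Lemma~\ref{lem.vasconda} (same interpolation-with-cutoff argument; your remark about extending to general $Z$ using a tubular neighborhood is also what the paper does), and condition~(H) established from Thom jet transversality exactly as in Lemma~\ref{lem.vascondc}. You also correctly locate where the codimension $\geq n+2$ hypothesis enters, and correctly reduce the local input to the germ-at-origin map $\cF(\bR^n,\cD)\to J^{(r)}(\bR^n,\bR^z)\setminus\cD$.

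The one substantive discrepancy is your treatment of that germ map. You flag it as ``the step I expect to be the real obstacle'' and propose to import Vassiliev's Alexander duality and analytic interpolation argument as a black box, reasoning that parametrized transversality only gives low-degree information. But the paper's Lemma~\ref{lem.vasrn} proves this weak equivalence elementarily, with \emph{no} codimension assumption and \emph{no} semi-algebraicity: one first fibers both sides over $Z$ via evaluation and passes to fibers, and then compares $\cF_0(\bR^n,\cD')$ with the polynomial model using Taylor's theorem. The key observation is that for $g\in\cF_0(\bR^n,\cD')$ the straight-line path $P_s(g)=s\,p_r(g)+(1-s)\sum_{|\alpha|=r+1}x^\alpha h_\alpha(x)$ from $g$ to its Taylor polynomial has constant $r$-jet at the origin; closedness of $\cD$ then guarantees that $P_s(g)$ restricted to a small enough disk $D^n_\epsilon(0)$ stays in $\cF_0$, and a family of radial embeddings $\bR^n\hookrightarrow D^n_\epsilon(0)$ fixing a neighborhood of $0$ finishes. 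This is much lighter than what you anticipated, and it is what makes the paper's proof of Vassiliev's theorem self-contained rather than circular. Your version is logically correct (citing Vassiliev for this local statement is sound), but it misses the point that the whole step admits an easy direct proof, and consequently overstates the role of semi-algebraicity, which in the paper's argument is used only to guarantee a Whitney stratification (as you note), not to power the local weak equivalence.

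One small further remark: your claim that parametrized transversality gives only a $\pi_0$-bijection and $\pi_1$-surjection for the germ map is not really the right way to think about it; transversality operates over compact source manifolds, whereas the germ map is a statement about $\bR^n$ with a marked point, and the correct tool is the Taylor/rescaling argument rather than transversality of any degree.
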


If the codimension is at least $n+3$, $j$ is in fact weak equivalence, because Vassiliev proved it is a homology equivalence between 1-connected spaces. Vassiliev asked whether $j$ is also a weak equivalence if the codimension is $n+2$. We will see below that under mild conditions codimension $n+2$ indeed suffices. We will also see that $j$ is still a homology equivalence even if one relaxes the conditions that $Z = \bR^z$ and that $\cD$ is real semi-algebraic. This will follow by checking the conditions for Theorem \ref{thm.main}. 

The following criterion for microflexibility is well-known, as any open differential relation gives rise to a microflexible sheaf by Example 1.4.1.B of \cite{gromovhp}.

\begin{lemma}\label{lem.vasconda} If $\cD$ is closed, then $\cF(-,\cD)$ is microflexible.\end{lemma}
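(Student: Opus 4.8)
The plan is to verify the microfibration condition of Definition~\ref{def.microfib} directly by a cutoff-and-interpolate argument; one could instead appeal to Example~1.4.1.B of \cite{gromovhp}, but the construction is short enough to record. By Lemma~\ref{lem.microflexvariants} it is enough to treat a pair $Q \subseteq R$ of compact subsets of a manifold $M$, and in fact the construction below uses nothing about the handle structure and applies to arbitrary such pairs.

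First I would unwind the lifting problem. Since $\cat{S} = \cat{Sh^{CAT}}$ and the parametrizing simplices are compact, a commutative square
\[\begin{tikzcd} \Delta^i \times \{0\} \rar \dar & \cF(R \subset M,\cD) \dar \\ \Delta^i \times [0,1] \rar & \cF(Q \subset M,\cD) \end{tikzcd}\]
is, after passing to a common stage of the cofiltered systems of neighborhoods, the same data as: an open neighborhood $\mathcal U \supseteq R$ with $\bar{\mathcal U}$ compact, a continuous family $g \colon \Delta^i \to \cF(\mathcal U,\cD)$, an open neighborhood $\mathcal V$ of $Q$ with $\bar{\mathcal V} \subseteq \mathcal U$, and a continuous family $f \colon \Delta^i \times [0,1] \to \cF(\mathcal V,\cD)$ with $f(d,0) = g(d)|_{\mathcal V}$. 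A lift over $\Delta^i \times [0,\epsilon]$ is then a family $\tilde f \colon \Delta^i \times [0,\epsilon] \to \cF(\mathcal U,\cD)$ with $\tilde f(d,0) = g(d)$ that agrees with $f(d,t)$ on a neighborhood of $Q$.

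Next I would build $\tilde f$. Choose nested opens $Q \subseteq \mathcal W \subseteq \bar{\mathcal W} \subseteq \mathcal V' \subseteq \bar{\mathcal V'} \subseteq \mathcal V$ and a smooth $\rho \colon M \to [0,1]$ with $\rho \equiv 1$ on $\mathcal W$ and $\rho \equiv 0$ off $\mathcal V'$. Fix an embedding $Z \hookrightarrow \bR^N$ with tubular-neighborhood retraction $\pi \colon \mathcal N \to Z$ (or fix a metric on $Z$ and use geodesic interpolation). For $t$ small enough that $f(d,t)(x)$ is uniformly close to $g(d)(x)$ on the compact set $\Delta^i \times \bar{\mathcal V}$, set
\[\tilde f(d,t)(x) \coloneqq \pi\bigl(\rho(x)\, f(d,t)(x) + (1 - \rho(x))\, g(d)(x)\bigr).\]
This is a smooth function of $x$ on $\mathcal V \cup (\mathcal U \setminus \bar{\mathcal V'}) = \mathcal U$, continuous in $(d,t)$, equal to $g(d)$ when $t = 0$ (since $f(d,0) = g(d)$), and equal to $f(d,t)$ on $\mathcal W \supseteq Q$. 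Its $r$-jet $j^r\tilde f(d,t)$ avoids $\cD$ automatically on $\mathcal W$ (where it equals $j^r f(d,t)$) and on $\mathcal U \setminus \bar{\mathcal V'}$ (where it equals $j^r g(d)$), so only the compact region $\bar{\mathcal V'} \setminus \mathcal W$ needs attention.

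The main obstacle — and the only place where closedness of $\cD$ is used — is this last estimate. Because $\Delta^i$ and $\bar{\mathcal V'}$ are compact, the set $\{\,j^r g(d)(x) : d \in \Delta^i,\ x \in \bar{\mathcal V'}\,\}$ is a compact subset of the \emph{open} set $J^{(r)}(\bR^n,Z) \setminus \cD$ (using a fixed atlas and the $G^{(r)}$-invariance of $\cD$), hence it has positive distance $\delta$ from the \emph{closed} set $\cD$ in any fiberwise metric on the $r$-jet bundle. Since $\tilde f$ is assembled from smooth operations, $j^r\tilde f(d,t) \to j^r g(d)$ in $C^0$ on $\bar{\mathcal V'}$, uniformly in $d$, as $t \to 0$; choosing $\epsilon > 0$ so this difference stays below $\delta$ for $t \le \epsilon$ forces $j^r\tilde f(d,t) \notin \cD$ throughout. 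This yields the required $\epsilon$ and lift, so the restriction map is a microfibration and $\cF(-,\cD)$ is microflexible. (The same picture shows the hypothesis cannot be dropped: for non-closed $\cD$ an arbitrarily small perturbation of the interpolation could land in $\cD$.)
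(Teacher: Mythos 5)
Your proof is correct and follows the same cutoff-and-interpolate strategy as the paper's. The only substantive difference is in how the general target manifold $Z$ is handled: the paper first treats $Z = \bR^p$ with affine interpolation and then generalizes by replacing addition with a smooth interpolation $\alpha \colon [0,1] \times W \to Z$ defined on a neighborhood $W$ of the diagonal (constructed via geodesics), whereas you work in one pass using either an embedding $Z \hookrightarrow \bR^N$ with a tubular retraction or geodesic interpolation. These are equivalent devices. You are also somewhat more explicit about the compactness argument — producing a uniform $\delta > 0$ separating the compact jet image from the closed set $\cD$ and then choosing $\epsilon$ so the $C^0$-perturbation of $r$-jets stays below $\delta$ — which the paper compresses into the single sentence ``Since the complement of $\cD$ is open, there exists some $\epsilon > 0$\ldots''; making this quantitative is a genuine improvement in readability.
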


\begin{proof}We need to check that for all pairs of $Q \subset R$ of compact submanifolds with corners in $M$, the restriction map
	\[\cF(R \subset M,\cD) \longto \cF(Q \subset M,\cD)\]
	is a microfibration. We first suppose that $Z = \bR^p$. Consider a commutative diagram
	\[\begin{tikzcd} \Delta^i \dar \rar{f_0} & \cF(R \subset M,\cD)\dar \\
	\Delta^i \times [0,1] \rar[swap]{F_s} & \cF(Q \subset M,\cD).\end{tikzcd}\]
	There exist open neighborhoods $V$ of $R$ in $M$ and $U$ of $Q$ in $M$, and representatives $f_0 \colon \Delta^i \to \cF(V,\cD)$ and $F_s \colon \Delta^i \times [0,1] \to \cF(U,\cD)$. We may assume $U \subset V$. Now pick a smooth function $\eta \colon M \to [0,1]$ with support in $U$ and equal to $1$ near $Q$. Then we can define a family of smooth functions on $V$ by
	\[f_s(d) \coloneqq \left(m \mapsto \eta(m) \tilde{F}_s(d)(m) + (1-\eta(m))f_0(d)(m) \right).\]
	
	Near $Q \subset M$ this coincides with $F_s$ and for $s=0$ this is equal to $f_0$. Since the complement of $\cD$ is open, there exists some $\epsilon>0$ such that for all $s<\epsilon$ the family $f_s$ has $r$-jet avoiding $\cD$. Restricting to $\Delta^i \times [0,\epsilon]$ gives the desired partial lift.
	
	For general $Z$, we remark that in the previous argument addition can be replaced by any smooth function $\alpha \colon [0,1] \times W \to Z$ where $W$ is a neighborhood of the diagonal in $Z \times Z$, such that $\alpha_0 = \pi_1$, $\alpha_1 = \pi_2$ and on the diagonal $\Delta$ it is the projection onto $Z$. Such functions exist: upon picking a Riemannian metric, there is a neighborhood $W$ such that if $(z,z') \in W$ there is a unique geodesic $\gamma \colon [0,1] \to Z$ from $z$ to $z'$, which will depend smoothly on the endpoints. Then define $\alpha(t,z,z')$ to be $\gamma(t)$.
\end{proof}

For condition (H) we will use Thom's jet transversality theorem, Theorem 2.3.2 of \cite{emhpbook}, Section II.4 of \cite{ggstable}, or page 38 of \cite{arnoldsing}. 

\begin{definition}Let $Y$ be a smooth manifold. \begin{itemize}
		\item A subset $S$ of $Y$ is said to be \emph{stratified} if it can be written as a finite union $\cup_{i=1}^n S_i$ of locally closed smooth submanifolds $S_i$, called \emph{strata}, such that $\bar{S}_k = \cup_{k=i}^n S_i$.
		\item A map $f \colon X \to Y$ is said to be \emph{transverse} to a stratified subset $S$ of $Y$ if it is transverse to each stratum.\end{itemize}
\end{definition}  

\begin{theorem}[Thom] \label{thm.thom} Let $\cS \subset J^{(r)}(\bR^n,Z)$ be a $\mr{Diff}$-invariant stratified subset, then the set of $g \in \mr{C}^\infty(M,Z)$ with jets $j^r(g) \in \Gamma(M,J^{(r)}(M,Z))$ transverse to $\cS$ is open and dense.\end{theorem}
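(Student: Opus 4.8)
The plan is to follow the classical proof of the jet transversality theorem (as in Section II.4 of \cite{ggstable} or Theorem 2.3.2 of \cite{emhpbook}), adapted to the $\mr{Diff}$-invariant setting. First note that the frontier condition $\bar{\cS}_k = \cup_{i\geq k}\cS_i$ forces $\cS = \bar{\cS}_1$ to be closed, and that, after refining if necessary to the canonical Whitney stratification (which is preserved by any automorphism of the pair $(J^{(r)}(\bR^n,Z),\cS)$), we may assume the strata $\cS_i$ are individually $G^{(r)}$-invariant. Each $\cS_i$ then determines a smooth fiber subbundle $\widetilde{\cS}_i \subset J^{(r)}(M,Z)$ via the associated-bundle construction $G^{(r)}(\bR^n,TM)\times_{G^{(r)}}(-)$, and $\widetilde{\cS} \coloneqq \bigcup_i \widetilde{\cS}_i$ is a closed (Whitney) stratified subset of the total space of $J^{(r)}(M,Z)\to M$; transversality of $j^r(g)$ to $\cS$ means transversality of the section $j^r(g)\colon M \to J^{(r)}(M,Z)$ to each $\widetilde{\cS}_i$, a condition which, because $\widetilde{\cS}_i$ is a subbundle, is local in $M$ and, in a bundle chart, reduces to transversality in coordinates of a map $U \to J^{(r)}(\bR^n,Z)$ to $\cS_i \subset J^{(r)}(\bR^n,Z)$. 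Openness over compact subsets of $M$ (in particular, when $M$ is compact) is the standard fact that transversality of $C^1$-maps to a closed Whitney-stratified set along a compact set is an open condition; since $C^\infty(M,Z)$ with the weak topology is a Baire space, it therefore suffices to prove that for every compact $K \subset M$ the set of $g$ with $j^r(g)$ transverse to $\widetilde{\cS}$ over $K$ is dense, after which one exhausts $M$ by compacts $K_1 \subset K_2 \subset \cdots$ and takes a countable intersection of dense open sets.

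For density over a compact set $K$ I would argue chart-by-chart using the parametric transversality theorem. Fix a coordinate chart $U \cong \bR^n$ in $M$ and a coordinate chart $\bR^z$ in the relevant part of $Z$, so that by Example \ref{exam.jetpoly} the fiber $J^{(r)}(\bR^n,\bR^z)$ is identified with the finite-dimensional vector space $P$ of $z$-tuples of polynomials of degree $\leq r$. Given $g_0$ and a bump function $\beta$ on $U$ with $\beta \equiv 1$ on a smaller chart $U'$ with $\overline{U'} \subset U$, consider the smooth family $g_p \coloneqq g_0 + \beta\,p$ for $p \in P$ (extended by $g_0$ off $\mr{supp}(\beta)$) and the evaluation $\mr{ev}\colon P \times U' \to J^{(r)}(\bR^n,\bR^z)$, $(p,x)\mapsto j^r_x(g_p)$. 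The key point is that for each fixed $x$ the map $p \mapsto j^r_x(g_0 + p)$ is an affine isomorphism onto $J^{(r)}(\bR^n,\bR^z)$ --- re-expanding a degree-$\leq r$ polynomial about $x$ again yields a degree-$\leq r$ polynomial --- so already the partial differential of $\mr{ev}$ in the $P$-directions is surjective; hence $\mr{ev}$ is a submersion, in particular transverse to every stratum $\cS_i$. By Thom's parametric transversality theorem (a form of Sard's theorem) the set of $p \in P$ for which $\mr{ev}(p,-) = j^r(g_p)|_{U'}$ is transverse to every $\cS_i$ over $U'$ is residual, hence dense; taking $p$ small makes $g_p$ arbitrarily close to $g_0$ in the weak topology.

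Finally one patches. Cover $K$ by finitely many charts $U_1,\dots,U_m$ with smaller charts $U_j'$, $\overline{U_j'}\subset U_j$, whose closures still cover $K$. Perturb inductively: transversality of $j^r(g)$ to $\widetilde{\cS}$ over the compact set $\overline{U_1'}\cup\cdots\cup\overline{U_{j-1}'}$ is an open condition, so there is $\delta_j>0$ such that any $\delta_j$-small change of $g$ supported in $U_j$ preserves it; apply the previous paragraph inside $U_j$ with a perturbation of size $<\delta_j$ to additionally achieve transversality over $\overline{U_j'}$. After $m$ steps we obtain, arbitrarily close to the original $g$, a map transverse to $\widetilde{\cS}$ over $K$; together with openness over $K$ and the Baire property this gives density over all of $M$, completing the proof. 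I expect the patching step to be the main obstacle --- arranging that the perturbation correcting transversality on one chart does not spoil what was already achieved on the others, which is exactly where openness of transversality over compact sets is used --- the only other points needing care being the reduction of the invariant statement to the classical one for $\cS_i \subset J^{(r)}(\bR^n,Z)$ (handled by the subbundle picture) and, for noncompact $M$, the appeal to Baire category in place of a genuine global openness statement.
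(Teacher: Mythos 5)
The paper does not prove this theorem; it states it as Thom's classical jet transversality theorem and refers the reader to Eliashberg--Mishachev, Golubitsky--Guillemin, and Arnold--Gusein-Zade--Varchenko for proofs. Your argument is a correct rendering of the standard proof found in those sources: perturb by the polynomial family $g_0 + \beta p$, observe that for each $x$ the map $p \mapsto j^r_x(g_0 + p)$ is an affine isomorphism (so the evaluation map is a submersion), invoke parametric transversality to get density over a chart, and patch chart-by-chart using openness of transversality over compacta and Baire category. Two minor caveats are worth flagging, both of which you implicitly touch on. First, openness of transversality to a closed stratified set genuinely uses Whitney's condition (a), which the paper's Definition of \emph{stratified} (only the frontier condition $\bar{\cS}_k = \cup_{i\geq k}\cS_i$) does not impose; your remedy---refining to the canonical Whitney stratification, which is automorphism-invariant---is legitimate for the semialgebraic/subanalytic $\cD$ that actually arise in the paper's applications, but goes slightly beyond the stated hypotheses. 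Second, for non-compact $M$ in the weak $C^\infty$-topology the transversal maps form only a residual set, not an open one; your Baire argument correctly delivers residuality, which is all the paper needs (and the paper's actual uses of the theorem, in Lemmas \ref{lem.vascondc} and \ref{lem.vaslinear}, are on compact cylinders anyway).
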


\begin{lemma}\label{lem.vascondc} If $\cD$ is a closed stratified subset with strata of codimension at least $n+2$, then $\cF(-,\cD)$ satisfies condition (H).\end{lemma}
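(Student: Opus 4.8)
The plan is to compare $\cF(-,\cD)$ with the sheaf $C^\infty(-,Z)$ of all smooth maps into $Z$, which is flexible --- it is the sheaf of sections of the trivial bundle $M \times Z \to M$ --- and hence satisfies condition (H) by Lemma \ref{lem.secgrouplike}; in other words $[\cat{C}^{C^\infty(-,Z)}]$ is a groupoid. The inclusion $\iota \colon \cF(-,\cD) \hookrightarrow C^\infty(-,Z)$ is a morphism of $\mr{Diff}$-invariant topological sheaves and so induces a functor $\iota_* \colon [\cat{C}^{\cF(-,\cD)}] \to [\cat{C}^{C^\infty(-,Z)}]$, compatible with composition since $\iota$ respects gluing and pushforward. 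I claim $\iota_*$ is fully faithful. Granting this, $[\cat{C}^{\cF(-,\cD)}]$ is a groupoid as well: each object $b$ acquires a two-sided identity by lifting $\mr{id}_{\iota_*(b)}$ through the bijection $\iota_* \colon [\cat{C}^{\cF(-,\cD)}](b,b) \to [\cat{C}^{C^\infty(-,Z)}](\iota_* b,\iota_* b)$ and checking the unit axioms by faithfulness, and each morphism $f$ acquires a two-sided inverse by lifting the inverse of $\iota_*(f)$ through $\iota_*$ and checking the defining identities by faithfulness. Hence $\cF(-,\cD)$ satisfies condition (H).

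To show $\iota_*$ is fully faithful I must show that for all boundary conditions $b_0,b_1 \in \cB_{\cF(-,\cD)}(S^{n-1}\times\{1\})$ the map
\[\pi_0\left(\cF(S^{n-1}\times[1,2],\cD \rel b_0 \sqcup b_1)\right) \longrightarrow \pi_0\left(C^\infty(S^{n-1}\times[1,2],Z) \rel b_0 \sqcup b_1\right)\]
is a bijection. For surjectivity, take any smooth $h\colon S^{n-1}\times[1,2]\to Z$ realizing the given germs --- whose $r$-jets already avoid $\cD$ near $S^{n-1}\times\{1,2\}$ --- and apply the relative form of Thom's jet transversality theorem (Theorem \ref{thm.thom}) to perturb $h$, keeping it fixed near $S^{n-1}\times\{1,2\}$, into a map with $r$-jet transverse to every stratum of $\cD$; since each stratum has codimension at least $n+2 > n = \dim(S^{n-1}\times[1,2])$, transversality forces disjointness, so the perturbed map lies in $\cF(S^{n-1}\times[1,2],\cD\rel b_0\sqcup b_1)$ and is homotopic rel the boundary germs to $h$. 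For injectivity, suppose $f_0, f_1 \in \cF(S^{n-1}\times[1,2],\cD\rel b_0\sqcup b_1)$ are joined by a path in $C^\infty(S^{n-1}\times[1,2],Z)\rel b_0\sqcup b_1$, represented by a smooth family $H_s$, $s\in[0,1]$, that is independent of $s$ near $S^{n-1}\times\{1,2\}$. Applying the parametric (fiberwise) and relative form of Theorem \ref{thm.thom} --- fixing $H$ near $S^{n-1}\times\{1,2\}$ and over $s\in\{0,1\}$ --- we perturb $H$ so that $j^r(H_s)$ is transverse to every stratum of $\cD$ for every $s$; the resulting family of $r$-jets is a section over the $(n+1)$-manifold $S^{n-1}\times[1,2]\times[0,1]$, so transversality to strata of codimension at least $n+2$ again forces disjointness from $\cD$, and the perturbed $H$ is a path from $f_0$ to $f_1$ inside $\cF(S^{n-1}\times[1,2],\cD\rel b_0\sqcup b_1)$.

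The only external inputs are Lemma \ref{lem.secgrouplike} applied to the flexible sheaf $C^\infty(-,Z)$ and the parametric and relative enhancements of Thom's jet transversality theorem, which are standard (see e.g.~\cite{emhpbook}). The quantitative heart of the matter --- and the only use of the hypothesis --- is the dimension count: a generic perturbation of an at most $(n+1)$-dimensional family of maps into $Z$ has $r$-jets avoiding a $\mr{Diff}$-invariant stratified subset of the jet space of codimension at least $n+2$. This is precisely what makes the condition defining $\cF(-,\cD)$ invisible to the $\pi_0$ of the length-one cylinder morphism spaces, so that $[\cat{C}^{\cF(-,\cD)}]$ inherits the groupoid property from $[\cat{C}^{C^\infty(-,Z)}]$. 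I expect the main obstacle to be purely bookkeeping rather than conceptual: arranging the perturbations to be compatible with the filtered colimit over shrinking neighborhoods implicit in the notation $\rel b_0 \sqcup b_1$, so that one genuinely produces an element of, and a path in, $\cF(S^{n-1}\times[1,2],\cD\rel b_0\sqcup b_1)$.
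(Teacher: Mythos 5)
Your proof is correct and follows essentially the same route as the paper: compare $\cF(-,\cD)$ against a flexible auxiliary sheaf of maps to $Z$, use that the latter satisfies condition (H) by Lemma~\ref{lem.secgrouplike}, and reduce the claim to a fully faithful comparison of the $\pi_0$-categories, verified by jet transversality together with the codimension-$\geq n+2$ dimension count. The only (harmless) deviations are that you compare against $C^\infty(-,Z)$ where the paper uses the sheaf of continuous maps (thereby saving a smoothing step), and that for $\pi_0$-injectivity you appeal to the parametric/relative form of Thom's theorem directly, whereas the paper instead applies the non-parametric theorem to the total map $F \colon S^{n-1}\times[1,2]\times[0,1] \to Z$ against the preimage stratified set $\pi^{-1}(\cD) \subset J^{(r)}(\bR^{n+1},Z)$ and checks by hand that this preimage has the same codimension and detects the same condition slice-by-slice.
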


\begin{proof} This will follow from Thom's jet transversality Theorem \ref{thm.thom}, which we will use to prove the following stronger statement. Let $\mr{Map}(-,Z)$ denote the $\mr{Diff}$-invariant sheaf of continuous functions to $Z$ and $\cat{C}^{\mr{Map}(-,Z)}_\cF$ be the full subcategory of $\cat{C}^{\mr{Map}(-,Z)}$ on objects $\la t,b\ra$ with $b$ a boundary condition of the sheaf $\cF(-,\cD)$, i.e.\ $b \in \cB_{\cF}(S^{n-1} \times \{0\})$. The inclusion $h \colon \cF(-,\cD) \to \mr{Map}(-,Z)$ induces a functor
	\[h \colon [\cat{C}^{\cF(-,\cD)}] \longto [\cat{C}^{\mr{Map}(-,Z)}_\cF],\]
	and we will show this is an isomorphism of categories. The lemma then follows by noting that $[\cat{C}^{\mr{Map}(-,Z)}_\cF]$ is a full subcategory of a groupoid, hence a groupoid.
	
	First we prove that $\pi_0(\cF(S^{n-1} \times [0,1],\cD \rel b_0 \sqcup b_1))$ contains an element in any given homotopy class, i.e.\ $h$ is a $\pi_0$-surjection. Given any continuous function $f \colon S^{n-1} \times [0,1] \to Z$ satisfying $b_0$, $b_1$ near $S^{n-1} \times \{0,1\}$, we can smooth it rel boundary and apply Theorem \ref{thm.thom} with $g=f$ and $\cS = \cD$. This implies that we can perturb $f$ relative to the boundary to obtain a smooth function with $r$-jet transverse to $\cD$. As we working over the manifold $S^{n-1} \times [0,1]$ of dimension $n$, which is strictly smaller than the codimension $n+2$ of $\cD$, transverse intersection means empty intersection.
	
	A similar argument says that any two homotopic functions can be connected by a path of smooth functions with $r$-jets avoiding $\cD$, i.e.\ $h$ is a $\pi_0$-injection. Take two functions $f_0, f_1 \in \cF(S^{n-1} \times [0,1],\cD \rel b_0 \sqcup b_1)$ in the same homotopy class rel boundary. They can be connected by a path $f_t$ of smooth functions satisfying $b_0$ and $b_1$. This path can be considered as a smooth function 
	\[F\colon S^{n-1} \times [0,1] \times [0,1] \longto Z.\]
	Now we apply Theorem \ref{thm.thom} with $g = F$ and $\cS = \pi^{-1}(\cD)$ with $\pi\colon J^{(r)}(\bR^{n+1},Z) \to J^{(r)}(\bR^n,Z)$ induced by composing with the inclusion $\bR^n \to \bR^{n+1}$ by taking the last coordinate $0$ (there is no other choice, as the origin has to go to the origin). Suppose that $Z = \bR^z$, which is the typical case by covering $Z$ with charts.  Under the identification with polynomials of Example \ref{exam.jetpoly}, the map $\pi\colon J^{(r)}(\bR^{n+1},\bR^z) \to J^{(r)}(\bR^n,\bR^z)$ is given by setting the variable $x_{n+1}$ equal to $0$. For this description it follows that $F$ has $r$-jet avoiding $\pi^{-1}(\cD)$ if and only if each $F(-,t)$ has $r$-jet avoiding $\cD$. Furthermore, it implies that $\pi$ is a submersion and thus $\cS$ has the same codimension as $\cD$. 
	
	Thus we can perturb $F$ to relative to $b_0$, $b_1$, $f_0$ and $f_1$, to have $r$-jet transverse to $\cD$. Because the codimension of $\cD$ was at least $n+2$ and we are working over the manifold $S^{n-1} \times [0,1] \times [0,1]$ of dimension $n+1 < n+2$, the intersection is still empty. We can thus connect $f_0$ and $f_1$ by a path in $\cF(S^{n-1} \times [0,1],\cD \rel b_0 \sqcup b_1)$.\end{proof}

Condition (W) does not follow from transversality and we will need additional assumptions on $\cD$. We will use that post-composition gives an action of $\mr{Diff}(Z)$ on $J^{(r)}(\bR^n,Z)$. If $Z = \bR^z$, then for all compactly supported diffeomorphisms $\rho \colon [0,\infty) \to [0,\infty)$ that are scaling near the origin, we have a diffeomorphism of $\bR^z$ given by 
\[x \longmapsto \begin{cases}\frac{\rho(||x||)}{||x||} \cdot x & \text{if $||x||>0$,} \\
0 & \text{otherwise.}\end{cases}\]
We say $\cD$ is \emph{radially invariant} if it invariant under the subgroup of $\mr{Diff}(\bR^z)$ of these diffeomorphisms. The \emph{standard linear map} $\Lambda \colon \bR^n \to \bR^z$ is the map $(x_1,\ldots,x_n) \mapsto (x_1,\ldots,x_z)$ if $z \leq n$ and $(x_1,\ldots,x_n) \mapsto (x_1,\ldots,x_n,0,\ldots,0)$ if $z>n$.

\begin{lemma}\label{lem.vaslinear} Suppose that $\cD$ is closed stratified subset with strata of codimension at least $n+2$. Suppose that $Z$ is path-connected and there exists a chart $e \colon \bR^z \hookrightarrow Z$ such that \begin{enumerate}[(i)] 
		\item $\cD|_{\bR^z}$ is radially invariant and invariant under a transitive subgroup of $\mr{Diff}(\bR^z)$ (e.g the translations), or
		\item $\cD|_{\bR^z}$ is radially invariant and $z \leq n+1$.
	\end{enumerate} Then for all fillable boundary conditions $b$, (i') the map $e \circ \Lambda$ has an $r$-jet which avoids $\cD$ and (ii') $b$ can be connected to $(e \circ \Lambda)|_{S^{n-1}}$.
\end{lemma}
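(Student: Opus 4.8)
The plan is to establish (i') first, then (ii'), working throughout inside the chart $e$, so that we may assume $Z=\bR^z$, $e=\mr{id}$, and that $\cD\subset J^{(r)}(\bR^n,\bR^z)$ is closed, $G^{(r)}$-invariant, radially invariant, has all strata of codimension $\ge n+2$, and — in case (i) — is moreover invariant under a transitive subgroup $G\le\mr{Diff}(\bR^z)$.

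For (i') I must show every $r$-jet of $\Lambda$ avoids $\cD$; writing $j^r_p(\Lambda)$ for the $r$-jet at the origin of $v\mapsto\Lambda(p+v)$, this is the jet $[\,v\mapsto\Lambda(p)+\Lambda(v)\,]$. The first step is to reduce to $p=0$: applying a radial diffeomorphism of $\bR^z$ that is multiplication by $t\in(0,1]$ on a ball containing $\Lambda(p)$, followed by the $G^{(r)}$-element $v\mapsto t^{-1}v$, sends $j^r_p(\Lambda)$ to $j^r_{tp}(\Lambda)$; since both operations preserve $\cD$, membership in $\cD$ is independent of $t$, and letting $t\to0$ while using that $\cD$ is closed leaves us to prove $j^r_0(\Lambda)\notin\cD$. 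Suppose it were in $\cD$. Then the orbit of $j^r_0(\Lambda)$ under $G^{(r)}$, the radial diffeomorphisms, and (in case (i)) the transitive group $G$, together with its closure, lies in $\cD$. When $z\le n$ that closure already contains the open dense locus of submersion jets of a fixed value, of codimension $z<n+2$; when $z=n+1$ one uses the transitive group (case (i)) or the bound $z\le n+1$ (case (ii)) to exhibit inside the closure a submanifold of jets whose coefficient vectors span a subspace of dimension at most $n$, again of codimension $<n+2$. Either way $\cD$ would contain a submanifold of codimension strictly less than $n+2$, contradicting the codimension hypothesis on its strata; hence $j^r_0(\Lambda)\notin\cD$ and $e\circ\Lambda$ avoids $\cD$. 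Ruling out this single jet is the part I expect to be the main obstacle: it is the only place that forces us to use all of the codimension bound, radial invariance, and transitivity (resp. $z\le n+1$) at once, and it rests on an explicit description of the orbit closure and a codimension count which is sharp, and hence delicate, in the borderline case $z=n+1$.

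For (ii'), let $b$ be fillable and pick $g\in\cF(D^n,\cD\rel b)$. Since $\cF(-,\cD)$ is open ($\cD$ being closed) and by Thom's transversality theorem (Theorem \ref{thm.thom}) we may, after a perturbation rel a collar of $\partial D^n$, assume $g$ has a point $x_0$ where $Dg_{x_0}$ has maximal rank $\min(n,z)$. I then produce a morphism $b\to(e\circ\Lambda)|_{S^{n-1}}$ in $[\cat{C}^{\cF(-,\cD)}]$ as a concatenation of two cylinders. The first is a ``blow-up'' at $x_0$: in a chart around $g(x_0)$ the rescaled functions $v\mapsto\big(g(x_0+\epsilon v)-g(x_0)\big)/\epsilon$ avoid $\cD$ for all small $\epsilon>0$ — they are obtained from $g$ by precomposition with an affine map of $\bR^n$ (whose effect on jets at the varying base points is absorbed by $G^{(r)}$-invariance) and postcomposition with an affine map of the chart (the dilation handled by radial invariance, the recentring by the transitive group in case (i), and chosen compatibly with the available symmetries in case (ii)) — and as $\epsilon\to0$ they converge, $\cD$ being closed, to the maximal-rank linear map $d:=Dg_{x_0}$; restricting $g$ to the annulus between $S^{n-1}$ and a small sphere about $x_0$ and rescaling gives a cylinder connecting $b$ to $d|_{S^{n-1}}$. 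The second cylinder normalizes $d$: using that $Z$ is path-connected we slide the value $g(x_0)$ to the centre of the chart, and since $d$ is a maximal-rank linear map it is joined through such maps — after if necessary precomposing with an orientation-reversing automorphism of $\bR^n$, which acts on the jet bundle and so may be absorbed — to the standard linear map $\Lambda$, the vanishing higher-order terms needing no correction. Concatenating these and invoking that $[\cat{C}^{\cF(-,\cD)}]$ is a groupoid (Lemma \ref{lem.vascondc}) shows that $b$ is connected to $(e\circ\Lambda)|_{S^{n-1}}$. A secondary technical difficulty here is keeping every intermediate function strictly inside $\cF(-,\cD)$ during the blow-up and arranging the maximal-rank point $x_0$, both of which I expect to be routine but need to be carried out with care.
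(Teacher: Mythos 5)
Your argument for (i') reduces the claim, via radial invariance, $G^{(r)}$-invariance, and closedness of $\cD$, to showing the $r$-jet of $\Lambda$ at the origin lies outside $\cD$, and then tries to derive a contradiction by pushing a set of codimension $<n+2$ into the orbit closure of that jet. This breaks at $z=n+1$. Writing $m=\binom{n+r}{r}$, the $G^{(r)}$-orbit of the $r$-jet of $\Lambda$ at $0$ consists of jets $(\phi,0)$ with $\phi$ an invertible $r$-jet of $\bR^n$ fixing the origin; radial diffeomorphisms act by scaling near $0\in\bR^{n+1}$ and add nothing new; and the orbit closure is the linear subspace $L$ of jets $p$ with $p(0)=0$ and $p_{n+1}\equiv 0$, of codimension $n+m\ge n+2$ in $J^{(r)}(\bR^n,\bR^{n+1})$. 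So no set of codimension $<n+2$ is forced into $\cD$. Your proposed ``submanifold of jets whose coefficient vectors span a subspace of dimension at most $n$'' is a determinantal locus of codimension $m-n$, and it \emph{contains} $L$ rather than being contained in $L$, so it is not forced into $\cD$ either. In fact $L$ itself is closed, a single stratum of codimension $\ge n+2$, $G^{(r)}$-invariant and radially invariant, and contains the $r$-jet of $\Lambda$ at $0$: it satisfies every hypothesis of case (ii), so the statement you reduced to is not provable from hypothesis (ii) alone, and the reduction to the origin is already a misstep. (In case (i) the transitive subgroup excludes this $L$, but then the needed orbit analysis would be entirely different from the one you sketch.)

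The paper's proof works in the opposite direction and avoids the issue. For each value $a$ it locates some $f$ whose $r$-jet at $0$ has value $a$, is of maximal rank, and \emph{already} misses $\cD$ --- such a jet exists because the maximal-rank-value-$a$ jets form a $G^{(r)}$-invariant subset of codimension $z\le n+1<n+2$, with transitivity reducing to a single $a$ in case (i) --- and then conjugates $f$ to $x\mapsto\Lambda(x)+a$ by a source diffeomorphism supplied by the implicit function theorem, concluding by $\mr{Diff}$-invariance. The codimension bound is thus used once, up front, to exhibit a good jet, rather than to bound an orbit from below. For (ii') the paper is also shorter: by the proof of Lemma \ref{lem.vascondc}, morphisms in $[\cat{C}^{\cF(-,\cD)}]$ coincide with those in $[\cat{C}^{\mr{Map}(-,Z)}_\cF]$, hence are controlled by homotopy classes of maps to $Z$, only the trivial class of $S^{n-1}\to Z$ is fillable, and the groupoid property finishes. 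Your blow-up construction for (ii') is plausible in spirit but has an unresolved step: the recentring $y\mapsto y-g(x_0)$ is not among the permitted symmetries of $\cD$ in case (ii), where only radial target diffeomorphisms are available, so the rescaled functions cannot be shown to avoid $\cD$ without a further argument.
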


In the proof of this lemma, the radial invariance plays no role. It is included for use in the next lemma.

\begin{proof}We start assuming that $Z = \bR^z$ and $e = \mr{id}$. We claim  that for each $a \in \bR^z$ there exists an $f \in \cF(\bR^n,\cD)$ such that $f(0) = a$ and $Df(0)$ is of maximal rank.
	
	Under hypothesis (ii), this follows because the $r$-jets that take value $a$ and are of maximal rank are a $\mr{Diff}$-invariant subset of codimension $z$ (assumed to be $\leq n+1$) in $J^{(r)}(\bR^n,\bR^z)$ and thus the set of these $r$-jets must have non-empty intersection with $J^{(r)}(\bR^n,\bR^z)\setminus \cD$. Under hypothesis (i), since some subgroup of diffeomorphisms preserving $\cD|_{\bR^z}$ acts transitively on $\bR^z$, we only need to find such $f$ for one $a \in \bR^z$. This is always possible because the $r$-jets that are of maximal rank are codimension $0$.
	
	Let $f \in \cF(\bR^n,\cD)$ such that $f(0) = a$ and $Df(0)$ is of maximal rank. Then by the implicit function theorem there exists a local diffeomorphism $\phi\colon \bR^n \to \bR^n$ fixing the origin such that $f \circ \phi$ is given by $x \mapsto \Lambda(x)+a$. Since $\cD$ is $\mr{Diff}$-invariant, this implies that the germ at $a$ of $\Lambda$ is not in $\cD$, and since $a$ was arbitrary that $\Lambda$ lies in $\cF(\bR^n,\cD)$. This proves (i'). For (ii'), we note the proof of Lemma \ref{lem.vasconda} and the fact that $\bR^z$ is contractible imply that for any two $b_0$, $b_1$ the set $\pi_0(\cF(S^{n-1} \times [1,2],\cD \rel b_0 \sqcup b_1))$ consists of a single element.
	
	Next, we describe the argument when $Z \neq \bR^z$ or $e \neq \mr{id}$. By the proof of Lemma \ref{lem.vasconda} the path components of $[\cat{C}^{\cF(-,\cD)}]$ are in bijection with homotopy classes of maps $S^{n-1} \to Z$ under composition. Only the trivial homotopy class is fillable, so we may assume that $b$ has image in the chart $e$. The argument above implies that the standard linear map $\Lambda\colon \bR^n \to \bR^z$ in that chart has $r$-jet avoiding $\cD$, proving part (i'). For part (ii'), use that both $b$ and $(e \circ \Lambda)|_{S^{n-1}}$ are necessarily in the trivial homotopy class.
\end{proof}

\begin{lemma}\label{lem.vascondd} Suppose that one of the conditions of Lemma \ref{lem.vaslinear} is satisfied. Then $\cF(-,\cD)$ satisfies condition (W).\end{lemma}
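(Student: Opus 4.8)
The plan is to verify the two clauses of condition (W) (Definition \ref{def.d}) in turn. Condition (H) holds by Lemma \ref{lem.vascondc}, whose hypothesis ($\cD$ closed and stratified with strata of codimension $\geq n+2$) is part of that of Lemma \ref{lem.vaslinear}. For the second clause I first identify the fillable component: a fillable $b \in \cB_\cF(S^{n-1})$ has underlying map $S^{n-1} \to Z$ extending over the disk, hence null-homotopic, while by Lemma \ref{lem.vaslinear}(ii') every fillable $b$ is connected in $[\cat{C}^{\cF(-,\cD)}]$ to $(e \circ \Lambda)|_{S^{n-1}}$; since $[\cat{C}^{\cF(-,\cD)}]$ is a groupoid, the fillable boundary conditions therefore form a single component, its fillability witnessed by $e \circ \Lambda$ restricted to a disk.

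I would then take the distinguished element to be the standard linear cylinder. Identifying $\bR^n \setminus \{0\}$ with $S^{n-1} \times (0,\infty)$ by $x \mapsto (x/|x|,|x|)$, let $\iota \in \cF(S^{n-1} \times [1,2] \rel b_s \sqcup b_e)$ be the restriction of $e \circ \Lambda \colon \bR^n \to Z$ to the annulus between the spheres of radii $1$ and $2$, with $b_s$, $b_e$ the induced boundary conditions; these are fillable (fill with $e \circ \Lambda$ over the two disks), hence lie in the fillable component, and $\iota$ is well defined since the $r$-jet of $e \circ \Lambda$ avoids $\cD$ by Lemma \ref{lem.vaslinear}(i'). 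The content of the lemma is then the claim that for all $t_0 < 1$ and all $b_0$ the map
\[\iota \circledcirc - \colon \cF(S^{n-1} \times [t_0,1] \rel b_0 \sqcup b_s) \longto \cF(S^{n-1} \times [t_0,2] \rel b_0 \sqcup b_e)\]
is a weak equivalence; both sides are empty unless $b_0$ admits a morphism to $b_s$ in $[\cat{C}^{\cF(-,\cD)}]$ (so is fillable), which I assume. By condition (H) and Proposition \ref{prop.compcyl}(i), $\iota \circledcirc -$ is a semi-equivalence, hence by Lemma \ref{lem.nearequivalence} a homology equivalence that is injective on all homotopy groups and surjective on $\pi_0$, so it is enough to prove surjectivity on the higher homotopy groups.

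For this, given a based family $\gamma \colon (D^k, \partial D^k) \to (\cF(S^{n-1} \times [t_0,2] \rel b_0 \sqcup b_e), \iota \circledcirc x_0)$ I would deform $\gamma$ rel $\partial D^k$ to a family $\gamma_1$ with $\gamma_1(d)|_{S^{n-1} \times [1,2]} = \iota$ for every $d$, after which $\gamma_1|_{S^{n-1} \times [t_0,1]}$ is the required preimage of $[\gamma]$. The mechanism is the one used for flexible sheaves in Lemma \ref{lem.secgrouplike}: a family of radial self-embeddings $\phi_s = \mr{id}_{S^{n-1}} \times \rho_s$ of $S^{n-1} \times [t_0,2]$, equal to the identity near the spheres of radii $t_0$ and $2$, compresses the interesting part of $\gamma(d)$ on $S^{n-1} \times [1,2]$ toward the germ at radius $2$, where $\gamma(d)$ equals the fixed value $e \circ \Lambda$; after sliding all the way in, the annulus $S^{n-1} \times [1,2]$ carries a radial reparametrization of $e \circ \Lambda$, which one replaces by $\iota$ itself using that both represent the same morphism of $[\cat{C}^{\cF(-,\cD)}]$ (condition (H)). This is the point at which the hypotheses of Lemma \ref{lem.vaslinear} are used: such reparametrizations have the form $e \circ (\text{radial rescaling of } \bR^z) \circ \Lambda$, which avoid $\cD$ precisely because $\cD|_{\bR^z}$ is radially invariant. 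Since $\cF(-,\cD)$ is only microflexible and not flexible, the compression cannot be performed coherently over all of $D^k$ at once; as in the proof of Proposition \ref{prop.resolve}, I would instead run it locally --- invoking Lemma \ref{lem.ak} on small embedded simplices to make the family constant near their barycenters --- and assemble the local deformations using a partition of unity, the bound $\dim D^k = k$ on the Lebesgue covering dimension, and a coloring of the nerve of the cover so that they have pairwise disjoint support.

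The step I expect to be the main obstacle is this last one: adapting the ``compress a collar, insert the standard model'' argument of Lemma \ref{lem.secgrouplike} from the flexible to the merely microflexible setting. Where flexibility supplies a lift of a single homotopy through a Serre fibration, one must here combine the microflexible local statement (Lemma \ref{lem.ak}) with the covering-dimension and coloring bookkeeping of Proposition \ref{prop.resolve}, all the while checking that the rescaled linear pieces stay inside $\cF(-,\cD)$ --- which is the only role played by the radial-invariance hypothesis.
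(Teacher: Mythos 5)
Your proposal takes a genuinely different route from the paper, and it has an acknowledged gap that is in fact the crux of the lemma.

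The paper's proof is a direct, formula-level construction of a homotopy inverse. It fixes a family of compactly supported radial diffeomorphisms $\rho_t$ of $[0,\infty)$, builds the associated radial maps $\phi_t$ (of the disk in the domain and, via the chart $e$, of $Z$ in the codomain), and writes down the inverse $r(f)=\phi_1 \circ f \circ \phi_1^{-1}$ together with explicit homotopies $h_t$, $h'_t$ showing $r\circ(\iota\circledcirc -)\simeq\mathrm{id}$ and $(\iota\circledcirc -)\circ r\simeq \mathrm{id}$. This is not homotopy-lifting against a fibration or microfibration; it is closed-form conjugation, available only because elements of $\cF(-,\cD)$ are honest functions that can be pre- and post-composed by diffeomorphisms. $\mr{Diff}$-invariance of $\cD$ handles the precomposition, and the radial invariance hypothesis is precisely what is needed so the postcomposition by $\phi_t$ in $Z$ keeps $r$-jets out of $\cD$. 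The microflexibility of $\cF(-,\cD)$ plays no role in this step.

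Your proof instead routes through the general machinery: Proposition \ref{prop.compcyl}(i) plus Lemma \ref{lem.nearequivalence} give that $\iota\circledcirc-$ is a homology equivalence and injective on homotopy, so you are left with surjectivity on $\pi_k$, $k\geq 1$. For this you propose the compression argument of Lemma \ref{lem.secgrouplike}, and you correctly flag that the argument as stated uses flexibility (Serre-fibration lifting) which $\cF(-,\cD)$ lacks. The proposed fix --- running the compression locally via Lemma \ref{lem.ak} and assembling with covering-dimension and coloring bookkeeping as in Proposition \ref{prop.resolve} --- is not carried out, and it is not clear it would work as sketched. Lemma \ref{lem.ak} is engineered to make a family constant near barycenters of simplices in the parameter space; it does not by itself extend a compression-isotopy in the manifold direction to arbitrary isotopy time, which is what the flexibility hypothesis supplied in Lemma \ref{lem.secgrouplike}. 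Moreover, when you do the compression locally over a cover of $D^k$, the ``radial reparametrization of $e\circ\Lambda$'' you obtain on the annulus will vary with $d$, and the final replacement of it by $\iota$ (which you perform using condition (H)) must be done coherently over $d$; this coherence problem is not addressed. So the proof has a real gap, concentrated precisely where you suspect. The paper sidesteps all of this by exploiting the special composition structure of the sheaf of functions to write an explicit inverse, which is both what makes the argument work and why the radial-invariance hypothesis is stated the way it is.
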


\begin{proof}By Lemma \ref{lem.vaslinear}(i') there exists a chart $e \colon \bR^z \to Z$ such that $e \circ \Lambda$ has $r$-jet avoiding $\cD$ and by (ii') we may assume this is in the unique fillable path component. We may as well identify the image of $e$ with $\bR^z$ to simplify notation. The relevant element $\iota$ of $\cF(S^{n-1} \times [1,2], \cD \rel b_s \sqcup b_e)$ (note it determines $b_s$ and $b_e$) is the restriction of $\Lambda$ to $D^n_2 \setminus \mr{int}(D^n)$.  We need to prove that
	\begin{align*}\iota \circledcirc -\colon \cF(S^{n-1} \times [t_0,1],\cD \rel b_0 \sqcup b_s) &\longto \cF(S^{n-1} \times [t_0,2],\cD \rel b_0 \sqcup b_e), \\
	- \circledcirc \iota \colon \cF(S^{n-1} \times [2,t_1],\cD \rel b_e \sqcup b_1) &\longto \cF(S^{n-1} \times [1,t_1],\cD \rel b_s \sqcup b_1),\end{align*}
	are weak equivalences if $t_0<1$ resp.\ $2<t_1$. We give a proof in the first case, the second being similar. For the homotopy inverse we pick a family of compactly supported diffeomorphisms $\rho_t\colon [0,\infty) \to [0,\infty)$ for $t \in [0,1]$ satisfying:
	\begin{enumerate}[(i)]
		\item  $\rho_0 = \mr{id}$,
		\item $\rho_t$ is the identity on $[0,t_0]$, and
		\item $\rho_t$ is given by $s \mapsto s-t$ near $2$.
	\end{enumerate} 
	Let $\phi_t$ denote the family of radial diffeomorphisms given by $x \mapsto \frac{\rho_t(||x||)}{||x||} \cdot x$ if $||x||>0$ and $0$ otherwise, extended to all of $Z$ by the identity. The proposed homotopy inverse $r$ to $\iota \circledcirc -$ maps $f$ to the function $\phi_1 \circ f \circ \phi_1^{-1}$, in other words shrinking $f$ onto the smaller cylinder $S^{n-1} \times [t_0,1]$. A homotopy $h_t$ from the identity map on $\cF(S^{n-1} \times [t_0,1],\cD \rel b_0 \sqcup b_s)$ to  $r \circ (\iota \circledcirc -)$ is given by
	\[h_t(f)(x) = \begin{cases}
	(\phi_1 \circ \phi_t^{-1} \circ f \circ \phi_t \circ \phi_1^{-1})(x) & \text{if $\rho_t(\rho^{-1}_1(||x||)) \leq 1$,} \\
	\Lambda(x) & \text{otherwise,}
	\end{cases}\]
	and a homotopy $h'_t$ from the identity on $\cF(S^{n-1} \times [t_0,2],\cD \rel b_0 \sqcup b_e)$ to $(\iota \odot -) \circ r$ is given by
	\[h'_t(f)(x) = \begin{cases}
	(\phi_{1-t} \circ f \circ \phi_{1-t}^{-1})(x) & \text{if $||x|| \leq 2-t$,} \\
	\Lambda(x) & \text{otherwise.}
	\end{cases}\]
	The conditions on $\cD$ in Lemma \ref{lem.vaslinear} guarantee that these maps and homotopies have $r$-jets avoiding $\cD$.\end{proof}

Theorem \ref{thm.main} and the previous lemma's imply a generalization of Vassiliev's $h$-principle, here stated in a bit more generality than in Corollary \ref{cor.vassiliev}. Since a real semi-algebraic subset is a stratified subset \cite{loj}, this Corollary implies Theorem \ref{thm.vassiliev}.

\begin{corollary}\label{cor.vassiliev2} Let $Z$ be a smooth manifold and $\cD \subset J^{(r)}(\bR^n,Z)$ be a closed stratified subset with strata of codimension at least $n+2$. Then $\cF(-,\cD)$ satisfies a homological $h$-principle on closed manifolds: the map
	\[j\colon \cF(M,\cD \rel b) \longto \cF^f(M,\cD \rel j(b))\]
	is a homology equivalence for all $n$-dimensional manifolds $M$ and boundary conditions $b \in \cB_\cF(\partial M)$.
	
	Suppose additionally that $\cD$ is $\mr{Diff}(Z)$-invariant (or satisfies one of the two conditions in Lemma \ref{lem.vaslinear}), then this map is in fact a weak equivalence.\end{corollary}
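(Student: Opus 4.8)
The plan is to apply Theorem~\ref{thm.technical} (equivalently Theorem~\ref{thm.main}) to the $r$-jet map $j^r\colon\cF(-,\cD)\to\cF^f(-,\cD)$, after collecting the inputs supplied by the preceding lemmas. First I would record that $\cF(-,\cD)$ is microflexible by Lemma~\ref{lem.vasconda} (this needs only that $\cD$ is closed), that $\cF^f(-,\cD)$ is flexible because it is a sheaf of sections of a natural bundle and restriction of sections along a cofibration is a Serre fibration, and that $\cF(-,\cD)$ satisfies condition~(H) by Lemma~\ref{lem.vascondc}. The one hypothesis of Theorem~\ref{thm.technical} not yet literally on the table is that $j^r\colon\cF(\bR^n,\cD)\to\cF^f(\bR^n,\cD)$ is a weak equivalence, and I would prove this by hand rather than by invoking an $h$-principle. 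Since $\bR^n$ is contractible and the relevant jet bundle is trivial over it, evaluation of the jet at the origin exhibits $\cF^f(\bR^n,\cD)$ as homotopy equivalent to the fibre $J^{(r)}(\bR^n,Z)\setminus\cD$, and the composite $\cF(\bR^n,\cD)\to J^{(r)}(\bR^n,Z)\setminus\cD$ is a weak equivalence by a rescaling argument of the kind already used in Lemmas~\ref{lem.vasconda} and \ref{lem.vascondd}: realise a jet at $0$ avoiding $\cD$ by a Taylor polynomial in a chart of $Z$, use closedness of $\cD$ to see that its $r$-jet avoids $\cD$ on a small ball, and then precompose with an expanding diffeomorphism of $\bR^n$ onto that ball which is the identity near the origin, invoking $\mr{Diff}$-invariance of $\cD$ to conclude; the same argument run over $\partial D^k$ and $D^k$ handles $\pi_k$.

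With all three hypotheses of Theorem~\ref{thm.technical} in place, condition~(H) yields that $j\colon\cF(M,\cD\rel b)\to\cF^f(M,\cD\rel j(b))$ is a homology equivalence for every $n$-manifold $M$ and every $b\in\cB_\cF(\partial M)$; manifolds with boundary and boundary conditions near $\partial M$ are absorbed exactly as in the deduction of Theorem~\ref{thm.main} from Theorem~\ref{thm.technical}, by regarding $M$ as a codimension-zero submanifold with corners of a boundaryless manifold and $b$ as a condition near the closed subset $\partial M$. This gives the first assertion.

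For the second assertion I would reduce to condition~(W) via Lemma~\ref{lem.vascondd} and Theorem~\ref{thm.technical}(ii), so the remaining task is to deduce one of the two hypotheses of Lemma~\ref{lem.vaslinear} from $\mr{Diff}(Z)$-invariance of $\cD$. I would first reduce to $Z$ path-connected, since a map out of $S^{n-1}$ lands in a single component of $Z$ and both $\cF(-,\cD)$ and $\cF^f(-,\cD)$ split over $\pi_0(Z)$, with $\mr{Diff}(Z)$-invariance passing to each piece. Then, fixing any chart $e\colon\bR^z\hookrightarrow Z$, the key point is that every compactly supported diffeomorphism of $\bR^z$ extends by the identity to a diffeomorphism of $Z$; hence $\cD|_{\bR^z}$ is invariant under $\mr{Diff}_c(\bR^z)$, which contains all the radial diffeomorphisms occurring in the definition of radial invariance (their radial profile is the identity near infinity) and which acts transitively on the connected manifold $\bR^z$. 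Thus hypothesis~(i) of Lemma~\ref{lem.vaslinear} holds, Lemma~\ref{lem.vascondd} gives condition~(W), and Theorem~\ref{thm.technical}(ii) upgrades the homology equivalence to a weak equivalence.

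Almost all of the real content already sits in the lemmas; the parts of this argument that will actually demand care are (a) identifying Gromov's abstract flexible replacement of $\cF(-,\cD)$ with the explicitly defined section sheaf $\cF^f(-,\cD)$, that is, making the fibrewise equivalence $\cF(\bR^n,\cD)\simeq J^{(r)}(\bR^n,Z)\setminus\cD$ above $G^{(r)}$-equivariant so that the associated bundles, their section spaces, and the holonomicity interpretation of $j$ all match up, also in the relative setting; and (b) the routine but fiddly bookkeeping needed to apply Lemma~\ref{lem.vaslinear} through an arbitrary chart of $Z$ and, in the $\mr{Diff}(Z)$-invariant case, to confirm that the shrinking homotopies of Lemma~\ref{lem.vascondd} retain $r$-jets avoiding $\cD$. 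I expect (a) to be the main obstacle, though it is standard material.
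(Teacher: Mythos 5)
Your proposal is correct and follows the paper's proof essentially step for step: microflexibility via Lemma~\ref{lem.vasconda}, condition (H) via Lemma~\ref{lem.vascondc}, condition (W) via Lemmas~\ref{lem.vaslinear} and~\ref{lem.vascondd} (with your explicit observation that compactly supported diffeomorphisms of $\bR^z$ extend to $Z$ and hence $\mr{Diff}(Z)$-invariance yields hypothesis (i) of Lemma~\ref{lem.vaslinear}, a point the paper states without elaboration), flexibility of $\cF^f(-,\cD)$ as a section sheaf, the weak equivalence $\cF(\bR^n,\cD)\to J^{(r)}(\bR^n,Z)\setminus\cD$ (isolated in the paper as Lemma~\ref{lem.vasrn} and proven by the same Taylor-polynomial and rescaling argument you sketch), and finally Theorem~\ref{thm.technical}. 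Your concern (a) is, however, a non-issue: Theorem~\ref{thm.technical} is deliberately phrased for an arbitrary map $j\colon\Psi\to\Gamma$ with $\Gamma$ flexible and $j\colon\Psi(\bR^n)\to\Gamma(\bR^n)$ a weak equivalence, so one takes $\Gamma=\cF^f(-,\cD)$ directly and never needs to match it $G^{(r)}$-equivariantly with Gromov's abstract $\Psi^f$; this is exactly why the paper invokes Theorem~\ref{thm.technical} rather than Theorem~\ref{thm.main} here.
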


\begin{proof}The first condition in Lemma \ref{lem.vaslinear} is satisfied when $\cD$ is $\mr{Diff}(Z)$-invariant, and by treating each path component of $Z$ separately, we may assume $Z$ is path-connected. We have already checked the conditions on $\cF(-,\cD)$ for Theorem \ref{thm.technical}: \begin{itemize}
		\item microflexibility: this was checked in Lemma \ref{lem.vasconda}.
		\item condition (H): this was checked in Lemma \ref{lem.vascondc}.
		\item condition (W): under the hypothesis on $\cD$, this was checked in Lemma \ref{lem.vascondd}.
	\end{itemize}
	
	Since $\cF^f(-,\cD)$ is a sheaf of sections, it is flexible. Hence it suffices to check that the map $\cF(\bR^n,\cD) \to J^{(r)}(\bR^n,Z) \setminus \cD$ which assigns a function its germ at the origin, is a weak equivalence. This is Lemma \ref{lem.vasrn}.\end{proof}

\begin{lemma}\label{lem.vasrn} If $\cD$ is a closed $\mr{Diff}$-invariant subset of $J^{(r)}(\bR^n,Z)$, then the map which assigns to a function its germ at the origin
	\[\cF(\bR^n,\cD) \longto J^{(r)}(\bR^n,Z) \setminus \cD\]
	is a weak equivalence.\end{lemma}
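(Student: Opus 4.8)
The plan is to exhibit a homotopy inverse to the germ map $p\colon\cF(\bR^n,\cD)\to J^{(r)}(\bR^n,Z)\setminus\cD$, $p(f)=j^r_0(f)$. Since $p$ and all the deformations below only see the behaviour of a function near the origin, and since the paper reduces such questions to the case $Z=\bR^z$ by covering $Z$ with charts (as in the proof of Lemma \ref{lem.vascondc}), I first reduce to $Z=\bR^z$; for a general target one repeats the argument with the straight-line interpolation below replaced by the geodesic interpolation used in the proof of Lemma \ref{lem.vasconda}, so that the deformation stays within a chart near the value at the origin. For $\bar g\in J^{(r)}(\bR^n,\bR^z)$ write $P_{\bar g}\colon\bR^n\to\bR^z$ for its polynomial representative of degree $\le r$ (Example \ref{exam.jetpoly}); note $P_{j^r_0(f)}$ has the same $r$-jet at the origin as $f$, namely $j^r_0(f)$.

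First I would build a section $\tau$ of $p$. Fix a continuous family of embeddings $h_\delta\colon\bR^n\hookrightarrow\mr{int}(B^n_\delta)$, $\delta>0$, each a diffeomorphism onto $\mr{int}(B^n_\delta)$, equal to the identity on $B^n_{\delta/2}$, and isotopic through embeddings of $\bR^n$ fixing $B^n_{\delta/2}$ to $\mr{id}_{\bR^n}$; such a family is produced by an explicit radial construction. Given $\bar g\notin\cD$, since $\cD$ is closed and $x\mapsto j^r_x(P_{\bar g})$ is continuous with value $\bar g\notin\cD$ at $x=0$, there is $\delta(\bar g)>0$ with $j^r_x(P_{\bar g})\notin\cD$ for all $\lvert x\rvert\le\delta(\bar g)$, and a partition of unity on $J^{(r)}(\bR^n,\bR^z)\setminus\cD$ makes $\delta(\bar g)$ continuous in $\bar g$. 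Because $\cF(-,\cD)$ is a sheaf, hence contravariantly functorial in embeddings, and $P_{\bar g}$ restricts to a solution on $\mr{int}(B^n_{\delta(\bar g)})$, the composite $\tau(\bar g):=P_{\bar g}\circ h_{\delta(\bar g)}$ lies in $\cF(\bR^n,\cD)$; and $p(\tau(\bar g))=\bar g$ because $h_{\delta(\bar g)}$ is the identity near the origin. Thus $p\circ\tau=\mr{id}$.

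It then remains to see that $p$ is a weak equivalence, for which, given the section $\tau$, it suffices to check that for every compact parametrizing manifold $P$ and every family $f_\bullet\colon P\to\cF(\bR^n,\cD)$ one has $f_\bullet\simeq\tau\circ p\circ f_\bullet$. Write $\bar g_p=j^r_0(f_p)$, a continuous family in $J^{(r)}(\bR^n,\bR^z)\setminus\cD$, and choose a single $\delta'\in(0,\min_{p\in P}\delta(\bar g_p)]$ so small that for every $p$, every $s\in[0,1]$, and every $\lvert x\rvert\le\delta'$ the jet $j^r_x((1-s)f_p+sP_{\bar g_p})=(1-s)j^r_x(f_p)+s\,j^r_x(P_{\bar g_p})$ avoids $\cD$: both $j^r_x(f_p)$ and $j^r_x(P_{\bar g_p})$ take the value $\bar g_p$ at $x=0$ and $P$ is compact, so for $\delta'$ small both lie within a prescribed distance of $\bar g_p$ uniformly over $P$, while $\mr{dist}(\bar g_p,\cD)$ is bounded below uniformly over $P$, so the segment stays in a fixed ball around $\bar g_p$ disjoint from $\cD$. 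Now concatenate three homotopies, all inside $\cF(\bR^n,\cD)$: $(1)$ from $f_\bullet$ to $f_\bullet\circ h_{\delta'}$, using the isotopy of $h_{\delta'}$ to $\mr{id}_{\bR^n}$, since pulling a solution back along an embedding is a solution; $(2)$ the homotopy $s\mapsto((1-s)f_\bullet+sP_{\bar g_\bullet})\circ h_{\delta'}$ from $f_\bullet\circ h_{\delta'}$ to $P_{\bar g_\bullet}\circ h_{\delta'}$, which lies in $\cF(\bR^n,\cD)$ by the choice of $\delta'$ (its $r$-jet at $x$ is that of $(1-s)f_\bullet+sP_{\bar g_\bullet}$ at $h_{\delta'}(x)\in\mr{int}(B^n_{\delta'})$, transported by the $r$-jet of $h_{\delta'}$); $(3)$ from $P_{\bar g_\bullet}\circ h_{\delta'}$ to $P_{\bar g_\bullet}\circ h_{\delta(\bar g_\bullet)}=\tau(p(f_\bullet))$, using a radial isotopy from $h_{\delta'}$ to $h_{\delta(\bar g_\bullet)}$ through embeddings with image in $\mr{int}(B^n_{\delta(\bar g_\bullet)})$. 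This gives $f_\bullet\simeq\tau\circ p\circ f_\bullet$, and together with $p\circ\tau=\mr{id}$ it follows that $p$ is a weak equivalence.

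The one genuinely delicate point is the estimate underlying the choice of $\delta'$: one must verify that the straight segment of $r$-jets from $j^r_x(f_p)$ to $j^r_x(P_{\bar g_p})$ really misses $\cD$ for all small $x$, with all constants uniform over the parametrizing manifold, and — for a general target $Z$ — that replacing the linear segment by a geodesic one keeps the interpolation within a chart near $\bar g_p(0)$. None of this is deep, but it is the crux of the argument.
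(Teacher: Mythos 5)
Your core argument is correct and uses essentially the same ingredients as the paper: polynomial representative of the jet, domain shrinking through a family of embeddings, and linear interpolation between the function and its polynomial truncation. The organization is slightly different — you construct a global section $\tau$ of $p$ and then verify $\tau\circ p\simeq\mathrm{id}$ on compact families, while the paper does a relative lifting argument after first reducing to a fiber. The two routes are interchangeable; it is worth noting that your homotopy fixes the $r$-jet at the origin throughout, so $p$ applied to it is constant, which is exactly what makes the section-plus-deformation argument deduce a weak equivalence cleanly. (Incidentally, the paper's displayed interpolation $P_s(g)(x)=sp_r(g)(x)+(1-s)\sum x^\alpha h_\alpha(x)$ has a misprint — the polynomial part should not be scaled by $s$ — and your $(1-s)f+sP_{\bar g}$ is the correct version.)

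The one real loose point is the reduction to $Z=\bR^z$. Your construction of the global section $\tau(\bar g)=P_{\bar g}\circ h_{\delta(\bar g)}$ needs the polynomial representative $P_{\bar g}$, and for a curved target this requires a choice of chart around $\bar g(0)\in Z$ made continuously (indeed, smoothly, since we are in $\cat{Sh^{Diff}}$) in $\bar g$. Your remark about replacing the straight segment by a geodesic one only patches the interpolation step, not the definition of $\tau$ itself. This can be fixed — for instance by fixing a Riemannian metric on $Z$ and taking $P_{\bar g}$ to be the polynomial representative in the exponential chart $\exp_{\bar g(0)}$ — but the paper's route is cleaner: observe that $\mathrm{ev}$ and $\mathrm{ev}^f$ are Serre fibrations over $Z$ (by isotopy extension plus $\mr{Diff}$-invariance of $\cD$), so it suffices to prove the statement fiberwise over a single $a\in Z$, at which point one may simply choose one chart around $a$ and zoom in. I'd recommend either adopting the evaluation-fibration reduction or spelling out the exponential-chart construction of $\tau$ for general $Z$.
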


\begin{proof}The evaluation maps $\mr{ev}$ and $\mr{ev}^f$ fit into a commutative diagram
	\[\begin{tikzcd} \cF(\bR^n,\cD) \arrow{rr}{j} \arrow{rd}[swap]{\mr{ev}} & &  J^{(r)}(\bR^n,Z) \setminus \cD \arrow{ld}{\mr{ev}^f} \\
	& Z. & \end{tikzcd}\]
	These maps are Serre fibrations, using the isotopy extension theorem and $\mr{Diff}$-invariance of $\cD$. It thus suffices to prove that the map on fibers is a weak equivalence. That is, we must prove that for all $i \geq 0$ we can find a dotted lift in each commutative diagram
	\[\begin{tikzcd} \partial \Delta^i \rar{f} \dar & \mr{ev}^{-1}(a) \dar \\
	\Delta^i \rar[swap]{F} \arrow[dotted]{ru} & (\mr{ev}^f)^{-1}(a),\end{tikzcd}\] 
	making the top triangle commute and the bottom triangle commute up to homotopy rel $\partial \Delta^i$. 
	
	We pick a chart $\bR^z$ in $Z$ so that the origin goes to $a$ and show how to reduce to $\bR^z = Z$. By zooming in on the origin, we can simultaneously homotope $f$ and $F$ so that they have image in $\bR^z$. Applying this homotopy changes the diagram by a homotopy of diagrams, so it suffices to prove that for all $i \geq 0$ we can find a dotted lift in each commutative diagram
	\[\begin{tikzcd} \partial \Delta^i \rar{f} \dar & \cF_0(\bR^n,\cD') \dar \\
	\Delta^i \rar[swap]{F} \arrow[dotted]{ru} & J^{(r)}_0(\bR^n,\bR^z) \setminus \cD',\end{tikzcd}\] 
	making the top triangle commute and the bottom triangle commute up to homotopy rel $\partial \Delta^i$. Here $\cD'$ is given by $J^{(r)}_0(\bR^n,\bR^z) \cap \cD$ and $\cF_0(\bR^n,\cD')$ denotes the space of functions $\bR^n \to \bR^z$ preserving the origin and with $r$-jet avoiding $\cD$. Finding such a lift is possible if and only if the map $j^r\colon \cF_0(\bR^n,\cD') \longto J^{(r)}_0(\bR^n,\bR^z) \setminus \cD'$ is a weak equivalence.
	
	Let $\cP_{(r)}(\bR^n,\bR^z) \setminus \cD'$ be the space of $z$-tuples of polynomials $\bR^n \to \bR$ of degree $\leq r$, which at the origin take value $0$ and have $r$-jet avoiding $\cD'$; by Example \ref{exam.jetpoly} $\cP_{(r)}(\bR^n,\bR^z) \setminus \cD' \cong J^{(r)}_0(\bR^n,\bR^z) \setminus \cD'$. 
	
	To prove that $j^r\colon \cF_0(\bR^n,\cD') \to \cP_{(r)}(\bR^n,\bR^z) \setminus \cD'$ is a weak equivalence, we use Taylor's theorem. Taylor's theorem tells us that any smooth function $f\colon \bR^n \to \bR$ can be written as $f(x) = p_r(f)(x) + \sum_{|\alpha| = r+1} x^\alpha h_\alpha(x)$, where $p_r(f)$ is a polynomial of degree $r$ and $h_\alpha(x)$ is a function $\bR^n \to \bR$ such that $\lim_{x \to 0} h_\alpha(x) = 0$.  Similarly, if $f$ is a function $\bR^n \to \bR^z$ with $z > 1$, we can apply Taylor approximation to each component separately. We use the same notation. 
	
	If $g \in \cF_0(\bR^n,\cD')$, for all $s \in [0,1]$
	\[P_s(g)(x) \coloneqq sp_r(g)(x) + (1-s) \sum_{|\alpha| = r+1} x^\alpha h_\alpha(x)\]
	has $r$-jet at the origin avoiding $\cD'$. This follows from the fact that the $r$-jet of $g_s$ at the origin is independent of $s$ and the statement is true for $s=0$. Each function $P_s(g)$ does not necessarily lie in $\cF_0(\bR^n,\cD')$, but since $\cD$ is closed there exists some $\epsilon > 0$ such that $P_s(g)$ restricted to $D^n_\epsilon(0)$ does. 
	
	Now suppose we are given a commutative diagram
	\[\begin{tikzcd} \partial \Delta^i \rar{f} \dar & \cF_0(\bR^n,\cD') \dar[swap]{j^r} \\
	\Delta^i \rar[swap]{F} \arrow[dotted]{ru} & \cP_{(r)}(\bR^n,\bR^z) \setminus \cD', \end{tikzcd}\]
	then it suffices to produce a dotted lift $L$ making the top triangle commute and the bottom triangle commute up to homotopy rel $\partial \Delta^i$. Since Taylor approximation is continuous in the function, there is a continuous function $\epsilon \colon \partial \Delta^i \to (0,1)$ such that $P_s(f(d))$ restricted to the disk $D^n_{\epsilon(d)}(0)$ of radius $\epsilon(d)$ around the origin lies in $\cF_0(\bR^n,\cD')$.
	
	Pick a family of embeddings $\eta \colon \partial \Delta^i \times [0,1] \to \mr{Emb}(\bR^n,\bR^n)$ such that (i) each $\eta(d,s)$ is the identity near the origin, (ii) $\eta$ restricted to $\Delta^i \times \{0\}$ is the identity, and (iii) $\eta(d,1)$ has image in $D^n_{\epsilon(d)}(0)$. We write $\Delta^i \cong \Delta^i \cup_{\partial \Delta^i \times \{2\}} \partial \Delta^i \times [0,2]$ and define $L$ as:
	\[L(d) \coloneqq \begin{cases} f(d') \circ \eta(d',s) & \text{if $d = (d',s) \in \partial \Delta^i \times [0,1]$,} \\
	P_{s-1}(f(d') \circ \eta(d',1)) & \text{if $d = (d',s) \in \partial \Delta^i \times [1,2]$,} \\
	F(d) \circ \eta(d,1) & \text{otherwise.}\end{cases}\]
	This is the desired lift.
\end{proof}

\begin{example}Let $\cD = \cD_{MW}$ be the subset of $J^2(\bR^n,\bR)$ consisting of the $2$-jets of $f$ such that $f(0) = 0$, $Df'(0) = 0$ and the Hessian $D^2 f(0)$ is degenerate. This is $\mr{Diff}$-invariant and has codimension $n+2$, and so Vassiliev's homological $h$-principle applies to $\cF(-,\cD_{MW})$. This was a crucial ingredient in the original proof of the Madsen-Weiss theorem, see Section 4 of \cite{madsenweiss}. It is not used in later proofs, e.g.~\cite{gmtw}.	Now remark that $\cD_{MW}$ is also radially invariant, so satisfies condition (ii) of Lemma \ref{lem.vaslinear}. Thus Corollary \ref{cor.vassiliev2} says $\cF(-,\cD_{MW})$ in fact satisfies a homotopical $h$-principle. \end{example}

\subsection{Maps to the line} As an example we discuss the smallest set of singularities sufficient to apply Corollary \ref{cor.vassiliev} to maps to $\bR$. That is, we explain which singularities of maps $\bR^n \to \bR$ one needs to include to get $\cD$ to be sufficient codimension. Given a smooth map $g \colon \bR^n \to \bR^m$, we say that $f$ is \emph{of the form $g$ near $p$} if there exist charts around $p$ and $f(p)$ such that $f$ is equal to $g$ near the origin in the coordinates from these charts.

\begin{definition}Let $M$ be a smooth manifold of dimension $n$ and $f\colon M \to \bR$ be a smooth map. \begin{itemize}
		\item A point $p \in M$ is a \emph{critical point} of $f$ if $df(p) = 0$.
		\item A critical point $p$ of $f$ is said to be a \emph{Morse singularity of index $k$} if near $p$ the function $f$ is of the form:
		\[f(x) = -\sum_{i=1}^k x_i^2+\sum_{j=k+1}^n x_j^2.\]
		\item A critical point $p$ of $f$ is said to be a \emph{birth-death singularity of index $k+\frac{1}{2}$} if near $p$ the function $f$ is of the form
		\[f(x) = x_1^3 -\sum_{i=2}^k x_i^2+\sum_{j=k+1}^n x_j^2.\]
	\end{itemize}
\end{definition}

See Figure \ref{fig.circlefamily} for examples.

\begin{definition}Let $\cG(M)$ be the space of smooth functions $f\colon M \to \bR$ that only have Morse or birth-death singularities. This is called the space of \emph{generalized Morse functions}.\end{definition}

The following $h$-principle was proven in a range by Igusa by singularity theory \cite{igusagmfh}, on homology by Vassiliev using Alexander duality and interpolation techniques \cite{vassiliev} and in general by Eliashberg and Mishachev using wrinkling \cite{emgmf}. 

\begin{corollary}[Eliashberg-Mishachev] \label{cor.gmf} The map
	\[j\colon \cG(M \rel b) \longto \cG^f(M \rel j(b))\]
	is a weak equivalence for all $M$ and boundary conditions $b \in \cB_{\cG}(\partial M)$.
\end{corollary}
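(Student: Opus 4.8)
The plan is to realize $\cG$ as a sheaf of the form $\cF(-,\cD)$ treated in Corollary \ref{cor.vassiliev2}, and to check that $\cD$ satisfies all the hypotheses there, including $\mr{Diff}(Z)$-invariance, so that the conclusion is a weak equivalence rather than merely a homology equivalence. First I would take $Z = \bR$ and $r = 3$ (the jet order is forced to be $\geq 3$ by the cubic appearing in the birth-death normal form), and let $\cD \subset J^{(3)}(\bR^n,\bR)$ be the $\mr{Diff}$-invariant subset of $3$-jets that represent neither a regular point, nor a Morse singularity, nor a birth-death singularity of any index. Concretely, after the identification of $J^{(3)}_0(\bR^n,\bR)$ with polynomials of degree $\leq 3$ from Example \ref{exam.jetpoly}, a jet lies in $\cD$ if and only if its linear part vanishes and either (a) its Hessian has corank $\geq 2$, or (b) its Hessian has corank exactly $1$ and its cubic part vanishes along the kernel line of the Hessian; one then globalizes over $M$ via the $G^{(3)}$-action. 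The splitting (generalized Morse) lemma — a germ with vanishing differential, corank-$1$ Hessian, and nonzero third derivative along the kernel line is right-equivalent to $\pm x_1^3$ plus a nondegenerate quadratic form in the remaining variables, hence equals the birth-death normal form in suitable charts — shows that a smooth $f\colon M \to \bR$ has only Morse and birth-death singularities precisely when $j^3 f$ avoids $\cD$. Thus $\cG(M) = \cF(M,\cD)$ and $\cG^f(M) = \cF^f(M,\cD)$, with matching germ maps $j$.

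Next I would verify the hypotheses of Corollary \ref{cor.vassiliev2}. That $\cD$ is $\mr{Diff}$-invariant is immediate, since criticality, the corank of the Hessian, and the vanishing of the cubic part along the kernel line are all unaffected by precomposition with a diffeomorphism fixing the point (a linear change of coordinates acts on the Hessian by congruence, hence preserves corank, and carries the kernel line to the kernel line while scaling the cubic's value there by a nonzero factor). That $\cD$ is closed follows because near a regular or a Morse jet every nearby jet is again of that type, and near a birth-death jet the Hessian can only keep corank $1$ — with the cubic still nonzero on the continuously varying kernel — or drop to corank $0$, i.e. to Morse. For the stratification and codimension bound: $\{df = 0\}$ has codimension $n$; inside it, the locus where the Hessian has corank $c$ has codimension $\binom{c+1}{2}$ in the space of quadratic parts, so the corank-$1$-with-degenerate-cubic piece has total codimension $n + 1 + 1 = n+2$, while each corank-$\geq 2$ piece has codimension $\geq n + 3$. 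All of these loci are semialgebraic, hence $\cD$ is a semialgebraic and therefore stratified subset (as in the discussion following Theorem \ref{thm.vassiliev}, citing \cite{loj}), and any stratification refining the rough decomposition above has all strata of codimension $\geq n+2$.

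For the weak equivalence I would then observe that $\cD$ is $\mr{Diff}(Z)$-invariant: since $Z = \bR$, post-composing $f$ with a diffeomorphism $\phi$ of $\bR$ preserves the critical set of $f$, scales the Hessian at a critical point $p$ by $\phi'(f(p)) \neq 0$ (the extra term $\phi''(f(p))\, df(p)\otimes df(p)$ vanishes there), and likewise scales the third derivative along the kernel line by $\phi'(f(p))$; so none of the defining conditions of $\cD$ are disturbed. (Equivalently, $\cD$ is invariant under translations of $\bR$, which act transitively, and under radial diffeomorphisms of $\bR$, so it satisfies condition (i) of Lemma \ref{lem.vaslinear}.) Therefore Corollary \ref{cor.vassiliev2} applies in its stronger form, and $j\colon \cG(M\rel b) \to \cG^f(M\rel j(b))$ is a weak equivalence for all $M$ and all boundary conditions $b \in \cB_{\cG}(\partial M)$.

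I expect the main obstacle to be the bookkeeping in the construction of $\cD$: correctly identifying the jet order and invoking the splitting lemma so that the set-theoretic identity $\cG(M) = \cF(M,\cD)$ is exact rather than only generically true, and simultaneously checking that this $\cD$ is honestly closed and has all strata of codimension exactly $\geq n+2$. Once $\cD$ is in place, the invariance checks and the appeal to Corollary \ref{cor.vassiliev2} are formal.
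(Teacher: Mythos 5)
Your proposal is correct and takes essentially the same approach as the paper: reduce to Corollary \ref{cor.vassiliev2} with $\cD \subset J^{(3)}(\bR^n,\bR)$ the complement of the regular, Morse, and birth-death germs, and use $\mr{Diff}(\bR)$-invariance to upgrade the conclusion from a homology equivalence to a weak equivalence. The only difference is cosmetic: the paper discharges the codimension-$\geq n+2$ requirement by citing the genericity theorems of Morse and Cerf (with the Boardman-formula computation deferred to Remark \ref{rem.boardmanformula}), whereas you compute it directly from the corank of the Hessian and the vanishing of the cubic along its kernel line.
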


\begin{proof}To apply Corollary \ref{cor.vassiliev}, one remarks that the set $\cD \subset J^{(3)}(\bR^n,\bR)$ of Morse or birth-death germs is $\mr{Diff}(\bR)$-invariant and hence it suffices to prove it is of sufficient codimension. This follows from the well-known results of Morse and Cerf that generic smooth functions have only Morse singularities and generic 1-parameter families of smooth functions have only Morse and birth-death singularities.\end{proof}
	
\begin{remark}\label{rem.boardmanformula}In general, one may use the Boardman codimension formula as in Section 2.4-2.6 of \cite{arnoldsing}. This says that singular subset $\Sigma$ in $M$ of a generic smooth map $f\colon M \to \bR^z$ can be stratified by strata $\Sigma^I$. These are defined recursively. The top stratum $\Sigma^i(f)$ is the subset where the rank drops by $i$. Similarly the other strata are inductively defined by setting $\Sigma^{i_1,\ldots,i_k,i}$ to be the subset of $\Sigma^{i_1,\ldots,i_k}$ where the derivative of $f\colon M \to \bR^z$ restricted to $\Sigma^{i_1,\ldots,i_k}$ drops by $i$. A necessary condition is thus that $m \geq i_1 \geq i_2 \geq \ldots \geq i_k \geq 0$. 
	
	Boardman's codimension formula says that the codimension of $\Sigma^I$ is given by
	\begin{align*}\nu_I(m,1) &\coloneqq (1-m+i_1)\mu(i_1,i_2,\ldots,i_k)-(i_1-i_2)\mu(i_2,i_3,\ldots,i_k)\\
	&\qquad -\ldots-(i_k-i_{k-1})\mu(i_k),\end{align*}
	where $\mu(i_1,\ldots,i_l)$ is the number of sequences of integers $j_1,\ldots,j_k$ such that $m \geq j_1 \geq \ldots \geq j_k \geq 0$, $i_r \geq j_r$ and $j_1 > 0$. If the value of this expression is negative, no such singularities occur, and if the value of this expression is $0$, it is a non-singular point. We are interested when $\nu_I(m,1) \leq m+1$, as those strata occur in 1-parameter families. Let's start with the case $k=1$. In that case $\nu_{i_1}(m,1) = (1-m+i_1)i_1$, so this is positive if $i_1 \geq m$ and then $\leq m+1$ only if $i_1 = m$. Since $\Sigma^I \subset \Sigma^{i_1}$, we conclude that for $I$ to occur it must start with $m$. In the case $\nu_{m,i_2}$ we get a value $\leq m+1$ only if $i_2=1$ and in that case in fact $\nu_{m,1}$ equals $2m-(m-1) = m+1$. This makes clear that no higher strata can occur, as the codimension of $\Sigma^I$ must increase. Finally one checks that $\Sigma^m$ and $\Sigma^{m,1}$ indeed correspond to Morse and birth-death singularities.\end{remark}

The homotopy type of $\cG(\bR^n)$ is known. The argument below was given in Section 3 of \cite{igusagmf}. We recall it for the convenience of the reader and will use it in Section \ref{sec.framed}.

\begin{lemma}[Igusa] \label{lem.gmfhomtype} $\cG(\bR^n)$ is weakly equivalent to the join $S^{n-1} * P$, where $P$ is the homotopy pushout of the following diagram:
	\[\begin{tikzcd} &[-40pt] \frac{O(n)}{O(0) \times O(n-1)} \arrow{rd} \arrow{ld} &[-40pt] &[-40pt] \frac{O(n)}{O(1) \times O(n-2)}\arrow{rd} \arrow{ld} &[-40pt] &[-40pt] \frac{O(n)}{ O(n-1) \times O(0)} \arrow{rd} \arrow{ld} &[-40pt] \\
	\frac{O(n)}{O(0) \times O(n)} & & \frac{O(n)}{O(1) \times O(n-1)}& &\qquad \ldots\qquad  & & \frac{O(n)}{O(n) \times O(0)}. \end{tikzcd}\]
\end{lemma}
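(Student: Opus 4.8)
The plan is to reduce $\cG(\bR^n)$ to a finite-dimensional jet model and compute its homotopy type, following the argument of Igusa \cite{igusagmf}. First I would identify $\cG$ with a sheaf $\cF(-,\cD)$: a smooth function has only Morse and birth--death singularities exactly when at every point its $3$-jet is regular (nonzero linear part), Morse (vanishing linear part and nondegenerate Hessian $Q$), or of birth--death type (vanishing linear part, $Q$ of corank exactly one, and cubic part $C$ not vanishing on $\ker Q$). The only nontrivial implication is that such a $3$-jet forces the corresponding local normal form, which follows from the splitting lemma together with $3$-determinacy of the $A_2$ singularity (alternatively, from Morse and Cerf theory, as in the proof of Corollary~\ref{cor.gmf}). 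Let $\cD\subset J^{(3)}(\bR^n,\bR)$ be the complement of the set of such jets; it is semi-algebraic (hence stratified), closed (a corank-one form is isolated from the closed set of forms of corank $\geq 2$), and invariant under diffeomorphisms of both source and target, so $\cG=\cF(-,\cD)$. By Lemma~\ref{lem.vasrn}, $\cG(\bR^n)$ is weakly equivalent to $J^{(3)}(\bR^n,\bR)\setminus\cD$, and, discarding the contractible constant coordinate, to the space $X$ of triples $(\ell,Q,C)$ --- with $\ell$ a linear form, $Q$ a quadratic form, $C$ a cubic form on $\bR^n$ --- such that either $\ell\neq 0$, or $\ell=0$ and the pair $(Q,C)$ lies in the subset $Y\subseteq W$, where $W$ is the contractible space of all pairs $(Q,C)$ and $Y$ consists of those with $Q$ nondegenerate, or with $Q$ of corank one and $C|_{\ker Q}\neq 0$.

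Next I would split off the linear part. For any contractible vector space $W$ and semi-algebraic $Y\subseteq W$, the space $\{(\ell,w)\in\bR^n\times W:\ell\neq 0\text{ or }w\in Y\}$ is weakly equivalent to the join $S^{n-1}*Y$: choosing an open neighbourhood $N$ of $Y$ in $W$ that deformation retracts onto $Y$, one covers the space by the open sets $(\bR^n\setminus 0)\times W\simeq S^{n-1}$ and the intersection of the space with $\{|\ell|<1\}\times N$ (which deformation retracts onto $Y$), having intersection $(\bR^n\setminus 0)\times N\simeq S^{n-1}\times Y$ and mapping to the two pieces by the projections; since the homotopy pushout of an open cover computes the union, this union is $S^{n-1}*Y$. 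Applying this to our $W$ and $Y$ gives $\cG(\bR^n)\simeq S^{n-1}*Y$.

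It then remains to identify $Y$ with the homotopy pushout $P$, which I would do by stratifying $Y$ according to the corank of $Q$. The corank-zero stratum is $\bigsqcup_{k=0}^{n}\cN_k$, where $\cN_k$ is the space of nondegenerate quadratic forms with exactly $k$ negative eigenvalues ($C$ unconstrained); sending a form to its negative eigenspace is a fibre bundle over $\mr{Gr}_k(\bR^n)$ with contractible fibres, so $\cN_k\simeq O(n)/(O(k)\times O(n-k))$. The corank-one stratum is $\bigsqcup_{k=0}^{n-1}\cR_k$, where $\cR_k$ consists of corank-one forms whose residual form has $k$ negative eigenvalues, together with a cubic nonzero on the kernel; since the kernel line together with the sign of $C$ on it picks out a unit vector $v$, and the remaining datum is a negative $k$-plane in $v^\perp$ (with the rest of $C$ contractible), one obtains a fibre bundle $\cR_k\to S^{n-1}$ with fibre $\simeq\mr{Gr}_k(\bR^{n-1})$, associated to $O(n)\to S^{n-1}$, whence $\cR_k\simeq O(n)/(O(k)\times O(n-1-k))$. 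A corank-one form is approached by nearby nondegenerate forms in precisely two ways: pushing the zero eigenvalue positive lands in $\cN_k$ via the inclusion $O(k)\times O(n-1-k)\hookrightarrow O(k)\times O(n-k)$, and pushing it negative lands in $\cN_{k+1}$ via $O(k)\times O(n-1-k)\hookrightarrow O(k+1)\times O(n-1-k)$. Covering $Y$ by tubular neighbourhoods of the strata --- each retracting onto its stratum, the neighbourhood of $\cR_k$ meeting only those of $\cN_k$ and $\cN_{k+1}$, in each case along a copy of $\cR_k$ included by the two degeneration maps --- the nerve lemma identifies $Y$ with the homotopy colimit of the zig-zag $\cN_0\leftarrow\cR_0\to\cN_1\leftarrow\cdots\to\cN_n$, which is exactly the pushout diagram defining $P$. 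Combining the three steps: $\cG(\bR^n)\simeq S^{n-1}*Y\simeq S^{n-1}*P$.

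The main obstacle I anticipate is this last step: choosing the tubular neighbourhoods so that they form an open cover of $Y$ with the asserted homotopy types and intersection pattern (so that the nerve/Mayer--Vietoris argument applies), computing the homotopy types of the strata as the listed homogeneous spaces, and verifying that the two degeneration maps out of $\cR_k$ are, up to homotopy, induced by $O(k)\times O(n-1-k)\hookrightarrow O(k)\times O(n-k)$ and $O(k)\times O(n-1-k)\hookrightarrow O(k+1)\times O(n-1-k)$, matched with the down-left and down-right arrows of the diagram --- in particular keeping the signatures straight. The reduction to jets via Lemma~\ref{lem.vasrn} and the splitting-off of the linear part are routine by comparison.
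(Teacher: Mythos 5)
Your proposal follows the same three-step outline as the paper's own sketch: reduce to the space of $3$-jets at the origin (= degree-$\le 3$ polynomials) via the jet-evaluation weak equivalence of Lemma~\ref{lem.vasrn}, split off the constant/linear coordinates to exhibit $S^{n-1}*(\,\cdot\,)$, and identify the remaining space (polynomials with zero value and derivative at the origin) with the homotopy pushout $P$ by stratifying by corank of the Hessian and recognising the strata as homogeneous spaces. The identifications of the strata with Grassmannians and flag spaces, and of the two degeneration maps as the standard inclusions of isotropy subgroups, match the paper's exactly.

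The one genuine difference is how the homotopy pushout is assembled. You cover $Y$ by \emph{open} tubular neighbourhoods of the strata and invoke a nerve-type argument, flagging the construction of those neighbourhoods as the main technical obstacle. The paper sidesteps this entirely by working with the \emph{closed} pieces $\bar A_k$ (the closure of the corank-zero stratum $\cN_k$, i.e.\ $\cN_k \cup B_{k-1/2}\cup B_{k+1/2}$), which cover the space and intersect pairwise in the corank-one strata $B_{k+1/2}$; since the inclusions $B_{k\pm 1/2}\hookrightarrow \bar A_k$ are cofibrations, the strict colimit of the resulting zig-zag is automatically a homotopy colimit, and the map $\bar A_k\to O(n)/(O(k)\times O(n-k))$ is a fibration with convex fibres. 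This is cleaner and avoids the tubular-neighbourhood bookkeeping you were rightly worried about; if you carry out your version you should be careful that your open sets really do retract onto the strata compatibly with the restriction maps, or better, switch to the paper's closed-cover formulation. A similar remark applies to the join step, where you use a two-element open cover and the paper cites a direct product decomposition (Igusa's Lemma~3.1) --- both work, but the latter requires no Mayer--Vietoris input.
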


\begin{proof}[Sketch of proof] By the proof of Corollary \ref{cor.vassiliev}, $\cG(\bR^n)$ is weakly equivalent to the space of $3$-jets at the origin of functions $\bR^n \to \bR$ with only Morse or birth-death singularities at the origin. We can identify this with the space $\cP^\cG_3(\bR^n)$ of polynomials of degree $\leq 3$ in $n$ variables satisfying the same condition.
	
	We will next prove that $\cP^\cG_3(\bR^n)$ is homotopy equivalent to $S^{n-1} * \cE^\cG_3(\bR^n)$, where $\cE^\cG_3(\bR^n)$ is the subspace of $\cP^\cG_3(\bR^n)$ of polynomials with value and first derivative at the origin given by $0$. This follows since $\cE^\cG_3(\bR^n) \cong \bR^{\dim \cP_3 - n - 1} \setminus C$ for a closed subset $C$, while $\cP^\cG_3(\bR^n) = \bR \times (\bR^{\dim \cP^\cG_3-1} \setminus C)$ (see Lemma 3.1 of \cite{igusagmf} for more details).
	
	We end by proving that $\cE^\cG_3(\bR^n)$ is homotopy equivalent to the homotopy pushout $P$. Let $\bar{A}_k$ be the subspace of polynomials with Morse singularities of index $k$ or birth-death singularities of index $k\pm\frac{1}{2}$ at the origin. Let $B_{k+1/2}$ be the subspace of polynomials with a birth-death singularity of index $k+\frac{1}{2}$ at the origin. Then $\cE_3(\bR^n)$ is a push out:
	\[\begin{tikzcd} & B_{1/2} \arrow{rd}  \arrow{ld} & & B_{1+1/2}  \arrow{rd} \arrow{ld} & & B_{n-1/2} \arrow{rd} \arrow{ld} & \\
	\bar{A}_0 & & \bar{A}_1 & & \ldots & & \bar{A}_n.\end{tikzcd}\]
	The maps are cofibrations, so this is also a homotopy push out. Hence it remains to identify $B_k$ and $\bar{A}_k$, and the maps between these.
	
	Let $D = D^2_0 f$ be the Hessian of $f$ at the origin. Sending $f \in B_{k+1/2}$ to the splitting $(D_+,D_-,\ker(D))$ of $T_0 \bR^n$ in positive eigenspace, negative eigenspace and kernel, and recording the framing of $\ker(D)$ gives a map $B_{k+1/2} \to O(n)/(O(k) \times O(n-1-k))$. This is a fibration with convex fibers, so a weak equivalence. Similarly, there is a map sending $f \in \bar{A}_k$ to $O(n)/(O(k) \times O(n-k))$. If $f \in A_k$, it sends it to $(D_+,D_-)$ and if $f \in B_{k\pm 1/2}$ we use the previous maps composed with the inclusion into $O(n)/(O(k) \times O(n-k))$. This is again a fibration with convex fibers. It also identifies the maps $B_{k \pm 1/2} \to \bar{A}_k$ with the standard inclusions of Grassmannians.
\end{proof}

\begin{example}\label{exfamilys2} Our $h$-principle implies that $\cG(S^1) \simeq LS^2$. Since the tangent bundle of $S^1$ is trivial, to prove this we only need to identify $\cG(\bR)$. By Lemma \ref{lem.gmfhomtype} it is $S^0 * P$, and $P \simeq S^1$, being the homotopy push out of
	\[\begin{tikzcd} & \frac{O(1)}{O(0) \times O(0)} \simeq \ast \sqcup \ast \arrow{rd} \arrow{ld} & \\
	\frac{O(1)}{O(0) \times O(1)} \simeq \ast & & \frac{O(1)}{O(1) \times O(0)} \simeq \ast,\end{tikzcd}\]
	where $* \sqcup *$ corresponds to birth-death singularities with opposite positive direction of $D^3 f$ on the kernel $K_p$ of $D^2 f$. The two $*$'s correspond to a local minimum and maximum respectively. So the answer is $\cG(\bR) \simeq S^2$. A generator of $\pi_2(\cG(\bR))$ is given as follows. Write $S^2 = S^0 * S^1$ with coordinates $(\epsilon,t,\theta)$ with $\epsilon = \pm 1$, $t \in [0,1]$ and $\theta \in S^1$ under the equivalence relation $(1,0,\theta) \sim (-1,0,\theta)$. Then the following is a generator of $\pi_2$, see Figure \ref{fig.circlefamily}:
	\[v(\epsilon,t,\theta) \coloneqq \epsilon t x + \sin(\theta) x^2 + \cos(\theta)x^3.\]
\end{example}

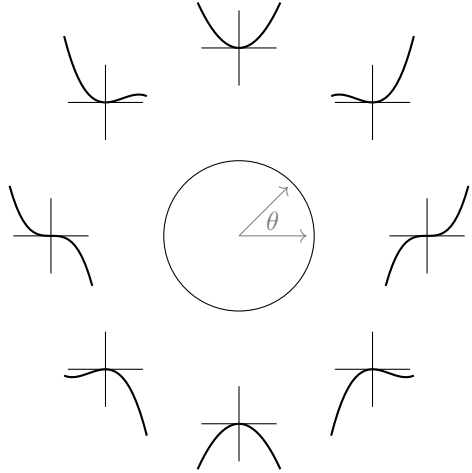
\begin{figure}
	\centering
	\begin{tikzpicture}
	\draw (0,0) circle (1cm);
	\draw [->,black!50!white] (0,0) -- (0.65,0.65);
	\draw [->,black!50!white] (0,0) -- (0.9,0);
	\node at (0.45,0.2) [black!50!white] {$\theta$};
	\begin{scope}[shift={(2.5,0)},scale=0.5]
	\draw (-1,0) -- (1,0);
	\draw (0,-1) -- (0,1);
	\draw[thick,domain=-1.1:1.1] plot (\x,{\x*\x*\x});
	\end{scope}
	\begin{scope}[shift={(-2.5,0)},scale=0.5]
	\draw (-1,0) -- (1,0);
	\draw (0,-1) -- (0,1);
	\draw[thick,domain=-1.1:1.1] plot (\x,{-\x*\x*\x});
	\end{scope}
	\begin{scope}[shift={(0,2.5)},scale=0.5]
	\draw (-1,0) -- (1,0);
	\draw (0,-1) -- (0,1);
	\draw[thick,domain=-1.1:1.1] plot (\x,{\x*\x});
	\end{scope}
	\begin{scope}[shift={(0,-2.5)},scale=0.5]
	\draw (-1,0) -- (1,0);
	\draw (0,-1) -- (0,1);
	\draw[thick,domain=-1.1:1.1] plot (\x,{-\x*\x});
	\end{scope}
	\begin{scope}[shift={({0.71*2.5},{0.71*2.5})},scale=0.5]
	\draw (-1,0) -- (1,0);
	\draw (0,-1) -- (0,1);
	\draw[thick,domain=-1.1:1.1] plot (\x,{.8*\x*\x+.6*\x*\x*\x});
	\end{scope}
	\begin{scope}[shift={({0.71*2.5},{-0.71*2.5})},scale=0.5]
	\draw (-1,0) -- (1,0);
	\draw (0,-1) -- (0,1);
	\draw[thick,domain=-1.1:1.1] plot (\x,{-.8*\x*\x+.6*\x*\x*\x});
	\end{scope}
	\begin{scope}[shift={({-0.71*2.5},{0.71*2.5})},scale=0.5]
	\draw (-1,0) -- (1,0);
	\draw (0,-1) -- (0,1);
	\draw[thick,domain=-1.1:1.1] plot (\x,{.8*\x*\x-.6*\x*\x*\x});
	\end{scope}
	\begin{scope}[shift={({-0.71*2.5},{-0.71*2.5})},scale=0.5]
	\draw (-1,0) -- (1,0);
	\draw (0,-1) -- (0,1);
	\draw[thick,domain=-1.1:1.1] plot (\x,{-.8*\x*\x-.6*\x*\x*\x});
	\end{scope}
	\end{tikzpicture}

	\caption{A family of generalized Morse functions indexed by the circle. At the origin, the functions at $\theta \in (0,\pi)$ have a Morse singularity of index $0$, those at $\theta \in (\pi,2\pi)$ have a Morse singularity of index $1$, and those at $\theta = 0$ and $\theta = \pi$ have a birth-death singularity of index $\frac{1}{2}$.}
	\label{fig.circlefamily}
\end{figure}

\subsection{Maps to the plane}
One application of the study of generalized Morse functions is pseudoisotopy theory \cite{igusastab}. To study higher-dimensional versions of pseudoisotopy theory, one needs $h$-principles for maps to $\bR^k$. To find out which singularities one needs to allow, one uses the codimension formula's from \cite{arnoldsing} as in Remark \ref{rem.boardmanformula}.

The case of maps to the plane was considered in \cite{weissreis, weissreish}, and there Reis and Weiss checked which singularities to include to get sufficient codimension. For precise definitions we will refer the reader to their work, but the conclusion is that a generic smooth map $M \to \bR^2$ only has \emph{fold} singularities, and a generic $1$-parameter family of smooth maps only has \emph{fold, cusp, lips, beak-to-beak} and \emph{swallowtail} singularities. These singularities are invariant under diffeomorphisms of $M$ and $\bR^2$. Let $\cG_\mr{2}(M)$ denote the functions $M \to \bR^2$ that only have such singularities. The jet map $j\colon \cG_\mr{2}(M \rel b) \to \smash{\cG^{f}_\mr{2}}(M \rel j(b))$ was shown to be a homology equivalence in \cite{weissreish}, but we prove it is actually a weak equivalence:

\begin{corollary}The map
	\[j\colon \cG_\mr{2}(M \rel b) \longto \cG^{f}_\mr{2}(M \rel j(b))\]
	is a weak equivalence for all $n$-dimensional manifolds $M$ and boundary conditions $b \in \cB_{\cG_{2}}(\partial M)$.\end{corollary}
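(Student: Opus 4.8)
The plan is to apply Corollary \ref{cor.vassiliev2}, so the work consists entirely of verifying its hypotheses for the sheaf $\cG_2(-)$ of maps $M \to \bR^2$ with only fold, cusp, lips, beak-to-beak, and swallowtail singularities. Let $\cD \subset J^{(r)}(\bR^n,\bR^2)$ be the closed $\mr{Diff}$-invariant stratified subset of jets representing forbidden singularities; by the codimension computations of Reis--Weiss in \cite{weissreis,weissreish} (obtained via the Boardman codimension formula as in Remark \ref{rem.boardmanformula}), every stratum of $\cD$ has codimension at least $n+2$, so $\cF(-,\cD) = \cG_2(-)$ and $\cF^f(-,\cD) = \cG_2^f(-)$. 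The first sentence of Corollary \ref{cor.vassiliev2} then already gives that $j$ is a homology equivalence; the point is to upgrade this to a weak equivalence, which requires checking the additional hypothesis: that $\cD$ is $\mr{Diff}(\bR^2)$-invariant (so that the first condition of Lemma \ref{lem.vaslinear} holds).

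The main step is therefore to observe that the list of allowed singularities --- fold, cusp, lips, beak-to-beak, swallowtail --- is defined in \cite{weissreis,weissreish} intrinsically in terms of the local behaviour of the map, hence is invariant under coordinate changes in both the source $M$ and the target $\bR^2$. In other words, $\cD$, being the complement (in the jet space) of this list, is invariant under the action of $\mr{Diff}_0(\bR^n) \times \mr{Diff}_0(\bR^2)$ on $J^{(r)}(\bR^n,\bR^2)$, and in particular is $\mr{Diff}(\bR^2)$-invariant. Since $\bR^2$ is path-connected, Corollary \ref{cor.vassiliev2} applies in its stronger form and yields that
\[
j \colon \cG_2(M \rel b) \longto \cG_2^f(M \rel j(b))
\]
is a weak equivalence for all $n$-dimensional $M$ and all $b \in \cB_{\cG_2}(\partial M)$.

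The only genuine obstacle is bookkeeping: one must confirm that the singularity list used by Reis--Weiss is exactly the set of strata of codimension $\le n+1$ predicted by Boardman's formula for maps to $\bR^2$ (so that $\cD$ really does have codimension $\ge n+2$), and that their definitions are phrased target-diffeomorphism-invariantly rather than, say, fixing a linear structure on $\bR^2$. Both are established in \cite{weissreis,weissreish}; we simply cite them. Once this is in place, there is nothing further to prove --- the weak equivalence statement is a formal consequence of Corollary \ref{cor.vassiliev2}, exactly as Corollary \ref{cor.gmf} was deduced for maps to the line.
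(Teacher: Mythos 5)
Your proof is correct and takes the same route as the paper: the corollary is indeed just an instance of Corollary \ref{cor.vassiliev2}, with the codimension bound supplied by Reis--Weiss (via the Boardman calculus) and the weak-equivalence upgrade supplied by the observation, also made in the paper, that the fold/cusp/lips/beak-to-beak/swallowtail singularity types are diffeomorphism-invariant in both source and target, hence $\cD$ is $\mr{Diff}(\bR^2)$-invariant.
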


\section{Application II: framed functions}\label{sec.framed} Our next application is the contractibility of the space of framed functions.

\begin{convention}In this section $\mr{CAT} = \mr{Diff}$, so all manifolds are smooth.\end{convention}

\subsection{Motivation and definition} While Corollary \ref{cor.gmf} is useful, it is not optimal for most applications because the homotopy type of $\cG(\bR^n)$ is non-trivial. Thus it may not be easy to prove that families exist or extend, even with an $h$-principle. Framed functions are designed to fix this, adding data to generalized Morse functions to get spaces $\cG_\mr{fr}(M)$ which not only satisfy an $h$-principle but also have the property that $\cG_\mr{fr}(\bR^n)$ is contractible. 

The problem exhibited by the generator of $\pi_2(\cG(\bR))$ in Example \ref{exfamilys2}, is that while the birth-death singularities have a preferred direction at the origin the local minimum and maximum do not. Indeed, recall that $\cG(\bR) \simeq S^0 * P$, with the latter a join of $S^0$ -- coming from the two possible signs of non-zero derivative at the origin -- with the homotopy push out $P$:
\[\begin{tikzcd} \ast \sqcup \ast \rar \dar & \ast \dar \\
\ast \rar & P \simeq S^1\end{tikzcd}\]
where $* \sqcup *$ corresponds birth-death singularities with opposite directions and the two $*$'s correspond to the local minimum and maximum. If we had decorations singling out a preferred direction at the local maximum, $P$ would be replaced by the contractible pushout $P'$ of $* \sqcup *$ mapping into $*$ and $* \sqcup *$:
\[\begin{tikzcd}\ast \sqcup \ast \rar \dar & \ast \dar \\
\ast \sqcup \ast \rar & P' \simeq \ast.\end{tikzcd}\]

In general the problem is that the space of Morse functions behaves more like a Grassmannian than a Stiefel manifold, which Igusa fixed by modifying the definitions to include a framing of the negative eigenspaces of the Hessian, defined as follows. Pick a Riemannian metric $g$ on $M$. If $f$ has a Morse singularity at $p$, then we can consider the Hessian $D^2 f$ as a linear map $T_p M \to T_p M$. This linear map is invertible if the Hessian is non-degenerate and has real eigenvalues since the Hessian was symmetric. By definition, the negative eigenspace is the subspace of $T_p M$ spanned by the eigenvectors corresponding to negative eigenvalues. 

Next, suppose $f$ has a birth-death singularity at $p$. Then $D^2 f$ as a linear map $T_p M \to T_p M$ has a one-dimensional kernel $K_p$. On the orthogonal complement of this kernel it is invertible and has real eigenvalues. On the kernel $K_p$, the third derivative gives a well-defined homogeneous map $\kappa \colon K_p \to \bR$ of degree 3.

\begin{definition}\label{def.framing} Suppose $M$ has a Riemannian metric $g$ and a generalized Morse function $f \colon M \to \bR$. \begin{itemize}
		\item A \emph{framing at a Morse singularity of index $k$} of $f$ is a choice of $k$ orthonormal basis vectors for the negative eigenspace.
		\item A \emph{framing at a birth-death singularity of index $k+\frac{1}{2}$} of $f$ is a choice of $k+1$ orthonormal vectors, such that the first $k$ are a basis for the negative eigenspace and the last vector is the unique unit vector in the kernel $K$ with $\kappa$ having positive value on it.
\end{itemize}\end{definition}

Recall that $\cG(M)$ denotes the space of generalized Morse functions on $M$. For an $f \in \cG(M)$, the subsets $\smash{\Sigma^m_k}(f)$ and $\smash{\Sigma^{m,1}_{k+1/2}}(f)$ of $M$ denote respectively the sets of index $k$ Morse singularities and index $k+\frac{1}{2}$ birth-death singularities of $f$.

\begin{definition}We can define the \emph{space $\cG_\mr{fr}(M)$ of framed of functions on $M$} as an object of $\cat{Sh^{CAT}}$. On a parametrizing manifold $P$, it is given by maps $P \to \cG(M) \times \mr{Riem}(M)$ with a framing as in Definition \ref{def.framing} for each critical point. These framings should be continuous in the sense that the basis vectors $\zeta_j$ of the framings are continuous when considered as sections of $TM$ over the singularity sets $\Sigma^m_i \subset P \times M$ for $0 \leq i \leq m$ and $\Sigma^{m,1}_{i+1/2} \subset P \times M$ for $0 \leq i \leq m-1$.\end{definition}

\subsection{The $h$-principle} The following was proven by Igusa in a range using singularity theory \cite{igusaframed}, by Lurie using obstruction-theoretic techniques \cite{lurietft}, by Eliashberg and Mishachev using wrinkling \cite{emframed}, and by Galatius in unpublished work using techniques similar to ours (in fact, his proof inspired this paper). Its main applications are the cobordism hypothesis \cite{lurietft} and the construction of higher Reidemeister torsion \cite{igusatorsion}.

\begin{corollary}[Lurie, Eliashberg-Mishachev, Galatius] \label{cor.framed} For all manifolds $M$ and boundary conditions $b \in \cB_\mr{\cG_\mr{fr}}(\partial M)$, the space of framed functions $\cG_\mr{fr}(M \rel b)$ is contractible.\end{corollary}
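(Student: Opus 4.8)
The plan is to apply Theorem~\ref{thm.main}(ii) to the sheaf $\cG_\mr{fr}$, so we must (a) exhibit a flexible model $\cG_\mr{fr}^f$ with fiber $\cG_\mr{fr}(\bR^n)$, (b) verify that $\cG_\mr{fr}$ is microflexible and satisfies condition~(W), and (c) compute that $\cG_\mr{fr}(\bR^n)$ is weakly contractible, so that $\cG_\mr{fr}^f(M \rel b)$ is the space of sections of a bundle with contractible fibers and hence $\cG_\mr{fr}(M\rel b)\simeq \ast$. The overall shape parallels the treatment of $\cF(-,\cD)$ in Section~\ref{sec.vassiliev}, with the extra framing data carried along.

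\textbf{Step 1: microflexibility.} First I would check that $\cG_\mr{fr}$ is microflexible. The underlying sheaf $\cG = \cF(-,\cD_\mr{gmf})$ is microflexible by Lemma~\ref{lem.vasconda} (the set of Morse/birth-death $3$-jets is closed), and $\mr{Riem}$ is flexible. The framing is a choice, over the singularity locus, of an orthonormal partial basis of $TM$ subject to open-and-closed conditions pinning it to eigenspaces and to the cubic direction; restricting such data along the inclusion of a compact subset is a microfibration because one can extend framings along the singularity locus using the exponential map for a (microflexibly-extended) metric and the fact that the relevant Stiefel-type fibers are nonempty and the eigenspace/kernel data varies continuously. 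In short, $\cG_\mr{fr}\to\mr{Riem}$ has microflexible fibers over a flexible base, which I would package via (a microfibration version of) the argument in Lemma~\ref{lem.flexibilityopen}.

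\textbf{Step 2: condition (W).} Here the key computation is that $\cG_\mr{fr}(\bR^n)$ is weakly contractible. I would mimic the proof of Lemma~\ref{lem.gmfhomtype}: $\cG_\mr{fr}(\bR^n)$ is weakly equivalent to the space of framed $3$-jets at the origin, which is $S^{n-1}\ast P'$ where $P'$ replaces each Grassmannian $O(n)/(O(k)\times O(n-k))$ in Igusa's pushout diagram by the corresponding Stiefel-type space $O(n)/(\mathbf{1}\times O(n-k))$ recording an actual ordered orthonormal $k$-frame (and, at birth-death strata, an additional framing vector in the kernel). Each map $B_{k\pm 1/2}^\mr{fr}\to \bar A_k^\mr{fr}$ becomes, up to weak equivalence, the standard inclusion $V_k(\bR^n)\hookrightarrow V_{k+1}(\bR^n)$ or $V_k(\bR^n)= V_k(\bR^n)$, and a filtration/telescope argument (or a direct check that the pushout of $\smash{V_k\xleftarrow{} V_k \xrightarrow{} V_{k+1}}$ along the double cover of birth-death directions collapses) shows $P'\simeq\ast$; hence $\cG_\mr{fr}(\bR^n)\simeq S^{n-1}\ast\ast\simeq\ast$. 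Given this, condition~(W) is easy in the style of Example after Definition~\ref{def.d}: since $\pi_0(\cG_\mr{fr}(S^{n-1}\times[1,2]\rel b_0\sqcup b_1))$ is a single point (every framed function on the cylinder is homotopic to the radial/linear one, as $\cG_\mr{fr}(\bR^n)$ is connected and the transversality argument of Lemma~\ref{lem.vascondc} applies with framings carried along), $[\cat{C}^{\cG_\mr{fr}}]$ is a groupoid, and one takes $\iota$ to be the framed restriction of the standard linear map $\Lambda\colon\bR^n\to\bR$ to an annulus, exactly as in Lemma~\ref{lem.vascondd}; the radial shrinking homotopies there preserve the framing since they are realized by diffeomorphisms of $\bR^n$ acting compatibly on tangent vectors, so $\iota\circledcirc-$ is a weak equivalence.

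\textbf{Step 3: assembling the $h$-principle.} With microflexibility and condition~(W) in hand, Gromov's construction produces a flexible $\cG_\mr{fr}^f$ with $\cG_\mr{fr}^f(M)\simeq\Gamma(M, \mr{Fr}^\mr{Diff}(TM)\times_{\mr{Diff}(n)}\cG_\mr{fr}(\bR^n))$, and Theorem~\ref{thm.main}(ii) gives $j\colon\cG_\mr{fr}(M\rel b)\xrightarrow{\simeq}\cG_\mr{fr}^f(M\rel j(b))$ for all $M$ and boundary conditions $b$. Since $\cG_\mr{fr}(\bR^n)$ is weakly contractible by Step~2, the associated bundle has weakly contractible fiber, so its space of sections satisfying the boundary condition $j(b)$ is weakly contractible (sections of a fibration with contractible fibers form a contractible space, and imposing a boundary condition over a closed set does not change this). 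Therefore $\cG_\mr{fr}(M\rel b)$ is weakly contractible, which is the claim.

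\textbf{Main obstacle.} The technical heart is Step~1 --- verifying microflexibility of $\cG_\mr{fr}$ --- because one must extend not just generalized Morse functions and metrics but also the framing data over a growing compact region while keeping it pinned to the (moving) negative eigenspaces and cubic directions; the eigenspaces can collide at birth-death strata, so the extension argument must be done carefully near those strata, presumably by first extending the metric and function microflexibly and then using that the fibers of the framing map over the singularity locus are nonempty Stiefel-type spaces varying continuously. The contractibility computation in Step~2 is the conceptual heart but is essentially Igusa's argument with $O(k)$ quotiented out, so it is more bookkeeping than difficulty.
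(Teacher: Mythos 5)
Your overall strategy matches the paper's: microflexibility by carrying framings through the proof of Lemma~\ref{lem.vasconda}, condition (W) with $\iota$ the restriction of the standard linear map to an annulus, contractibility of $\cG_\mr{fr}(\bR^n)$ by repeating the argument of Lemma~\ref{lem.gmfhomtype} with Grassmannians $O(n)/(O(k)\times O(n-k))$ replaced by Stiefel-type quotients $O(n)/O(n-k)$, and then concluding via Theorem~\ref{thm.main}(ii) and the fact that a section space with contractible fiber is contractible. Steps 1 and 3 and the contractibility computation are fine.

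However, Step 2 has a genuine gap in the verification of condition (H). You assert that $\pi_0(\cG_\mr{fr}(S^{n-1}\times[1,2]\rel b_0\sqcup b_1))$ is a single point ``as $\cG_\mr{fr}(\bR^n)$ is connected and the transversality argument of Lemma~\ref{lem.vascondc} applies with framings carried along.'' Neither justification actually closes the gap. Contractibility of the local model $\cG_\mr{fr}(\bR^n)$ cannot by itself tell you anything about the global path components on the cylinder --- that implication is precisely the $h$-principle you are in the process of proving, so the appeal is circular. And the transversality argument of Lemma~\ref{lem.vascondc} connects the \emph{underlying} generalized Morse functions by a path; extending the framing over that path is not automatic. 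The problem is concentrated at death singularities: when two Morse critical points cancel along the $1$-parameter family, the framings coming in from either side might be incompatible --- in particular they might differ in orientation, since only a frame of the negative eigenspace is recorded and a single birth-death direction is preferred, and the two incoming $k$-frames need not have the same orientation. To make the extension work one must introduce a concrete move that reverses the orientation of a framing, e.g.\ by inserting a short-lived extra pair of birth-death singularities (this is exactly what the paper's Figure~\ref{figorientation} does). Without such an argument, ``with framings carried along'' is an unsupported claim, and it is the only genuinely delicate point in the whole proof.
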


This follows directly from Theorem \ref{thm.main} and the following lemma's. The first shows that space of framed functions on $\bR^n$ is contractible --- Theorem 2.4 of \cite{igusaframed} --- and the second says our $h$-principle applies.

\begin{lemma}[Igusa] $\cG_\mr{fr}(\bR^n)$ is contractible.\end{lemma}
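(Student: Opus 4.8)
The plan is to identify $\cG_\mr{fr}(\bR^n)$ with a space of $3$-jets at the origin, exactly as in the proof of Lemma~\ref{lem.gmfhomtype} for the unframed case, and then run through Igusa's pushout decomposition while keeping track of the framing data. First I would note that by the same evaluation-fibration argument that appears in the proof of Corollary~\ref{cor.vassiliev} (and Lemma~\ref{lem.vasrn}), the forgetful map from $\cG_\mr{fr}(\bR^n)$ to the space of $3$-jets at the origin of framed generalized Morse functions is a weak equivalence: the Riemannian metric contributes a contractible factor, and the higher-order Taylor data retracts onto the polynomial part while the framing (being an orthonormal frame depending only on the $2$-jet at a Morse point, or the $3$-jet at a birth-death point) is carried along continuously. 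Thus it suffices to compute the homotopy type of the space $\cP^{\cG_\mr{fr}}_3(\bR^n)$ of such framed polynomial jets.

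Next I would mimic the join decomposition: splitting off the value and first derivative at the origin gives $\cP^{\cG_\mr{fr}}_3(\bR^n) \simeq S^{n-1} * \cE^{\cG_\mr{fr}}_3(\bR^n)$, where $\cE^{\cG_\mr{fr}}_3$ is the framed analogue of Igusa's space $\cE^\cG_3$ with vanishing value and derivative at the origin. Then $\cE^{\cG_\mr{fr}}_3(\bR^n)$ is a homotopy pushout built from the strata $\bar A_k^\mr{fr}$ (framed Morse or birth-death of index $k\pm\tfrac12$) and $B_{k+1/2}^\mr{fr}$ (framed birth-death of index $k+\tfrac12$), exactly as in Lemma~\ref{lem.gmfhomtype} but with the Grassmannians replaced by Stiefel-type spaces. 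Concretely, a framing at an index-$k$ Morse singularity records not just the negative eigenspace but an ordered orthonormal basis of it, so the map $\bar A_k^\mr{fr}\to \cdots$ now lands in $O(n)/O(n-k)$ (a Stiefel manifold $V_k(\bR^n)$) rather than $O(n)/(O(k)\times O(n-k))$; likewise $B_{k+1/2}^\mr{fr}$ maps to $O(n)/O(n-1-k)\cong V_{k+1}(\bR^n)$, and the map $B_{k+1/2}^\mr{fr}\to \bar A_k^\mr{fr}$ becomes the standard inclusion $V_{k+1}(\bR^n)\hookrightarrow V_k(\bR^n)$ of Stiefel manifolds (appending the distinguished unit kernel vector, on which $\kappa>0$, to the frame of the negative eigenspace). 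As before these maps are fibrations with convex fibers, hence weak equivalences.

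So the problem reduces to showing that the iterated homotopy pushout of the telescope
\[
\begin{tikzcd}[column sep=small]
& V_1(\bR^n) \arrow{rd}\arrow{ld} & & V_2(\bR^n)\arrow{rd}\arrow{ld} & & \cdots & & V_n(\bR^n)\arrow{rd}\arrow{ld} & \\
V_0(\bR^n) & & V_1(\bR^n) & & V_2(\bR^n) & & \cdots & & V_n(\bR^n)
\end{tikzcd}
\]
is contractible, and then that $S^{n-1}*(\text{contractible})$ is contractible. For the second point, the join of any space with a contractible space is contractible, so that step is free. The first point is the combinatorial heart of the argument: the pushout is an iterated double mapping cylinder, and one proves contractibility by an induction from the left, at each stage gluing $V_{k+1}\xrightarrow{\mathrm{id}} V_{k+1}$ along $V_{k+1}\hookrightarrow V_k$ and using that attaching a cone on a subspace via the identity collapses that subspace; equivalently, one can filter the pushout so that the $k$-th stage deformation retracts onto a point because each new $B_{k+1/2}^\mr{fr}\simeq V_{k+1}$ is simultaneously coned off (to $\bar A_{k+1}^\mr{fr}\simeq V_{k+1}$ on one side by the identity) and the previously assembled piece is already contractible. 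Carefully setting up this induction — in particular being precise about which inclusions are cofibrations so the homotopy pushouts agree with the strict ones, and checking that the framing-compatibility conditions at the ends of $1$-parameter families match Igusa's — is the main obstacle; everything else is a bookkeeping translation of Lemma~\ref{lem.gmfhomtype} with Stiefel manifolds in place of Grassmannians, together with the observation that the extra framing data is precisely what rigidifies each Grassmannian $O(n)/(O(k)\times O(n-k))$ into the corresponding Stiefel manifold and thereby kills the obstruction seen in Example~\ref{exfamilys2}.
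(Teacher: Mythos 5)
Your proposal is correct and follows essentially the same route as the paper's proof: disregard the contractible metric factor, reduce to $3$-jets via the argument of Lemma~\ref{lem.gmfhomtype}, write $\cG_\mr{fr}(\bR^n)$ as the join $S^{n-1}*P'$ with $P'$ the iterated homotopy pushout of Stiefel manifolds $V_k(\bR^n)=O(n)/O(n-k)$, and collapse the pushout inductively because $B^{\mr{fr}}_{k+1/2}\to\bar A^{\mr{fr}}_{k+1}$ is a weak equivalence. One small slip in terminology: $V_{k+1}(\bR^n)\to V_k(\bR^n)$, the map induced by $B^{\mr{fr}}_{k+1/2}\to\bar A^{\mr{fr}}_k$, is the Stiefel-bundle projection forgetting the last frame vector (note $V_{k+1}$ has \emph{larger} dimension than $V_k$, so it cannot be an inclusion), and your parenthetical about ``appending the kernel vector'' describes the framing data at a birth-death singularity rather than this map.
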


\begin{proof}The space of Riemannian metrics is convex, hence contractible, and thus we disregard it. Carrying through the argument of Lemma \ref{lem.gmfhomtype}, $\cG_\mr{fr}(\bR^n)$ is weakly equivalent to the join $S^{n-1} * P'$, where $P'$ is the homotopy pushout of the following diagram:
	\[\begin{tikzcd}  & \frac{O(n)}{O(0)} \arrow{rd} \arrow{ld} & & \frac{O(n)}{O(1) }\arrow{rd} \arrow{ld} & & \frac{O(n)}{ O(n-1)} \arrow{rd} \arrow{ld} & \\
	\frac{O(n)}{O(0)} & & \frac{O(n)}{O(1)}& &\ldots  & & \frac{O(n)}{O(n)}. \end{tikzcd}\]
	This homotopy pushout can be computed inductively by iterated homotopy pushouts, starting at the right. Let $P'_i$ be the homotopy pushout of
	\[\begin{tikzcd} &[-5pt] \frac{O(n)}{O(i)} \arrow{rd}\arrow{ld} &[-5pt] &[-5pt] \frac{O(n)}{O(1+i)}  \arrow{rd} \arrow{ld} &[-5pt] &[-5pt] \frac{O(n)}{ O(n-1)}  \arrow{rd} \arrow{ld} &[-5pt] \\
	\frac{O(n)}{O(i)} & & \frac{O(n)}{O(1+i)}& &\ldots  & & \frac{O(n)}{O(n)}.\end{tikzcd}\]
	If $i = n$ we simply get $O(n)/O(n) \simeq *$, and $P'_i$ is obtained as the homotopy pushout of the diagram
	\[\begin{tikzcd} & \frac{O(n)}{O(i)} \arrow{rd} \arrow{ld} & \\
	\frac{O(n)}{O(i)} & & P'_{i+1},\end{tikzcd}\]
	and thus $P'_i \simeq P'_{i+1} \simeq *$.
\end{proof}

\begin{lemma}$\cG_\mr{fr}(-)$ is microflexible and satisfies condition (W).\end{lemma}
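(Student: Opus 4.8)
The plan is to verify the two properties separately, both by reducing to local singularity-theoretic facts together with the deformation machinery already available for generalized Morse functions.

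\textbf{Microflexibility.} First I would show $\cG_\mr{fr}(-)$ is microflexible. Recall from Lemma \ref{lem.vasconda} (with $Z = \bR$ and $\cD$ the set of germs that are \emph{not} Morse or birth-death) that the underlying sheaf $\cG(-)$ of generalized Morse functions is microflexible, since $\cG(-) = \cF(-,\cD)$ for a closed $\cD$. The sheaf $\mr{Riem}(-)$ is flexible, hence microflexible, and a product of microflexible sheaves is microflexible, so $\cG(-) \times \mr{Riem}(-)$ is microflexible. It remains to handle the framing data. Given a pair of compact submanifolds-with-corners $Q \subset R$ in $M$ and a lifting problem for $\cG_\mr{fr}(R \subset M) \to \cG_\mr{fr}(Q \subset M)$, I would first solve the lifting problem for the underlying map to $\cG \times \mr{Riem}$ on some $\Delta^i \times [0,\epsilon_0]$ using microflexibility there. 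Over this partial solution the Hessian data (negative eigenspace at a Morse point; the line $K_p$ with its cubic form at a birth-death point) depends continuously on the parameters and, for $\epsilon$ small enough, the singularity sets $\Sigma^m_k$ and $\Sigma^{m,1}_{k+1/2}$ form bundles over $\Delta^i \times [0,\epsilon]$ extending those over $\Delta^i \times \{0\}$; this is the standard local normal form / implicit function theorem statement for nondegenerate and birth-death critical points. The space of framings of a given negative eigenspace is a torsor under $O(k)$, and at a birth-death point of index $k+\tfrac12$ the last framing vector is uniquely determined, so the space of framings fibers over the eigenspace data with fiber homeomorphic to $O(k)$; since restriction of sections of such a fiber bundle along the cofibration $Q \hookrightarrow R$ is a fibration, the given framing over $R \times \{0\}$ together with its already-chosen extension over $Q \times [0,\epsilon]$ extends, after possibly shrinking $\epsilon$, to a framing over $R \times [0,\epsilon]$. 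This produces the required partial lift.

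\textbf{Condition (W).} Since $\mr{Riem}(-)$ is convex it can be ignored for the homotopy-theoretic statements. For condition (H), I would argue exactly as in Lemma \ref{lem.vascondc}: the codimension of the non-Morse, non-birth-death stratum in $J^{(3)}(\bR^n,\bR)$ is $n+2$ (by the Morse--Cerf codimension count, cf.\ Remark \ref{rem.boardmanformula}), and adding a framing does not change this—any generalized Morse function on $S^{n-1}\times[0,1]$ or $S^{n-1}\times[0,1]\times[0,1]$ has \emph{no} critical points at all by the transversality/dimension argument, hence carries a unique (empty) framing. Thus $\pi_0(\cG_\mr{fr}(S^{n-1}\times[1,2]\rel b_0\sqcup b_1))$ has at most one element for all $b_0,b_1$, and by Example \ref{exam.grouplikesingle} condition (H) holds; moreover every boundary condition lies in the unique fillable component (one can fill with a function having a single interior critical point of any prescribed index, with a chosen framing). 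For the weak-equivalence part of condition (W), following the remark after Example \ref{def.d} and the pattern of Lemma \ref{lem.vascondd}, it suffices to exhibit a single "linear/constant" element $\iota \in \cG_\mr{fr}(S^{n-1}\times[1,2]\rel b_s\sqcup b_e)$—take the restriction of a fixed critical-point-free linear function with empty framing—and a radial-shrinking homotopy inverse to $\iota\circledcirc-$ built from a family of radial diffeomorphisms $\phi_t$ of $\bR^n$ as in the proof of Lemma \ref{lem.vascondd}. Since $\iota$ has no critical points, the cylinders being glued on carry no framing data, and the shrinking homotopies act by pullback along diffeomorphisms, they automatically carry framings to framings; so exactly the same formulas as in Lemma \ref{lem.vascondd} give the homotopies $h_t$ and $h'_t$.

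\textbf{Main obstacle.} The genuinely non-formal point is the microflexibility verification: one must know that for a small enough time of deformation the singularity sets remain embedded submanifolds varying continuously, and that the Hessian eigenspace data (and at birth-death points the kernel line together with its cubic form) vary continuously, so that the framing—an $O(k)$-torsor's worth of data at each Morse point—can be transported. This is a packaging of the standard local normal forms for Morse and birth-death singularities, and the care needed is to choose $\epsilon$ uniformly over the compact parameter simplex; once that is in place the rest reduces to the fact that restriction along a cofibration is a fibration for bundles with structure group $O(k)$, together with the already-established microflexibility of $\cG(-)$.
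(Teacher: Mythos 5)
Your microflexibility argument follows the same route as the paper (carry the framings along the interpolation of Lemma \ref{lem.vasconda}, using the local normal forms to track the singular loci and the $O(k)$-torsor of framing data), and is fine modulo details.

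However, your argument for condition (W) has a fatal error. You claim that ``any generalized Morse function on $S^{n-1}\times[0,1]$ or $S^{n-1}\times[0,1]\times[0,1]$ has \emph{no} critical points at all by the transversality/dimension argument.'' This is false. The dimension count in Lemma \ref{lem.vascondc} ensures that a generic function on an $n$- or $(n+1)$-manifold has $r$-jet avoiding $\cD$, i.e.\ has no \emph{worse-than-birth-death} singularities; it does not say the function is critical-point-free. Indeed, the Morse critical locus $\Sigma^n$ has codimension $n$ in $M$ (not $n+2$), so a generic function on $S^{n-1}\times[0,1]$ generically has isolated Morse critical points, and a generic $1$-parameter family over $S^{n-1}\times[0,1]$ generically passes through isolated birth-death singularities. (Figure \ref{figorientation} in the paper shows exactly such a family.)

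Because of this, your argument skips over the actual content of condition (H) for framed functions: connecting two framed functions requires connecting the underlying generalized Morse functions by a $1$-parameter family, and then \emph{extending the framing} across that family. At a death singularity, the incoming framing (an orthonormal $k$-tuple) and the outgoing framing (an orthonormal $(k+1)$-tuple, one vector of which is forced to point in the positive cubic direction in the kernel of the Hessian) may fail to line up — there is an $O(k)/SO(k)$-orientation obstruction. The paper handles this by explicitly showing one may reverse the orientation of the framing along the family by temporarily introducing a pair of cancelling birth-death singularities (Figure \ref{figorientation}, taken fiberwise times a quadratic form in higher dimensions). Your proof never confronts this, so the crucial step is missing. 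Your choice of $\iota$ (a critical-point-free linear function on the annulus) matches the paper's, but the surrounding argument establishing that $[\cat{C}^{\cG_\mr{fr}}]$ is a groupoid does not go through as written.
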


\begin{proof}Checking microflexibility is done as in Lemma \ref{lem.vasconda} and carrying along the framings. To check condition (H) holds, we could use Theorem 1.6 of \cite{igusaframed}, which says that the space $\cG_\mr{fr}(M,b)$ is $(n-1)$-connected. However, an elementary argument as in Lemma 1.5 of \cite{igusaframed} suffices.
	
	Indeed, for condition (H), Lemma \ref{lem.vascondc} says generalized Morse functions underlying a $1$-parameter framed functions can be connected by a one-parameter family of generalized Morse functions. We can assume that there is at most one birth-death singularity in $\mr{int}(M)$ for each $t \in [0,1]$ and will do an induction on the number of birth-death singularities.
	
	Starting at $t=0$, we can extend the framing until we reach the first birth-death singularity. If it is a birth singularity, i.e.\ two Morse singularities appear when first there were none, one can pick a framing arbitrarily at the birth-death singularity and extend using a local model. 
	
	If it is a death singularity, i.e.\ two Morse singularities disappear, one has to be careful, because the framings might not match up. In that case we need to line up an orthonormal $k$-tuple and the orthonormal $(k+1)$-tuple so that (i) the $k$-tuple coincides with the first $k$ vectors of $(k+1)$-tuple and (ii) the first basis vector of the latter goes to the positive direction of the kernel of the Hessian in birth-death singularity. Pick any path of $(k+1)$-tuples to satisfy the second condition. We want find a path of $k$-tuples to the remaining $k$ vectors in the $(k+1)$-tuple. Since the space of $k$-tuples with fixed orientation is path-connected, we can do so if the two orthonormal $k$-tuples have the same orientation. Hence it suffices to show how reverse the direction of one of these $k$ vectors. Figure \ref{figorientation} shows how to do this in the 1-dimensional case by introducing two additional Morse singularities for a short time period. In the higher-dimensional case, one simply takes the product of this picture with the appropriate quadratic form.
	
	Finally, for the additional requirements of condition (W) one takes $\iota$ to be $\Lambda \colon \bR^n \to \bR$, the projection to the first coordinate, restricted to the annulus $D^n_2 \setminus \mr{int}(D^n_1)$.\end{proof}

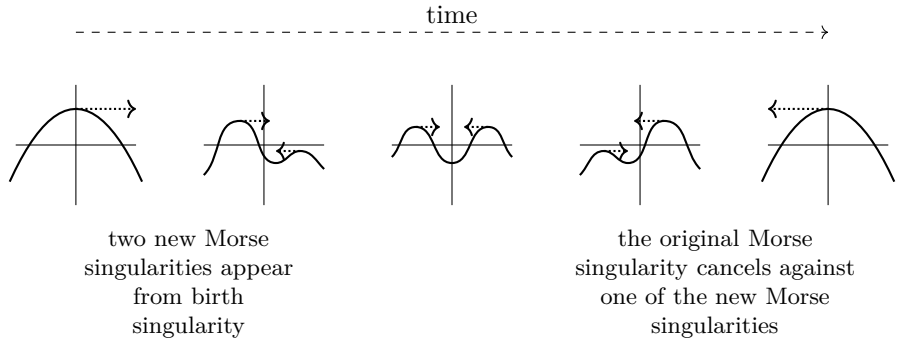
\begin{figure}
	
	\begin{tikzpicture}
	\draw [dashed,->] (-5,1.5) -- (5,1.5);
	\node at (0,1.5) [above] {time};
	\begin{scope}[shift={(-5,0)},scale=0.8]
	\draw (-1,0) -- (1,0);
	\draw (0,-1) -- (0,1);
	\draw[thick,domain=-1.1:1.1] plot (\x,{.6-\x*\x});
	\draw [thick,densely dotted,->] (0,.6) -- (1,.6);
	\end{scope}
	\begin{scope}[shift={(-2.5,0)},scale=0.8]
	\draw (-1,0) -- (1,0);
	\draw (0,-1) -- (0,1);
	\draw[thick] (-1,-.5) to[in=180] (-.4,.4) to[out=0,in=180]  (.2,-.3) to[out=0,in=180] (.6,-.1) to[out=0] (1,-.4);
	\draw [thick,densely dotted,->] (-.4,.4) -- (.1,.4);
	\draw [thick,densely dotted,->] (.6,-.1) -- (.2,-.1);
	\end{scope}
	\begin{scope}[shift={(0,0)},scale=0.8]
	\draw (-1,0) -- (1,0);
	\draw (0,-1) -- (0,1);
	\draw[thick] (-1,-.2) to[in=180] (-.6,.3) to[out=0,in=180]  (0,-.3) to[out=0,in=180] (.6,.3) to[out=0] (1,-.2);
	\draw [thick,densely dotted,->] (-.6,.3) -- (-.2,.3);
	\draw [thick,densely dotted,->] (.6,.3) -- (.2,.3);
	\end{scope}
	\begin{scope}[shift={(2.5,0)},scale=0.8]
	\draw (-1,0) -- (1,0);
	\draw (0,-1) -- (0,1);
	\draw[thick] (-1,-.5) to[in=180] (-.6,-.1) to[out=0,in=180] (-.2,-.3) to[out=0,in=180]   (.4,.4) to[out=0] (1,-.4);
	\draw [thick,densely dotted,->] (.4,.4) -- (-.1,.4);
	\draw [thick,densely dotted,->] (-.6,-.1) -- (-.2,-.1);
	\end{scope}
	\begin{scope}[shift={(5,0)},scale=0.8]
	\draw (-1,0) -- (1,0);
	\draw (0,-1) -- (0,1);
	\draw[thick,domain=-1.1:1.1] plot (\x,{.6-\x*\x});
	\draw[thick,densely dotted,->] (0,.6) -- (-1,.6);
	\end{scope}
	\node at (-3.5,-1) [below] {\parbox{3cm}{\small \centering two new Morse singularities appear from birth singularity}};
	\node at (3.5,-1) [below] {\parbox{4cm}{\small \centering the original Morse singularity cancels against one of the new Morse singularities}};
	\end{tikzpicture}	
	
	\caption{The simplest situation where the orientation of framing is changed by introducing a pair of birth-death singularities.}
	\label{figorientation}
	
\end{figure}

\section{Application III: foliations} \label{sec.foliations} Our next goal is to study certain spaces of foliations and reprove several famous results of Mather and Thurston, full proofs of which remain relatively hard to find in the literature.

We start by recalling basic definitions of foliation theory. A \emph{codimension $k$ $\mr{CAT}$-foliated atlas} for a manifold $N$ consists of an open cover $N$ by $U_i$ with charts $\phi_i \colon \bR^k \times \bR^{n-k} \to U_i \subset N$, so that the transition functions $\phi_j^{-1} \phi_i \colon \phi_i^{-1}(U_i \cap U_j) \to \phi_j^{-1}(U_i \cap U_j)$ are $\mr{CAT}$-isomorphisms of the form $(x,y) \mapsto (f(x),g(x,y))$.

\begin{definition}A \emph{codimension $k$ $\mr{CAT}$-foliation} $\cF$ on an $n$-dimensional manifold $N$ is a maximal codimension $k$ foliated atlas. \end{definition}

The subsets of the form $\{x\} \times \bR^{n-k}$ in a chart are called \emph{plaques} and they glue together to immersed $(n-k)$-dimensional manifolds called \emph{leaves}. One can give equivalent definitions of foliations in terms of leaves, and in the smooth case in terms of integrable distributions.  

If $p \colon N \times B \to B$ is the projection, then a foliation $\cF$ on $N \times B$ is said to be \emph{transverse to $p$} if the leaves are transverse to the fibers $p^{-1}(b)$. We will be interested in the situation where the foliation is of codimension $n$, i.e.\ the leaves have the same dimension as $B$. In that case transversality is equivalent to the existence of charts of the following form: near each $e \in E$ we have a chart $\phi_B \colon \bR^k \to B$ near $p(e)$ and foliated chart $\phi \colon \bR^k \times \bR^{n} \to E$ near $e$ so that $\pi \circ \phi = \phi_B \circ \pi_1$. We shall take this as the definition, avoiding questions about the exact definition of transversality when $\mr{CAT} = \mr{PL},\mr{Top}$.

\begin{definition}\label{def.folspace} If $M$ is an $n$-dimensional $\mr{CAT}$-manifold, we let $\mr{Fol}_\mr{CAT}(M)$ denote the element of $\cat{Sh^{CAT}}$ which assigns to a parametrizing manifold $P$ the set of $\mr{CAT}$-foliations of codimension $n$ on $P \times M$ that are transverse to the projection to $P$. We call it the \emph{space of $\mr{CAT}$-foliations on $M$}.\end{definition}

It is easiest to see that $M \mapsto \mr{Fol}_\mr{CAT}(M)$ is a $\mr{CAT}$-invariant topological sheaf on $n$-manifolds by thinking in terms of leaves; if $e \colon N \to M$ is an embedding, the inverse image under $\mr{id} \times e$ of a partition of $P \times M$ into leaves transverse to the projection to $P$ gives such a partition of $P \times N$, and similarly for families of embeddings.

\begin{figure}
	\centering
	\begin{tikzpicture}
		\draw (0,0) rectangle (6,3);
		\node at (3,0) [below] {$\Delta^1$};
		\node at (0,1.5) [left] {$[0,1]$};
		\foreach \i in {1,...,5}
		{
		\draw [Mahogany] (0,{\i/30*3}) -- (6,{\i/30*3});
		}
		\foreach \i in {25,...,29}
		{
			\draw [Mahogany] (0,{\i/30*3}) -- (6,{\i/30*3});
		}
		\foreach \i in {6,...,24}
		{
			\draw [Mahogany] plot [smooth] coordinates {(0,{\i/30*3}) (1,{\i/30*3})  ({3},{\i/30*3+(\i-6)*(24-\i)*(1-\i/24)/100}) (5,{\i/30*3})  (6,{\i/30*3-(\i-6)*(24-\i)/400})};
		}
	
		\draw [decorate,decoration={brace,amplitude=4pt},xshift=3pt,yshift=0pt]
		(6,3) -- (6,2.4) node [black,midway,xshift=.5cm,yshift=0cm] {$\cF_0$};
		
		\draw [decorate,decoration={brace,amplitude=4pt},xshift=3pt,yshift=0pt]
		(6,0.6) -- (6,0) node [black,midway,xshift=.5cm,yshift=0cm] {$\cF_0$};
	\end{tikzpicture}
	\caption{A $1$-simplex of $\mr{Fol}_\mr{Diff}([0,1] \rel \cF_0)$, where we take the boundary condition $\cF_0$ to be near the endpoints $\{0,1\} \subset [0,1]$.}
\end{figure}
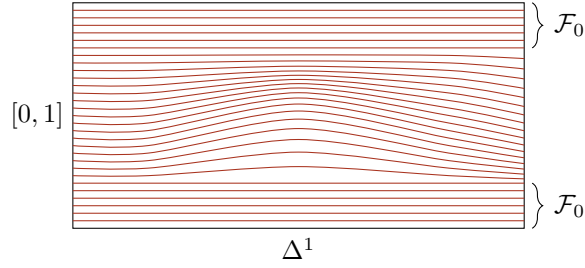

Note that $\mr{Fol}_\mr{CAT}(M)$ has a unique $0$-simplex $\cF_0$, and when writing $\mr{Fol}_\mr{CAT}(M \rel \cF_0)$ we shall take the boundary condition to be near $\partial M$ unless mentioned otherwise. The space $\mr{Fol}_\mr{CAT}(M \rel \cF_0)$ has an interpretation related to classifying spaces of groups of $\mr{CAT}$-isomorphisms. To state it, let $\mr{CAT}^\delta_\partial(M)$ denote the discrete group underlying the group of $\mr{CAT}$-isomorphisms of $M$ that are the identity on a neighborhood of $\partial M$. The inclusion $\mr{CAT}^\delta_\partial(M) \to \mr{CAT}_\partial(M)$ induces a map on classifying spaces $B\mr{CAT}^\delta_\partial(M) \to B\mr{CAT}_\partial(M)$.

\begin{lemma}\label{lem.folfibseq} If $M$ is compact there is a fiber sequence 
	\[\mr{Fol}_\mr{CAT}(M \rel \cF_0)\longto B\mr{CAT}^\delta_\partial(M) \longto B\mr{CAT}_\partial(M).\]
\end{lemma}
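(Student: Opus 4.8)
The plan is to construct the fiber sequence as the Borel-construction / classifying-space incarnation of a principal bundle. First I would recall the standard ``holonomy'' interpretation: a family of codimension-$n$ foliations on $P \times M$ transverse to the projection to $P$ is, up to the choice of trivialization near $\partial M$, the data of a flat $\mr{CAT}_\partial(M)$-structure on a $P$-indexed family of manifolds modelled on $M$, i.e.\ a $\mr{CAT}^\delta_\partial(M)$-bundle together with a bundle identification with the trivial bundle $P \times M$ after restricting to a neighborhood of $\partial M$ and forgetting the discrete topology. Concretely, pulling back the universal flat bundle, one gets a map $\mr{Fol}_\mr{CAT}(M \rel \cF_0) \to B\mr{CAT}^\delta_\partial(M)$, and the composite to $B\mr{CAT}_\partial(M)$ is the one induced by the continuous map of groups $\mr{CAT}^\delta_\partial(M) \to \mr{CAT}_\partial(M)$; the second claim is that this composite is nullhomotopic with a canonical nullhomotopy, because the flat $\mr{CAT}_\partial(M)$-structure becomes (non-flat but) \emph{trivialized} near $\partial M$ by the very definition of the boundary condition $\cF_0$. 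Thus $\mr{Fol}_\mr{CAT}(M \rel \cF_0)$ maps to the homotopy fiber of $B\mr{CAT}^\delta_\partial(M) \to B\mr{CAT}_\partial(M)$, and the job is to show this map is a weak equivalence.

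\textbf{Key steps.} (1) Make precise, for each parametrizing manifold $P$, a natural bijection between the set $\mr{Fol}_\mr{CAT}(M \rel \cF_0)(P)$ and the set of isomorphism classes (rel the germ near $\partial M$) of ``$M$-bundles over $P$ with a foliation transverse to $P$, trivialized near $\partial M$''; this is really a restatement using charts of the form $\pi \circ \phi = \phi_B \circ \pi_1$ from the discussion preceding Definition~\ref{def.folspace}, which exhibits the leaf space locally as a plaque chart and hence supplies the holonomy cocycle valued in $\mr{CAT}^\delta_\partial(M)$. (2) Interpret such data as the homotopy pullback of $E\mr{CAT}_\partial(M) \to B\mr{CAT}_\partial(M)$ along $B\mr{CAT}^\delta_\partial(M) \to B\mr{CAT}_\partial(M)$: i.e., a $\mr{CAT}^\delta_\partial(M)$-bundle over $P$ plus a path in $\mathrm{Map}(P, B\mr{CAT}_\partial(M))$ from its underlying topological bundle to the constant map. (3) Identify this homotopy pullback with $\mathrm{hofib}\bigl(B\mr{CAT}^\delta_\partial(M) \to B\mr{CAT}_\partial(M)\bigr)$, using that $E\mr{CAT}_\partial(M)$ is contractible; since $B\mr{CAT}_\partial(M)$ is the classifying space of a topological group, the relevant model (e.g.\ $\overline{W}$ or the bar construction) is a Kan-fibrant replacement and the homotopy fiber is computed on the nose. (4) Check naturality in $M$ (for functoriality of the sheaf) and in $P$, and that the resulting map is a weak equivalence of objects of $\cat{Sh^{CAT}}$, which by the conventions of Appendix~\ref{app.spaces} may be tested on the point and on the parametrized level simultaneously. (5) Invoke compactness of $M$ exactly where it is needed: it guarantees that ``the identity near $\partial M$'' is a genuine open condition controlling a collar, so that $\mr{CAT}_\partial(M)$ and $\mr{CAT}^\delta_\partial(M)$ are well-behaved topological resp.\ discrete groups and the germ-near-$\partial M$ data patches; it also ensures the foliation charts can be taken over a fixed finite cover.

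\textbf{Main obstacle.} The delicate point is step~(1)--(2): turning a transverse foliation of a family $P \times M$ into a genuine \emph{flat bundle} with structure group $\mr{CAT}^\delta_\partial(M)$, honestly and naturally in the parametrizing manifold $P$. For $\mr{CAT} = \mr{Diff}$ this is classical (foliated microbundles / Haefliger structures specialized to codimension equal to $\dim P$ give honest flat bundles), but for $\mr{CAT} = \mr{PL}, \mr{Top}$ one must be careful about what ``transverse'' means — which is precisely why the excerpt defines it via charts with $\pi \circ \phi = \phi_B \circ \pi_1$ — and about patching holonomy cocycles when there is no canonical leaf-space submersion. The cleanest route is probably to work with the germ near the boundary throughout and to build the holonomy cocycle directly from a good cover adapted to the chart condition, then to observe that the nerve of this cover with the cocycle is a model for the homotopy fiber; I would spend the bulk of the write-up making that cocycle construction and its inverse (building a foliation from a flat bundle) manifestly inverse to each other, up to the expected contractible choices.
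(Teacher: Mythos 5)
Your proposal is correct and follows essentially the same route as the paper: identify $B\mr{CAT}_\partial(M)$ as classifying $M$-bundles, $B\mr{CAT}^\delta_\partial(M)$ as classifying such bundles equipped with a transverse codimension-$n$ foliation (standard away from the boundary, using compactness of $M$), and then read off the homotopy fiber as foliated but topologically trivialized bundles, which is exactly $\mr{Fol}_\mr{CAT}(M \rel \cF_0)$. The paper's proof is a terse two-sentence version of this argument; your write-up just makes explicit the holonomy-cocycle construction and the $\mr{PL}/\mr{Top}$ technicalities that the paper leaves implicit.
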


\begin{proof}$B\mr{CAT}_\partial(M)$ classifies bundles with fiber $M$ and structure group $\mr{CAT}_\partial(M)$, while $B\mr{CAT}^\delta_\partial(M)$ classifies such bundles with a codimension $n$ foliation transverse to the projection which is fiberwise supported away from a neighborhood of the boundary (this uses that $M$ is compact). Thus the homotopy fiber classifies trivial bundles with fiber $M$ but a possibly non-trivial codimension $n$ foliation transverse to the projection which is supported away from a neighborhood of the boundary.\end{proof}

The space $\mr{Fol}_\mr{CAT}(M)$ is easy to study in our framework, as it has only $\cF_0$ as a $0$-simplex. In particular $\mr{Fol}_\mr{CAT}(M \rel \cF_0)$ is path-connected and thus condition (H) is automatically satisfied, as in Example \ref{exam.grouplikesingle}.

\begin{remark}Condition (W) is \emph{not} satisfied. To see why, let us consider 
	\[\cF_0 \circledcirc - \colon \mr{Fol}_\mr{CAT}(S^{n-1} \times [t_0,t_1] \rel \cF_0) \longto \mr{Fol}_\mr{CAT}(S^{n-1} \times [t_0,t_2] \rel \cF_0).\] A reasonable guess for a homotopy inverse is $(\mr{id}_{S^{n-1}} \times \lambda)_*$ with $\lambda \colon [t_0,t_2] \to [t_0,t_1]$ a $\mr{CAT}$-isomorphism equal to translation near the boundary. However, this can not be a weak equivalence, since the corresponding map $B\mr{CAT}^\delta_\partial(S^{n-1} \times [t_0,t_2]) \to B\mr{CAT}^\delta_\partial(S^{n-1} \times [t_0,t_1])$ in Lemma \ref{lem.folfibseq} is not (e.g.~take $\pi_1$). Indeed, it is only a \emph{homology} equivalence. 
	
We can see more explicitly why this fails: $\cF_0 \circledcirc (\mr{id}_{S^{n-1}} \times \lambda)_*(\cF)$ is obtained from $\cF$ by shrinking $\cF$ onto $S^{n-1} \times [t_0,t_1]$ and extending it by $\cF_0$. Any attempt to ``deform away'' the additional part where it equals $\cF_0$ picks up a non-trivial foliation and hence does not preserve the basepoint.\end{remark}

 Thus if we want to prove a homological $h$-principle, it suffices to prove that $\mr{Fol}_\mr{CAT}(-)$ is microflexible. This is well-known, e.g.~Section 4 of \cite{siebenmannicm}, and is a consequence of the following lemma.

\begin{lemma}\label{lem.prodfol} Suppose $M$ is an $n$-dimensional manifold and $K \subset M$ compact. Let $U \subset M$ be an open neighborhood of $K$. Then for any map $h \colon \Delta^i \to \mr{Fol}_\mr{CAT}(M)$ (represented by a foliation $\mathcal{H}_0$ of $V \times M$ with $V$ a neighborhood of $\Delta^i$ in $\bR^i$), and any extension $H \colon \Delta^i \times [0,1] \to \mr{Fol}_\mr{CAT}(U)$ of its restriction to $U$ (represented by a foliation $\cH$ on $W \times [0,1] \times U$ with $W$ a neighborhood of $\Delta^i$ in $\bR^i$), there exist
	\begin{itemize}
		\item a neighborhood $W' \subset W$ of $\Delta^i$ in $\bR^i$,
		\item a neighborhood $U' \subset U$ of $K$,
		\item a real number $\epsilon > 0$, and
		\item a map $G \colon W' \times [0,\epsilon] \to \mr{Emb}^\mr{CAT}(U',U)$,
	\end{itemize}
	such that $G|_{W' \times \{0\}}$ is the identity and the adjoint $\tilde{G} \colon W' \times [0,\epsilon] \times U' \hookrightarrow W' \times [0,\epsilon] \times U$ has the property that $\tilde{G}^* \cH = \pi^* (\cH_0|_{W' \times U'})$ with $\pi \colon W' \times [0,\epsilon] \times U' \to W' \times U'$ the projection.
\end{lemma}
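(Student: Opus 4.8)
The plan is to reduce the statement to a standard fact: an isotopy of a foliated manifold that moves leaves to leaves can be straightened, i.e.\ one can realize a "flow along the foliation." Concretely, near each point of the compact set $K$ we work in a foliated chart. Since $\cH$ is a codimension-$n$ foliation on $W \times [0,1] \times U$ transverse to the projection to $W \times [0,1]$, and at $[0,1]$-time $0$ it agrees with $\pi^*(\cH_0|_{W \times U})$, the leaves of $\cH$ form, in suitable local coordinates, the graphs of a family of submersions $W \times [0,1] \times \bR^{n} \to \bR^{k}$ (with $k = \dim W = i$) depending on the extra variable $t \in [0,1]$. First I would use the transversality hypothesis to choose, locally over a point of $K$, a foliated chart $\phi\colon \bR^i \times \bR^n \to$ (a neighborhood in a fiber), compatible with a chart $\phi_{W}\colon \bR^i \to W$, in which the leaves of $\cH_0$ are the slices $\{x\}\times \bR^n$ and the leaves of $\cH$ at time $t$ are given by level sets of a smooth (or $\mr{CAT}$) family $F_t$ with $F_0 = \mathrm{pr}_1$. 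By the implicit function theorem (in the $\mr{Diff}$ case) or by the local form of $\mr{CAT}$-foliated charts (in the $\mr{PL}$, $\mr{Top}$ cases, where transversality was built into the chart structure per Definition \ref{def.folspace}), there is, for small $t$, a $\mr{CAT}$-isomorphism $g_{t}$ of a slightly smaller neighborhood onto its image carrying the slice foliation to $F_t$'s level sets and equal to the identity at $t=0$; this $g_t$ is just "following the plaques." These local isotopies depend $\mr{CAT}$-continuously on the $W$-parameter.

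Second I would globalize. Cover $K$ by finitely many such foliated charts $\{C_\ell\}$, obtain on each a family of $\mr{CAT}$-embeddings $G^\ell_t$ defined for $t \in [0,\epsilon_\ell]$ over a neighborhood $W^\ell$ of $\Delta^i$, each equal to the identity at $t=0$ and each pulling back $\cH$ to $\pi^*(\cH_0)$ on its domain. To patch these into a single $G$ one uses a partition of unity $\{\rho_\ell\}$ subordinate to the cover of $K$ and forms the composite flow: roughly, one time-scales each $G^\ell$ by $\rho_\ell$ and composes them in a fixed order, or — more robustly — one differentiates to get, over $W' \times U'$, a time-dependent $\mr{CAT}$-vector field tangent to the leaves of $\cH$ that generates the plaque-following motion, patches these vector fields using $\{\rho_\ell\}$ (tangency to leaves is a convex/affine condition, so a convex combination of leaf-tangent fields is still leaf-tangent when the leaves agree, which they do since they are all leaves of the \emph{same} foliation $\cH$), and integrates. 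Integrating a compactly-supported leaf-tangent vector field over a compact parameter family yields, for small $\epsilon>0$, a family $G\colon W' \times [0,\epsilon] \to \mr{Emb}^{\mr{CAT}}(U',U)$ with $G|_{W'\times\{0\}}=\mathrm{id}$, defined on some smaller neighborhoods $W' \subset W$ of $\Delta^i$ and $U' \subset U$ of $K$; because the flow preserves $\cH$ and is leaf-preserving, pulling back along $\tilde G$ turns $\cH$ into a foliation whose leaves are constant in the $[0,\epsilon]$-direction, i.e.\ $\tilde G^* \cH = \pi^*(\cH_0|_{W'\times U'})$, as required.

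The main obstacle is carrying out this patching in the $\mr{PL}$ and $\mr{Top}$ categories, where one cannot literally integrate vector fields. There the right substitute is to note that the "flow along plaques" is canonical once one has chosen foliated charts adapted to the projection, and the needed local families $G^\ell$ can be written down directly from the transition-function form $(x,y)\mapsto(f(x),g(x,y))$ in Definition \ref{def.folspace}; the patching is then done not by averaging vector fields but by composing the $G^\ell_t$ in a fixed order with cutoff functions $\rho_\ell$, using that outside the region where a given $G^\ell$ is active it is the identity, so that successive compositions remain $\mr{CAT}$-embeddings and still pull $\cH$ back to $\pi^*\cH_0$ on the shrinking common domain. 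A minor technical point to handle carefully is keeping everything parametrized $\mr{CAT}$-continuously over $W$ and compactly supported in $U$, which is what lets one extract the uniform $\epsilon > 0$; but this is routine given compactness of $\Delta^i$ and $K$. I do not expect conceptual difficulty beyond this — the lemma is the standard "microflexibility of foliations" statement and the argument is, in essence, the isotopy extension theorem applied leafwise.
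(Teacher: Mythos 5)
The key idea, ``follow the plaques'' (parallel transport along leaves), is the same as the paper's, but your patching step is where the argument has a genuine gap, and the paper's proof shows that no patching is needed at all. The crucial observation, which you skirt but never state, is that parallel transport is \emph{globally unique}: because $\cH$ is transverse to the projection to $W \times [0,1]$, a leaf of $\cH$ projects to $W \times [0,1]$ as a local homeomorphism, so the lift of a path $t \mapsto (d,t)$ through a prescribed starting point $(d,0,u)$ is uniquely determined wherever it exists. Consequently the local parallel transports defined in your foliated charts automatically coincide on overlaps; there is nothing to average or compose. One simply covers $\Delta^i \times \{0\} \times K$ by finitely many such charts, lets $\epsilon$ be the minimum of the local times, and shrinks $W$ and $U$ so that the (now globally defined) parallel transport is an embedding on $W' \times [0,\epsilon] \times U'$. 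This works verbatim in $\mr{PL}$ and $\mr{Top}$, since in a foliated chart commuting with the projection, parallel transport is literally the identity on the fiber coordinate, hence a $\mr{CAT}$-map.

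Both of the patching schemes you propose are problematic. The vector-field averaging (for $\mr{Diff}$) is not wrong but is a no-op you have not recognized as such: a vector that is simultaneously tangent to a fixed leaf and projects to $(0,\partial/\partial t) \in T(W \times [0,1])$ is unique, so the vector fields you average are already equal; you do not need the convexity argument, you need the uniqueness statement, which is also the argument that works in all categories. More seriously, the cutoff-and-compose scheme you offer as the $\mr{PL}/\mr{Top}$ fallback would not produce the required conclusion: if you time-rescale a local parallel transport $G^\ell$ by a bump function $\rho_\ell$, then on the set where $0 < \rho_\ell < 1$ the rescaled map is not parallel transport, so its pullback of $\cH$ is not constant in the $t$-direction, and composing several such maps does not repair this. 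The condition $\tilde G^* \cH = \pi^*(\cH_0|_{W' \times U'})$ is rigid and is exactly what is destroyed by cutoffs. So as written your $\mr{PL}/\mr{Top}$ argument would fail; the fix is precisely to drop the cutoffs and invoke uniqueness of parallel transport.
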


\begin{proof}Without loss of generality $V = W$. The map $G$ will be obtained by parallel transport along leaves. For every $x \in \Delta^i \times \{0\} \times U$, there exists a real number $\eta_x$ so that $\mr{int}(D^i_{\eta_x}(x)) \subset W$, and a foliated chart $\phi_x \colon \mr{int}(D^i_{\eta_x}(x)) \times [0,\epsilon_x) \times \bR^n \hookrightarrow W \times [0,1] \times U$ over $\mr{int}(D^i_{\eta_x}(x)) \times [0,\epsilon)$. Using this chart, we see that for any point $y = (d,t,u) \in W \times [0,1] \times U$ in the domain of this chart, there is a unique $\psi(y)$ in $W \times \{0\} \times U$ so that $y$ is obtained by parallel transport of $\psi(y)$ along the path $t \mapsto (d,t)$ in $W \times [0,1]$. Since the parallel transport is unique if it exists, $\psi(y)$ is independent of the choice of chart. 
	
	By compactness of $\Delta^i$ and $K$ there exists a finite number of such charts so that the open subsets $\phi_j(\mr{int}(D^i_{\eta_j}(x_j)) \times \{0\} \times \bR^n) \subset W \times \{0\} \times U$ cover $\Delta^i \times \{0\} \times K$. Let $\epsilon$ the minimum of the finitely many $\epsilon_i$'s, and let $W'$ and $U'$ be open neighborhoods of $\Delta^i$ and $K$ so that $W' \times \{0\} \times U' \subset \bigcup_j \phi_j(\mr{int}(D^i_{\eta_j}(x_j)) \times \{0\} \times \bR^n)$.
	
	We define $G(d,t,u)$ to be the point $z \in W' \times [0,\epsilon] \times U$ obtained by parallel transport of $u$ along the path $t \mapsto (d,t)$ in $W' \times [0,\epsilon)$. Uniqueness of parallel transport implies this is an embedding, and by construction $\tilde{G}^* \cH = \pi^* (\cH_0|_{W' \times U'})$.
\end{proof}

\begin{lemma}$\mr{Fol}_\mr{CAT}(-)$ is microflexible.\end{lemma}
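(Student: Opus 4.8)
The plan is to verify the microfibration condition of Definition~\ref{def.microfib}. By Lemma~\ref{lem.microflexvariants} it suffices to treat the restriction $\mr{Fol}_\mr{CAT}(R\subset M)\to\mr{Fol}_\mr{CAT}(Q\subset M)$ for an arbitrary compact pair $Q\subset R$ in $M$. Unwinding the colimits defining these spaces, a lifting problem
\[\begin{tikzcd}\Delta^i\times\{0\}\rar\dar & \mr{Fol}_\mr{CAT}(R\subset M)\dar\\ \Delta^i\times[0,1]\rar & \mr{Fol}_\mr{CAT}(Q\subset M)\end{tikzcd}\]
amounts to a codimension-$n$ foliation $\cH_0$ of $V\times U_R$ transverse to the projection to $V$, together with a foliation $\cH$ of $W\times[0,1]\times U_Q$ transverse to the projection to $W\times[0,1]$, where $V\supset\Delta^i$ is open in $\bR^i$, $W\subset V$ is a smaller neighborhood of $\Delta^i$, $U_R\supset R$ is open in $M$, $U_Q\subset U_R$ is a neighborhood of $Q$, and $\cH|_{W\times\{0\}\times U_Q}$ agrees with $\cH_0$ as a germ near $Q$; after shrinking $U_Q$ I may assume they agree on all of $W\times U_Q$. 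I must then produce an $\epsilon'>0$ and a partial lift over $\Delta^i\times[0,\epsilon']$.

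First I would feed this data into Lemma~\ref{lem.prodfol}, taking the ambient manifold to be $U_R$, the compact set to be $K=Q$, and the open neighborhood to be $U=U_Q$. This yields neighborhoods $W'\subset W$ of $\Delta^i$ and $U'\subset U_Q$ of $Q$, a number $\epsilon>0$, and a family $G\colon W'\times[0,\epsilon]\to\mr{Emb}^\mr{CAT}(U',U_Q)$ with $G|_{W'\times\{0\}}=\mathrm{id}$ whose adjoint $\tilde G$ satisfies $\tilde G^*\cH=\pi^*(\cH_0|_{W'\times U'})$, where $\pi\colon W'\times[0,\epsilon]\times U'\to W'\times U'$ is the projection. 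Since $G$ is the identity at $t=0$, after shrinking $\epsilon$ there is a neighborhood $U''$ of $Q$ with $\overline{U''}\subset U'$ on which every $G_{(d,t)}$ is invertible, so $(d,t)\mapsto G_{(d,t)}^{-1}$ is a family in $\mr{Emb}^\mr{CAT}(U'',U')$ that is the identity at $t=0$. The crucial step is then to extend this family to a family $\Phi\colon W'\times[0,\epsilon']\to\mr{Emb}^\mr{CAT}(U_R'',U_R)$, for some $\epsilon'\le\epsilon$ and some neighborhood $U_R''$ of $R$, which equals $G^{-1}$ near $Q$, is the identity at $t=0$, and is the identity outside a compact subset of $U'$. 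Given such a $\Phi$, letting $\pi_{U_R}\colon W'\times[0,\epsilon']\times U_R\to W'\times U_R$ be the projection and $\tilde\Phi$ the adjoint of $\Phi$, I would set
\[\widehat\cH\ :=\ \tilde\Phi^*\,\pi_{U_R}^*\bigl(\cH_0|_{W'\times U_R}\bigr),\]
a codimension-$n$ foliation of $W'\times[0,\epsilon']\times U_R''$ transverse to the projection to $W'\times[0,\epsilon']$. At $t=0$ the embedding $\tilde\Phi$ is the inclusion, so $\widehat\cH$ restricts there to $\cH_0$ and hence extends the top map; near $Q$, where $\Phi=G^{-1}$, the restriction of $\pi_{U_R}^*\cH_0$ to $W'\times[0,\epsilon']\times U'$ equals $\pi^*(\cH_0|_{W'\times U'})=\tilde G^*\cH$ by Lemma~\ref{lem.prodfol}, so $\widehat\cH=(\tilde G^{-1})^*\tilde G^*\cH=(\tilde G\circ\tilde G^{-1})^*\cH=\cH$ there, meaning $\widehat\cH$ lies over the bottom map. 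Thus $\widehat\cH$ is the desired partial lift.

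The main obstacle is exactly the extension of $(d,t)\mapsto G_{(d,t)}^{-1}$ to $\Phi$, which is a relative parametrized isotopy-extension problem, made delicate by the need to work uniformly in $\mr{CAT}=\mr{Diff}$, $\mr{PL}$ or $\mr{Top}$. In the smooth case I would write the isotopy $t\mapsto G_{(d,t)}^{-1}$ as the flow of a continuously $(d,t)$-dependent vector field, multiply that vector field by a bump function equal to $1$ near $Q$ and supported in $U'$, integrate, and shrink $\epsilon'$ so the result remains a family of embeddings. In the $\mr{PL}$ and $\mr{Top}$ cases I would instead use that $G^{-1}$, like $G$, is assembled from parallel transport inside the finitely many foliated charts used in the proof of Lemma~\ref{lem.prodfol}: fixing a cutoff $\rho\colon U_R''\to[0,1]$ that is $1$ near $Q$ and supported in $U'$, one transports a point $v$ backwards by time $\rho(v)\cdot t$ rather than $t$, and uniqueness of parallel transport (valid once $\epsilon'$ is small enough that all transport stays inside the charts) shows the result is a well-defined CAT-embedding equal to $G^{-1}$ near $Q$ and to the identity away from it. Modulo this point, where all the category-specific care resides, the lemma follows.
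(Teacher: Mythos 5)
Your proof is essentially the same as the paper's: both reduce to a lifting problem for $\mr{Fol}_\mr{CAT}(R\subset M)\to\mr{Fol}_\mr{CAT}(Q\subset M)$, both invoke Lemma~\ref{lem.prodfol} to produce the family $G$ of parallel-transport embeddings trivializing $\cH$ near $Q$, both then cut off $G$ (you phrase this as extending $G^{-1}$ to a compactly-supported family $\Phi$ and pulling $\pi^*\cH_0$ back by $\tilde\Phi$, the paper extends $G$ to compactly-supported isomorphisms $\Lambda$ and pushes $\cH_0\times[0,\epsilon]$ forward by $\tilde\Lambda$; these are the same construction), and both thereby obtain a foliation on a neighborhood of $R$ agreeing with $\cH$ near $Q$ and with $\cH_0$ at $t=0$. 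The only real point of divergence is how the cut-off family of embeddings is produced: the paper simply cites a parametrized $\mr{CAT}$-isotopy extension theorem (Theorem~6.5 of \cite{siebenmanndeformation}), whereas you sketch proofs — vector-field integration for $\mr{Diff}$, and a cutoff-reparametrized parallel transport for $\mr{PL}/\mr{Top}$. Your $\mr{PL}/\mr{Top}$ trick is a nice way to stay elementary and internal to the setup of Lemma~\ref{lem.prodfol}, though the injectivity/openness verification for $v\mapsto T_{\rho(v)t}(v)$ deserves a line of argument (e.g.\ uniqueness of transport along the $[0,\epsilon]$-direction forces $\rho(v)t=\rho(v')t$ whenever the images coincide, and then $v=v'$; invariance of domain gives that it is an embedding). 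Modulo that, the two arguments are the same.
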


\begin{proof}It suffices to prove that the restriction map
	\[\mr{Fol}_\mr{CAT}(R \subset M) \longto \mr{Fol}_\mr{CAT}(Q \subset M)\]
	is a microfibration for $Q \subset R$ compact submanifolds with corners in $M$. In each comutative diagram
	\[\begin{tikzcd} \Delta^i \times \{0\} \rar{h} \dar & \mr{Fol}_\mr{CAT}(R \subset M) \dar \\
	\Delta^i \times [0,1] \rar[swap]{H} & \mr{Fol}_\mr{CAT}(Q \subset M)\end{tikzcd}\]
	we must find a partial lift. Note that the map $h$ is represented by a neighborhood $W$ of $\Delta^i$ in $\bR^i$, a neighborhood $U$ of $R$ in $M$ and a foliation $\cH_0$ on $W \times \{0\} \times U$ transverse to the projection onto $W \times \{0\}$. Similarly $H$ is represented by a neighborhood $W$ of $\Delta^i$ in $\bR^i$, a neighborhood $V$ of $Q$ in $M$ and a foliation $\cH$ on $W \times [0,1] \times V$ transverse to the projection onto $W \times [0,1]$. We may assume $V \subset U$. Now apply Lemma \ref{lem.prodfol} to the compact subset $Q$ of the manifold $V$, to obtain an embedding $G$ such that $G^* \cH$ is a product (we may assume $W' = W$ and $U' = U$ in the notation of that lemma). 
	
	Using isotopy extension as in Theorem 6.5 of \cite{siebenmanndeformation}, we can find a family compactly-supported $\mr{CAT}$-isomorphisms $\Lambda \colon W \times [0,\epsilon] \to \mr{CAT}_c(U)$ so that $\Lambda|_{W \times \{0\}}$ is the identity and $\Lambda$ agrees with $G$ in a neighborhood of $Q$. Pushing forward the product foliation $\cH_0 \times [0,\epsilon]$ along $\Lambda$ gives a foliation on $W \times [0,\epsilon] \times U$.\end{proof}

The following is closely related to Theorems 4 and 5 of \cite{thurstonfol}, and was stated before as Corollary \ref{cor.thurston}.

\begin{theorem}[Thurston] \label{thm.thurston} There is a flexible $\mr{CAT}$-invariant topological sheaf $\mr{Fol}_{\mr{CAT}}^f(-)$, whose values are weakly equivalent to a space of sections with fiber $\mr{Fol}_\mr{CAT}(\bR^n)$, so that the map
	\[j\colon \mr{Fol}_\mr{CAT}(M \rel \cF_0) \longto \mr{Fol}_{\mr{CAT}}^f(M \rel j(\cF_0))\]
	is a homology equivalence for all $\mr{CAT}$-manifolds $M$.\end{theorem}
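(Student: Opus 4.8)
The plan is to deduce the theorem directly from Theorem~\ref{thm.technical}(i): the sheaf $\mr{Fol}_\mr{CAT}(-)$ has been arranged precisely so that the hypotheses of that theorem hold, and so that condition~(H), but not condition~(W), is available. First I would set $\mr{Fol}^f_\mr{CAT}(-) \coloneqq (\mr{Fol}_\mr{CAT})^f$ to be Gromov's flexibilization of the microflexible sheaf $\mr{Fol}_\mr{CAT}$, together with its natural structure map of sheaves $j\colon \mr{Fol}_\mr{CAT} \to \mr{Fol}^f_\mr{CAT}$. As recalled after Theorem~\ref{thm.main}, this $\mr{Fol}^f_\mr{CAT}$ is flexible, its value $\mr{Fol}^f_\mr{CAT}(M)$ is weakly equivalent to the space of sections of $\mr{Fr}^\mr{CAT}(TM) \times_{\mr{CAT}(n)} \mr{Fol}_\mr{CAT}(\bR^n)$, and $j\colon \mr{Fol}_\mr{CAT}(\bR^n) \to \mr{Fol}^f_\mr{CAT}(\bR^n)$ is a weak equivalence. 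Thus hypotheses (ii) and (iii) of Theorem~\ref{thm.technical} are automatic, while hypothesis~(i), microflexibility of $\mr{Fol}_\mr{CAT}$, is exactly the lemma proved just above, whose substance is the uniqueness of parallel transport along leaves of a foliation transverse to a projection (Lemma~\ref{lem.prodfol}) followed by isotopy extension.

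Next I would check condition~(H). Because $\mr{Fol}_\mr{CAT}$ admits a unique $0$-simplex $\cF_0$ on every manifold, the set of boundary conditions $\cB_{\mr{Fol}_\mr{CAT}}(S^{n-1}\times\{t\})$ is a single point for each $t$, so $[\cat{C}^{\mr{Fol}_\mr{CAT}}]$ has a single object. Moreover, since $\mr{Fol}_\mr{CAT}(N \rel \cF_0)$ is path-connected for every $N$ (in particular for $N = S^{n-1}\times[1,2]$), the set $\pi_0\bigl(\mr{Fol}_\mr{CAT}(S^{n-1}\times[1,2] \rel \cF_0 \sqcup \cF_0)\bigr)$ is a single element. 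Hence, by Example~\ref{exam.grouplikesingle}, $[\cat{C}^{\mr{Fol}_\mr{CAT}}]$ is a groupoid, i.e.\ condition~(H) holds. I would stress here that condition~(W) genuinely fails: as the Remark above explains, shrinking a foliation onto a subcylinder and extending by $\cF_0$ cannot be undone without introducing a non-trivial foliation, so the would-be homotopy inverse does not preserve the basepoint. This is exactly why only a homological conclusion is attainable.

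Finally, applying Theorem~\ref{thm.technical}(i) with $A = \partial M$ and $a = \cF_0 \in \cB_{\mr{Fol}_\mr{CAT}}(\partial M)$ yields that
\[j\colon \mr{Fol}_\mr{CAT}(M \rel \cF_0) \longto \mr{Fol}^f_\mr{CAT}(M \rel j(\cF_0))\]
is a homology equivalence for every $\mr{CAT}$-manifold $M$ of dimension $n$, which is the assertion of the theorem; combined with Lemma~\ref{lem.folfibseq} it recovers the classical Mather--Thurston comparison between $B\mr{CAT}^\delta_\partial(M)$ and $B\mr{CAT}_\partial(M)$. I do not expect any new obstacle inside the proof of the theorem itself: all of the work has been pushed into the preceding lemmas, and the one genuinely delicate input is the microflexibility of $\mr{Fol}_\mr{CAT}$, which is where foliation theory---the existence and uniqueness of parallel transport along leaves---enters.
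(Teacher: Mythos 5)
Your proposal is correct and follows essentially the same route as the paper: the paper gives no displayed proof of Theorem~\ref{thm.thurston}, but the preceding discussion (unique $0$-simplex $\cF_0$ giving condition (H) via Example~\ref{exam.grouplikesingle}, Lemma~\ref{lem.prodfol} and Lemma~6.6 giving microflexibility, then an appeal to Theorem~\ref{thm.main}/Theorem~\ref{thm.technical}(i)) is exactly the argument you have spelled out. Your identification of why condition (W) fails and your bookkeeping of the hypotheses (flexibilization $\mr{Fol}^f_\mr{CAT}$, the weak equivalence over $\bR^n$ from Gromov--Siebenmann) match the paper's intended reasoning.
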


\begin{remark}For $\mr{CAT} = \mr{Diff}, \mr{Top}$, one can identify the fibers of the section space weakly equivalent to $\mr{Fol}_{\mr{CAT}}^{f}(M)$ as the homotopy fiber of a map $B\Gamma^\mr{CAT}_n \to B\mr{CAT}(n)$, where $B\Gamma^\mr{CAT}_n$ is the so-called \emph{codimension $n$ Haefliger space}. The case $\mr{CAT} = \mr{PL}$ is more subtle, see \cite{gelfandpl1,gelfandpl2}.\end{remark}

This theorem has the following corollary, which appears on page 306 of \cite{thurstonfol}. See also \cite{mcduff,matherlectures,narimanlg}.

\begin{corollary}[Mather-Thurston] For all manifolds $M$, $\tilde{H}_*(\mr{Fol}_\mr{Top}(M \rel \cF_0)) = 0$, and $B\mr{Top}_c^\delta(\mr{int}(M)) \to B\mr{Top}_c(\mr{int}(M))$ is a homology equivalence.\end{corollary}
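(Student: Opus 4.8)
The plan is to specialize Theorem \ref{thm.thurston} to $\mr{CAT} = \mr{Top}$ and feed in Mather's computation of the homotopy type of $\mr{Fol}_\mr{Top}(\bR^n)$. Recall from Gromov's construction (as recalled after Theorem \ref{thm.technical}) that $\mr{Fol}^f_\mr{Top}(M \rel j(\cF_0))$ is weakly equivalent to the space of sections of the bundle $\mr{Fr}^{\mr{Top}}(TM) \times_{\mr{Top}(n)} \mr{Fol}_\mr{Top}(\bR^n)$ over $M$ that agree with the section induced by $j(\cF_0)$ near $\partial M$; under the identification of the Remark following Theorem \ref{thm.thurston}, the fibre is the homotopy fibre $\overline{B\Gamma}^{\mr{Top}}_n$ of $B\Gamma^{\mr{Top}}_n \to B\mr{Top}(n)$.

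The key input is Mather's theorem that $\mr{Fol}_\mr{Top}(\bR^n) \simeq \overline{B\Gamma}^{\mr{Top}}_n$ is weakly contractible (see \cite{matherlectures,mcduff,narimanlg}). Granting this, a bundle with weakly contractible fibres has a weakly contractible space of sections, and the same holds relative to a fixed partial section over a closed subset (by obstruction theory: the relative section space is non-empty, since it contains $j(\cF_0)$, and all of its homotopy groups vanish). Hence $\mr{Fol}^f_\mr{Top}(M \rel j(\cF_0))$ is weakly contractible, and Theorem \ref{thm.thurston} then gives that $j$ is a homology equivalence, so $\tilde{H}_*(\mr{Fol}_\mr{Top}(M \rel \cF_0)) = 0$. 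This is the first assertion.

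For the second assertion, first reduce to $M$ compact: writing $\mr{int}(M) = \bigcup_i M_i$ as an increasing union of compact codimension-zero submanifolds, one has $\mr{Top}_c(\mr{int}(M)) = \operatorname{colim}_i \mr{Top}_\partial(M_i)$, and $B(-)$, $B(-)^\delta$ and singular homology all commute with this filtered colimit; moreover $\mr{Top}_\partial(M_i) \cong \mr{Top}_c(\mr{int}(M_i))$ and $\mr{Top}^\delta_\partial(M_i) = \mr{Top}^\delta_c(\mr{int}(M_i))$ as (topological) groups. For $M$ compact, Lemma \ref{lem.folfibseq} provides a fibre sequence
\[\mr{Fol}_\mr{Top}(M \rel \cF_0) \longto B\mr{Top}^\delta_\partial(M) \longto B\mr{Top}_\partial(M).\]
The base is path-connected, and by the first part together with the fact (noted before Theorem \ref{thm.thurston}) that $\mr{Fol}_\mr{Top}(M \rel \cF_0)$ is non-empty and path-connected, the fibre has the integral homology of a point; in particular $\pi_1$ of the base acts trivially on $H_0$ of the fibre. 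The Serre spectral sequence therefore collapses onto the $q=0$ line, and its edge homomorphism identifies $B\mr{Top}^\delta_\partial(M) \to B\mr{Top}_\partial(M)$ as an integral homology isomorphism. Combining with the identifications above finishes the proof.

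The main obstacle is concentrated entirely in the cited input: Mather's theorem that $\overline{B\Gamma}^{\mr{Top}}_n$ is weakly contractible is deep, and it is precisely what forces a genuine homological (rather than homotopical) statement in the $\mr{Top}$ case — consistent with the Remark that condition (W) fails for $\mr{Fol}_\mr{CAT}(-)$. Everything downstream — the section-space argument, the reduction to compact $M$, and the spectral sequence collapse — is routine.
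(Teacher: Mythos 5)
Your proof has a genuine circularity in its key input. You cite as ``Mather's theorem'' the statement that $\mr{Fol}_\mr{Top}(\bR^n) \simeq \overline{B\Gamma}^{\mr{Top}}_n$ is weakly contractible, and then derive the corollary from it. But the weak contractibility of $\overline{B\Gamma}^{\mr{Top}}_n$ is not Mather's theorem --- it is the Mather--Thurston theorem (i.e.\ precisely what you are asked to prove, or at least an equivalent statement). What Mather actually proved, in the paper cited as \cite{matherhomology}, is that $\tilde{H}_*(B\mr{Top}^\delta_c(\bR^n)) = 0$: a statement purely about the \emph{discrete} group $\mr{Top}^\delta_c(\bR^n)$, provable by algebraic methods, with no reference to Haefliger spaces or $h$-principles. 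The passage from that acyclicity result to the contractibility of $\overline{B\Gamma}^{\mr{Top}}_n$ is exactly where Theorem \ref{thm.thurston} does its work, and it is the step your argument skips.

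The paper's proof performs the necessary bootstrap. It first specializes to $M = D^n$: by the Alexander trick $B\mr{Top}_c(\bR^n) \simeq \ast$, so Lemma \ref{lem.folfibseq} gives $\tilde{H}_*(\mr{Fol}_\mr{Top}(D^n \rel \cF_0)) \cong \tilde{H}_*(B\mr{Top}^\delta_c(\bR^n)) = 0$ by Mather. Theorem \ref{thm.thurston} then says $\mr{Fol}_\mr{Top}^f(D^n \rel j(\cF_0)) \simeq \Omega^n \mr{Fol}_\mr{Top}^f(\bR^n)$ is acyclic, and since an iterated loop space is simple, acyclic implies weakly contractible; so $\pi_k(\mr{Fol}_\mr{Top}^f(\bR^n)) = 0$ for $k \geq n$. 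For $k < n$ one uses Gromov's open $h$-principle (Theorem \ref{thm.gromov}) to identify $\pi_k(\mr{Fol}_\mr{Top}^f(\bR^n))$ with concordance classes of foliations on $S^k \times \bR^{n-k}$, which is a single class. Only after this two-step argument does one obtain weak contractibility of $\mr{Fol}_\mr{Top}^f(\bR^n)$, and from there the obstruction-theoretic section-space argument, the reduction to compact $M$, and the Serre spectral sequence collapse all go exactly as you wrote them. So the downstream part of your proposal is fine; the gap is that the correct input is the acyclicity of $B\mr{Top}^\delta_c(\bR^n)$, not the contractibility of $\overline{B\Gamma}^\mr{Top}_n$, and deriving the latter from the former requires an application of the very $h$-principle you are in the middle of establishing.
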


\begin{proof}It is now helpful to think of $\bR^n$ as the interior of $D^n$, so that $\mr{Top}_c(\bR^n) \cong \mr{Top}_\partial(D^n)$. This allows us to see that $B\mr{Top}_c(\bR^n) \simeq *$ by the Alexander trick. We use this in Lemma \ref{lem.folfibseq} with $M = D^n$ to see that $\tilde{H}_*(\mr{Fol}_{\mr{Top}}(D^n \rel \cF_0)) \cong \tilde{H}_*(B\mr{Top}^\delta_c(\bR^n))$. In \cite{matherhomology} Mather proved that the latter vanishes.
	
Theorem \ref{thm.thurston} thus says that $\Omega^n(\mr{Fol}_{\mr{Top}}^f(\bR^n))$ is acyclic. Since its an $n$-fold loop space and hence simple, this implies it is weakly contractible. Thus $\smash{\pi_k(\mr{Fol}_{\mr{Top}}^f(\bR^n))} = 0$ for $k \geq n$. For the remaining homotopy groups, for $k < n$ Gromov's $h$-principle on open manifolds says that $\smash{\pi_k(\mr{Fol}_{\mr{Top}}^f(\bR^n))}$ is in bijection with concordance classes of codimension $n$ topological foliations on the $n$-dimensional manifold $S^k \times \bR^{n-k}$, but there is only one such foliation. We conclude that $\mr{Fol}_{\mr{Top}}^f(\bR^n)$ is weakly contractible, cf.~Theorem 3 of \cite{thurstonfol}, so using Theorem \ref{thm.thurston} and Lemma \ref{lem.folfibseq} the corollary follows.\end{proof}

\appendix

\section{Categories of spaces} \label{app.spaces} In this appendix we describe a category of ``spaces'' that in our opinion is most convenient for studying $h$-principles: sheaves on parametrizing manifolds. We will also discuss three other choices, and their advantages and disadvantages.

\subsection{Sheaves on parametrizing manifolds}\label{subsec.sheavesparam} A convenient category of spaces $\cat{S}$ is the category $\cat{Sh^{CAT}}$ of sheaves on parametrizing manifolds. Before getting into the details, we state three reasons to prefer $\cat{Sh^{CAT}}$ as one's notion of spaces:
\begin{itemize}
	\item It has convenient technical properties, in particular concerning colimits.
	\item It is easy to do constructions locally in the parametrizing object.
	\item It is a natural setting for geometric objects such as spaces of smooth structures or foliations. For example, it was used in \cite{madsenweiss} and \cite{galatiuseliashberg}.
\end{itemize}

In the remainder of this appendix we will define $\cat{Sh^{CAT}}$ and show that the constructions of this paper make sense in this context.

\subsubsection{Basic definitions} We fix a category of manifolds, i.e.\ let $\mr{CAT}$ be $\mr{Diff}$, $\mr{PL}$ or $\mr{Top}$.

\begin{definition} We let $\cat{CAT}$ be the category with objects the $\mr{CAT}$-manifolds with empty boundary, and morphisms the $\mr{CAT}$-maps. We will call its objects \emph{parametrizing manifolds}.
\end{definition}

\begin{remark}It may be desirable to restrict to submanifolds of $\bR^\infty$, so that $\cat{CAT}$ is a small category. We will ignore this.\end{remark}

\begin{definition}Let $\cat{Sh^{CAT}}$ be the category defined as follows:
	\begin{itemize}
		\item Objects are the \emph{sheaves on parametrizing manifolds}, i.e.\ functors $F \colon \cat{CAT}^\mr{op} \to \cat{Set}$ such that for all $\mr{CAT}$-manifolds $X$ and all open covers  $\{U_i\}$ of  $X$ the following diagram is an equalizer of sets
		\[\begin{tikzcd}F(X) \rar & \prod_i F(U_i) \arrow[shift left=.5ex]{r} \arrow[shift left=-0.5ex]{r}& \prod_{i,j} F(U_i \cap U_j ).\end{tikzcd}\]
		\item Morphisms are natural transformations.
	\end{itemize}
\end{definition}

Many desirable properties of the category of sheaves on parametrizing manifolds follow from the fact that it is a Grothendieck topos. In particular it is complete and cocomplete.

We often think of $F(X)$ as $X$-indexed families of objects and hence of $F$ as a moduli space. In fact, if $X$ is a $\mr{CAT}$-manifold then the Yoneda lemma says that the representable functor $\mr{CAT}(-,X)$ has the property that the set of natural transformation $\mr{CAT}(-,X) \to F$ equals $F(X)$. Thus if we use the shorthand denoting $\mr{CAT}(-,X)$ by $X$, the following alternative notion is unambiguous:

\begin{convention}$X \smash{\overset{f}{\to}} F$ is alternative notation for an element $f$ of $F(X)$.
\end{convention}

The following example should also illuminate this notation.

\begin{example}\label{exam.scatshcat} There is a functor $S \colon \cat{Top} \to \cat{Sh^{CAT}}$ given by $S(Z) \coloneqq (X \mapsto \mr{Map}(X,Z))$, with $\mr{Map}(X,Z)$ the set of continuous maps $X \to Z$.\end{example}

\subsubsection{Extension to pairs} It is helpful to consider a relative version of $F(X)$, defined on pairs of a manifold and a closed subset.

\begin{definition}Suppose we are given a closed subset $A \subset X$, an open subset $U \subset X$ containing $A$, and an element $a \in F(U)$. Let $\cat{I}_A$ be the directed set of open neighborhoods $W \subset U$ of $A$ (where there is a unique morphism $W \to W'$ if $W' \subset W$). We then define $F(X,A \rel a)$ as
	\[F(X,A \rel a) \coloneqq \underset{W \in \cat{I}_A}{\mr{colim}}\, F(X,W \rel a|_W),\]
	where $F(X,W \rel a|_W)$ is the inverse image of $a|_W$ under the restriction map $F(X) \to F(W)$.\end{definition}

One easily sees this definition only depends on the germ of $a$ near $A$. We will occasionally drop $A$ from the notation, when it is clear from the context.

\subsubsection{Extension to manifolds with corners}

There is a natural extension of a sheaf on parametrizing manifolds from manifolds without boundary to manifolds with corners.

\begin{definition}\label{def.paramcorners} If $X$ is a manifold with corners contained in $Y \in \cat{CAT}$, then let $\cat{I}_X$ be the directed set of open neighborhoods $U$ of $X$ in $Y$ (where there is a unique morphism $U \to U'$ if $U' \subset U$). We define
	\[F(X \subset Y) \coloneqq \underset{U \in \cat{I}_X}{\mr{colim}}\, F(U).\]
\end{definition}

There is also a relative version of $F(X \subset Y)$ with respect to a germ $a$ in $A \subset X$, the notation for which is $F(X \subset Y, A \rel a)$.

Note that if we have $X \subset Y$ and $X \subset Y'$ as above, with $Y$ and $Y'$ of the same dimension, then there is an isomorphism $F(X \subset Y) \cong F(X \subset Y')$, which depends only on a choice of an embedding $\phi \colon U \hookrightarrow Y'$ of neighborhood $U$ of $X$ in $Y$ such that $\phi|_X = \mr{id}|_X$. We will occasionally drop $Y$ from the notation, when it is clear from the context.

\subsubsection{Internal mapping spaces} The category $\cat{Sh^{CAT}}$ has certain internal mapping spaces. Suppose $X$ is a manifold with corners contained in $Y \in \cat{CAT}$, $A \subset X$ is closed and $a \in F(A \subset X)$. 

\begin{definition}We set $\mr{Map}_\cat{Sh^{CAT}}(X,F \rel a)$ to be the sheafification of the presheaf that assigns to a parametrizing manifold $P$ the set $F(X \times P \rel a \times P)$, where $a \times P$ denotes the pullback of $a$ along the map $X \times P \to X$.\end{definition}

\subsubsection{Weak equivalences of sheaves on parametrizing manifolds} There are several equivalent ways to define weak equivalences of sheaves on parametrizing manifolds.

Firstly, we can extract two weakly equivalent simplicial sets out of $F$. To do so, note there is a faithful functor $\Delta \to \cat{CAT}$, sending $[n]$ to the extended $n$-simplex $\Delta^n_e$, defined as the span of the basis vectors $e_i$ in $\bR^{n+1}$. Using these manifolds we can construct a simplicial set $\Delta_e(F)_\bullet$ with $\Delta_e(F)_n \coloneqq F(\Delta^n_e)$, and this gives a functor $\Delta_e \colon \cat{Sh^{CAT}} \longto \cat{sSet}$. Alternatively, we can think of the standard simplices as manifolds with corners, and define a simplicial set $\Delta(F)_n \coloneqq F(\Delta^n \subset \Delta^n_e)$, 
giving a functor $\Delta \colon \cat{Sh^{CAT}} \longto \cat{sSet}$. Note that for general $F$, neither $\Delta_e(F)$ or $\Delta(F)$ is Kan.

\begin{lemma}\label{lem.deltaedeltaweq} Restriction gives a natural weak equivalence $\Delta_e \to \Delta$ of functors $\cat{Sh^{CAT}} \to \cat{sSet}$. \end{lemma}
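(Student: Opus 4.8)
The plan is to exhibit $\Delta(F)$ as a retract of $\Delta_e(F)$ up to simplicial homotopy, for each $F$, in a way that is natural. The key point is that the inclusion $\Delta^n \hookrightarrow \Delta^n_e$ of the standard simplex into the extended simplex is a deformation retract in $\mr{CAT}$: there is a $\mr{CAT}$-map $r_n \colon \Delta^n_e \to \Delta^n$ together with a $\mr{CAT}$-isotopy $\Delta^n_e \times [0,1] \to \Delta^n_e$ from the identity to (the inclusion of) $r_n$, which can moreover be chosen compatibly with the face and degeneracy maps so as to be natural in $[n] \in \Delta$ — or at least compatibly enough that the induced maps assemble. Concretely one can take $r_n$ to be a fixed $\mr{CAT}$-retraction of a neighborhood of $\Delta^n$ in $\Delta^n_e$ onto $\Delta^n$, extended to all of $\Delta^n_e$; since $\Delta^n$ is a $\mr{CAT}$-deformation retract of any such neighborhood and of $\Delta^n_e$ itself, this is unobstructed.

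First I would set up the two functors carefully: restriction $\Delta^n \subset \Delta^n_e$ induces $F(\Delta^n_e) \to F(\Delta^n \subset \Delta^n_e)$, i.e.\ a natural transformation $\rho \colon \Delta_e(F) \to \Delta(F)$ of simplicial sets (using that $F(\Delta^n \subset \Delta^n_e) = \mr{colim}_{U} F(U)$ and that $\Delta^n_e$ itself is one of the neighborhoods $U$, so there is a canonical map). Second, using the $\mr{CAT}$-retractions $r_n$ I would build a section up to homotopy: pulling back along $r_n$ (or rather along an embedding of a neighborhood of $\Delta^n$ coming from $r_n$) gives $F(\Delta^n \subset \Delta^n_e) \to F(\Delta^n_e)$, and the isotopy from $\mr{id}$ to $r_n$ provides, via the internal mapping-space structure and the sheaf/colimit properties recorded in the appendix, a simplicial homotopy between the composite and the identity on each of $\Delta_e(F)$ and $\Delta(F)$. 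Concretely the isotopy $H_n \colon \Delta^n_e \times [0,1] \to \Delta^n_e$ yields an element of $F(\Delta^n_e)(\text{parametrized by }[0,1])$, i.e.\ a $1$-parameter family, which is exactly a homotopy in the relevant simplicial model; one then checks it is compatible with faces and degeneracies.

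Third, I would conclude: a map of simplicial sets that admits a homotopy inverse is a weak equivalence, and naturality of all the above constructions in $F$ (the retractions $r_n$ and isotopies $H_n$ are chosen once and for all, independent of $F$) gives that $\rho$ is a natural weak equivalence of functors $\cat{Sh^{CAT}} \to \cat{sSet}$. If one prefers to avoid choosing a fully natural-in-$[n]$ system of retractions, an alternative is to argue levelwise plus a cofinality/Bousfield–Kan style argument, but the cleanest route is the explicit $\mr{CAT}$-retraction of $\Delta^n_e$ onto $\Delta^n$, which does exist compatibly with the cosimplicial structure because the standard cosimplicial object $[n] \mapsto \Delta^n$ sits inside $[n] \mapsto \Delta^n_e$ as a cofibrant-like subobject.

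The main obstacle I expect is the compatibility bookkeeping: making the retractions $r_n$ and the isotopies $H_n$ simultaneously respect \emph{all} the coface and codegeneracy maps of the cosimplicial manifold $[n] \mapsto \Delta^n_e$, so that applying $F$ yields an honest simplicial homotopy rather than just levelwise homotopies. This is a standard but slightly fiddly "straightening the cosimplicial deformation retract" argument; one handles it by constructing the $H_n$ inductively on $n$, at each stage extending the already-defined homotopies on the image of the coface maps (a closed sub-complex of $\Delta^n_e \times [0,1]$) using the fact that $\Delta^n$ is a $\mr{CAT}$-deformation retract of its neighborhoods and that the relevant inclusions are $\mr{CAT}$-cofibrations.
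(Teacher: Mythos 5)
Your proof hinges on the existence of a system of $\mr{CAT}$-retractions $r_n \colon \Delta^n_e \to \Delta^n$ commuting with \emph{all} cofaces and codegeneracies (you need this so that pullback along $r_n$ defines a simplicial map $\Delta(F) \to \Delta_e(F)$, and for $r_n^*$ to even be defined out of the filtered colimit $\mr{colim}_U F(U)$ you need $r_n$ to factor through every neighborhood $U$, i.e.\ to have image in $\Delta^n$). You flag the "compatibility bookkeeping" as the main obstacle and assert an inductive extension argument handles it, but in fact no such system exists, and the obstruction is the codegeneracies. Since $r_0 = \mr{id}_{*}$ is forced, coface compatibility forces $r_1(a_0,a_1) = (\rho(a_0), 1-\rho(a_0))$ for some $\rho\colon\bR\to[0,1]$ with $\rho(0)=0$, $\rho(1)=1$. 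Compatibility with $\sigma_0,\sigma_1\colon\Delta^2_e\to\Delta^1_e$ then \emph{uniquely} determines
\[ r_2(x_0,x_1,x_2) = \bigl(\rho(x_0),\ \rho(x_0+x_1)-\rho(x_0),\ 1-\rho(x_0+x_1)\bigr), \]
and the middle coordinate is negative for some $(x_0,x_1,x_2)\in\Delta^2_e$ unless $\rho(b)\ge\rho(a)$ for all $a,b\in\bR$ (take $x_0=a$, $x_1=b-a$), i.e.\ unless $\rho$ is constant, contradicting $\rho(0)\ne\rho(1)$. So $r_2$ does not land in $\Delta^2$, and the same formula shows one cannot even arrange a cosimplicial self-map of $\Delta^\bullet_e$ with image in a small neighborhood of $\Delta^\bullet$. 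This is not a choice that can be made more cleverly; the codegeneracies rigidly determine $r_{n+1}$ from $r_n$ and push the image out of the simplex.

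What does survive is coface compatibility alone: one can inductively build $r_n$ (and the homotopies) commuting with the $\delta_i$, since the faces only prescribe $r_n$ on the coordinate hyperplanes and extending into $\Delta^n$ is unobstructed. That produces a \emph{semi}simplicial section and semisimplicial homotopies, whence a weak equivalence of thick realizations, and one finishes using that $\lVert X\rVert \to \lvert X\rvert$ is a weak equivalence for every simplicial set. Alternatively one can bypass sections entirely and verify $\pi_n(\rho) = 0$ directly by solving each finite lifting problem with an ad hoc, non-cosimplicial choice of shrinking embedding. Either of these routes is workable; the strictly cosimplicial retraction you propose is not, so as written the argument has a genuine gap at its central step.
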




Since disks and spheres are manifolds, we can define homotopy groups of a sheaf on parametrizing manifolds without leaving the world of sheaves. If $f_0 \in F(\ast)$, then $\pi_n(F,f_0)$ is given by the equivalence classes of $f \in F(D^n,\partial D^n \rel f_0)$, where we remark that $f_0$ can be pulled back along the unique map from a neighborhood $U$ of $\partial D^n$ in $D^n$ to $\ast$. The equivalence relation says that $f$ is equivalent to $f'$ if there exists an element $\bar{f} \in F(D^n \times [0,1],\partial D^{n-1} \times [0,1] \rel f_0)$ such that $\bar{f}|_{D^n \times \{0\}} = f$ and $\bar{f}|_{D^n \times \{1\}} = f'$. This coincides with $\pi_0$ of the internal mapping space object from $D^n$ to $F$ relative to $f_0$ on $\partial D^n$, or from $S^n \to F$ relative to $f_0$ on $\ast$.

We can also define the relative homotopy $\pi_n(\eta)$ of a map $\eta \colon F \to G$. It is given by equivalence classes of pairs $(g,f)$ of $g \in G(D^n)$ and $f \in F(\partial D^n)$ such that $g|_{\partial D^n} = \eta(f)$. The equivalence relation says that $(g,f)$ is equivalent to $(g',f')$ if there exists a $\bar{g} \in G(D^n \times [0,1])$ and $\bar{f} \in F(\partial D^n \times [0,1])$ such that $\bar{g}|_{\partial D^n \times [0,1]} = \eta(\bar{f})$, $\bar{f}|_{D^n \times \{1\}}$, $\bar{g}|_{D^n \times \{0\}} = g$, $\bar{f}|_{\partial D^n \times \{0\}} = f$, $\bar{g}|_{D^n \times \{1\}} = g'$, and $\bar{f}|_{\partial D^n \times \{1\}} = f'$. It is a standard argument that $\pi_n(\eta) = 0$ for all $n$ if and only if $\eta$ induces an isomorphism on $\pi_n$ for all $n$ and base points $f_0$.

\begin{lemma}\label{lem.wegshparam} Let $\eta \colon F \to G$ be a map of sheaves on parametrizing manifolds. Then the following are equivalent:
	\begin{enumerate}[(i)]
		\item For all $n$ and $f_0$, $\eta \colon F \to G$ induces a bijection $\pi_n(F,f_0) \to \pi_n(G,\eta(f_0))$.
		\item $\eta \colon \Delta(F) \to \Delta(G)$ is a weak equivalence.
	\end{enumerate} 
	By Lemma \ref{lem.deltaedeltaweq} we may replace $\Delta$ by $\Delta_e$. By the remarks preceding the lemma, we may also phrase everything in terms of relative homotopy groups.
\end{lemma}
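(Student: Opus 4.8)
The statement to prove is Lemma \ref{lem.wegshparam}, asserting the equivalence of two notions of weak equivalence for a map $\eta \colon F \to G$ of sheaves on parametrizing manifolds: (i) bijectivity of all $\pi_n(F,f_0) \to \pi_n(G,\eta(f_0))$, and (ii) $\eta \colon \Delta(F) \to \Delta(G)$ being a weak equivalence of simplicial sets. By Lemma \ref{lem.deltaedeltaweq} it is harmless to work with $\Delta_e$ instead of $\Delta$, and as remarked we may phrase (i) equivalently in terms of vanishing of all relative homotopy groups $\pi_n(\eta)$. The plan is to prove both implications by comparing the geometrically-defined homotopy groups $\pi_n(F,f_0)$ (defined via $F$ evaluated on disks and spheres) with the simplicial homotopy groups of $\Delta_e(F)$, using the fact that disks and simplices are manifolds, hence parametrizing objects.

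First I would reduce to showing that for any sheaf $F$ and any basepoint $f_0 \in F(\ast)$ there is a natural isomorphism $\pi_n(F,f_0) \cong \pi_n(\mr{Sing}(\Delta_e(F)), f_0)$ — or more precisely that the geometric $\pi_n$ of $F$ agrees with the simplicial homotopy groups of a fibrant replacement of $\Delta_e(F)$. The key input is that the pair $(D^n,\partial D^n)$ is $\mr{CAT}$-isomorphic, after smoothing corners, to the pair $(|\Delta^n|, |\partial \Delta^n|)$, and the same for the homotopy relation $F(D^n \times [0,1], \partial D^n \times [0,1]\rel f_0)$, which matches a simplicial homotopy in $\Delta_e(F)$ after choosing a triangulation of $D^n \times [0,1]$. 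Since $\Delta_e(F)$ need not be Kan, some care is needed: one should either argue directly that the ``manifold-parametrized'' homotopy classes already have the group structure and composition maps expected of $\pi_n$ of the Kan replacement (using that one can glue maps out of manifolds along closed submanifolds, a consequence of the sheaf property), or invoke that for $F$ a sheaf on $\cat{CAT}$ the simplicial set $\Delta_e(F)$ has the property that its geometric homotopy classes compute $\pi_n |\Delta_e(F)|$. I would organize this as: (a) every element of $\pi_n |\Delta_e(F)|$ is represented by a simplicial sphere, hence by an element of $F$ on a triangulated $S^n$, hence by an element of $F(D^n,\partial D^n \rel f_0)$ using the sheaf property to collapse; (b) two such are identified in $\pi_n |\Delta_e(F)|$ iff they are connected by an element of $F$ on a triangulated $S^n \times [0,1]$, matching the geometric homotopy relation.

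Granting the natural isomorphism $\pi_n(F,f_0) \cong \pi_n|\Delta_e(F)|$ (and $\pi_n(\eta) \cong \pi_n$ of the map $|\Delta_e(F)| \to |\Delta_e(G)|$), the lemma is immediate: (ii) says $\Delta_e(\eta)$ is a weak equivalence of simplicial sets, i.e. induces isomorphisms on all $\pi_n$ of the realizations at all basepoints, which by the isomorphism is exactly (i); and conversely. The 2-out-of-3 and long-exact-sequence formalism for relative homotopy groups referenced just before the lemma then lets one pass between the ``$\pi_n(\eta)=0$'' and ``$\pi_n$ iso'' formulations with no extra work. I would also note $\pi_0$ separately, where $\pi_0(F)$ is literally the set of path-components computed via $F$ on $[0,1]$, matching $\pi_0 |\Delta_e(F)|$ by the same collapsing argument.

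The main obstacle is step (a)–(b): controlling the non-Kan simplicial set $\Delta_e(F)$ well enough to know that its geometric realization's homotopy groups are computed by manifold-parametrized maps without first fibrantly replacing. The cleanest route is probably to prove a small ``approximation'' statement: for a sheaf $F$ on $\cat{CAT}$, any continuous map $|K| \to |\Delta_e(F)|$ from a finite simplicial complex is homotopic to one arising from an element of $F$ on (a $\mr{CAT}$-structure on) $|K|$, and two such elements giving homotopic maps are connected through $F$ on $|K| \times [0,1]$; this is essentially simplicial approximation combined with the sheaf gluing property, and is exactly the mechanism already used in Propositions \ref{prop.resolve} and \ref{prop.acteq} to move between realizations of semisimplicial spaces and values of sheaves. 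Once that is in place, everything else is bookkeeping. I would keep the write-up short by citing Lemma \ref{lem.deltaedeltaweq} for the $\Delta$ versus $\Delta_e$ comparison and by treating the relative-homotopy reformulation as the standard fact it is.
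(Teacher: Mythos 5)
The paper does not actually give a proof of Lemma~\ref{lem.wegshparam}: in the appendix it is stated without proof, as are the surrounding lemmas comparing $\Delta_e$ with $\Delta$ and with $\mr{Sing}$, with the implicit references being to the appendices of Madsen--Weiss and to the model structure of Cisinski cited at the end of the appendix. So there is no written proof in the paper to compare against, and I will evaluate your proposal on its own merits.

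Your overall plan is correct: reduce to a natural isomorphism $\pi_n(F,f_0)\cong\pi_n|\Delta_e(F)|$, and you rightly flag that the obstacle is the failure of $\Delta_e(F)$ to be Kan. However, the key step (a) of your plan has a genuine gap. You assert that a simplicial sphere $K\to\Delta_e(F)$ gives ``an element of $F$ on a triangulated $S^n$'' via ``the sheaf property to collapse.'' This is not licensed by the sheaf axiom. A simplicial map $K\to\Delta_e(F)$ with $K$ a triangulation of $S^n$ produces, for each top-dimensional non-degenerate simplex of $K$, an element of $F(\Delta^n_e)$, and the simplicial compatibility says these agree after restriction to the affine face hyperplanes $\Delta^{n-1}_e\hookrightarrow\Delta^n_e$. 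Those are \emph{closed}, codimension-one subsets. The sheaf condition only lets you glue elements of $F$ that agree on overlaps of an \emph{open} cover; it does not let you glue $F$-elements on two $n$-manifolds that agree only along a shared closed hypersurface. Working with $\Delta(F)$ instead of $\Delta_e(F)$ does not fix this: germs near the geometric simplex agree on a neighborhood of the face \emph{inside the face hyperplane}, which is still positive codimension in the ambient $\Delta^n_e$. The same issue defeats step (b).

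This missing ingredient is exactly the ``collaring'' or ``cylinder'' step: one must first homotope the simplicial datum so that adjacent simplices of $K$ land on elements of $F$ that agree on codimension-zero collar neighborhoods of shared faces, after which the sheaf property really does glue. That modification is the technical heart of the Madsen--Weiss representability argument for concordance classes, and is built into Cisinski's Quillen equivalence $\cat{Sh^{CAT}}\simeq\cat{sSet}$ which the appendix cites. Your appeal to Propositions~\ref{prop.resolve} and~\ref{prop.acteq} as precedent is not apt: the semisimplicial spaces there have levels consisting of $F$-elements on honest codimension-zero submanifolds of a fixed ambient manifold, already glued along genuine open overlaps by the sheaf property, and the propositions are proved by a microflexibility lifting argument against parametrizing pairs $(P,\partial P)$, not by any simplicial-to-manifold collaring. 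So the architecture of your argument is right, but the crucial passage between simplicial data in $\Delta_e(F)$ and manifold-parametrized data in $F$ requires the Madsen--Weiss or Cisinski machinery and cannot be waved through by ``the sheaf gluing property.''
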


\begin{definition}\label{def.wegshparam}We say that a map $\eta \colon F \to G$ of sheaves on parametrizing manifolds is a \emph{weak equivalence} if one of the equivalent conditions of Lemma \ref{lem.wegshparam} is satisfied.\end{definition}

Generalizing the remark about internal $\pi_n$ being defined in terms of $\pi_0$ of an internal mapping space object, we have the following lemma.

\begin{lemma}There is a natural weak equivalence between $|\Delta_e(\mr{Map}_\cat{Sh^{CAT}}(X,F \rel a))|$ and $\mr{Map}_\cat{Top}(X,|\Delta_e(F)| \rel a)$.\end{lemma}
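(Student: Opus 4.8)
The claim is the identification of the geometric realization of the singular simplicial set of an internal mapping space with the topological mapping space out of $|\Delta_e(F)|$. The strategy is to reduce the statement to the non-relative case and then to a cofinality/commutation-of-colimits argument between the functor $\Delta_e$ and the internal mapping space construction, using that $|\cdot|$ is a left adjoint and hence commutes with the colimits defining the internal mapping space.

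\textbf{Step 1: reduce to the non-relative case.} First I would note that both sides are built from the same colimit over $\cat{I}_A$ that defines relative sections: $\mr{Map}_{\cat{Sh^{CAT}}}(X,F\rel a)$ is by definition the sheafification of $P\mapsto F(X\times P\rel a\times P)$, and $F(X\times P\rel a\times P)$ is itself a filtered colimit over shrinking neighborhoods $W$ of $A$ in $X$ of the fibers of $F(X\times P)\to F(W\times P)$ over $a|_W\times P$. Since $\Delta_e$ is evaluation on the manifolds $\Delta^n_e$, and filtered colimits of sets commute with finite limits (in particular with the fiber-product defining the relative sections), $\Delta_e$ of the internal mapping space is the filtered colimit of $\Delta_e$ of the non-relative internal mapping spaces $\mr{Map}_{\cat{Sh^{CAT}}}(X,F)$ pulled back appropriately. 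On the topological side, $\mr{Map}_{\cat{Top}}(X,|\Delta_e(F)|\rel a)$ is the homotopy fiber / strict fiber of the restriction $\mr{Map}_{\cat{Top}}(X,|\Delta_e(F)|)\to \mr{Map}_{\cat{Top}}(\text{nbhd of }A,|\Delta_e(F)|)$, again a filtered colimit over $W$ of fibers. So it suffices to establish the natural weak equivalence $|\Delta_e(\mr{Map}_{\cat{Sh^{CAT}}}(X,F))|\simeq \mr{Map}_{\cat{Top}}(X,|\Delta_e(F)|)$ compatibly with restriction maps, and then pass to fibers. (Here one uses that geometric realization preserves the relevant fibrations up to weak equivalence, or works simplicially with Kan fibrant replacements; I would phrase this carefully or cite the standard fact that $|\cdot|$ commutes with finite limits.)

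\textbf{Step 2: the non-relative comparison.} By definition $\Delta_e(\mr{Map}_{\cat{Sh^{CAT}}}(X,F))_n = (\text{sheafification of }P\mapsto F(X\times P))(\Delta^n_e)$; since $\Delta^n_e$ is a manifold, sheafification does not change the value on it beyond gluing, and when $X$ is a manifold with corners sitting in $Y$ this is $F(X\times\Delta^n_e\subset Y\times\Delta^n_e)=\underset{U}{\mr{colim}}\,F(U\times\Delta^n_e)$. Meanwhile $\mr{Map}_{\cat{Top}}(X,|\Delta_e(F)|)$ has $|\Delta_e(F)|$ glued out of simplices $\Delta^n\times F(\Delta^n_e)$. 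The key input is that $|\cdot|\colon \cat{sSet}\to\cat{Top}$ is a left adjoint, hence preserves colimits, and that mapping out of a compact space $X$ (a finite CW complex, or at least locally compact) commutes with the relevant colimits; combined with the simplicial Yoneda/coend description $|\Delta_e(F)| \cong \int^{[n]}\Delta^n\times F(\Delta^n_e)$. Then I would invoke Lemma \ref{lem.deltaedeltaweq} to replace $\Delta_e$ by $\Delta$ where the manifolds-with-corners structure makes the coend argument cleaner, and the already-stated fact that internal $\pi_k$ of $\mr{Map}_{\cat{Sh^{CAT}}}(X,F\rel a)$ computes $\pi_k$ of the internal mapping object, which by Lemma \ref{lem.wegshparam} agrees with $\pi_k$ of $\Delta(\mr{Map}_{\cat{Sh^{CAT}}}(X,F\rel a))$. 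Matching this with $\pi_k\mr{Map}_{\cat{Top}}(X,|\Delta_e(F)|\rel a) = [\text{(something)}, |\Delta_e(F)|]$ is then a direct adjunction computation using that $D^k\times X$ and $S^k\wedge X_+$ etc.\ are again (homotopy types of) manifolds, so that internal homotopy groups are being computed by probing with the right parametrizing manifolds.

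\textbf{Step 3: naturality and the main obstacle.} Naturality in $F$ is automatic since every construction (evaluation on $\Delta^n_e$, sheafification, geometric realization, $\mr{Map}_{\cat{Top}}(X,-)$) is functorial. The step I expect to be the main obstacle is the commutation of geometric realization / mapping-out-of-$X$ with the filtered colimit over neighborhoods of $A$ that defines relative sections, together with the point-set issue that neither $\Delta_e(F)$ nor $\Delta(F)$ is Kan, so ``fiber'' must be interpreted as a homotopy fiber and one must check the restriction maps are (Kan, or realize to Serre) fibrations — or else replace the whole comparison by one phrased in terms of the relative homotopy groups $\pi_n(\eta)$ as the excerpt suggests, which sidesteps fibrancy entirely. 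I would take the latter route: prove the displayed map induces isomorphisms on all relative homotopy groups by the probing argument in Step 2 applied to the pair, using that $D^n$, $\partial D^n$, $D^n\times[0,1]$ are all manifolds (with corners) and hence legitimate test objects, so that $\pi_n$ of both sides is computed by literally the same sets of maps.
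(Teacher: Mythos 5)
The paper states this lemma without proof, so I can only assess your argument on its own terms. Your plan has the right shape but two of its load-bearing steps are not sound as written.

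First, the colimit-commutation input in Step 2 is the very claim the paper itself singles out as false. You write that ``mapping out of a compact space $X$ (a finite CW complex, or at least locally compact) commutes with the relevant colimits.'' But $\mr{Map}(X,-)$ is a \emph{right} adjoint; what one actually has is a restricted small-object statement (filtered colimits along, say, relatively $T_1$ or closed inclusions — cf.\ Lemma~\ref{lem.colimit1}), and the coend defining geometric realization is not of that shape. So the purported commutation of $\mr{Map}_{\cat{Top}}(X,-)$ with $|\cdot| = \int^{[n]}\Delta^n\times(-)_n$ does not follow from anything you have cited, and in general is simply false. Since this is the ``key input'' of your Step 2, the non-relative comparison does not go through as stated.

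Second, and more fundamentally, the comparison you never actually construct is the one between elements of $F(X\times P)$ and continuous maps $X\times P\to |\Delta_e(F)|$. These live in different worlds: the former is a set of sections of a sheaf, the latter a set of maps of spaces. The bridge is a Yoneda / classifying-map construction: an element $f\in F(Y)$ corresponds to a map of sheaves $\mr{CAT}(-,Y)\to F$, hence to a map $|\Delta_e(\mr{CAT}(-,Y))|\to |\Delta_e(F)|$, and one must invoke the (nontrivial) fact that $|\Delta_e(\mr{CAT}(-,Y))|\simeq Y$ naturally to obtain a continuous map $Y\to |\Delta_e(F)|$ up to coherent homotopy. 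You gesture at an ``adjunction computation'' and at probing with $D^n$, $\partial D^n$, $D^n\times[0,1]$, but without making the classifying-map step explicit there is no actual map to compare, let alone show to be a weak equivalence. Your Step~3 pivot to relative homotopy groups is the more robust route and would avoid the fibrancy and fiber-vs-homotopy-fiber issues you correctly flag, but it still runs through exactly this representability input, which remains missing. As written, the argument has a genuine gap at its center.
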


\subsubsection{Homology equivalences of sheaves on parametrizing manifolds}

The discussion of homology equivalences involves the generalized homology theory $\Omega^\mr{SO}_*$ known as \emph{oriented bordism}.  We can define $\Omega^\mr{SO}_*$ on $F \in \cat{Sh^{CAT}}$ directly. The abelian group $\Omega^\mr{SO}_k(X)$ is given by equivalence classes of pairs $(M,f)$ of a $k$-dimensional oriented closed smooth manifold $M$ and a element $f \in F(X)$, under the equivalence relation of oriented bordism. This equivalence relation says that $(M,f)$ and $(M',f')$ are equivalent if there is a $(k+1)$-dimensional oriented compact smooth manifold $W$ together with a $\bar{f} \in F(W)$ such that $\partial W \cong M \cup \bar{M}'$ (where $\bar{M}'$ is $M'$ with the opposite orientation) and $\bar{f}|_M = f$, $\bar{f}|_{M'} = f'$. 


There is also a notion of relative oriented bordism groups for a map $\eta \colon F \to G$. The abelian group $\Omega^\mr{SO}_k(\eta)$ is given by equivalence classes of triples $(M,g,f)$ of a smooth oriented manifold $M$ with boundary $\partial M$, $g \in G(M)$ and $f \in F(\partial M)$ such that $g|_{\partial M} = \eta(f)$. The equivalence relation says that $(M,g,f)$ and $(M',g',f')$ are equivalent if there exists a bordism $(W,\partial W)$ of smooth oriented manifolds with boundary from $(M,\partial M)$ to $(M',\partial M')$, together with $\bar{g} \in G(W)$ and $\bar{f} \in F(\partial W)$, such that $\bar{g}|_{\partial W} = \eta(\bar{f})$ $\bar{g}|_{M} = g$, $\bar{f}|_{\partial M} = f$, $\bar{g}|_{M'} = g'$ and $\bar{f}|_{\partial M'} = f'$. It is a standard result that the relative bordism groups $\Omega^\mr{SO}_*(\eta)$ are $0$ if and only if $\eta$ induces an isomorphism on $\Omega^\mr{SO}_*$.

It is a well-known fact that a map $X \to Y$ of simplicial sets or topological spaces induces an isomorphism on homology if and only if it induces an isomorphism on oriented bordism. A reference for this is Appendix B of \cite{galatiuseliashberg}, but it also follows from the Atiyah-Hirzebruch spectral sequence.

\begin{lemma}\label{lem.homologequivalencedef} Let $\eta \colon F \to G$ be a map of sheaves on parametrizing manifolds. The following are equivalent: \begin{enumerate}[(i)]
		\item $\eta \colon F \to G$ induces an isomorphism on $\Omega^\mr{SO}_*$,
		\item $\eta \colon \Delta(F) \to \Delta(G)$ is an oriented bordism equivalence,
		\item $\eta \colon \Delta(F) \to \Delta(G)$ is a homology equivalence.
	\end{enumerate}
	Note that by Lemma \ref{lem.deltaedeltaweq} we may replace $\Delta$ by $\Delta_e$. By the remarks preceding the lemma, we may also phrase everything in terms of relative oriented bordism or homology groups.
\end{lemma}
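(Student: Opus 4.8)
The plan is to split the statement into two independent pieces: the equivalence of (ii) and (iii), which is a purely topological fact about the connective spectrum $MSO$, and a natural identification of the internally-defined group $\Omega^\mr{SO}_*(F)$ with $\Omega^\mr{SO}_*(\Delta_e(F))$ (together with its relative analogue for $\eta$), which gives the equivalence of (i) and (ii). By Lemma~\ref{lem.deltaedeltaweq} we may work with $\Delta_e$ rather than $\Delta$, and I will do so because the extended simplices $\Delta^n_e$ are boundaryless parametrizing manifolds; I also use freely Thom's identification of the geometric oriented bordism of a space with $MSO_*$ of that space, and the fact that $\Omega^\mr{SO}_*$ of a simplicial set, as defined in the text, is $MSO_*$ of its geometric realization. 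For (ii)$\Leftrightarrow$(iii) the input is that $MSO$ is connective with $\pi_0(MSO)=\bZ$, so that a map of spaces is an $H\bZ_*$-isomorphism if and only if it is an $MSO_*$-isomorphism — one direction by the Atiyah--Hirzebruch spectral sequence for $MSO$, the other because $H\bZ$ is a retract of $MSO$; this is exactly Appendix~B of \cite{galatiuseliashberg}.

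For (i)$\Leftrightarrow$(ii) I would build a natural homomorphism $\Omega^\mr{SO}_k(F)\to\Omega^\mr{SO}_k(\Delta_e(F))=\Omega^\mr{SO}_k(|\Delta_e(F)|)$ and show it is an isomorphism. The engine is the lemma identifying $|\Delta_e(\mr{Map}_\cat{Sh^{CAT}}(X,F\rel a))|$ with $\mr{Map}_\cat{Top}(X,|\Delta_e(F)|\rel a)$. Combining it (with $a$ empty) with the description of $\pi_0$ of a sheaf as the set of concordance classes of its points — used already in the discussion preceding Lemma~\ref{lem.wegshparam} — one obtains, naturally in $X$, a surjection $q_X\colon F(X)\twoheadrightarrow[X,|\Delta_e(F)|]$ identifying the target with the set of concordance classes in $F(X)$. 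Sending a cycle $(M,f)$, with $M$ a closed oriented $k$-manifold and $f\in F(M)$, to $(M,g)$ for any $g$ representing $q_M(f)$ is independent of the choice of $g$ (homotopic maps give bordant cycles), and it descends to bordism classes, since a bordism $\bar f\in F(W)$ maps by naturality of $q$ in the manifold variable to a map $W\to|\Delta_e(F)|$ restricting on $\partial W$ to representatives of the classes of the two ends; it is a homomorphism for disjoint union using $F(M_1\sqcup M_2)=F(M_1)\times F(M_2)$. Surjectivity is immediate from surjectivity of $q_M$, since any geometric cycle for $|\Delta_e(F)|$ is a map defined on a closed manifold and can be homotoped to one representing $q_M(f)$ for some $f\in F(M)$. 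The relative groups $\Omega^\mr{SO}_*(\eta)$, whose cycles are triples $(M,g,f)$ with $g\in G(M)$, $f\in F(\partial M)$ and $g|_{\partial M}=\eta(f)$, are treated identically using the relative form of the mapping-space lemma.

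For injectivity, suppose $(M,f)$ is sent to zero, witnessed by a compact oriented $W$ with $\partial W=M$ and a map $h\colon W\to|\Delta_e(F)|$ with $h|_M$ representing $q_M(f)$. Let $a_f\in F(M\times[0,\epsilon))$ be the pullback of $f$ along the projection to $M$, viewed as a germ near $\partial W$ via a collar. Using the homotopy extension property for $(W,\partial W)$, together with an inner collar, I deform $h$ (not rel $\partial W$) so that on the collar it agrees strictly with a fixed classifying map of the germ $a_f$; naturality of $q$ under $M\times[0,\epsilon)\to M$ makes this possible. The relative mapping-space lemma $|\Delta_e(\mr{Map}_\cat{Sh^{CAT}}(W,F\rel a_f))|\simeq\mr{Map}_\cat{Top}(W,|\Delta_e(F)|\rel a_f)$ then identifies the homotopy class of $h$ rel $\partial W$ with a concordance class of elements of $F(W)$ whose germ near $\partial W$ is $a_f$; picking a representative $\bar f$ gives $\bar f\in F(W)$ with $\bar f|_M=f$, so $(M,f)$ already bounds internally. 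The relative case carries the pair $(g,f)$ along verbatim. Combining the absolute and relative identifications of $\Omega^\mr{SO}_*$ with the equivalence (ii)$\Leftrightarrow$(iii) establishes all three equivalences.

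I expect the main difficulty to be organizational rather than conceptual: checking that the comparison map and the inverse construction are well-defined and mutually inverse amounts to matching the geometric bordism relation on the two sides through repeated compatible uses of the absolute and relative mapping-space lemmas, together with the routine collar and homotopy-extension manipulations on the topological side that upgrade homotopies to strict agreement near boundaries. A clean alternative that bypasses this bookkeeping is to rerun the (omitted) proof of Lemma~\ref{lem.wegshparam} with ``disk'' and ``homotopy'' replaced throughout by ``closed oriented manifold'' and ``oriented bordism'', which produces the isomorphism between internal and simplicial $\Omega^\mr{SO}_*$, and its relative analogue, directly.
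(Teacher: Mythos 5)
The paper states this lemma without a proof; the paragraph immediately preceding it supplies the substance of (ii)$\Leftrightarrow$(iii) by pointing to Appendix~B of \cite{galatiuseliashberg} and the Atiyah--Hirzebruch spectral sequence, and the equivalence with (i) is left implicit, in parallel with the similarly-unproved Lemma~\ref{lem.wegshparam}. Your decomposition matches this, and your strategy for (i)$\Leftrightarrow$(ii) --- identify internal oriented bordism with geometric oriented bordism of $|\Delta_e(F)|$ via the mapping-space lemma (concordance classes in $F(X)$ $\leftrightarrow$ $[X,|\Delta_e(F)|]$), then use collars and homotopy extension to upgrade maps-up-to-homotopy to strict elements of the sheaf --- is exactly the right transposition of the omitted proof of Lemma~\ref{lem.wegshparam} to the bordism setting, as you yourself observe at the end.

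One technical slip in the (ii)$\Leftrightarrow$(iii) justification: $H\bZ$ is \emph{not} a retract of $MSO$. If it were, the Thom-class map $\Omega^\mr{SO}_*(X)\to H_*(X;\bZ)$ would be split surjective for every $X$, but Thom showed there are integral homology classes in degree $7$ not representable by singular manifolds; Wall's splitting of $MSO$ into Eilenberg--MacLane spectra holds only $2$-locally, and away from $2$ the wedge summands are suspensions of $BP$, not of $H\bZ$. The correct argument for ``$MSO_*$-iso $\Rightarrow$ $H\bZ_*$-iso'' is another Atiyah--Hirzebruch argument applied to the cofiber $C$ of $\eta$: because $MSO$ is connective with $\pi_0 MSO=\bZ$ and $\pi_q MSO=0$ for $q<0$, the bottom nonzero reduced homology group $\tilde H_n(C;\bZ)$ sits in $E^2_{n,0}$, supports and receives no differentials for degree reasons, and therefore survives to $\widetilde{MSO}_n(C)$. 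This is in effect the argument in the cited reference, and the slip does not affect the correctness of your proof.
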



\begin{definition}We say that a map $\eta \colon F \to G$ of sheaves on parametrizing manifolds is a \emph{homology equivalence} if one of the equivalent properties of Lemma \ref{lem.homologequivalencedef}  is satisfied.\end{definition}

\subsection{Other choices for the category of spaces} \label{subsec.classicalspaces} In this section we discuss three other categories of spaces that can take the role of $\cat{S}$ in the paper, and compare them to $\cat{Sh^{CAT}}$.

\subsubsection{$\cat{Top}$, {topological spaces}} 	The obvious choice for a category of spaces is the category $\cat{Top}$ of topological spaces. Unfortunately, this category does not behave well with respect to colimits, as we will discuss now.

One might expect that maps out of a compact space commute with filtered colimits, i.e.\ that compact spaces are compact objects in the categorical sense. This is false: every compact metric space is the filtered colimit of its countable subsets with the subspace topology, but clearly not every map from a compact space into a compact metric space has countable image. Page 50 of \cite{hovey} (also see the errata) has a counterexample where the colimit is sequential and $X$ is the two point space with indiscrete topology. 

However, it is true that a map $g \colon X \to \mr{colim}_{\cat{I}}\, F$ factors over some $F(i)$ under some more restrictive conditions on $X$ and the diagram $F$. Recall that an inclusion $X \to Y$ is \emph{relatively $T_1$} if for any open $U$ in $X$ and any $z \in Y \setminus U$, there is an open subset $V$ of $Y$ such that $U \subset V$ and $z \notin V$. The following is Lemma A.3 of \cite{duggerisaksen}.

\begin{lemma}\label{lem.colimit1}Any map $X \to \mr{colim}_{\cat{I}}\, F$ factors over some $F(i)$ if $I$ is sequential, all maps $F(i) \to F(j)$ are relatively $T_1$, and $X$ is compact.\end{lemma}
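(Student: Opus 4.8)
This is Lemma A.3 of \cite{duggerisaksen}, so I would reproduce their argument. Here is the plan.

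\medskip

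\textbf{Setup.} Let $\cat{I}$ be sequential, so we may take $\cat{I} = (\bN, \leq)$ and write the colimit as $F(0) \to F(1) \to \cdots$ with $Y \coloneqq \mr{colim}_n\, F(n)$ carrying the colimit topology. Let $g \colon X \to Y$ be continuous with $X$ compact. I want to find $n$ and a continuous lift $X \to F(n)$. The plan is to argue by contradiction: suppose $g$ does not factor (set-theoretically) through any $F(n)$ — equivalently, for every $n$ the image $g(X)$ is not contained in the image of $F(n)$ in $Y$ — and build a bad open cover of $X$ with no finite subcover.

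\medskip

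\textbf{Construction of the bad cover.} For each $n$, by assumption there is a point $x_n \in X$ with $g(x_n) \notin \mr{im}(F(n) \to Y)$; passing to a tail we may assume the points $y_n \coloneqq g(x_n)$ are ``new at stage $n$'', i.e.\ $y_n \in \mr{im}(F(n)\to Y)\setminus \mr{im}(F(n-1)\to Y)$ after reindexing, and in particular the $y_n$ are pairwise distinct. The key is to use the relatively $T_1$ hypothesis to separate these points inside each $F(n)$. Working inside $F(n)$: the finite set $\{y_0,\dots,y_{n-1}\}$ (viewed in $F(n)$ via the structure maps, choosing preimages) is, by $n$ applications of the relatively-$T_1$ condition along $F(n-1)\hookrightarrow F(n)$ and induction, contained in an open $U_n \subset F(n)$ that omits $y_n$; more carefully one produces a nested/compatible family of opens $W_n \subset F(n)$ with $W_n$ pulling back from $W_{n+1}$, with $y_k \in W_n$ for $k < n$ and $y_n \notin W_n$. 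Taking the union gives an open $W \subset Y$ (open because its preimage in each $F(n)$ is the open $W_n$) containing all $y_k$ — wait, that is the wrong direction. The correct move (following Dugger--Isaksen) is the reverse: build an open $V \subseteq Y$ whose preimage in each $F(n)$ omits $y_n$ but contains $y_0, \ldots, y_{n-1}$, so that $V$ contains all the $y_k$; then $V$ is open in $Y$, hence $g^{-1}(V)$ is open in $X$ and contains every $x_k$. This does not yet contradict compactness, so instead one builds, for each $m$, an open $V_m$ with $y_k \in V_m$ for all $k \neq m$ and $y_m \notin V_m$ (relative $T_1$ gives exactly this kind of separation), giving an open cover $\{g^{-1}(V_m)\}_m$ of $g(X)$; any finite subcollection $g^{-1}(V_{m_1}),\dots,g^{-1}(V_{m_r})$ misses the point $x_{m^*}$ for $m^* \notin\{m_1,\dots,m_r\}$, so there is no finite subcover, contradicting compactness of $X$.

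\medskip

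\textbf{Conclusion.} Hence $g$ factors set-theoretically through some $F(n)$, say $g = \iota_n \circ h$ with $h\colon X \to F(n)$ a function. It remains to check $h$ is continuous: for $U \subseteq F(n)$ open, $h^{-1}(U) = g^{-1}(\iota_n(U)')$ where one must produce an open of $Y$ pulling back to $U$ at stage $n$ — again the relatively-$T_1$ hypothesis (applied to the inclusions $F(n)\hookrightarrow F(n+k)$) lets one ``spread'' $U$ out to a compatible family of opens $U_{n+k}\subseteq F(n+k)$, whose union is open in $Y$ and whose intersection with (the image of) $F(n)$ is exactly $U$; then $h^{-1}(U) = g^{-1}(\text{that open})$ is open. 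This gives the continuous factorization.

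\medskip

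\textbf{Main obstacle.} The only delicate point is the bookkeeping in the relatively $T_1$ argument: one must simultaneously (a) separate infinitely many points $y_m$ from the rest, and (b) do so compatibly across the maps $F(n) \to F(n+1)$, since the final open set must be open in the colimit (i.e.\ have open preimage at every stage). Getting the induction organized — typically by constructing the separating opens one stage at a time and taking preimages of what was built at later stages — is where the care lies; the compactness contradiction and the continuity check are then routine.
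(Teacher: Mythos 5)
The paper gives no proof of this lemma; it simply cites Lemma A.3 of Dugger--Isaksen, so there is no internal argument to compare against, only the source you name. Your reconstruction has the correct overall shape (argue by contradiction, isolate exceptional points $y_m = g(x_m)$, separate them with the relatively $T_1$ condition, contradict compactness), but two steps are genuinely wrong as written.

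\emph{The bad cover.} You settle on open sets $V_m \subset Y$ with $y_k \in V_m$ for all $k \neq m$ and $y_m \notin V_m$. Two problems. (a) The relatively $T_1$ hypothesis lets you enlarge an open across each inclusion $F(j) \hookrightarrow F(j+1)$ while \emph{avoiding} finitely many newly appearing points; it has no mechanism to \emph{force} the later points $y_k$, $k>m$, into $V_m$, so the $V_m$ you describe may not exist. (b) Even if they did, $\{y_k : k \neq m\}\subseteq V_m$ only guarantees $\bigcup_m V_m \supseteq \{y_k\}$, not $\bigcup_m V_m\supseteq g(X)$, so $\{g^{-1}(V_m)\}$ need not cover $X$ and compactness of $X$ is not contradicted. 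Both are repaired by the same change: after reindexing so $y_m \in F(n_m)\setminus F(n_{m-1})$, build $V_m$ so that it contains \emph{all of} $F(n_{m-1})$ and avoids $y_k$ for every $k \geq m$. Start with $V_m^{(n_{m-1})} = F(n_{m-1})$, and at each later stage apply the relatively $T_1$ condition once per new exceptional point and intersect the finitely many resulting opens. Then $\bigcup_m V_m = Y$, so $\{g^{-1}(V_m)\}$ is a genuine open cover of $X$, and any finite subcover $\{V_{m_1},\dots,V_{m_r}\}$ misses $y_M$ for $M \geq \max_i m_i$, giving the contradiction.

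\emph{Continuity of the lift.} You assert that relative $T_1$ lets you spread $U \subseteq F(n)$ out to $\tilde U \subseteq Y$ with $\tilde U \cap F(n) = U$. It does not: it only lets you exclude finitely many prescribed points, while $F(n)\setminus U$ is in general infinite. What actually produces such a $\tilde U$ is the hypothesis that the $F(j)\to F(j+1)$ are \emph{subspace inclusions} (this is what ``inclusion'' means in the definition preceding the lemma): choose $U_{j+1}$ open in $F(j+1)$ with $U_{j+1}\cap F(j)=U_j$, starting from $U_n = U$; the union is open in $Y$ and meets $F(n)$ in exactly $U$. This shows $F(n)\to Y$ is an embedding, whence $g|^{F(n)}$ is automatically continuous. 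So the continuity step leans on the ``inclusion'' part of the hypothesis, not on the relatively $T_1$ part; as written, your argument attributes it to the wrong piece of the hypothesis.
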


\subsubsection{$\cat{sSet}$, {simplicial sets}} Simplicial sets are well-behaved with respect to colimits. In particular, every map $X \to \mr{colim}_\cat{I} F$ factors over some $F(i)$. An additional advantage of simplicial sets is that it is easy to write down spaces of smooth structures or foliations. Their main disadvantage is that it is not easy to do local constructions. In particular, many of our proofs would require barycentric subdivisions.

\subsubsection{$\cat{QTop}$, {quasitopological spaces}}

Quasitopological spaces are a concept originally due to Spanier and Whitehead, and Gromov used them to replace spaces in \cite{gromovhp}. They are well-behaved with respect to colimits and allow for easy local constructions. We repeat their definition for the convenience of the reader:

\begin{definition}A \emph{quasitopological space} consists of a set $A$ and for each topological space $Z$ a subset $A(Z)$ of $\mr{Fun}(Z,A)$ (the functions of underlying sets from $Z$ to $A$) called ``continuous.'' These have to satisfy:
	\begin{enumerate}[(i)]
		\item If $f \colon Z \to Z'$ is continuous and $g \in A(Z')$, then $g \circ f \in A(Z)$.
		\item If $\{U_i\}$ is an open cover of $Z$, then $f \colon Z \to A$ is in $A(Z)$ if all $f|_{U_i} \in A(U_i)$.
		\item If $Z = Z_1 \cup Z_2$ with $Z_1$, $Z_2$ closed, then $f \in A(Z)$ if $f|_{Z_i} \in A(Z_i)$.
	\end{enumerate}
	
	A map of quasitopological spaces is a map $F \colon A \to A'$ such that for all $Z$ we have $(F \circ -)(A(Z)) \subset A'(Z)$. We denote this category $\cat{QTop}$.\end{definition}

It seems unnecessary that $Z$ ranges over all topological spaces, including very wild ones. Indeed, other types of quasitopological spaces are appear in the literature. For example, Essay V of \cite{kirbysiebenmann} restricts the test spaces from all spaces to either Hausdorff compacta or polyhedra. Furthermore, quasitopological spaces do not allow for the full generality of sheaves one might want to treat (e.g. foliations and smooth structures), without making some unnatural choices.

\subsubsection{Comparing categories of spaces} We explain how to compare these different categories of spaces. We start with $\cat{Top}$, $\cat{QTop}$ and $\cat{sSet}$. 

There is a functor $E \colon \cat{QTop} \to \cat{sSet}$ given by $E(A)_n \coloneqq A(\Delta^n)$.  There is also a functor $Q \colon \cat{Top} \to \cat{QTop}$ given by $Q(X) \coloneqq (Z \mapsto \mr{Map}(Z,X))$, with $\mr{Map}(Z,X)$ the set of continuous maps.  There is a commutative diagram
\[\begin{tikzcd} \cat{Top} \arrow{rd}[swap]{\mr{Sing}} \rar{Q} & \cat{QTop} \dar{E} \\ & \cat{sSet}.\end{tikzcd}\]
The functor $\mr{Sing}$ is part of a Quillen equivalence between simplicial sets with the Quillen model structure and topological spaces with the Quillen model structure. We expect there exists also a model structure on quasitopological spaces such that $E$ is part of a Quillen equivalence with simplicial sets with the Quillen model structure, but do not know a reference for this.

In Subsection \ref{subsec.sheavesparam} we gave functors relating $\cat{Top}$, $\cat{sSet}$ and $\cat{Sh^{CAT}}$. In particular, recall that there is a functor $S \colon \cat{Top} \to \cat{Sh^{CAT}}$, given by $S(Z) \coloneqq (P \mapsto \mr{Map}(P,Z))$, with $\mr{Map}(X,Z)$ the set of continuous maps $X \to Z$. There is also a functor $S' \colon \cat{QTop} \to \cat{Sh^{CAT}}$ given by $S'(A) \coloneqq (P \mapsto A(P))$. There is a commutative diagram
\[\begin{tikzcd} \cat{Top} \arrow{rd}[swap]{S} \rar{Q} & \cat{QTop} \dar{S'} \\ & \cat{Sh^{CAT}}.\end{tikzcd}\]
We also gave a functor $\Delta_e \colon \cat{Sh^{CAT}} \to \cat{sSet}$, which does not satisfy $\Delta_e \circ S = \mr{Sing}$. However, we do have the following.

\begin{lemma}There is a natural weak equivalence $\Delta_e \circ S \to \mr{Sing}$ of functors $\cat{Top} \to \cat{sSet}$.
\end{lemma}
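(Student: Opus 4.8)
The statement to prove is that there is a natural weak equivalence $\Delta_e \circ S \to \mathrm{Sing}$ of functors $\cat{Top} \to \cat{sSet}$. First I would write down the candidate map. For a topological space $Z$, the simplicial set $\Delta_e(S(Z))$ has $n$-simplices $S(Z)(\Delta^n_e) = \mathrm{Map}(\Delta^n_e, Z)$, the continuous maps from the extended simplex $\Delta^n_e$ (the affine span of the standard basis vectors $e_0,\dots,e_n$ in $\bR^{n+1}$, a genuine manifold without boundary) to $Z$. On the other hand $\mathrm{Sing}(Z)_n = \mathrm{Map}(\Delta^n, Z)$, continuous maps from the standard topological simplex. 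The inclusion $\Delta^n \hookrightarrow \Delta^n_e$ of the standard simplex into the extended simplex is compatible with the cosimplicial structure maps, so restriction along it defines a simplicial map $r_Z \colon \Delta_e(S(Z)) \to \mathrm{Sing}(Z)$, and this is visibly natural in $Z$. (This is the ``same'' map as in Lemma~\ref{lem.deltaedeltaweq}, which says restriction $\Delta_e \to \Delta$ is a natural weak equivalence; indeed $\Delta(S(Z))_n = S(Z)(\Delta^n \subset \Delta^n_e) = \mathrm{colim}_{U \supset \Delta^n} \mathrm{Map}(U,Z)$.)

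The key reduction is to factor $r_Z$ through the map of Lemma~\ref{lem.deltaedeltaweq}. We have $\Delta(S(Z))_n = \mathrm{colim}_{U} \mathrm{Map}(U, Z)$, the colimit over open neighborhoods $U$ of $\Delta^n$ in $\Delta^n_e$, and restriction gives a natural map $\Delta(S(Z)) \to \mathrm{Sing}(Z)$; composing with $\Delta_e(S(Z)) \to \Delta(S(Z))$ recovers $r_Z$. By Lemma~\ref{lem.deltaedeltaweq} the first map is a weak equivalence, so by two-out-of-three it suffices to prove that the restriction map
\[
s_Z \colon \Delta(S(Z)) = \underset{U}{\mathrm{colim}}\, \mathrm{Map}(U,Z) \longrightarrow \mathrm{Sing}(Z), \qquad (s_Z)_n([f]) = f|_{\Delta^n},
\]
is a natural weak equivalence. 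Since $\Delta^n$ is a deformation retract of each of its open neighborhoods $U$ in $\Delta^n_e$ — one can choose a retraction $\Delta^n_e \to \Delta^n$ and isotopies compatibly — the restriction map $\mathrm{Map}(U,Z) \to \mathrm{Map}(\Delta^n, Z)$ is a homotopy equivalence for each $U$, and one checks this is compatible with the colimit so that on homotopy/homology groups of the realizations the map $s_Z$ is an isomorphism. The cleanest way to organize this is to exhibit, for each $n$, a section $\mathrm{Map}(\Delta^n,Z) \to \mathrm{colim}_U \mathrm{Map}(U,Z)$ up to homotopy (extend a map off $\Delta^n$ using a collar), and to argue on the level of $\pi_k$ (equivalently, relative homotopy groups $\pi_k(r_Z) = 0$, which by Lemma~\ref{lem.wegshparam} is what ``weak equivalence'' means here): a relative class is represented by a map $D^k \to \Delta(S(Z))$ and $\partial D^k \to \mathrm{Sing}(Z)$, which by adjunction (and the internal mapping space description of $\pi_k$) amounts to a map out of a neighborhood of a $k$-disk's worth of simplices; restriction and re-extension via collars kills it.

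The main obstacle — and the only real content — is handling the colimit carefully: a map $D^k \to \Delta_e(S(Z))$ or into $\mathrm{Sing}(Z)$ is, after realization, a map out of a compact space, but $\Delta(S(Z))$ is built from colimits over nested open neighborhoods, so one must ensure that such a map lands in a single stage of the colimit, or at least can be represented compatibly across the finitely many simplices involved. This is exactly the kind of point that motivates working in $\cat{Sh^{CAT}}$ (Appendix~\ref{app.spaces}): colimits of sheaves are computed pointwise on sets, so $\Delta(S(Z))_n = \mathrm{colim}_U \mathrm{Map}(U,Z)$ is a genuine filtered colimit of sets and any finite diagram of simplices is supported on a common neighborhood. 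Given that, one chooses compatible collars for the finitely many extended simplices appearing and performs the retraction-and-extension argument uniformly; naturality in $Z$ is automatic since every construction is post-composition with maps $Z \to Z'$. I would present the proof as: (1) define $r_Z$ by restriction along $\Delta^n \hookrightarrow \Delta^n_e$ and note naturality; (2) factor it through Lemma~\ref{lem.deltaedeltaweq} to reduce to showing $s_Z$ is a weak equivalence; (3) prove $\pi_k(s_Z)=0$ for all $k$ using collars and the pointwise-colimit property of sheaves, invoking Lemma~\ref{lem.wegshparam}.
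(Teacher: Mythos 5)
Your outline is sound up to the last and crucial step: you define $r_Z$ by restriction along $\Delta^n \hookrightarrow \Delta^n_e$, note naturality, and factor through $\Delta(S(Z))$ using Lemma~\ref{lem.deltaedeltaweq}, reducing to the claim that $s_Z \colon \Delta(S(Z)) \to \mr{Sing}(Z)$ is a weak equivalence of simplicial sets. That last step is where the real content lies, and your sketch of it does not go through as written. Lemma~\ref{lem.wegshparam} characterizes weak equivalences of \emph{sheaves on parametrizing manifolds}; here $s_Z$ is a map of simplicial sets and $\mr{Sing}(Z)$ is not of the form $\Delta(F)$ for a sheaf $F$, so that lemma is not applicable. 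You also have the roles of the disk and its boundary reversed: for $\pi_k(s_Z)$ the $k$-disk maps into the target $\mr{Sing}(Z)$ and the boundary sphere into the source $\Delta(S(Z))$. Most importantly, the proposed ``section up to homotopy'' is not a simplicial map: the obvious levelwise sections (postcompose with a retraction of a neighborhood onto $\Delta^n$) need not commute with the cosimplicial structure maps of $\Delta^\bullet_e$ (e.g.\ the nearest-point projection fails the codegeneracy identity), so they do not assemble into a map $\mr{Sing}(Z) \to \Delta(S(Z))$, and a levelwise homotopy section does not by itself produce a weak equivalence of simplicial sets.

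The right way to organize the collar idea is to show directly that $s_Z$ is a trivial Kan fibration, which is exactly what the germ (filtered colimit) formulation is good for. Given $\sigma \colon \Delta^n \to Z$ and a compatible lift of $\sigma|_{\partial\Delta^n}$ to $\Delta(S(Z))$, choose representatives $\tau_i \colon W_i \to Z$ of the boundary germs, with $W_i$ an open neighborhood of $\Delta^{n-1}$ inside the $i$-th face hyperplane of $\Delta^n_e$. Since the colimit is filtered and only finitely many simplices are involved, after shrinking the $W_i$ the simplicial identities hold on the nose and the $\tau_i$ and $\sigma$ glue to a continuous function on $A \coloneqq \Delta^n \cup \bigcup_i W_i$. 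Pick an open neighborhood $N$ of $\Delta^n$ in $\Delta^n_e$ small enough that each face hyperplane meets $N$ inside the corresponding $W_i$; then $A \cap N$ is a compact absolute neighborhood retract of $\Delta^n_e$, and it \emph{contains} the intersection of each face hyperplane with $N$. Composing the glued function with a retraction of a smaller neighborhood of $\Delta^n$ onto $A \cap N$ yields a germ whose restriction to $\Delta^n$ is $\sigma$ and whose restriction to each face hyperplane near $\Delta^{n-1}$ is $\tau_i$ --- the key point being that this retraction is the identity on the face hyperplanes near $\Delta^n$, as they already lie in $A \cap N$. This verifies the lifting property against $\partial\Delta^n \hookrightarrow \Delta^n$, so $s_Z$ is a trivial Kan fibration and in particular a weak equivalence; together with Lemma~\ref{lem.deltaedeltaweq} and your factorization the statement follows.
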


A weak or homology equivalence in $\cat{Sh^{CAT}}$ becomes a weak or homology equivalence of simplicial sets upon applying $\Delta_e$. Thus this lemma says there is no loss in working with $\cat{Sh^{CAT}}$ if one is interested in the homotopy or homology groups of a space.

However, in contrast to quasitopological spaces, we can make a more precise comparison. In Section 6.1 of \cite{cisinskisheaves}, Cisinski discusses geometric models for the homotopy category of spaces. In particular, he constructs a model structure on $\cat{Sh^{CAT}}$ which is Quillen equivalent to $\cat{sSet}$ with the Quillen model structure.

\bibliographystyle{amsalpha}
\bibliography{cell}

\end{document}